\tikzstyle arrowstyle=[scale=1]
\tikzstyle directed=[postaction={decorate,decoration={markings,
    mark=at position .22 with {\arrow[arrowstyle]{stealth}}}}]
\tikzstyle reverse directed=[postaction={decorate,decoration={markings,
    mark=at position .22 with {\arrowreversed[arrowstyle]{stealth};}}}]
\tikzstyle directedd=[postaction={decorate,decoration={markings,
    mark=at position .5 with {\arrow[arrowstyle]{stealth}}}}]
\theoremstyle{theorem}
\newtheorem{theorem}{Theorem}[section]
\newtheorem{lemma}[theorem]{Lemma}
\newtheorem{proposition}[theorem]{Proposition}
\newtheorem{corollary}[theorem]{Corollary}
\newtheorem*{claim}{Claim}
\theoremstyle{definition}
\newtheorem*{remark}{Remark}
\newtheorem{example}[theorem]{Example}
\theoremstyle{question}
\newtheorem*{billiardtheorem}{Billiard Rigidity Theorem}
\newcommand{\billiardrigidity}{Billiard Rigidity Theorem }
\newtheorem*{supporttheorem}{Current Support Theorem}
\newcommand{\currentsupport}{Current Support Theorem }
\newcommand{\Hyp}{{\rm Hyp}}
\newcommand{\cone}{{\rm cone}}
\newcommand{\PSL}{{\rm PSL}}
\newcommand{\Homeo}{{\rm Homeo}}
\newcommand{\tphi}{{\tilde \varphi}}
\newcommand{\hphi}{{\hat \varphi}}
\newcommand{\CG}{{\mathcal G}}
\newcommand{\tS}{{\widetilde S}}
\newcommand{\tC}{{\widetilde C}}
\newcommand{\CB}{{\mathcal B}}
\newcommand{\tHyp}{\widetilde{\rm Hyp}}
\newcommand{\hS}{{\hat S}}
\newcommand{\hC}{{\hat C}}
\newcommand{\CT}{{\mathcal T}}
\newcommand{\CE}{{\mathcal E}}
\newcommand{\tCE}{{\widetilde{\mathcal E}}}
\newcommand{\Isom}{{\rm Isom}}
\newcommand{\CA}{{\mathcal A}}
\newcommand{\Area}{{\rm Area}}
\newcommand{\ve}[1]{\marginpar{\color{purple}\tiny #1 --VE}}
\newcommand{\cl}[1]{\marginpar{\color{red}\tiny #1 --CL}}
\newcommand{\PSO}{{\rm PSO}}
\newcommand{\Stab}{{\rm Stab}}
\title{Hyperbolic cone metrics and billiards}
\author{Viveka Erlandsson, Christopher J. Leininger, and Chandrika Sadanand}
\date{}                                         
\begin{document}
\maketitle

\newcommand{\Addresses}{{
  \bigskip
  \footnotesize

 Viveka Erlandsson,  \textsc{School of Mathematics, University of Bristol, Bristol, UK, and}\par\nopagebreak
  \textsc{Department of Mathematics and Statistics, UiT The Arctic University of
Norway}\par\nopagebreak
  \textit{E-mail address:} \texttt{v.erlandsson@bristol.ac.uk}

  \medskip

  Christopher J. Leininger, \textsc{Department of Mathematics, Rice University, Houston, TX, USA}\par\nopagebreak
  \textit{E-mail address:} \texttt{cjl12@rice.edu}

  \medskip

  Chandrika Sadanand, \textsc{Department of Mathematics, Bowdoin College} \par\nopagebreak \textsc{Brunswick, Maine, USA}\par\nopagebreak
  \textit{E-mail address:} \texttt{c.sadanand@bowdoin.edu}

}}

\abstract{A negatively curved hyperbolic cone metric is called {\em rigid} if it is determined (up to isotopy) by the support of its Liouville current, and {\em flexible} otherwise.  We provide a complete characterization of rigidity and flexibility, prove that rigidity is a generic property, and parameterize the associated deformation space for any flexible metric.  As an application, we parameterize the space of hyperbolic polygons with the same symbolic coding for their billiard dynamics, and prove that generically this parameter space is a point.
}

\section{Introduction}

Let $S$ be a closed, oriented surface of genus at least two and let $\varphi \in \Hyp_c(S)$ be the isotopy class of a negatively curved hyperbolic cone metric on $S$; see \S\ref{S:Hyp cone}.  Hersonsky and Paulin \cite{HerPaul} proved that $\varphi \in \Hyp_c(S)$ is determined by its marked length spectrum.  The proof follows Otal's approach \cite{Otal}, associating to $\varphi$ a {\em Liouville} (or {\em M\"obius}) {\em geodesic current}, $L_\varphi$ which determines, and is determined by, the marked length spectrum, then proving that $L_\varphi$ determines $\varphi$; see also \cite{croke,CFF,Frazier:LS}.  The analogous result for unit area nonpositively curved Euclidean cone metrics---also called, {\em flat metrics}---was proved by the second author with Bankovic in \cite{BL} extending a special case with Duchin and Rafi \cite{DLR}, and subsequently extended to all nonpositively curved Riemannian cone metrics by Constantine \cite{Constantine}.

In our earlier paper with Duchin \cite{oldpaper} we proved a stronger theorem for flat metrics: they are typically determined by the {\em support} of their Liouville current; see \S\ref{S:geodesics and currents}.  This result starkly contrasts the case of nonpositively curved {\em Riemannian} metrics for which the Liouville current always has full support, and thus no two such metrics can be distinguished by supports.  In general, we proved that two flat metrics have the same support for their Liouville currents  if and only if the metrics differ by an affine deformation (up to isotopy), and this is possible precisely when the holonomy of the metrics have order at most $2$; see \cite{oldpaper} for more details.

For $\varphi \in \Hyp_c(S)$, there is no notion of an ``affine deformation" which preserves $\CG_{\tphi}$, the support of the Liouville current $L_\varphi$.  On the other hand, in Theorem~\ref{T:deforming} below, we show that if we are in the (highly non-generic) situation that $(S,\varphi)$ admits a locally isometric branched covering of a hyperbolic orbifold $(S,\varphi) \to \mathcal O$ {\em and} every cone point maps to an orbifold point of even order, then any deformation of $\mathcal O$ to another hyperbolic orbifold $\mathcal O'$ lifts to a deformation $\varphi'$ of $\varphi$ without disturbing the support of the Liouville current; i.e.~so that $\CG_{\tphi} = \CG_{\tphi'}$.  Our main theorem states that this is the only way two metrics can have the same support for their currents.

\begin{supporttheorem} Suppose $\varphi_1,\varphi_2 \in \Hyp_c(S)$ and $\CG_{\tphi_1} = \CG_{\tphi_2}$.  Then $\varphi_1 = \varphi_2$ or else $\varphi_1,\varphi_2$ arise precisely as described above (that is, as in Theorem~\ref{T:deforming}).
\end{supporttheorem} 

We reiterate that for either a flat or hyperbolic cone metric, $\varphi$, the generic situation is that $\CG_{\tphi}$ determines $\varphi$.  In the non-generic case, the dimension of the deformation space for flat cone metrics preserving the support is precisely $2$, while for hyperbolic cone metrics, the dimension is still finite but can be arbitrarily large: indeed, it is parameterized by the Teichm\"uller space of the quotient orbifold produced in the proof of the \currentsupport\!.  See the remark at the end of \S\ref{S:flexibility proof}.  

An interesting special case is described by the following, which states that when a cone metric has ``too many cone points", then it is necessarily rigid.

\begin{corollary} \label{C:too many cone points} If $\varphi \in \Hyp_c(S)$ has at least $32(g-1)$ cone points (where $g$ is the genus of $S$) then $\varphi$ is rigid.
\end{corollary}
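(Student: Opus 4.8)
\emph{Proof proposal.}

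The plan is to prove the contrapositive: if $\varphi \in \Hyp_c(S)$ is flexible, then it has fewer than $32(g-1)$ cone points. By the \currentsupport\!, flexibility means $\varphi$ arises as in Theorem~\ref{T:deforming}; in particular there is a locally isometric branched covering $p \colon (S,\varphi) \to \mathcal O$ onto a hyperbolic $2$-orbifold in which every cone point of $\varphi$ maps to an orbifold point of $\mathcal O$ of even order. First I would reduce to the case that $\mathcal O$ has no mirror locus: since $S$ is oriented and $p$ is a local isometry away from finitely many points, no loop in $S$ can map to an orientation-reversing loop of $\mathcal O$, so $p$ factors through the orientation double cover $\mathcal O_+ \to \mathcal O$, and replacing $\mathcal O$ by $\mathcal O_+$ only doubles $-\chi^{\mathrm{orb}}$ while preserving the relevant structure. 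So assume $\mathcal O$ is a closed oriented hyperbolic $2$-orbifold with cone points of orders $n_1,\dots,n_k$, and let $d = \deg p$ and $N$ be the number of cone points of $\varphi$ (if $N=0$ there is nothing to prove, so assume $N\ge 1$).

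Next I would record two inequalities. Because $p$ is a local isometry off a finite set, $\Area(S) = d\cdot\Area(\mathcal O) = -2\pi d\,\chi^{\mathrm{orb}}(\mathcal O)$, while Gauss--Bonnet for the cone metric gives $\Area(S) = 2\pi(2g-2) - \sum_x(\theta_x - 2\pi)$, the sum over the cone points $x$ of $\varphi$, each with $\theta_x > 2\pi$; hence
\[
d\,\bigl(-\chi^{\mathrm{orb}}(\mathcal O)\bigr) \;<\; 2g-2 .
\]
For the second inequality, fix an orbifold point $y_j$ of order $n_j$: a preimage of $y_j$ that is a cone point has local degree $e_x$ with $\theta_x = 2\pi e_x/n_j > 2\pi$, so $e_x \ge n_j + 1$, and since $\sum_{p(x)=y_j} e_x = d$ the number $c_j$ of cone-point preimages of $y_j$ satisfies $c_j \le d/(n_j+1)$. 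As every cone point of $\varphi$ lies over some even-order $y_j$,
\[
N \;=\; \sum_j c_j \;\le\; d\!\!\sum_{j:\, n_j \text{ even}}\!\! \frac{1}{n_j+1}.
\]

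Combining the two displays gives $N < (2g-2)\,R(\mathcal O)$ with $R(\mathcal O) := \bigl(\sum_{j:\, n_j \text{ even}} \tfrac1{n_j+1}\bigr)\big/\bigl(-\chi^{\mathrm{orb}}(\mathcal O)\bigr)$, so it suffices to show $R(\mathcal O)\le 16$ for every closed oriented hyperbolic $2$-orbifold $\mathcal O$. This is the computational heart. If $|\mathcal O|$ has positive genus then $-\chi^{\mathrm{orb}}(\mathcal O)\ge \sum_j(1-\tfrac1{n_j})$, and the elementary bound $\tfrac1{n+1}\le\tfrac23(1-\tfrac1n)$ for $n\ge 2$ (equivalently $(2n+1)(n-2)\ge 0$) yields $R(\mathcal O)\le\tfrac23$; so $|\mathcal O|$ is a sphere. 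If $\mathcal O$ has at least five even-order points then $-\chi^{\mathrm{orb}}(\mathcal O)\ge \tfrac52-2=\tfrac12$ while the numerator is at most a third of the number of even points, giving $R(\mathcal O)\le\tfrac{10}{3}$. The remaining cases are a sphere with at most four even-order points and some odd-order points with $-\chi^{\mathrm{orb}}(\mathcal O)>0$; here, once all but one of the orders are fixed the ratio $R$ is monotone in the remaining order, so one is left with boundedly many explicit orbifolds to check, the maximum of $R$ being $14$, attained uniquely by the $(2,3,7)$-orbifold ($-\chi^{\mathrm{orb}}=\tfrac1{42}$, numerator $\tfrac13$). Thus $N < 14(2g-2) < 32(g-1)$.

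I expect the main obstacle to be exactly this last step: organizing the sphere case into a genuinely finite, clean argument, since a priori there are infinitely many hyperbolic triangle (and quadrilateral) orbifolds with $-\chi^{\mathrm{orb}}$ accumulating at the Euclidean values, and one must make the monotonicity-in-the-remaining-order reduction precise before checking the handful of extremal orbifolds. The reduction to a mirrorless $\mathcal O$ and the two inequalities above are routine by comparison; it is the orbifold bookkeeping that does the real work, and it is what makes $32$ — rather than the sharp value $28$ — a convenient constant to state.
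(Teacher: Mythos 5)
Your overall strategy is the same as the paper's: take the contrapositive, use the \currentsupport to get a locally isometric branched cover $(S,\varphi)\to\mathcal O$ with cone points over even-order orbifold points, compare areas via Gauss--Bonnet to bound the degree, bound the number of cone points over each branch point by a local-degree estimate, and finish with an extremal analysis over hyperbolic orbifolds. (Your preliminary reduction to an orientable $\mathcal O$ is unnecessary: the orbifold produced in the proof of the \currentsupport is $\mathbb H/\Gamma_1^0$ with $\Gamma_1^0<\PSL_2(\mathbb R)$, hence already orientable.) Where you genuinely diverge is in the two bounds you optimize. You use the sharp local-degree estimate $e_x\ge n_j+1$, which leads to the ratio $R(\mathcal O)=\bigl(\sum_{n_j\,\mathrm{even}}\tfrac1{n_j+1}\bigr)/(-\chi^{\mathrm{orb}})$ and, if your claim $\max R=14$ at $(2,3,7)$ is right, to the sharper bound $28(g-1)$. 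The paper instead uses only $e_x\ge 3$, so its extremal problem depends on $\mathcal O$ only through the number $r$ of even-order points and $\Area(\mathcal O)$; this is weaker (it is what produces the constant $32$, realized by $(2,3,8)$ with $r=2$) but reduces the optimization to a clean finite table: the linear area bound $\pi(r-4)$ for $r\ge 5$ and the minimal-area orbifold with $r$ even-order points for $r=1,2,3,4$.

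The one genuine soft spot is the step you yourself flag: your reduction of the sphere case to ``boundedly many explicit orbifolds'' via monotonicity in a single remaining order is not correct as stated. With the other orders fixed, $R$ is monotone decreasing along even values of the varying order and along odd values separately, but the numerator $\sum_{n_j\,\mathrm{even}}1/(n_j+1)$ jumps \emph{up} each time the varying order passes from odd to even, so $R$ is not monotone and a naive ``increase the order until you leave the hyperbolic range'' argument does not terminate the search. This can be repaired (e.g.\ by treating even and odd values separately, or by replacing each even order's contribution $1/(n+1)$ by a multiple of its deficiency $1-1/n$ as you do in the positive-genus case), but as written the computational heart of your argument is incomplete. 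If tightening it proves awkward, note that the paper's coarser estimate $e_x\ge 3$ entirely sidesteps this parity issue at the cost of the larger constant $32(g-1)$, which is all the statement requires.
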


\noindent See  \S \ref{S:too many cone points} for the proof.  We do not claim that the bound $32(g-1)$ is sharp, but one cannot do better than a bound which is linear in $g$. Indeed, by taking unbranched covers of a genus two surface by a surface of genus $g \geq 3$ with the pull-back metric from the one constructed in Example~\ref{Ex:branched example}, we find flexible metrics on the surface of genus $g$ with $g-1$ cone points.


\subsection{Billiard rigidity}

Suppose $P \subset \mathbb H$ is a compact $n$--gon in the hyperbolic plane (assumed simply connected, but not necessarily convex). We assume that $P$ comes equipped with a labelling of the side by elements of $\CA = \{1,2,\ldots,n\}$, starting with $1$ on some edge and proceeding in counterclockwise order.  A biinfinite billiard trajectory determines a {\em bounce sequence} in $\CA^{\mathbb Z}$ by recording the labels of the sides encountered in order, and the the {\em bounce spectrum of $P$} is the set $\CB(P) \subset \CA^{\mathbb Z}$ consisting of all bounce sequences.  We say that $P$ is {\em billiard rigid} if $\CB(P)$ determines $P$ up to label-preserving isometry, and {\em billiard flexible} otherwise.

As an application of the \currentsupport \!\!, we prove a sharp symbolic-dynamical rigidity theorem characterizing rigidity and flexibility for hyperbolic polygons.  Our theorem is analogous to, but with quite a different conclusion from, the ``Bounce Theorem'' of our paper with Duchin, \cite{oldpaper} where we proved that a Euclidean polygon is billiard flexible if and only if all its interior angles are either $\tfrac{\pi}2$ or $\tfrac{3\pi}2$.  For hyperbolic polygons, the statement is slightly more technical, and requires an additional definition.  

A polygon $P$ is {\em reflectively tiled} if it can be tiled by isometric copies of a single polygon, called the {\em tiles}, so that any two adjacent tiles differ by reflection in their shared edge.  We further require that each interior angle of a tile is of the form $\frac{\pi}k$, for some $k \in \mathbb Z$, where $k$ is even if the vertex of the tile is also a vertex of $P$.  See \S\ref{S:billiards} for a detailed discussion.


\begin{billiardtheorem}   Given hyperbolic polygons $P_1,P_2$, we have $\CB (P_1) = \CB (P_2)$ if and only if
\begin{enumerate}
\item $P_1$ is isometric to $P_2$ by a label preserving isometry, or
\item $P_1,P_2$ are reflectively tiled and there exists a label-preserving homeomorphism $H \colon P_1 \to P_2$ that maps tiles to tiles, preserving their interior angles.
\end{enumerate}
\end{billiardtheorem}

The characterization of billiard flexibility is easily derived from the theorem; see \S\ref{S:billiards}.

\begin{corollary}  \label{C:billiard flexible} A hyperbolic polygon $P$ is billiard flexible if and only it is reflectively tiled with a non-triangular tile.
\end{corollary}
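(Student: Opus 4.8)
The plan is to read the corollary off from the Billiard Rigidity Theorem by comparing, for a fixed $P$, its equivalence class under ``having the same bounce spectrum'' with its label-preserving isometry class. By definition $P$ is billiard flexible exactly when there is a hyperbolic polygon $P'$ with $\CB(P) = \CB(P')$ admitting no label-preserving isometry to $P$. For a given $P'$, the theorem says $\CB(P) = \CB(P')$ iff alternative (1) or (2) holds, so such a $P'$ exists iff there is a $P'$ for which (2) holds while (1) fails. It therefore suffices to show that such a pair $(P,P')$ exists precisely when $P$ is reflectively tiled with a non-triangular tile.

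\textbf{Forward direction.} Suppose $P$ is billiard flexible, with witness $P'$. Then (2) holds: $P$ and $P'$ are reflectively tiled and there is a label-preserving homeomorphism $H \colon P \to P'$ taking tiles to tiles and preserving their interior angles. I claim the tile $T$ of $P$ in this tiling is not a triangle. Otherwise the tile $T'$ of $P'$, having the same three interior angles as $T$ because $H$ preserves angles, is isometric to $T$ (a hyperbolic triangle is determined up to isometry by its angles). Transporting this isometry across the tiling using the combinatorial bijection between the two tilings induced by $H$ together with the reflection relation along shared edges, one builds a global isometry $P \to P'$; since $H$ is already label-preserving on $\partial P$, this isometry is label-preserving, contradicting the choice of $P'$. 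Hence $T$ is non-triangular, so $P$ is reflectively tiled with a non-triangular tile.

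\textbf{Reverse direction.} Suppose $P$ is reflectively tiled by a non-triangular tile $T$, say an $m$-gon with $m \ge 4$ whose interior angles are all of the form $\pi/k$. The space of hyperbolic $m$-gons with this fixed admissible angle data is a nonempty real-analytic manifold of dimension $m-3 \ge 1$, so there is a one-parameter family $T_t$ of pairwise non-isometric hyperbolic $m$-gons with $T_0 = T$ and all the same interior angles. For small $t$, reflecting isometric copies of $T_t$ across shared edges according to the same combinatorial pattern that tiles $P$ produces a hyperbolic polygon $P_t$: the conditions $\pi/k$ (with $k$ even at vertices of $P$) are exactly what guarantee the tiles fit together around each interior and boundary vertex, and these conditions are unchanged under the deformation; and for $t$ small the developing map remains an embedding, so $P_t$ is a genuine simply connected hyperbolic polygon, which we label through the tautological label-preserving homeomorphism $H_t \colon P \to P_t$ taking tiles to tiles and preserving angles. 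By the Billiard Rigidity Theorem (the implication from (2)), $\CB(P) = \CB(P_t)$. Finally, at least one edge length of $P_t$---being a sum of edge lengths of the tile $T_t$---varies real-analytically and non-constantly with $t$, whereas any $P_t$ admitting a label-preserving isometry to $P$ must reproduce the edge lengths of $P$; hence this occurs only for $t$ in a discrete set, and for generic small $t$ the polygon $P_t$ witnesses that $P$ is billiard flexible.

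The main obstacle is the reverse direction: carefully verifying that ``retiling'' with a deformed tile yields an honest simply connected hyperbolic polygon $P_t$ (the fitting of copies of $T_t$ around interior and boundary vertices, and embeddedness of the developing image for small $t$) equipped with a label-preserving, tile-respecting, angle-preserving homeomorphism from $P$, so that case (2) of the theorem applies, and then checking that $P_t$ really escapes the label-preserving isometry class of $P$. The forward direction is routine apart from the unfolding argument that reconstructs a global label-preserving isometry $P \to P'$ from isometric triangular tiles and the combinatorial matching, which should be spelled out using the $\pi/k$ angle hypotheses.
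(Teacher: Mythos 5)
Your argument is correct and follows essentially the same route as the paper: both directions are read off from the \billiardrigidity\!\!, with the decisive point being that the angle-preserving deformation space of an $m$--gon tile has dimension $m-3$, which vanishes exactly for triangles (this is the paper's appeal to the argument of Corollary~\ref{C:deforming polygons}). You supply considerably more detail than the paper's one-paragraph derivation; the only step that both treatments leave somewhat informal is the verification that a nontrivial angle-preserving deformation of the tile actually moves $P_t$ out of the label-preserving isometry class of $P$ (your claim that some boundary edge length of $P_t$ must vary deserves a word of justification, e.g.\ that a polygon with fixed angles is determined by its boundary edge lengths, and that constancy of all of these for a nonconstant tile family can be ruled out).
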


Because the statement of \billiardrigidity is slightly technical, we note a couple of special cases that help to illustrate the conclusion.
First, rigidity is generic; for example, Corollary~\ref{C:irrational angle rigid} says that any polygon with at least one interior angle that is not in $\pi \mathbb Q$ is rigid.  Moreover, Proposition~\ref{P:lots of examples} shows that an $n$--gon for which no interior angle is an even submultiple of $\pi$ (i.e.~of the form $\frac{\pi}{2k}$ for some $k \in \mathbb Z$), is rigid, while Corollary~\ref{C:deforming polygons} shows that if all angles are even submultiples of $\pi$, then there is an $(n-3)$--dimensional space of $n$--gons with the same bounce spectrum; namely those having the same corresponding interior angles. See \S\ref{S:billiard rigidity proof} 

For any polygon $P$, the bounce spectrum $\CB(P)$ consists of uncountably many biinfinite sequences and it is natural to wonder if one can draw the same conclusions with less information. The answer is yes (as in the Euclidean case) and one needs only consider the bounce sequences of {\em generalized diagonals}, which are {\em compact} billiard trajectories in $P$ that start and end at a vertex of $P$.  The bounce sequence of a generalized diagonal is a finite sequence and the set of all such, $\CB_\Delta(P)$, is countable.  This is enough information to determine $\CB(P)$ and thus we have the following.

\begin{theorem} \label{T:gen diagonal hyp} Given polygons $P_1,P_2 \subset \mathbb H$, we have $\CB_\Delta(P_1) = \CB_\Delta(P_2)$ if and only if one of the two conclusions of the \billiardrigidity holds.
\end{theorem}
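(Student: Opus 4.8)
The plan is to reduce the statement about $\CB_\Delta$ to the \billiardrigidity, which characterizes equality of the full bounce spectra $\CB(P_1) = \CB(P_2)$. Since $\CB_\Delta(P) \subset \CB(P)$ (reading a finite generalized-diagonal word as a sequence, or rather recording the finite words that appear), the implication ``$\CB(P_1) = \CB(P_2) \Rightarrow \CB_\Delta(P_1) = \CB_\Delta(P_2)$'' is essentially immediate once one checks that a trajectory's being a generalized diagonal in $P_1$ is detected by its bounce data alone — but this is not quite obvious, so some care is needed (see below). The substance is the reverse implication: $\CB_\Delta(P_1) = \CB_\Delta(P_2) \Rightarrow \CB(P_1) = \CB(P_2)$, after which the \billiardrigidity gives one of the two listed conclusions, and conversely each of those two conclusions already implies $\CB(P_1)=\CB(P_2)$ hence $\CB_\Delta(P_1)=\CB_\Delta(P_2)$.

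First I would set up the correspondence between billiard trajectories in $P$ and geodesics on the associated hyperbolic cone surface: the standard doubling (or orbifold-reflection) construction turns $P$ into a hyperbolic cone surface $X_P$ carrying an isometric involution with quotient $P$, so that billiard trajectories lift to geodesics and generalized diagonals lift to geodesics connecting cone points (coming from the vertices of $P$). Under this dictionary $\CB_\Delta(P)$ encodes the combinatorics of the cone-point-to-cone-point geodesics, which in turn encodes the side-pairing/edge-crossing pattern of a dense collection of geodesics. The key density input is that generalized diagonals are dense in the space of all billiard trajectories — equivalently, geodesics between cone points are dense (in the appropriate sense of their lifts to $\partial$-data) among all geodesics on a hyperbolic cone surface. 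This is the hyperbolic analogue of a well-known fact in the flat/translation setting and should follow from minimality-type arguments for the geodesic flow together with the fact that cone points are ``approached'' by nearby geodesics; alternatively one can quote the analogous step from \cite{oldpaper} and adapt it. Given density, a bounce sequence in $\CB(P)$ is a limit of finite subwords each of which is realized by (an initial segment of a bounce sequence of) a generalized diagonal, so $\CB_\Delta$ determines the closure, which is $\CB(P)$ up to the bookkeeping about finite vs.\ biinfinite words.

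The step I expect to be the main obstacle is making the passage ``$\CB_\Delta$ determines $\CB$'' fully rigorous, because it conflates two things that must be disentangled: (i) the set of finite words appearing as factors of sequences in $\CB(P)$, and (ii) which biinfinite sequences are realized. One must show that a biinfinite word lies in $\CB(P)$ iff every finite factor of it appears as a factor of some word in $\CB_\Delta(P)$ — the nontrivial direction being that factor-realizability for all finite factors forces global realizability, which is a compactness/closed-ness statement about $\CB(P)$ as a subshift of $\CA^{\mathbb Z}$ together with the density of generalized diagonals. There is also a mild subtlety that generalized diagonals can run into a vertex, so their bounce words are finite and one must decide on the convention for what ``factor'' means at the two ends (typically one truncates and only uses interior factors, or uses the coding of the trajectory reflected at the vertex); I would fix this convention early. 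Once $\CB(P_1) = \CB(P_2)$ is established, the \billiardrigidity applies verbatim to yield conclusions (1) or (2), and the converse direction is handled by observing that in case (1) the label-preserving isometry identifies generalized diagonals, and in case (2) the tile-preserving homeomorphism $H$ does as well (since generalized diagonals run between vertices and their combinatorics is determined by the tiling structure, which $H$ preserves), so $\CB_\Delta(P_1)=\CB_\Delta(P_2)$ in both cases.
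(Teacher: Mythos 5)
Your high-level strategy (unfold, use density of generalized diagonals, reduce to the \billiardrigidity\!\!) is the right instinct, but the specific reduction you propose — recover $\CB(P)$ from $\CB_\Delta(P)$ symbolically and then invoke the \billiardrigidity as a black box — has a genuine gap exactly at the step you flagged. The claim that a biinfinite word lies in $\CB(P)$ if and only if all its finite factors appear as factors of words in $\CB_\Delta(P)$ requires $\CB(P)$ to be (essentially) a closed subshift of $\CA^{\mathbb Z}$, and it is not: a sequence of billiard trajectories realizing longer and longer central factors of a word $\mathbf b$ converges, after passing to a subsequence, to a \emph{generalized} trajectory that may run into a vertex, in which case $\mathbf b$ is factor-realizable without being the bounce sequence of any legal trajectory. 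So factor-realizability characterizes the \emph{closure} of $\CB(P)$ (which corresponds to basic, possibly singular, geodesics on the unfolding), not $\CB(P)$ itself, and the compactness argument you sketch does not close this gap. The paper's assertion that $\CB_\Delta(P)$ determines $\CB(P)$ is a \emph{consequence} of the theorem, not an ingredient of its proof.

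The paper sidesteps this by never passing through $\CB(P_1)=\CB(P_2)$. Instead, from $\CB_\Delta(P_1)=\CB_\Delta(P_2)$ it matches each $\tphi_1$--saddle connection on a common unfolding with a $\tphi_2$--saddle connection crossing the same labeled cells (hence uniformly close), then uses the fact that every nonsingular $\tphi_i$--geodesic is a limit of saddle connections to conclude $\CG_{\tphi_1}=\CG_{\tphi_2}$ directly — the limit is taken in the space of geodesics, where closedness of $\CG(\tphi_i)$ is built into the definition of basic geodesics, rather than in the (non-closed) subshift. From there it observes that the proof of the \billiardrigidity only uses $\CG_{\tphi_1}=\CG_{\tphi_2}$ after the unfolding step, so it can be rerun verbatim; this is a white-box rather than black-box use of that theorem. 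Your converse direction has a similar, milder issue: asserting that the tile-preserving homeomorphism $H$ in case (2) identifies the combinatorics of generalized diagonals is not automatic and needs the straightening machinery (Lemmas~\ref{L:examples C--uber-normalized} and \ref{L:relative straightening}) to show that saddle connections straighten to saddle connections crossing the same labeled edges. If you rework your argument to compare geometric objects (saddle connections and their straightenings) rather than symbolic words, the proof goes through.
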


The bounce spectrum of a hyperbolic polygon was previously studied by Ullmo and Giannoni \cite{UlGi} in the special case that all interior angles are acute (see also \cite{UlGi2} and Nagar--Singh \cite{NaSi}).  The primary objective in \cite{UlGi} was to find a set of ``grammar rules" that completely describe $\CB(P)$ for a given polygon.  There are some interesting connections with the \billiardrigidity that we explain in \S\ref{S:UG-connections}.

\subsection{Outline}  Here we explain the key ideas in the proofs of the two theorems above, focusing primarily on the \currentsupport\!.   First, given a metric $\varphi \in \Hyp_c(S)$ consider the pull back metric $\tphi$ on the universal cover $\tilde{S}$ of $S$. The set $\CG_{\tphi}$ can be alternatively described as the set of pairs of endpoints at infinity of the {\em basic} $\tphi$--geodesics, that is those which are limits of {\em nonsingular} (disjoint from cone points) $\tphi$--geodesics; see \S\ref{S:geodesics and currents}.   

Now, fix $\varphi_1,\varphi_2 \in \Hyp_c(S)$ and suppose $\CG_{\tphi_1} = \CG_{\tphi_2}$. Having the same set of endpoints  translates into a correspondence between basic $\tphi_1$--geodesics and basic $\tphi_2$-geodesics since they must be bounded Hausdorff distance apart.  The proof of the \currentsupport begins by successively adjusting (a representative of) $\varphi_2$ by an isotopy until it and $\varphi_1$ share some useful common features.  This starts by first showing that the cone points are naturally ``aligned" by the condition $\CG_{\tphi_1} = \CG_{\tphi_2}$ using {\em chains} at infinity: after adjusting $\varphi_2$ by an isotopy, the two metrics have the same set of cone points; see \S\ref{S:concurrency and partitions}.  Analyzing the way the basic geodesics partition the cone points in the universal covering, we explain how to adjust $\varphi_2$ and construct a triangulation with vertices at the cone points so that in {\em both metrics} the triangles are isometric to triangles in $\mathbb H$; see \S\ref{S:triangulations}.

Up to this point the outline of the proof is similar to that of the analogous theorem for Euclidean cone metrics in \cite{oldpaper}, though there are various technical differences we must account for throughout.  In contrast, the remainder of the proof substantially diverges from the Euclidean case.  To  prove the theorem we must {\bf either} show that the interior angles of the triangles of the triangulation are the same in the two metrics, ensuring the triangles are isometric and thus $\varphi_1=\varphi_2$, {\bf or} produce a locally isometric branched cover of an orbifold $(S,\varphi_1) \to \mathcal O_1$ (with cone points mapping to even order orbifold points) and prove that $\varphi_2$ is obtained by lifting a deformation of $\mathcal O_1$.  

This analysis starts by removing the common set of cone points to produce a punctured surface $\dot S$, and taking (the metric completion of) the universal cover $\hS$ of $\dot S$.  Using the triangulation, we show that the basic geodesics of the pulled back metrics $\hphi_1$ and $\hphi_2$ also have the same endpoints at infinity.  Associated to $\hphi_i$, for each $i=1,2$, there is a locally isometric developing map $D_i \colon (\hS,\hphi_i) \to \mathbb H$ which is equivariant with respect to its associated holonomy homomorphism $\rho_i \colon \pi_1(\dot S) \to \PSL_2(\mathbb R)$.    A delicate argument analyzing intersection patterns of basic geodesics is used to construct an orientation preserving homomorphism $h \colon \partial \mathbb H \to \partial \mathbb H$ that conjugates $\rho_1$ to $\rho_2$ within the homeomorphism group of the circle $\partial \mathbb H$.  Furthermore, $h$ sends the endpoints of any $D_1$--image of a basic $\hphi_1$--geodesic to the endpoints of the  $D_2$--image of the associated basic $\hphi_2$--geodesic; see \S\ref{S:conjugating}. It is by investigating the holonomy homomorphism image $\rho_1(\pi_1(\dot S))$ (and a slightly enlarged group) and using the existence of the conjugating homeomorphism that we can determine whether the two metrics are the same or differ by an orbifold deformation. 

Indeed, if $\rho_1(\pi_1(\dot S))$ is indiscrete we show that $h$ must in fact lie in $\PSL_2(\mathbb R)$ by an analysis of the closed, cocompact subgroups of $\PSL_2(\mathbb R)$; see \S\ref{S:top conjugacy}.  This implies that the angle between a pair of $D_1$--developed basic $\hphi_1$--geodesics and the corresponding $D_2$--developed basic $\hphi_2$--geodesics must agree.  Applying this to extensions of the sides of the triangles implies that the interior angles for each triangle in the triangulation are the same in the two metrics, and hence we have that $\varphi_1=\varphi_2$. 

One can push this argument further using an observation from earlier in the proof: during the first adjustment stage, it is shown that all basic $\tphi_1$--geodesics through a cone point in $(\tS, \tphi_1)$ correspond to $\tphi_2$--geodesics that pass through a cone point in $(\tS, \tphi_2)$ (after an adjustment, these become the same point).  Passing to $\hS$, we deduce that for every cone point $\zeta$ of $\hS$, $h$ ``witnesses" the concurrence of the $D_1$-- and $D_2$--images of the respective basic geodesics through $\zeta$.  This manifests itself in the fact that $h$ conjugates the order two elliptic isometry fixing $D_1(\zeta)$ to the order two elliptic isometry fixing $D_2(\zeta)$.  Taking the groups $\Gamma_i^0 < \PSL_2(\mathbb R)$ generated by $\rho_i(\pi_1(\dot S))$ together with these order two elliptic elements about $D_i(\zeta)$, over all cone points $\zeta$ of $\hS$, we see that $h$ actually conjugates $\Gamma_1^0$ to $\Gamma_2^0$; see \S\ref{S:rigidity}.  Consequently, if $\Gamma_1^0$ is indiscrete we again deduce, by the same argument as above, that $h$ must lie in $\PSL_2(\mathbb R)$ and so $\varphi_1=\varphi_2$ as before.  

The only option left is that $\Gamma_1^0$ and $\Gamma_2^0$ are {\em discrete}.  Then the quotient $\mathcal O_i^0 = \mathbb H/ \Gamma_i^0$ are hyperbolic orbifolds, for $i=1,2$.  By construction each $D_i$ is equivariant and hence descend to branched covers from $(S,\varphi_i) \to \mathcal O_i^0$ under which the cone points map to even order orbifold points.  The homeomorphism $h$ conjugates $\Gamma_1^0$ to $\Gamma_2^0$ and hence determines a homeomorphisms $\mathcal O_1^0 \to \mathcal O_2^0$.  Chasing through the diagram shows that (after another adjustment) $\varphi_2$ differs from $\varphi_1$ by lifting this homeomorphism, completing the proof; see \S\ref{S:flexibility proof}.

The \billiardrigidity follows from the \currentsupport by an unfolding procedure.  Roughly speaking, given a hyperbolic $n$--gon $P$, we consider a negatively curved hyperbolic cone surfaces tiled by copies of $P$.  Then ``folding the tiles up" defines a map from the surface to $P$ that sends each tile isometrically to $P$; consequently,  we call such a surface and metric an unfolding.  A key property of an unfolding is that nonsingular geodesics in the surface project to billiard trajectories, and vice versa billiard trajectories lift to nonsingular geodesics.

Given two hyperbolic $n$--gons $P_1$ and $P_2$, we find a {\em common unfolding}, which is a surface with two different hyperbolic cone metrics $\varphi_1$ and $\varphi_2$ so that the tilings define the same cell structure (the 2-cells are the tiles) and so the labelings from the polygons agree.  When $\CB(P_1) = \CB(P_2)$, we lift a common unfolding to the universal cover of the surface, and observe that the bounce sequence of a billiard trajectory also specifies (from a given starting tile) a symbolic coding of a nonsingular geodesic.  In this way, $\CB(P_1) = \CB(P_2)$ naturally implies $\CG_{\tphi_1} = \CG_{\tphi_2}$ since a $\tphi_1$--geodesic and $\tphi_2$--geodesic passing through the same set of tiles forces them to remain a bounded distance apart; see \S\ref{S:unfoldings}.

This first part of the proof is similar to the Euclidean billiard case, but again, at this point the proofs diverge.  Since $\CG_{\tphi_1} = \CG_{\tphi_2}$, we find a homeomorphism $h: \partial\mathbb{H}\to \partial\mathbb{H}$ and show that it conjugates the group generated by reflections in the faces of $P_1$ to the group generated by reflections in the faces of $P_2$.  If $h \in \PSL_2(\mathbb R)$, by looking at the extensions of the geodesic sides to biinfinite geodesics and the effect of $h$ on the endpoints, we see that the extension of $h$ to an isometry of $\mathbb H$ maps $P_1$ to $P_2$.  

Now, to determine whether $h$ belongs to $\PSL_2(\mathbb R)$ we proceed similarly to the argument above. If the group $R^0_{P_1}$ generated by reflections in the sides of $P_1$ together with the order two elliptic elements fixing the vertices is indiscrete, then $h$ is necessarily in $\PSL_2(\mathbb R)$, and we are in case (1) of the theorem; see \S\ref{S:reflection groups}.  If on the other hand $R^0_{P_1}$ is discrete, one can check that this group is also generated by reflections in a (typically) smaller polygon inside of $P_1$.  The union of the lines of reflection for this group defines a tiling of $\mathbb H$ for which $P_1$ is a union of tiles.  The map $h$ sends the endpoints of all these lines of reflection to the endpoints of the lines of reflection for the associated group determined by $P_2$.  This is enough to construct a homeomorphism $\mathbb H \to \mathbb H$ sending tiles of one tiling to tiles of the other and sending $P_1$ to $P_2$.  The interior angles of the tiles is preserved by this homeomorphism since $h$ conjugates the dihedral vertex stabilizer in one group to an isomorphic group in the other.  This essentially puts us in case (2) of the theorem, and completes the proof; see \S\ref{S:billiard rigidity proof}.

\medskip

The paper is organized as follows. In \S\ref{S:prelim} we introduce the notation and terminology used throughout the paper, describe the geodesics in $\tS$ and $\hS$, and discuss topological conjugates of subgroups of $\PSL_2(\mathbb{R})$. \S\ref{S:deformations} is devoted to proving Theorem~\ref{T:deforming} mentioned above, showing that two hyperbolic cone metrics $\varphi_1, \varphi_2$ which branch cover hyperbolic orbifolds (with cone points projecting to even order orbifold points) and differ by a lift of an orbifold deformation satisfy $\CG_{\tphi_1} = \CG_{\tphi_2}$. The normalization procedures of the metric $\varphi_2$ alluded to in the outline above are explained in \S\ref{S:geodesics and triangulations}. 
In \S\ref{S:holonomy} we construct the homeomorphism $h: \partial\mathbb{H}\to  \partial\mathbb{H}$ which conjugates the holonomies and is a key tool in the proof of the \currentsupport\!\!, which we prove in \S\ref{S:proof of current}. Finally, in \S\ref{S:billiards} we discuss hyperbolic billiards and prove the \billiardrigidity\!.

\bigskip

\noindent
{\bf Acknowledgements.} We would like to thank Ben Barrett, Moon Duchin, Hugo Parlier, Alan Reid, and Juan Souto for useful conversations throughout the course of this work.  In particular, the argument involving triangle groups in Lemma~\ref{L:billiards indiscrete} is due to Alan Reid.  The authors would like to thank the referee for helpful comments on the initial version of the paper.  Leininger was partially supported by NSF grant DMS-1811518 and DMS-2106419. Erlandsson was partially supported by EPSRC grant EP/T015926/1.

\section{Preliminaries}\label{S:prelim}

\subsection{Hyperbolic cone metrics} \label{S:Hyp cone}

We write $\tHyp_c(S)$ for the space of negatively curved hyperbolic cone metrics on $S$.  These are metrics, locally isometric to $\mathbb H$ away from a finite, positive number of cone singularities with cone angles greater than $2\pi$.  We refer to the non-cone points as {\em regular points}.   The group $\Homeo_0(S)$ of homeomorphisms isotopic to the identity acts (on the right) on $\tHyp_c(S)$ by pullback, and we let $\Hyp_c(S)$ be the quotient by this action.  That is, given $\varphi,\varphi' \in \tHyp_c(S)$, declaring $\varphi \sim \varphi'$ if there exists a homeomorphism $f \colon S \to S$ isotopic to the identity, such that $f^*(\varphi') = \varphi$, then $\Hyp_c(S) = \tHyp_c(S)/\!\!\sim$ is the quotient of $\tHyp_c(S)$ by the equivalence relation $\sim$.

\begin{remark} While we have assumed that the cone point set is nonempty, we note that the  \currentsupport\! is trivially valid without this assumption.  Indeed, for any nonsingular hyperbolic metric, its Liouville current has full support. Thus, any two nonsingular hyperbolic metrics have the same support for their Liouville currents, and do in fact differ by the lift of a deformation from the trivial orbifold covering of itself.
\end{remark}

We fix a universal covering $p \colon \tS \to S$.  For any reference negatively curved metric, we let $S^1_\infty$ be the boundary at infinity of $\tS$ with respect to the pull back metric. This defines a compactification of $\tS$ which is independent of the metric, up to homeomorphism that is the identity on $\tS$.  In particular, $S^1_\infty$ is independent of the choice of negatively curved reference metric (indeed, $S^1_\infty$ is the Gromov boundary, which can be defined by any geodesic metric on $S$ pulled back to $\tS$). For any $\varphi \in \tHyp_c(S)$, the metric $\tphi = p^*(\varphi)$ is CAT(-1), by Gromov's link condition and the Cartan-Hadamard Theorem; see \cite{BridHaef}.


\subsection{Puncturing surfaces} \label{S:puncturing surfaces}

Suppose $\varphi \in \tHyp_c(S)$, let $C_0=\cone(\varphi)$ be the set of cone points for $\varphi$, and let $\dot S = S\smallsetminus C_0$ denote the 
punctured surface obtained by deleting the cone points.  Let $\hat p \colon \widetilde{\dot S} \to \dot S$ be the universal cover and $\hphi = \hat p^*(\varphi)$ the pull back metric.  Since $\hat p$ is a local isometry, it extends over the metric completions, and we denote the map by the same name:
\[ \hat p \colon \hat S \to S\]
where $\hat S$ denotes the completion of $\widetilde{\dot S}$. 
To describe the local behavior of this map and the space $\hat S$ near the completion points, suppose that $\epsilon > 0$ is sufficiently small so that the $\epsilon$--neighborhood of $C_0$ is a disjoint union of topological disks, $\Delta_1 \cup \ldots \cup \Delta_k$, with each $\Delta_j$ containing exactly one point $\zeta_j \in C_0$.  For any component $U \subset \hat p^{-1}(\Delta_j - \{\zeta_j\})$,
\[ \hat p|_U \colon U \to \Delta_j - \{\zeta_j\} \]
is a universal cover of $\Delta_j - \{\zeta_j\}$.  For such $U$, there is a single completion point $\hat \zeta_U$ that projects to $\zeta_j$.

In fact, any non-convergent Cauchy sequence in $\widetilde{\dot S}$ projects to such a sequence in $\dot S$, which is necessarily eventually contained in some $\Delta_j$.  By taking $\epsilon >0$ sufficiently small, we can assume that the sequence is contained in a single component $U$, and hence converges to the point $\hat \zeta_U$.  That is, the points $\hat \zeta_U$ account for all completion points.  We write $\hat C_0 = \hat p^{-1}(C_0)  \subset \hS$ for the completion points, which (by an abuse of terminology) we also refer to as cone points.

Note that $\hat p$ factors through a map $\tilde p \colon \hat S \to \tS$.  Furthermore, there are covering actions of $\pi_1\dot S$ and $\pi_1S$ on $\hS$ and $\tS$ respectively, and a homomorphism $\pi_1\dot S \to \pi_1S$ (induced by inclusion $\dot S\to S$, up to conjugation) so that $\tilde p$ is equivariant:
\[ \begin{tikzcd}[row sep=8]
\pi_1 \dot S \ar[d, phantom, "\circlearrowright"] \ar[r] & \pi_1S \ar[d, phantom, "\circlearrowright"] \\
\hat S \ar[r, "\tilde p"] \ar[dddr,"\hat p" below] & \tS \ar[ddd, "p"] \\\\\\
& \, S. 
\end{tikzcd}
\]

In what follows, when considering two metrics $\varphi_1,\varphi_2 \in \tHyp_c(S)$, we will only consider the space $\hat S$ when $\varphi_1$ and $\varphi_2$ have common cone point set $C_0 = \cone(\varphi_1) = \cone(\varphi_2)$. Under this assumption, we note that the discussion above implies that the metric completion of $\widetilde {\dot S}$ is the same for either metric $\hphi_1$ or $\hphi_2$.
Also, with respect to either metric $\hphi_1$ or $\hphi_2$, $\hS$ is CAT(-1), and in particular is Gromov hyperbolic.  The identity $(\hS,\hphi_1) \to (\hS,\hphi_2)$ is a quasi-isometry (in fact, after adjusting $\varphi_2$ by an arbitrarily small homeomorphism (fixing $C_0$), we may assume $\varphi_1$ and $\varphi_2$ are biLipschitz equivalent, and hence the same is true of $\hphi_1$ and $\hphi_2$).  The boundary at infinity of $\hS$, denoted $\hS_\infty^1$, is thus well-defined and independent of the metric (in the same sense as for $\tS$).  Moreover, any two points of the boundary are connected by a biinfinite geodesic.  Although $\hS^1_{\infty}$ is not a circle (it is not even compact), it is homeomorphic to a subset of the circle in a $\pi_1\dot S$--invariant way, since $\widetilde{\dot S}$ is a surface.

\subsection{Geodesics in $\tS$ and $\hS$} \label{S:geodesics and currents} 

We suppose $\varphi \in \tHyp_c(S)$ throughout and let $C_0=\cone(\varphi)$. We refer to the sets $\tC_0 = p^{-1}(C_0)$ and $\hC_0 = \hat{p}^{-1}(C_0)$ as the cone point sets of $(\tS, \tphi)$ and $(\hS, \hphi)$. By a $\tphi$--geodesic or $\hphi$--geodesic we will always mean a biinfinite geodesic in $(\tS, \tphi)$ or $(\hS, \hphi)$, respectively. Moreover, we consider the geodesics as unparameterized, and we equip the space of geodesics with the quotient topology coming from the compact-open topology on the set of parametrized geodesics, where we forget the parametrization.

A $\tphi$--geodesic segment is a subsegment of a $\tphi$--geodesic and can be finite, half-infinite (i.e. a ray), or bi-infinite, and we make the analogous convention for $\hphi$--geodesic segments. A $\tphi$--geodesic (resp. $\hphi$--geodesic) segment is {\em singular} if its interior intersects $\tC_0$ (resp. $\hC_0 $) and {\em nonsingular} otherwise. Note that nonsingular geodesic segments are hyperbolic geodesic segments.  If a $\tphi$--geodesic segment $\eta$ meets a cone point $\zeta \in \tC_0$ in the interior of $\eta$, then any sufficiently small disk neighborhood of $\zeta$ is divided into two half-disks by $\eta$ which we call the {\em sides of $\eta$ near $\zeta$} (nested disks determine nested sides), and we note that being geodesic, the angle on each side is at least $\pi$.  If a cone point $\hat\zeta \in \hC_0$ is encountered in the interior of a singular $\hphi$--geodesic segment, then any sufficiently small ``disk" neighborhood (i.e.~an open set $U$ as above) is divided into three sides; on only one side does it make sense to measure the angle, but again that angle must be at least $\pi$. Finally, a nonsingular $\tphi$- or $\hphi$--geodesic segment between two cone points is called a {\em saddle connection}.

Observe that a $\tphi$--geodesic $\eta$ has two {\em globally} defined sides since it divides $\tS$ into two components.  Orienting $\eta$, and appealing to the orientation on $\tS$, we can refer to the two sides as the {\em positive} (left-hand) and {\em negative} (right-hand) sides of $\eta$.  A $\hphi$--geodesic can have infinitely many globally defined sides, but choosing an orientation they can still be prescribed a sign, positive or negative.  Of course, orienting a geodesic segment also gives rise to a well-defined sign for the sides at cone points encountered (locally), as well.

The set of distinct pairs of points in $S^1_\infty$ is denoted by
\[ \CG(\tS) = S^1_\infty \times S^1_\infty \setminus \Delta/(x,y) \sim(y,x)\]
where $\Delta$ is the diagonal in $S^1_\infty \times S^1_\infty$. 
Given a $\tphi$--geodesic $\eta$, we let $\partial_{\tphi}(\eta) \in \CG(\tS)$ be the set of endpoints on $S^1_\infty$ of $\gamma$. Since $\tphi$ is CAT(-1), this determines a homeomorphism
$$\partial_{\tphi}: \{\text{\small $\tphi$--geodesics in }\tS\} \to \mathcal{G}(\tS).$$
Note that two $\tphi$--geodesics are either disjoint, meet at exactly one point where they transversely intersect, or coincide either in a single cone point or in a (unique maximal) geodesic segment. If they coincide in a segment, this is either a concatenation of saddle connections or an infinite ray emanating from a cone point (and the two geodesics share an endpoint at infinity). In the latter case we say the two geodesics are {\em cone point asymptotic}. We say that $\{x_1, y_1\}, \{x_2, y_2\} \in \mathcal{G}(\tS)$ {\em link} if $x_2, y_2\notin\{x_1, y_1\}$ and $x_2$ and $y_2$ lie in different components of $S^1_\infty\setminus\{x_1, y_1\}$. We say that two $\tphi$--geodesics $\eta_1$ and $\eta_2$ {\em cross} if their endpoints  $\partial_{\tphi}(\eta_1)$ and $\partial_{\tphi}(\eta_2)$ link (meaning, $S^1_\infty \setminus \partial_\tphi(\eta_1)$ contains one point of $\partial_\tphi(\eta_2)$ in each component).  When $\eta_1$ and $\eta_2$ cross, they either intersect transversely at a point or they coincide either in a single cone point or in a concatenation of saddle connections, see Figure \ref{F:crossing}. Moreover, if a pair of geodesics intersect transversely, then they must cross.  On the other hand, a pair of geodesics may coincide in a nontrivial segment or cone point without crossing. 


\begin{figure}[htb]
\begin{center}
  \captionsetup{width=.85\linewidth}
\begin{tikzpicture}[scale = .5]
\draw (0,0) circle (3);
\draw (-1.3077,-2.7) -- (-1,-1) -- (0,0.8) -- (2.5,1.6583);
\draw (-2.5,-1.6583) -- (-1,-1) -- (0,0.8) -- (-1.3077,2.7); 
\draw[fill=black] (-1.3077,-2.7) circle (.04cm);
\draw[fill=black] (-1.3077,2.7) circle (.04cm);
\draw[fill=black] (2.5,1.6583) circle (.04cm);
\draw[fill=black] (-2.5,-1.6583) circle (.04cm);
\draw[fill=black] (-1,-1) circle (.06cm);
\draw[fill=black] (0,0.8) circle (.06cm);
\draw[fill=black] (-0.6,-0.3) circle (.06cm);
\node[right] at (1,1.7) {\small{$\eta_1$}};
\node[right] at (-2.5,-1.0) {\small{$\eta_1$}};
\node[right] at (-1.3,-2.1) {\small{$\eta_2$}};
\node[right] at (-1.7,1.7) {\small{$\eta_2$}};
\node[below] at (-1.4077,-2.7) {\small{$y_2$}};
\node[above] at (-1.3077,2.7) {\small{$x_2$}};
\node[above] at (3,1.4583) {\small{$x_1$}};
\node[below] at (-2.7,-1.6583) {\small{$y_1$}};
\draw (9,0) circle (3);
\draw [domain=-43:43] plot ({4.75736+4*cos(\x)}, {4*sin(\x)});
\draw (11.5,1.6583) -- (6.5,-1.6583);
\draw[fill=black] (7.72,-2.7) circle (.04cm);
\draw[fill=black] (7.72,2.7) circle (.04cm);
\draw[fill=black] (11.5,1.6583) circle (.04cm);
\draw[fill=black] (6.5,-1.6583) circle (.04cm);
\node[below] at (7.72,-2.7) {\small{$y_2$}};
\node[above] at (7.72,2.7) {\small{$x_2$}};
\node[above] at (12,1.4583) {\small{$x_1$}};
\node[below] at (6.3,-1.6583) {\small{$y_1$}};
\node[left] at (8.7,1.1) {\small{$\eta_2$}};
\node[right] at (10,1.5) {\small{$\eta_1$}};
\end{tikzpicture}
\caption{Two examples of crossing geodesics $\eta_1$ and $\eta_2$ with their linking endpoints $\partial_{\tphi}(\eta_1)=\{x_1, y_1\}$ and $\partial_{\tphi}(\eta_2)=\{x_2, y_2\}$.}
\label{F:crossing}
\end{center}
\end{figure}
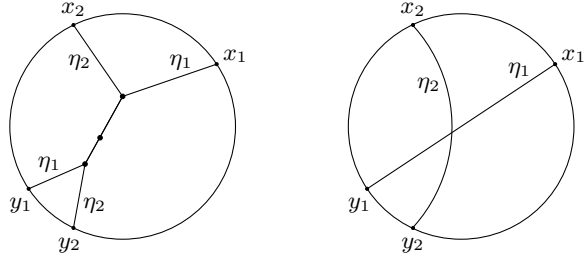

We let $\CG(\tphi)$ denote the closure of the set of nonsingular $\tphi$--geodesics and call the geodesics in $\CG(\tphi)$ the {\em basic $\tphi$--geodesics}. We define
\[ \CG_{\tphi}  =  \partial_{\tphi}(\CG(\tphi)) \subset \CG(\tS)\]
to be the set of all endpoints of basic geodesics. 
As mentioned in the introduction, the set $\CG_\tphi$ is exactly the support of the Liouville current $L_\varphi$ (see \cite{BL,Constantine}).  As a notational reminder, when some metric is involved, we will typically include the metric as a subscript to denote endpoints of geodesics while it will be included in parentheses to denote the actual geodesics.

Similarly, we let $\mathcal{G}(\hphi)$ be the closure of the set of of nonsingular $\hphi$--geodesics which we call {\em basic $\hphi$--geodesics}, denoting the set of their endpoints by $\mathcal{G}_{\hphi}\subset \CG(\hS) = \hS_\infty^1 \times \hS_\infty^1 \setminus \Delta/(x,y) \!\sim\! (y,x)$. 
 
The following lemma describes a few elementary properties of basic geodesics in $\tS$ and $\hS$. For $\tS$, the analogous statements in the case of Euclidean cone metrics were proved in \cite[Section~3]{BL} and for non-positively curved Riemannian cone metrics in \cite[Section~5]{Constantine}. 

\begin{lemma} [Basic Geodesics] \label{L:basic geodesics} Suppose $\varphi \in \tHyp_c(S)$.  For any basic geodesic in $(\tS, \tphi)$ that meets a cone point, it does so making angle exactly $\pi$ on one side at that point.  If a basic $\tphi$--geodesic encounters more than one cone point, then after choosing an orientation and ordering the cone points it meets accordingly, the sides on which the angle is $\pi$ can change sign at most once.  Consequently, the set $\CG^2_{\tphi}$ of basic geodesics meeting at least $2$ cone points is countable. The same statements are true for basic geodesics in $(\hS, \hphi)$: at each point $\zeta\in\hC_0$ a basic $\hphi$--geodesic encountering it makes angle exactly $\pi$, and if a basic $\hphi$--geodesic encounters more than one point of $\hC_0$, the signs of the sides on which it makes angle $\pi$ switch at most once.
\end{lemma}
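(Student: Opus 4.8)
The plan is to exploit that, by definition, every basic geodesic is a limit of nonsingular geodesics, together with the fact that $\tS$ and $\hS$ are uniquely geodesic (being complete CAT($-1$) spaces). \emph{The angle is $\pi$ on one side.} Let $\eta$ be a basic $\tphi$--geodesic meeting a cone point $\zeta$ of angle $\theta>2\pi$, and pick nonsingular $\tphi$--geodesics $\eta_n\to\eta$. Fix a small metric ball $B$ about $\zeta$ containing no other cone point. By convexity of $B$, the geodesic $\eta$ crosses $B$ in a single arc having $\zeta$ in its interior; this arc splits $B$ at $\zeta$ into a ``positive'' and a ``negative'' sector of angles $\alpha_+,\alpha_-\ge\pi$ with $\alpha_++\alpha_-=\theta$. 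For $n$ large, $\eta_n\cap B$ is a nonsingular hyperbolic geodesic segment close to $\eta\cap B$ and avoiding $\zeta$, hence contained in one of the two sectors; after passing to a subsequence and relabeling, say it lies in the positive one. Slitting $B$ along a radial arc into the interior of the negative sector yields a simply connected piece containing no cone point, which develops isometrically onto a hyperbolic sector of angle $\theta$; for $n$ large this developing map is defined on $\eta_n\cap B$ and, away from $\zeta$, on $\eta\cap B$. The developed images of the $\eta_n\cap B$ are hyperbolic geodesic segments, and they converge to the developed image of $\eta\cap B$, so the latter is also a geodesic segment; this forces $\alpha_+=\pi$. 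Since $\theta>2\pi$ we get $\alpha_-=\theta-\pi>\pi$, so exactly one side carries angle $\pi$; moreover the side on which the $\eta_n$ approach $\zeta$ is, for $n$ large, unique and equal to this $\pi$--side (otherwise both sides would carry angle $\pi$).

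\emph{At most one sign change.} Suppose, toward a contradiction, that for some oriented basic $\tphi$--geodesic $\eta$ the sequence of $\pi$--side signs at the cone points $\eta$ meets switches more than once; then $\eta$ meets cone points $\zeta_a,\zeta_b,\zeta_c$ in this order along $\eta$ with $\pi$--sides of signs $+,-,+$, say. Parametrize $\eta$ by arclength so that $\eta(t_a)=\zeta_a$, $\eta(t_b)=\zeta_b$, $\eta(t_c)=\zeta_c$ with $t_a<t_b<t_c$, and choose nonsingular $\eta_n\to\eta$. By the previous paragraph, for $n$ large each $\eta_n$ passes these three cone points on their respective $\pi$--sides, i.e.\ on the positive side of $\eta$ near $\zeta_a$ and $\zeta_c$ and on the negative side near $\zeta_b$. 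Since $\eta_n$ is connected, it must meet $\eta$ at a point $p_1$ with $\eta$--parameter in $(t_a,t_b)$ and again at a point $p_2$ with $\eta$--parameter in $(t_b,t_c)$; in particular $p_1\ne p_2$ and $\zeta_b$ lies strictly between $p_1$ and $p_2$ along $\eta$. But distinct geodesics in a CAT($-1$) space that share two points coincide on the segment between them, so $\eta$ and $\eta_n$ would coincide on a segment containing $\zeta_b$, contradicting nonsingularity of $\eta_n$. Hence the signs switch at most once.

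\emph{Countability.} By the previous two parts, any basic $\tphi$--geodesic meeting at least two cone points arises from the following finite-type data: a saddle connection it contains between two consecutive cone points it meets, together with the sequence of $\pi$--side signs at the cone points it meets, read in order along the geodesic. Indeed, at each cone point there is a unique geodesic continuation making angle $\pi$ on a prescribed side, so extending the chosen saddle connection in both directions according to the prescribed signs recovers a bi-infinite geodesic, and such data therefore determines at most one element of $\CG^2_{\tphi}$. Now $\tC_0$ is countable and $\tS$ is uniquely geodesic, so there are only countably many saddle connections; and there are only countably many sign sequences with at most one switch. Therefore $\CG^2_{\tphi}$ is countable.

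\emph{The $\hS$ case, and the main difficulty.} All of the above transports to $(\hS,\hphi)$, which is also a complete CAT($-1$) space: near a cone point $\zeta\in\hC_0$, a singular $\hphi$--geodesic segment bounds a unique sector on which an angle can be measured, the approximating nonsingular geodesics must lie in that sector (they cannot ``go around'' the infinite-angle side while staying close to $\eta$), and the developing argument forces that angle to be $\pi$; the connectedness-plus-uniqueness argument then gives at most one switch of the side carrying angle $\pi$. The step I expect to require the most care is the first: correctly constructing the local developing map on the slit ball and verifying that the nonsingular approximants really do develop into the \emph{same} sector and converge there to the development of $\eta$, so that ``straightness'' passes to the limit. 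Everything after that uses only connectedness and uniqueness of geodesics in CAT($-1$) spaces.
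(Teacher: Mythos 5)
Your proposal is correct and follows essentially the same route as the paper: approximate a basic geodesic by nonsingular ones, show that in the limit the angle on the side of approach must be exactly $\pi$ (you justify the paper's terser ``straightness passes to the limit'' step with a local developing-map/visual-angle argument), rule out a double switch of $\pi$--sides because an approximant would then have to meet $\eta$ twice and hence, by unique geodesics, share a segment with it (the paper phrases the same CAT($-1$) contradiction as a bigon), and count $\CG^2_{\tphi}$ by saddle connections together with the position of the single sign switch, exactly as in the paper. The one caveat is your assertion that $\eta_n\cap B$ lies in a single sector for a ball $B$ fixed in advance---an approximant can cross $\eta$ once inside $B$---but since $\eta\cap\eta_n$ is connected and misses $\zeta$, shrinking the ball for each $n$ (or discarding the piece on the far side of the unique crossing) repairs this without changing the argument, and the same remark covers the one-sidedness you invoke near $\zeta_a,\zeta_b,\zeta_c$ in the switching argument.
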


\begin{proof}
We prove the statements for basic $\tphi$--geodesics, then describe any modifications necessary for the corresponding statement for basic $\hphi$--geodesics.

Consider any $\eta \in \CG(\tphi)$ and suppose that $\zeta$ is a cone point on $\eta$.  We first prove that the angle is $\pi$ on one side of $\eta$ at $\zeta$. Let $\{\eta_i\}$ be a sequence of nonsingular geodesics converging to $\eta$, and observe that for each sufficiently large $i$, there is a small disk neighborhood of $\zeta$ which $\eta_i$ intersects, and lies on one side or the other of $\zeta$ (since $\eta_i$ does not contain $\zeta$).   Passing to a subsequence if necessary, we can assume all $\{\eta_i\}$ are on the same side of $\eta$ in each of these neighborhoods.  Since each $\eta_i$ make angle $\pi$ on each side of every point it passes through, it follows that $\eta$ must make angle $\pi$ at $\zeta$ on the side containing all $\eta_i$.  For $\eta \in \CG(\hphi)$, the argument is similar, noting that in this case (in sufficiently small neighborhoods) the approximating geodesics $\eta_i$ must lie on the side of $\eta$ making finite angle, which is then necessarily equal to $\pi$.

Next we assume $\eta \in \CG(\tphi)$, and show that the side one which it makes angle $\pi$ can change at most once. To this end, assume $\eta$ passes through at least three points of $\tC_0$, say $\zeta_1, \zeta_2, \zeta_3$, and that these points appear in order along $\eta$, with respect to some orientation on $\eta$. We suppose $\eta$ makes angle $\pi$ on the positive side at $\zeta_1$ and $\zeta_3$ but at the negative side at $\zeta_2$, and will arrive at a contradiction. As above, let $\{\eta_i\}$ be an approximating sequence of nonsingular geodesics. Note that for large $i$ we must have that $\eta_i$ is on the positive side of $\eta$ in neighborhoods of $\zeta_1$ and $\zeta_3$ while on the negative side of $\eta$ in a neighborhood of $\zeta_2$. It follows that for large $i$ the geodesics $\eta_i$ and $\eta$ contain segments that bound a bigon, which is impossible since the metric is CAT(-1).  Since the metric $\hphi$ on $\hS$ is also CAT(-1), the exact same argument works in $\hS$ as well.

Finally, we explain why these facts imply that the set of basic geodesics meeting at least $2$ cone points is countable. Every $\tphi$--geodesic meeting at least two cone points contains a saddle connection.  Since a saddle connection is determined by its endpoints in $\tC_0$, and since $\tC_0$ is countable, there are only countably many saddle connections.  For each saddle connection, there are only countably many basic $\tphi$--geodesics containing it: one that always makes angle $\pi$ on the positive side, one that always makes angle $\pi$ on the negative side, and a countable number that switch.  To see that this last set is countable, divide into two cases, depending on whether it switches from $\pi$ on the positive side to $\pi$ on the negative side, or vice-versa, then index the cone points encountered by any such geodesic with an interval in $\mathbb Z$ (with one of the endpoints of the saddle connections being $0$) and record the index of the last cone point before the switch of sides occurs.  Therefore, the set of basic $\tphi$--geodesics containing at least two cone points is a countable union of countable sets, hence countable.
\end{proof}

We say that a geodesic segment (in either $\tS$ or $\hS$) is a {\em basic segment} if it is either nonsingular or it meets each cone point in its interior at an angle exactly $\pi$ on one side and this side switches at most once. This name is justified because we will show below that a segment is basic if and only if it can be extended to a basic geodesic. In fact, we show that the description of basic geodesics in Lemma \ref{L:basic geodesics} actually characterizes basic geodesics: 

\begin{lemma} \label{L:characterizing basics}
If $\eta$ is a $\tphi-$geodesic (or $\hphi$--geodesic) which makes angle $\pi$ on one side at every cone point it meets, and this side switches at most once, then $\eta$ belongs to $\mathcal G(\tphi)$ (respectively, $\CG(\hphi)$). Moreover, a geodesic segment is a basic segment if and only if it can be extended to a basic geodesic.
\end{lemma}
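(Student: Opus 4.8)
The plan is to deduce the second (``moreover'') statement from the first, and to prove the first statement by developing a thin one-sided neighborhood of $\eta$ isometrically into $\mathbb H$. One half of the second statement is essentially immediate: if a geodesic segment $\sigma$ extends to a basic geodesic $\eta \in \CG(\tphi)$ (resp.\ $\CG(\hphi)$), then Lemma~\ref{L:basic geodesics} tells us $\eta$ makes angle exactly $\pi$ on one side at each cone point it meets and this side changes at most once along $\eta$; restricting to $\sigma$, the same holds, so $\sigma$ is a basic segment.

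For the first statement, suppose $\eta$ is a $\tphi$--geodesic making angle exactly $\pi$ on one side at every cone point it meets, with at most one change of side; we must produce nonsingular $\tphi$--geodesics converging to $\eta$, so we may assume $\eta$ meets $\tC_0$. Consider first the case in which the side does not change, say $\eta$ always makes angle $\pi$ on its positive side. The key point is that at each cone point $\zeta$ on $\eta$ the positive side near $\zeta$ is locally isometric to a half-disk in $\mathbb H$ (its angle being exactly $\pi$), while at a regular point of $\eta$ a small neighborhood is isometric to a piece of $\mathbb H$; moreover the closed positive side of $\eta$ is convex, since a geodesic segment between two of its points cannot cross the separating geodesic $\eta$ (that would create a bigon, impossible in CAT$(-1)$). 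I would therefore pick a small positive function $\epsilon$ along $\eta$---allowing $\epsilon\to 0$ at the ends of $\eta$ in case cone points accumulate there---so that $N^+$, the portion of the closed positive side within distance $\epsilon$ of $\eta$, is an embedded, locally convex, simply connected subsurface whose only cone points are those already lying on $\eta$. Developing the metric gives a local isometry $\mathcal D \colon N^+ \to \mathbb H$, injective for $\epsilon$ small, carrying $\eta$ to a geodesic line $L$ (the angle-$\pi$ condition is exactly what makes the development of $\eta$ straight) and carrying $N^+$ onto a region lying to one side of $L$ and pinching down to $L$ at its two ends. Then for any geodesic line $L'$ of $\mathbb H$ close enough to $L$ and contained in $\mathcal D(N^+)$, the curve $\mathcal D^{-1}(L')$ is a biinfinite local geodesic of $\tS$, hence (CAT$(0)$) a genuine $\tphi$--geodesic; it is nonsingular because it misses $L$, hence misses every cone point of $\eta$, and $N^+$ contains no others. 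Letting $L'\to L$ gives $\mathcal D^{-1}(L')\to \eta$, so $\eta\in\CG(\tphi)$. If the side changes once, it does so along a saddle connection $s_0$ of $\eta$, and I would run the same argument with a collar that is one-sided near each end of $\eta$ but two-sided along $s_0$, so that its development admits a nearby geodesic $L'$ crossing $L$ transversally once, at an interior point of $\mathcal D(s_0)$, again producing nonsingular $\tphi$--geodesics converging to $\eta$. This is precisely where ``at most one change of side'' enters: two geodesics of $\mathbb H$ meet at most once, so a collar forcing two or more crossings admits no nearby geodesic. For $\hphi$--geodesics everything is identical, developing the collar at each $\hat\zeta\in\hC_0\cap\eta$ on the unique side where the angle is defined (and hence equal to $\pi$).

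Finally, for the remaining half of the second statement, let $\sigma$ be a basic segment. Since $S$ is closed, $\tphi$ is complete and geodesic segments extend: at a regular endpoint continue straight, and at a cone point of angle $\theta>2\pi$ continue in any direction making angle $\geq\pi$ on both sides, in particular making angle exactly $\pi$ on a prescribed side (the complementary angle $\theta-\pi\geq\pi$ keeps this locally length minimizing). I would extend $\sigma$ to a biinfinite path $\eta$ by continuing past the forward end making angle $\pi$ on whichever side $\sigma$ uses beyond its (at most one) change of side, symmetrically past the backward end, and fixing any single side throughout when $\sigma$ is nonsingular. Then $\eta$ is a biinfinite local geodesic, hence a $\tphi$--geodesic; it makes angle exactly $\pi$ on one side at each cone point it meets, with its side changing at most once, so by the first statement $\eta\in\CG(\tphi)$ and $\sigma$ extends to a basic geodesic (the $\hphi$ case being the same). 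The step I expect to demand the most care is the setup of the one-sided collar $N^+$ in the first statement---its embeddedness, local convexity, and injective development, and the verification that the pulled-back curve is a genuine geodesic of $\tS$ and not merely a local geodesic of $N^+$---together with the bookkeeping of sides in the once-switching case.
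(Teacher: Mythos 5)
Your handling of the ``moreover'' clause is correct and is the same as the paper's: one direction is just Lemma~\ref{L:basic geodesics} restricted to the segment, and for the other you extend the segment by rays making angle $\pi$ on the appropriate sides and appeal to the first statement. The problem lies in your proof of the first statement, whose central construction cannot work.

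You develop a thin one-sided collar $N^+$ of $\eta$ onto a region $\mathcal D(N^+)\subset\mathbb H$ containing the geodesic $L=\mathcal D(\eta)$, and then propose to pull back ``any geodesic line $L'$ of $\mathbb H$ close enough to $L$ and contained in $\mathcal D(N^+)$.'' In hyperbolic geometry there is no such $L'$ other than $L$ itself: a complete geodesic distinct from $L$ either crosses $L$, or is disjoint from it, and in the disjoint case (ultraparallel or asymptotic) its distance from $L$ tends to infinity in at least one direction. Either way it eventually leaves the set $\{x : d(x,L)\le\epsilon,\ x \text{ on the positive side of } L\}$, hence leaves $\mathcal D(N^+)$ --- all the more so if $\epsilon$ pinches to $0$ at the ends. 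This is precisely where the hyperbolic case differs from the flat one, where a collar of a line does contain parallel lines. The same objection defeats your once-switching variant: a geodesic $L'$ crossing $L$ transversally diverges from $L$ in both directions away from the crossing point, so it cannot stay in a collar that is thin outside the saddle connection $s_0$. The approximating geodesics simply cannot be confined to any bounded neighborhood of $\eta$; they must be allowed to diverge from $\eta$, and the convergence $\eta_i\to\eta$ is only uniform convergence on compact sets. This is how the paper argues: it builds the approximants intrinsically in $\tS$ (or $\hS$) as complete basic geodesics through the points $y$ of a short cone-point-free transversal $\delta$, orthogonal to $\delta$ (respectively, through a fixed regular point between the two switch cone points, at angles $\theta\to 0$), shows they are pairwise disjoint (respectively, cross $\eta$ exactly once) using the CAT$(-1)$ no-bigon/no-fat-triangle argument, and discards the countably many parameters for which the resulting geodesic is singular. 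Repairing your argument essentially means replacing the collar construction by this one.
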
 

\begin{proof}
Let $\eta$ be a $\tphi$--geodesic as in the statement (the case of $\hphi$--geodesics follow by almost identical arguments). We will construct a sequence of nonsingular geodesics $\{\eta_i\}$ converging to $\eta$, showing that $\eta$ is a basic geodesic. 

First suppose that $\eta$ makes angle $\pi$ on the same side (say the positive side) at every cone point it encounters. Pick any regular point $x$ on $\eta$ and let $\delta$ be a short geodesic segment emanating from $x$ on the positive side of $\eta$ and meeting $\eta$ orthogonally.  By taking $\delta$ short enough we can assume it contains no cone points. For every point $y$ on $\delta$ let $\eta_y$ be a basic geodesic through $y$ and orthogonal to $\delta$. Note that $\{\eta_y\}_{y \in \delta}$ is a set of pairwise disjoint geodesics, since if any two intersected this would result in a geodesic triangle with angle sum at least $\pi$, which is impossible in a CAT(-1) metric. Hence for each $y \in \delta - \{x\}$, the geodesic $\eta_y$ lies on the positive side of $\eta = \eta_x$.  Moreover, only countably many $y$ gives rise to a singular $\eta_y$ (since there are only countably many points in $\tC_0$) and hence picking any sequence of points $\{y_i\}$ on $\delta$ that avoids this countable set and converging to $x$ we get a sequence of nonsingular geodesics $\eta_i = \eta_{y_i}$ limiting to $\eta$. 

Now suppose $\eta$ passes through at least two cone points and that the side at which it makes angle $\pi$ at the cone points switches once.  Assume $\eta$ makes angle $\pi$ on, say, the negative side at cone point $\zeta$ and on the positive side at cone point $\xi$ and that hat $\eta$ encounters $\zeta$ immediately before $\xi$. Let $[\zeta, \xi]$ denote the segment of $\eta$ between the two points and let $x$ be any regular point on $[\xi, \zeta]$. For each $\theta\in(0, \pi)$ let $\eta_{\theta}$ be a geodesic that transversely intersects $\eta$ at $x$ making an angle as shown in Figure~\ref{F:approximating from the middle}. Then $\eta_{\theta}$ is nonsingular for all but countably many directions $\theta$, and hence picking any sequence $\{\theta_i\}$, avoiding this countable set, such that $\theta_i\to 0$ and setting $\eta_i = \eta_{\theta_i}$ results in a sequence of nonsingular geodesics $\{\eta_{i}\}$ converging to $\eta$.  

\begin{figure} 
\begin{center}
  \captionsetup{width=.80\linewidth}
\begin{tikzpicture}
\draw (-1,0) -- (11,0);
\filldraw (2,0) circle (1.3pt);
\filldraw (8,0) circle (1.3pt);
\filldraw (10,0) circle (1.3pt);
\filldraw (0,0) circle (1.3pt);
\filldraw (5,0) circle (.7pt);
\draw (-.5,-1.5) -- (10.5,1.5);
  \draw [thin,domain=180:360] plot ({2+.2*cos(\x)}, {.2*sin(\x)});
  \draw [thin,domain=180:360] plot ({.2*cos(\x)}, {.2*sin(\x)});
  \draw [thin,domain=0:180] plot ({10+.2*cos(\x)}, {.2*sin(\x)});
  \draw [thin,domain=0:180] plot ({8+.2*cos(\x)}, {.2*sin(\x)});
  \draw [domain=0:15] plot ({5+.7*cos(\x)}, {.7*sin(\x)});
\node at (0,-.35) {\small $\pi$};
\node at (2,-.35) {\small $\pi$};
\node at (2,.25) {\small $\zeta$};
\node at (8,-.25) {\small $\xi$};
\node at (8,.35) {\small $\pi$};
\node at (10,.35) {\small $\pi$};
\node at (5,.3) {\small $x$};
\node at (6.5,.2) {\small $\theta$};
\node at (10.7,-.3) {\small $\eta$};
\node at (9.3,1.4) {\small $\eta_\theta$};
\end{tikzpicture} 
\caption{Constructing an approximation of $\eta$ by $\eta_\theta$, as $\theta \to 0$.} \label{F:approximating from the middle}
 \end{center}
 \end{figure}
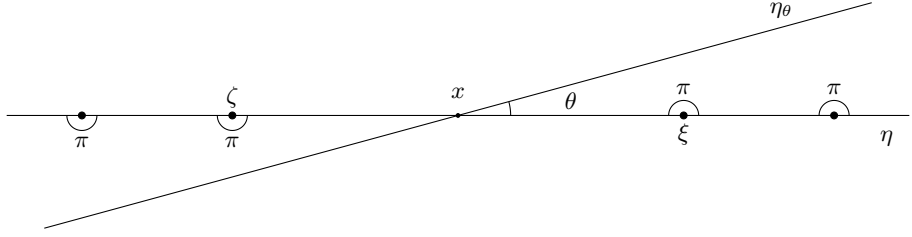

For the second claim, let $\sigma$ be a basic geodesic segment. First suppose its interior makes angle $\pi$ on the positive side at every cone point it meets. Extend $\sigma$ to a geodesic by making angle exactly $\pi$ on the positive side at every cone point it meets. By the above, the resulting geodesic is a basic geodesic. Now, if $\sigma$ switches once from making angle $\pi$ on the positive side to making angle $\pi$ on the negative side, extend $\sigma$ forward by a geodesic ray making angle $\pi$ on the negative side, and backward by a geodesic ray making angle $\pi$ on the positive side. Again by the above, the result is a basic geodesic. The converse follows from Lemma \ref{L:basic geodesics} and the definition of a basic geodesic segment.
\end{proof}

We note that, since in a neighborhood of any regular point the map $\tilde p: \hS\to \tS$ is an isometry, and since cone points are sent to cone points, any nonsingular geodesic in $\hS$ projects to a nonsingular geodesic in $\tS$. For the same reason, every nonsingular geodesic in $\tS$ can be lifted to a nonsingular geodesic in $\hS$. In fact, the same is true for basic geodesics.

\begin{lemma}\label{L:basic to basic}
The map $\tilde p: \hS\to \tS$ sends basic geodesics to basic geodesics, and every basic geodesic in $\tS$ is the $\tilde p$--image of a basic geodesic in $\hS$.
\end{lemma}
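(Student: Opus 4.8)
The plan is to bootstrap from the two facts already established in the excerpt: that $\tilde p$ is a local isometry sending $\hC_0$ to $\tC_0$, and that nonsingular geodesics correspond under $\tilde p$ in both directions (stated in the paragraph preceding the lemma). The strategy for the forward direction is to take a basic $\hphi$--geodesic $\eta \subset \hS$, note that it is a limit of nonsingular $\hphi$--geodesics $\eta_i$, push these forward to nonsingular $\tphi$--geodesics $\tilde p(\eta_i)$, and show the $\tilde p(\eta_i)$ converge (after passing to a subsequence, using local compactness of the space of geodesics) to $\tilde p(\eta)$. The image $\tilde p(\eta)$ is a local geodesic by the local isometry property, and a local geodesic in a CAT($-1$) space is a geodesic; being a limit of nonsingular geodesics it is basic. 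One must check $\tilde p(\eta)$ is genuinely biinfinite (not a closed-up loop), which follows because $\tilde p$ factors the covering, so distinct lifts of a point have disjoint neighborhoods and $\eta$ cannot be wrapped into a bounded set. Alternatively — and more cleanly — one can use Lemma~\ref{L:characterizing basics}: $\tilde p(\eta)$ is a $\tphi$--geodesic which, at each cone point it meets, makes angle $\pi$ on one side (because $\tilde p$ is a local isometry near that point and $\eta$ has this property by Lemma~\ref{L:basic geodesics}), with the side switching at most once (the switching is detected locally along the geodesic, so it is inherited); hence $\tilde p(\eta) \in \CG(\tphi)$.

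For the reverse direction, start with a basic $\tphi$--geodesic $\gamma \subset \tS$. Pick a regular point $x \in \gamma$ and a lift $\hat x \in \hS$ with $\tilde p(\hat x) = x$. Near $\hat x$, since $\tilde p$ is a local isometry, $\gamma$ lifts to a geodesic segment through $\hat x$; I want to extend this lift to a full biinfinite $\hphi$--geodesic $\hat\gamma$ mapping onto $\gamma$. The extension is unobstructed: whenever the developing lift reaches a cone point $\hat\zeta \in \hC_0$, the geodesic $\gamma$ makes angle $\pi$ on one side there, and the local structure of $\hS$ near $\hat\zeta$ (three sides, with a well-defined angle on exactly one of them, as described in \S\ref{S:geodesics and currents}) allows a unique continuation making angle $\pi$ — I continue on the unique side where the angle is defined and equals $\pi$. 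Away from cone points the lift extends uniquely since $\tilde p$ is a covering away from $\hC_0$. So $\hat\gamma$ is a $\hphi$--geodesic with $\tilde p(\hat\gamma) = \gamma$. Now $\hat\gamma$ makes angle $\pi$ on one side at each cone point it meets — this is exactly the angle condition, lifted locally from $\gamma$ — and the side switches at most once, again because $\gamma$ has this property and switching is a local phenomenon along the curve. By Lemma~\ref{L:characterizing basics}, $\hat\gamma \in \CG(\hphi)$.

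The main obstacle I anticipate is the bookkeeping around the non-compactness and non-circularity of $\hS^1_\infty$: one must make sure that the lifted geodesic $\hat\gamma$ is biinfinite (it does not run off into a completion point in finite time), i.e.\ that it really is a geodesic in the CAT($-1$) space $\hS$ rather than a geodesic segment that terminates. This is handled by observing that $\hat\gamma$ projects homeomorphically (in the relevant local-isometry sense) onto the complete geodesic $\gamma$, so it has infinite length in both directions and its image leaves every compact set; since $\hS$ is complete it therefore limits to two distinct points of $\hS^1_\infty$. A secondary subtlety is the uniqueness of the continuation at a cone point of $\hS$: because only one of the three local sides carries a well-defined angle, the requirement "angle $\pi$ on one side" forces the continuation, so no ambiguity arises and the at-most-one-switch condition is transported faithfully. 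With these points addressed, both inclusions follow directly from Lemma~\ref{L:characterizing basics}.
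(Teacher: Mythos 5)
Your proposal is correct, and the forward direction coincides with the paper's (push the approximating nonsingular geodesics forward and pass to the limit; your alternative via Lemma~\ref{L:characterizing basics} also works). The reverse direction, however, is genuinely different from the paper's argument. The paper never lifts the basic $\tphi$--geodesic $\lambda$ directly: it lifts a carefully chosen approximating sequence of nonsingular geodesics (the ones built in the proof of Lemma~\ref{L:characterizing basics}, which are pairwise disjoint or concurrent at a regular point), based at an evenly covered disk, and then proves that the lifted sequence converges in $\hS$ by cutting a large ball along disjoint geodesic rays emanating from the finitely many cone points it contains, so that the resulting simply connected, cone-point-free region lifts isometrically; the limit is then automatically basic and projects to $\lambda$. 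You instead lift $\lambda$ itself, continuing through each completion point by the unique lift of $\lambda$'s continuation that makes angle exactly $\pi$ on the side prescribed by Lemma~\ref{L:basic geodesics}, and then invoke Lemma~\ref{L:characterizing basics} to conclude the lift is basic. Your route is shorter and avoids the convergence-of-lifts argument entirely, but it shifts the burden onto two points that deserve explicit mention: first, among the infinitely many geodesic continuations of the lift at a point of $\hC_0$ projecting to $\lambda$ (those making angle $\pi + k\cdot(\text{cone angle})$ on the finite side), you must select the one with angle exactly $\pi$, since only that choice feeds into Lemma~\ref{L:characterizing basics}; second, you must justify that the resulting biinfinite local geodesic is a genuine $\hphi$--geodesic, which follows from the CAT($-1$) (hence CAT(0)) local-to-global property of $\hS$ together with the fact that the Alexandrov angle at a completion point equals the finite-side angle truncated at $\pi$. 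With those two sentences added, your argument is complete; the paper's approach, by contrast, buys independence from the local angle analysis at completion points at the cost of the more delicate convergence construction.
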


\begin{proof}
Given a basic geodesic $\eta$ in $\hS$, there is a sequence of nonsingular geodesics $\{ \eta_i \}$ that converges to $\eta$. Then the sequence $\{ \tilde p (\eta_i) \}$ of nonsingular geodesics must converge to $\tilde p (\eta)$, and so $\tilde p (\eta)$ must be a basic geodesic in $\tS$.

Now let $\lambda$ be a basic geodesic in $\tS$ that is parametrized at unit speed, and $\{\lambda_i\}$ a sequence of nonsingular unit speed geodesics converging to $\lambda$. We can assume $\{\lambda_i\}$ is one of the sequences constructed in the proof of Lemma \ref{L:characterizing basics} above. In particular, it is a sequence of nonsingular geodesics that are either pairwise disjoint or all coincide in a unique (regular) point. In the latter case, let $x$ be this point (which by construction is a point on $\lambda)$ and in the former case, there is a common perpendicular geodesic segment to $\{\lambda_i\}$ and $\lambda$ and $x$ is its intersection with $\lambda$.

Let $U$ be an evenly covered open disk containing $x$ and no cone points. Consider the tail of the sequence $\{\lambda_i\}$ that intersects $U$. Dropping everything but the tail and re-indexing, every geodesic in the sequence $\{ \lambda_i \}$ intersects $U$. Choosing base points in $U$, lift the geodesics $\{ \lambda_i\}$ to nonsingular geodesics $\{ \eta_i \}$ in $\hS$. We claim that $\{ \eta_i \}$ converges to a geodesic $\eta$ in $\hS$. If this were true, $\eta$ would be a basic geodesic since it is the limit of a sequence of nonsingular geodesics, and $\tilde p (\eta) = \lambda$. To show that $\{ \eta_i \}$ converges, we first recall that convergence in the compact open topology is equivalent to uniform convergence on compact sets.

Since $\{ \lambda_i \}$ converges in the compact open topology, $\{ \lambda_i \}$ must converge uniformly on compact sets. Given a compact set $K \subseteq \mathbb{R}$, there must be a ball $B_K$ containing $U$ that contains $\cup_i \lambda_i(K)$. If $B_K$ contains cone points, there are geodesic rays emanating from the cone points to $\partial B_K$  that do not intersect $\cup_i \lambda_i$ and do not intersect each other.

In the case where $\lambda_i$ intersect in a point $x$, these rays can be chosen as follows. Connect $x$ to each cone point by a geodesic segment, and then take the rays to be a geodesic continuation of these segments emanating from the cone points. No two can intersect because if they did, a geodesic bigon would be created with vertices $x$ and the intersection point.

In the case where $\lambda_i$ are disjoint, let $\gamma$ be the common perpendicular geodesic segment to $\{\lambda_i\}$ and recall $x = \lambda \cap \gamma$. The rays can be found as follows. Take the perpendicular geodesic segments from $\gamma$ to each cone point, and then take the rays to be a geodesic continuation of these segments emanating from the cone points. No two can intersect because if they did, a geodesic triangle would be formed by the intersecting geodesics and $\gamma$ with angle sum larger than $\pi$.

Let the union of the rays be called $R$. Then let $B := B_K \setminus R$. We have chosen $R$ so that $B$ contains no cone points, and so that $B$ is simply connected. Therefore, $B$ can be lifted homeomorphically and intrinsically isometrically to a bounded compact set in $\hS$ containing $\cup_i \eta_i(K)$. Therefore $\{ \eta_i \}$ converges uniformly on $K$ as desired.
\end{proof}

We end this subsection with an observation about the sets $\CG_{\tphi}$ and $\CG_{\hphi}$ of endpoints of basic geodesics in $\tS$ and $\hS$. To prove the \currentsupport we will be interested in metrics $\varphi_1, \varphi_2$ for which $\CG_{\tphi_1}=\CG_{\tphi_2}$, and we will see that this is equivalent to the assumption that $\CG_{\hphi_1}=\CG_{\hphi_2}$ (up to replacing $\varphi_2$ with an equivalent metric).  We prove one implication here;  see Lemma~\ref{L:hat combinatorics} for the converse. 

\begin{lemma} \label{L:same hat, same tilde}
Let $\varphi_1, \varphi_2 \in \tHyp_c(S)$ with $C_0 = \cone(\varphi_1) = \cone(\varphi_2)$. If $\CG_{\hphi_1} = \CG_{\hphi_2}$ then $\CG_{\tphi_1} = \CG_{\tphi_2}$.
\end{lemma}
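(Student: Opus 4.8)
The plan is to use the projection $\tilde p \colon \hat S \to \tilde S$ and the two facts already available: Lemma~\ref{L:basic to basic}, which says $\tilde p$ carries basic $\hphi_i$--geodesics onto basic $\tphi_i$--geodesics and every basic $\tphi_i$--geodesic so arises, and the compatibility of the boundary maps. So the first step is to record how $\tilde p$ behaves on boundaries at infinity: since $\tilde p$ is $\pi_1\dot S$--equivariant and, with respect to a fixed biLipschitz reference, a quasi-isometry onto its image, it extends to a map $\hat S^1_\infty \to S^1_\infty$ (call it $\partial \tilde p$) that does not depend on which of $\hphi_1,\hphi_2$ we use, by the remarks in \S\ref{S:puncturing surfaces} on $\hat S^1_\infty$ being metric-independent. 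Moreover $\partial\tilde p$ is compatible with taking endpoints: if $\eta$ is an $\hphi_i$--geodesic with endpoints $\{x,y\} \in \CG(\hS)$, then $\tilde p(\eta)$ is a $\tphi_i$--geodesic (or, in degenerate cases, a geodesic segment or point, but for biinfinite basic geodesics it is biinfinite) with endpoints $\{\partial\tilde p(x), \partial\tilde p(y)\}$. This yields a commuting relation $\partial_{\tphi_i} \circ (\tilde p)_* = (\partial\tilde p \times \partial\tilde p) \circ \partial_{\hphi_i}$ on basic geodesics, for $i = 1,2$, with the \emph{same} map $\partial\tilde p$ in both cases.

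Granting that, the inclusion $\CG_{\tphi_1} \subseteq \CG_{\tphi_2}$ follows quickly: take a point of $\CG_{\tphi_1}$, i.e.\ the endpoint set $\{a,b\}=\partial_{\tphi_1}(\lambda)$ of a basic $\tphi_1$--geodesic $\lambda$. By Lemma~\ref{L:basic to basic} there is a basic $\hphi_1$--geodesic $\hat\lambda$ with $\tilde p(\hat\lambda) = \lambda$; let $\{x,y\} = \partial_{\hphi_1}(\hat\lambda) \in \CG_{\hphi_1}$, so that $\{\partial\tilde p(x),\partial\tilde p(y)\} = \{a,b\}$. The hypothesis $\CG_{\hphi_1} = \CG_{\hphi_2}$ gives $\{x,y\} \in \CG_{\hphi_2}$, i.e.\ there is a basic $\hphi_2$--geodesic $\hat\mu$ with $\partial_{\hphi_2}(\hat\mu) = \{x,y\}$. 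Applying $\tilde p$ and Lemma~\ref{L:basic to basic} once more, $\tilde p(\hat\mu)$ is a basic $\tphi_2$--geodesic, and by the compatibility relation its endpoint set is $\{\partial\tilde p(x),\partial\tilde p(y)\} = \{a,b\}$. Hence $\{a,b\} \in \CG_{\tphi_2}$. Swapping the roles of the indices gives the reverse inclusion, and therefore $\CG_{\tphi_1} = \CG_{\tphi_2}$.

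The step I expect to be the main obstacle — and the one needing actual care rather than the bookkeeping above — is verifying the compatibility of $\tilde p$ with endpoints at infinity, in particular that $\partial\tilde p$ is \emph{well-defined independent of the metric} and that $\tilde p$ of a biinfinite basic geodesic is genuinely biinfinite with the expected endpoints (rather than, say, a geodesic that backtracks, or one whose image only limits to a single point of $S^1_\infty$). Here one uses that $\tilde p$ is a local isometry which is a quasi-isometric embedding onto its image in each fixed metric (so it extends continuously to boundaries), that $\pi_1$-equivariance pins the boundary extension down uniquely, and that two quasi-isometric reference metrics on $\hat S$ induce the same boundary identification — all of which is set up in \S\ref{S:puncturing surfaces}. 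I would dispatch this by choosing the reference metric so that $\hphi_1,\hphi_2$ are biLipschitz (allowed after adjusting $\varphi_2$ by an arbitrarily small homeomorphism fixing $C_0$, as noted there), so that the identity $(\hat S,\hphi_1)\to(\hat S,\hphi_2)$ induces the identity on $\hat S^1_\infty$, and then $\partial\tilde p$ computed with either metric agrees. With that in hand, the argument above goes through verbatim.
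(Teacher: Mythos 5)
Your overall route is in fact the paper's: lift a basic $\tphi_1$--geodesic to a basic $\hphi_1$--geodesic via Lemma~\ref{L:basic to basic}, invoke $\CG_{\hphi_1}=\CG_{\hphi_2}$, and project back with $\tilde p$. The gap is in the step you yourself flag as the main obstacle: you justify the compatibility of $\tilde p$ with endpoints at infinity by asserting that $\tilde p$ is a quasi-isometry (quasi-isometric embedding) onto its image, so that it extends to a boundary map $\partial\tilde p\colon \hS^1_\infty \to S^1_\infty$. That assertion is false. The map $\tilde p \colon \hS \to \tS$ is the quotient by the kernel $K$ of $\pi_1\dot S \to \pi_1 S$ (normally generated by peripheral loops about the cone points), which is an infinite group acting on $\hS$; hence the fibers of $\tilde p$ are unbounded, distances are collapsed arbitrarily badly, and the standard quasi-isometry argument for extending to the Gromov boundary does not apply. (Note also that $\hS^1_\infty$ is not compact and is much larger than $S^1_\infty$, so ``extends continuously to boundaries'' would need a genuinely different justification; your biLipschitz normalization of $\hphi_1,\hphi_2$ addresses metric-independence upstairs but not this issue.)

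Fortunately nothing that strong is needed, and the repair is exactly the paper's argument. If $\hat\lambda$ is the basic $\hphi_1$--geodesic lifting $\lambda$ and $\hat\mu$ is a basic $\hphi_2$--geodesic with the same endpoints in $\hS^1_\infty$, then $\hat\lambda$ and $\hat\mu$ lie at bounded Hausdorff distance (using that the identity $(\hS,\hphi_1)\to(\hS,\hphi_2)$ is a quasi-isometry and that $\hS$ is CAT($-1$), as set up in \S\ref{S:puncturing surfaces}). Since $\tilde p$ is $1$--Lipschitz, the images $\tilde p(\hat\lambda)$ and $\tilde p(\hat\mu)$ remain at bounded Hausdorff distance in $\tS$; by Lemma~\ref{L:basic to basic} they are genuine biinfinite basic geodesics for $\tphi_1$ and $\tphi_2$ respectively, and two biinfinite geodesics in the CAT($-1$) space $\tS$ at bounded Hausdorff distance have the same pair of endpoints on $S^1_\infty$. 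This yields the compatibility you need without defining any global boundary map $\partial\tilde p$, and with it your inclusion argument and its symmetric counterpart go through.
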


\begin{proof}
Let $\eta_1\in \CG(\tphi_1)$ be a basic $\tphi_1$--geodesic. Then there exists a basic $\hphi_1$--geodesic $\hat\eta_1$ such that $\tilde p(\hat\eta_1)=\eta_1$. Since $\CG_{\hphi_1} = \CG_{\hphi_2}$ there exists a basic $\hphi_2$--geodesic $\hat\eta_1$ such that the Hausdorff distance (with respect to, say, $\hphi_1$) between $\hat\eta_1$ and $\hat\eta_2$ is bounded. Since $\tilde p$ maps basic geodesics in $\hS$ to basic geodesics in $\tS$ we have that $\eta_2 = \tilde p(\hat\eta_2)$ is a basic $\tphi_2$--geodesic. Moreover, since $\tilde{p}$ is 1-Lipschitz, we must have that $\eta_1 = \tilde p(\hat\eta_1)$ and $\eta_2 = \tilde p(\hat\eta_2)$ stay bounded Hausdorff distance apart, and hence must have the same endpoints. We have shown that $\CG_{\tphi_1}\subset \CG_{\tphi_2}$. The opposite inclusion follows by a symmetric argument. 
\end{proof}

\subsection{Developing and holonomy}\label{S:prelim develop}

As all along, suppose $\varphi \in \tHyp_c(S)$ and let $C_0 =\cone(\varphi) \subset S$ be the set of cone points, $\hphi$ the pull back metric $ \hat p^*(\varphi)$ and $\hat C_0 = \hat p^{-1}(C_0)\subset \hS$. Since $\widetilde{\dot S}$ is simply connected and $\hphi$ is locally isometric to the hyperbolic plane, there is a {\em developing map} $\widetilde{\dot S} \to \mathbb H$: an orientation preserving local isometry, unique up to post-composing by an element of $\PSL_2(\mathbb R)$.  This map also extends to the metric completion, which we continue to call a developing map and denote it by
\[ D \colon \hat S \to \mathbb H. \]
Observe that for every basic geodesic $\eta \in \CG(\hphi)$, $D(\eta)$ is a bi-infinite geodesic in $\mathbb{H}$.
\begin{lemma} \label{L:Dev surjective}
Fix any $\varphi \in \tHyp_c(S)$.  Then for any point $\zeta \in \hS$, any geodesic in $\mathbb H$ through $D(\zeta)$ is the $D$--image of a basic $\hphi$--geodesic in $\hS$ through $\zeta$.  In particular, $D$ is surjective and every geodesic in $\mathbb H$ is the $D$--image of a basic $\hphi$--geodesic.
\end{lemma}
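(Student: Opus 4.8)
The plan is to construct the required basic $\hphi$--geodesic by ``developing $\ell$ backwards'' through $D$: start at $\zeta$, lift a small piece of $\ell$, and then extend maximally. The essential local input comes from the description of $\hS$ near its completion points in \S\ref{S:puncturing surfaces}. Away from $\hC_0$, $D$ is an orientation preserving local isometry, so it carries geodesic segments to geodesic segments and lifts them uniquely. Near a cone point $\hat\zeta\in\hC_0$, the map $\hat p$ is (the metric completion of) a universal covering of a once-punctured disk, so $D$ restricted to a small neighborhood $U$ of $\hat\zeta$ is the extension-by-$D(\hat\zeta)$ of a universal covering of a punctured neighborhood of $D(\hat\zeta)$ in $\mathbb H$. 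Two consequences will be used repeatedly: (i) the induced map from geodesic rays in $U$ emanating from $\hat\zeta$ to geodesic rays in $\mathbb H$ emanating from $D(\hat\zeta)$ is surjective (the ``tangent cone'' at $\hat\zeta$ is infinite and develops onto the circle of directions at $D(\hat\zeta)$); and (ii) if two such rays from $\hat\zeta$ make angle exactly $\pi$ on the finite side at $\hat\zeta$, then their $D$--images point in opposite directions, so their union develops to a geodesic through $D(\hat\zeta)$.

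Now parametrize $\ell$ at unit speed with $\ell(0)=D(\zeta)$. First I would produce an initial basic segment through $\zeta$: if $\zeta$ is regular, the local isometry lifts $\ell|_{(-\epsilon,\epsilon)}$ uniquely to a geodesic segment $\eta$ with $\eta(0)=\zeta$ and $D\circ\eta=\ell|_{(-\epsilon,\epsilon)}$; if $\zeta\in\hC_0$, use (i)--(ii) to lift $\ell|_{(0,\epsilon)}$ and $\ell|_{(-\epsilon,0)}$ to rays from $\zeta$ whose directions in the tangent cone at $\zeta$ differ by exactly $\pi$, obtaining a geodesic segment $\eta$ through $\zeta$ making angle exactly $\pi$ on one side at $\zeta$ with $D\circ\eta=\ell|_{(-\epsilon,\epsilon)}$. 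Fix the orientation of $\eta$ so that $D$ sends its forward direction to $\ell'$, and (if $\zeta\in\hC_0$) arrange the chosen side at $\zeta$ to be the positive one. Then I would extend $\eta$ maximally among oriented geodesic segments $\eta\colon I\to\hS$ with $D\circ\eta=\ell|_I$ making angle exactly $\pi$ on the positive side at every cone point in the interior of $I$. Such extensions are nested (at a regular point the continuation developing onto $\ell$ is unique, and at a cone point there is a unique outgoing direction developing onto the forward direction of $\ell$ and making angle $\pi$ on the positive side with the incoming one, by (i)--(ii)), so their union gives a maximal $\eta\colon I^*\to\hS$. If $\sup I^*=b<\infty$, then $\eta|_{[0,b)}$ has finite length, hence is Cauchy, so by completeness of $\hS$ it converges to a point $\eta(b)$ with $D(\eta(b))=\ell(b)$; using the local isometry when $\eta(b)\notin\hC_0$ or (i)--(ii) when $\eta(b)\in\hC_0$, one extends $\eta$ past $b$ still developing onto $\ell$ and still making angle $\pi$ on the positive side, contradicting maximality. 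Hence $I^*=\mathbb R$, and symmetrically on the negative side, so $\eta\colon\mathbb R\to\hS$ is a geodesic through $\zeta$ with $D\circ\eta=\ell$ making angle exactly $\pi$ on the positive side at every cone point it meets. It therefore switches sides zero times, so by Lemma~\ref{L:characterizing basics} it is a basic $\hphi$--geodesic with $D(\eta)=\ell$, which is the first assertion.

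The remaining claims then follow formally. For surjectivity of $D$: fix any $\zeta_0\in\hS$ and any $q\in\mathbb H$, and apply the first assertion to $\zeta_0$ and a geodesic through $D(\zeta_0)$ passing through $q$; its $D$--image contains $q$, so $q\in D(\hS)$. For the last sentence: given any geodesic $\ell\subset\mathbb H$, pick $q\in\ell$, pick $\zeta\in D^{-1}(q)$ (nonempty by surjectivity), and apply the first assertion again.

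I expect the main obstacle to be making the local analysis at the completion points of $\hS$ rigorous: verifying that $D$ near such a point genuinely behaves like the universal covering map of a punctured disk — so that \emph{every} geodesic ray in $\mathbb H$ from $D(\hat\zeta)$ is a $D$--image and the ``angle exactly $\pi$'' condition corresponds to developing straight — and ensuring that the extension and gluing at cone points in the maximality step are carried out consistently with a single global orientation and a single choice of ``positive side.'' The completeness-based extension argument and the deduction of surjectivity are routine.
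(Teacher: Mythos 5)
Your proof is correct and follows essentially the same route as the paper's: both select the tangent direction at $\zeta$ that develops onto $\ell$ (using that the direction map at a completion point is the universal covering of the circle of directions, hence surjective) and then extend to a geodesic making angle exactly $\pi$ on one fixed side at every cone point, so that Lemma~\ref{L:characterizing basics} certifies it is basic and its $D$--image is $\ell$. The only difference is bookkeeping: the paper performs the extension intrinsically by citing Lemma~\ref{L:characterizing basics} together with the observation that $D$ carries basic geodesics to complete geodesics of $\mathbb H$, whereas you re-derive that extension by explicit path-lifting with a maximality-plus-completeness argument.
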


\begin{proof}
First suppose $\zeta$ is any point in $\hS \smallsetminus\hC_0$.  For every unit tangent vector in ${\bf v} \in T_\zeta^1 \hS$ we can find a basic $\hat\varphi$--geodesic $\eta_{\bf v}$ through $\zeta$ tangent to ${\bf v}$ (see Lemma~\ref{L:characterizing basics}).  Observe that the derivative $dD_\zeta$ is an isometry, and that $D(\eta_{\bf v})$ is a geodesic through $D(\zeta)$ tangent to $dD_\zeta({\bf v})$.  Since $dD_\zeta$ is an isometry, every geodesic in $\mathbb H$ through $D(\zeta)$ is the image of some basic $\hphi$--geodesic in $\hS$. 
Essentially the same argument works if $\zeta \in \hC_0$: in this case, however, the derivative $dD_\zeta$ restricted to the unit vectors $T_\zeta^1 \hS$ in the tangent cone $T_\zeta \hS$ at $\zeta$ maps onto $T_{D(\zeta)}^1(\mathbb H)$ (in fact, the restriction of the derivative $dD_\zeta \colon T_\zeta^1\hS \to T_{D(\zeta)}^1\mathbb H$ is the universal cover).  This proves the first claim of the lemma.  The second claim is immediate from the first.
\end{proof}

Let $\mathcal K \subseteq \hS^1_\infty$ be the subset of the Gromov boundary of $\hS$ consisting of end points of a basic $\hphi$--geodesics. We extend the map $D$ to the union 
\[ \bar D \colon \hS \cup \mathcal K \to \overline{\mathbb H},\]
so that $\bar D(\mathcal K) \subset \partial \mathbb H$.  To see how this is done, let $k \in \mathcal K$, and define $\bar D(k)$ to be the forward end point of $D(\eta)$ where $\eta$ is any oriented basic $\hphi$--geodesic whose forward end point is $k$. This is well-defined because any two forward asymptotic basic $\hphi$--geodesics will have forward asymptotic $D$--images.  

\begin{corollary} \label{C:extendD}
The restriction $\bar D|_{\mathcal K} : \mathcal K \to \partial \mathbb H$ is surjective. 
\end{corollary}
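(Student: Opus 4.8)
The plan is to deduce surjectivity of $\bar D|_{\mathcal K}$ directly from Lemma~\ref{L:Dev surjective} together with the construction of $\bar D$. First I would fix an arbitrary point $\xi \in \partial \mathbb H$. The goal is to produce $k \in \mathcal K$ with $\bar D(k) = \xi$. To this end, pick any point $\zeta \in \hS$ (the set $\hS$ is nonempty), so that $D(\zeta) \in \mathbb H$ is a well-defined interior point. Since $\xi \neq D(\zeta)$, there is a unique bi-infinite geodesic $\gamma$ in $\mathbb H$ through $D(\zeta)$ having $\xi$ as one of its two endpoints; orient $\gamma$ so that $\xi$ is its forward endpoint.

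Next I would invoke Lemma~\ref{L:Dev surjective}: since $\gamma$ passes through $D(\zeta)$, there is a basic $\hphi$--geodesic $\eta$ in $\hS$ through $\zeta$ with $D(\eta) = \gamma$. Choose the orientation on $\eta$ for which $dD_\zeta$ matches the chosen orientation of $\gamma$, so that $D$ carries the forward ray of $\eta$ to the forward ray of $\gamma$. Let $k \in \hS^1_\infty$ be the forward endpoint of $\eta$. Because $\eta$ is a basic $\hphi$--geodesic, $k \in \mathcal K$ by definition of $\mathcal K$. Finally, by the definition of $\bar D$ on $\mathcal K$, we have $\bar D(k) = $ (forward endpoint of $D(\eta)$) $= $ (forward endpoint of $\gamma$) $= \xi$. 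Since $\xi \in \partial \mathbb H$ was arbitrary, $\bar D|_{\mathcal K}$ is surjective.

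There is essentially no obstacle here: the content is entirely carried by Lemma~\ref{L:Dev surjective}, and the only points to be careful about are (i) that $\hS$ is nonempty so that a basepoint $\zeta$ exists, and (ii) that one keeps track of orientations consistently so that ``forward endpoint of $\eta$'' maps to ``forward endpoint of $\gamma = D(\eta)$'' rather than to the other endpoint — but this is immediate since $D$ is an orientation-preserving local isometry and hence sends the oriented geodesic $\eta$ to the oriented geodesic $\gamma$. One could equally well phrase the argument using the second claim of Lemma~\ref{L:Dev surjective} (every geodesic of $\mathbb H$ is a $D$--image of a basic $\hphi$--geodesic) and then simply read off the forward endpoint; I would present whichever version reads most cleanly.
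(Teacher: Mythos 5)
Your proof is correct and follows essentially the same route as the paper: both arguments take an arbitrary boundary point, realize a geodesic in $\mathbb H$ ending there as the $D$--image of a basic $\hphi$--geodesic via Lemma~\ref{L:Dev surjective}, and read off the forward endpoint. The paper invokes the second claim of that lemma directly rather than routing through a basepoint $\zeta$, but as you note yourself this is only a cosmetic difference.
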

\begin{proof} For every $y \in \partial \mathbb{H}$ one can choose an oriented geodesic $\eta'$ that has $y$ as its forward end point. By Lemma~\ref{L:Dev surjective}, there is a basic $\hphi$--geodesic $\eta$ with $D(\eta) =\eta'$. Then $\bar D(k)=y$ where $k$ is the forward end point of $\eta$.
\end{proof}

Precomposing $D$ with any covering transformation $\gamma \in \pi_1\dot S$ acting on $\hS$, gives another developing map to $\mathbb H$ that must therefore differ from $D$ by post-composing with an element in $\PSL_2(\mathbb{R})$ denoted $\rho(\gamma)$.  The assignment $\gamma \mapsto \rho(\gamma)$ thus defines the {\em holonomy homomorphism} $\rho \colon \pi_1\dot S \to \PSL_2(\mathbb R)$.  That is, $\rho$ is defined by
\[ D(\gamma \cdot x) = \rho(\gamma) \cdot D(x),\]
for all $x \in \hat S$ and $\gamma \in \pi_1 \dot S$.
We let $\Gamma = \rho(\pi_1\dot S)$, and note that since $D$ is $\rho$--equivariant, it descends to a continuous map $q \colon S \to \mathcal O = \mathbb H/\Gamma$ making the following diagram commute:
\[ \xymatrix{ \hat S \ar[r]^{D} \ar[d]_{\hat p} & \mathbb H \ar[d] \\
S \ar[r]^{q} & \mathcal O.}\]
Here $\mathcal O = \mathbb H/\Gamma$ is just the quotient topological space, {\em unless} $\Gamma$ is discrete, in which case it admits the structure of a hyperbolic $2$--orbifold.  In any case, we note that since $S$ is compact and $q$ is continuous, $\mathcal O$ is compact.  That is, $\Gamma$ is a cocompact subgroup.



\subsection{Topological conjugacy}\label{S:top conjugacy}

Recall that the action of $\Isom(\mathbb H)$ on $\mathbb H$ extends uniquely to an action on $\overline {\mathbb H} = \mathbb H \cup \partial \mathbb H$.  We will implicitly use this extension without further comment, and likewise for the orientation preserving index two subgroup $\PSL_2(\mathbb R) < \Isom(\mathbb H)$.

Given two subgroups $G_1, G_2<\Isom(\mathbb{H})$, we say that the homeomorphism $h:\partial\mathbb{H}\to\partial\mathbb{H}$ {\em topologically conjugates $G_1$ to $G_2$} if for all $\gamma_1\in G_1$ there exists $\gamma_2\in G_2$ such that
$$h\gamma_1 h^{-1}(x) = \gamma_2(x)$$
for all $x\in \partial\mathbb{H}$.  Given two hyperbolic cone metrics  $\varphi_1$ and $\varphi_2$ with $\CG_{\tphi_1} =\CG_{\tphi_2}$ and the corresponding holonomy homomorphisms $\rho_1$ and $\rho_2$, we will see in the course of the proof that $\Gamma_1=\rho_1(\pi_1 \dot S)$ and $\Gamma_2=\rho_2(\pi_1 \dot S)$ are topologically conjugate.  In fact, we will prove that the homomorphisms are topologically conjugate (see Proposition~\ref{P:holonomy conjugation}), meaning that there is a homeomorphism $h \colon \partial \mathbb H \to \partial \mathbb H$ so that for all $x\in\partial \mathbb H$ and $\gamma \in \pi_1\dot S$, 
\[ h (\rho_1(\gamma)\cdot x) = \rho_2(\gamma) \cdot h(x).\]
In this case we will say that $h$ {\em topologically conjugates $\rho_1$ to $\rho_2$}.

Part of the proof of the \currentsupport\! involves proving that quite often the homeomorphism $h$ which topologically conjugates $\rho_1$ to $\rho_2$ is in fact in $\PSL_2(\mathbb R)$.  The following will be an important ingredient.


\begin{proposition}\label{p:subgroups}
Suppose $G_1, G_2$ are two cocompact subgroups of $\Isom(\mathbb{H})$. If there exists an orientation preserving homeomorphism $h:\partial\mathbb{H}\to\partial\mathbb{H}$ topologically conjugating $G_1$ to $G_2$, then either the groups are discrete or $h\in\PSL_2(\mathbb{R})$. 
\end{proposition}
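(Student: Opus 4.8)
The plan is to split on whether $G_1$ is discrete. If $G_1$ is discrete we are done immediately (the first alternative holds), so assume $G_1$ is \emph{indiscrete} and cocompact; I must then show $h \in \PSL_2(\mathbb R)$. The first step is to understand the closure $\bar G_1 < \Isom(\mathbb H)$: since $G_1$ is indiscrete, $\bar G_1$ is a closed, cocompact, non-discrete subgroup of $\Isom(\mathbb H)$, and a classification of such subgroups (they contain the full stabilizer of a point, i.e.\ a conjugate of $\PSO(2)$, and being cocompact they must then be all of $\Isom(\mathbb H)$ or its index-two orientation-preserving subgroup $\PSL_2(\mathbb R)$; a cocompact group cannot be a one-parameter parabolic or hyperbolic flow or its normalizer) shows $\bar G_1$ is either $\PSL_2(\mathbb R)$ or $\Isom(\mathbb H)$. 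The same applies to $\bar G_2$. This is the step that the paper flags as ``an analysis of the closed, cocompact subgroups of $\PSL_2(\mathbb R)$'' and it is essentially a known structure result.

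Next I would transport the topological conjugacy to the boundary: the action of $\Isom(\mathbb H)$ on $\partial \mathbb H \cong S^1$ embeds $\Isom(\mathbb H)$ (resp.\ $\PSL_2(\mathbb R)$) as a closed subgroup of $\Homeo(S^1)$, and conjugation by the fixed homeomorphism $h$ is a homeomorphism of $\Homeo(S^1)$, hence sends closures to closures. Therefore $h (\bar G_1) h^{-1}$, computed inside $\Homeo(S^1)$, contains $G_2$ and is contained in $\overline{G_2}$; but conjugation is a homeomorphism so $h\,\bar G_1\,h^{-1} = \overline{h G_1 h^{-1}} = \bar G_2$ as subgroups of $\Homeo(S^1)$. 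Since $h$ is orientation preserving, we get an (abstract, and in fact topological) isomorphism between $\bar G_1$ and $\bar G_2$, each of which is $\PSL_2(\mathbb R)$ or $\Isom(\mathbb R)$ acting on the circle in the standard way. So it suffices to know: if $h \in \Homeo^+(S^1)$ conjugates the standard $\PSL_2(\mathbb R)$-action (or $\Isom(\mathbb H)$-action) on $S^1$ into $\Homeo(S^1)$ so that the image is again the standard $\PSL_2(\mathbb R)$- (or $\Isom(\mathbb H)$-) action, then $h$ itself lies in $\PSL_2(\mathbb R)$. This is a rigidity statement for the standard projective action on the circle: the normalizer of $\PSL_2(\mathbb R)$ inside $\Homeo^+(S^1)$ is $\PSL_2(\mathbb R)$ itself. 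I would prove it by a fixed-point/cross-ratio argument — the one-parameter parabolic subgroups of $\bar G_1$ are intrinsically characterized inside $\bar G_1$ (they are the maximal abelian subgroups with a unique, common fixed point on $S^1$), so $h$ must carry parabolic fixed points to parabolic fixed points; this pins down $h$ on the dense set of such points as an element of $\PSL_2(\mathbb R)$, and then continuity of $h$ forces $h \in \PSL_2(\mathbb R)$ on all of $S^1$.

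I expect the main obstacle to be the last rigidity step — showing that a circle homeomorphism conjugating the standard $\PSL_2(\mathbb R)$-action to itself must be Möbius — done cleanly and without circularity. The classification of closed cocompact subgroups of $\Isom(\mathbb H)$ is standard Lie theory and should be quotable or quickly dispatched; the transfer of conjugacy to closures is a soft topological-group argument. But the normalizer computation requires genuinely using the structure of $\PSL_2(\mathbb R)$: one wants to identify, purely in terms of the $S^1$-action, enough structure (say, the parabolic subgroups and their fixed points, or equivalently the horocyclic foliation data) that is automatically preserved by any conjugating homeomorphism, and then observe that three points of $S^1$ and their images determine a unique Möbius transformation, so $h$ agrees with a fixed element $g \in \PSL_2(\mathbb R)$ on a dense set and hence everywhere. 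A subtlety to handle carefully is the case $\bar G_1 = \Isom(\mathbb H)$ versus $\bar G_1 = \PSL_2(\mathbb R)$, and the orientation hypothesis on $h$, which is exactly what rules out the orientation-reversing Möbius maps and guarantees $h \in \PSL_2(\mathbb R)$ rather than merely $h \in \Isom(\mathbb H)$.
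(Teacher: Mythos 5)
There is a genuine gap at your first step, and it propagates through the rest of the argument. Your claimed classification of closed, cocompact, non-discrete subgroups of $\Isom(\mathbb H)$ --- that any such group contains a full point stabilizer and is therefore all of $\PSL_2(\mathbb R)$ or $\Isom(\mathbb H)$ --- is false. The Borel subgroup $U=\Stab\{\infty\}=\{z\mapsto az+b \mid a>0,\ b\in\mathbb R\}$ acts simply transitively on $\mathbb H$, so it is closed, connected, non-discrete and cocompact, yet it is a proper subgroup containing no elliptic elements at all. Likewise the group generated by the parabolic one-parameter subgroup $T=\{z\mapsto z+t\}$ together with a single dilation $z\mapsto\lambda^2 z$ is closed, non-discrete and cocompact (the quotient of $\mathbb H/T\cong(0,\infty)$ by the dilation is a circle). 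Your rigidity step then has nothing to act on for exactly these groups: when $\overline{G_1}$ fixes $\infty$ there is only one parabolic fixed point, so the set of such points is not dense and the ``three points determine a M\"obius map'' argument cannot start; nor can you invoke the normalizer of $\PSL_2(\mathbb R)$ in $\Homeo^+(S^1)$, since the group being conjugated is much smaller than $\PSL_2(\mathbb R)$.

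The paper's proof avoids this by classifying only the identity component $\overline{G_1}^{\,id}$ of the closure (six possibilities up to conjugacy: $\{id\}$, $T$, $D$, $U$, $\PSO(2)$, $\PSL_2(\mathbb R)$), matching $\overline{G_1}^{\,id}$ with $\overline{G_2}^{\,id}$ via fixed-point data, and then treating the cases $T$, $U$, $\PSL_2(\mathbb R)$ uniformly: in each, $h$ normalizes the translation subgroup $T$, a continuous orientation-preserving automorphism of $T\cong\mathbb R$ is $t\mapsto\lambda t$, and hence $h(t)=\lambda t+p$ on $\partial\mathbb H\smallsetminus\{\infty\}$, i.e.\ $h$ is affine and lies in $\PSL_2(\mathbb R)$. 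The case $\PSO(2)$ follows because $h$ must commute with every rotation, and the case $D$ is ruled out by cocompactness. To repair your outline you would need either to prove the corrected classification and then supply a separate argument for the solvable cocompact closures (essentially the affine argument above), or to work directly with the identity component as the paper does. Your remaining ingredients --- conjugation by a fixed homeomorphism commutes with taking closures, and the normalizer of the standard $\PSL_2(\mathbb R)$-action in $\Homeo^+(S^1)$ is $\PSL_2(\mathbb R)$ --- are sound, but they only dispose of the case $\overline{G_1}^{\,id}=\PSL_2(\mathbb R)$, which is one case among several.
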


\begin{proof}
First note that $h$ must topologically conjugate the orientation preserving subgroup of $G_1$ to the orientation preserving subgroup of $G_2$.  Since discreteness of a group is equivalent to discreteness of its orientation preserving subgroup, we may assume that $G_1$ and $G_2$ are in $\PSL_2(\mathbb{R})$. 

For $i=1, 2$ let $\bar{G}_i$ denote the closure of $G_i$ and let $\bar{G}_i^{id}$ be the connected component of $\bar{G}_i$ containing the identity. Since $h$ conjugates $G_1$ to $G_2$ it also topologically conjugates $\bar{G}_1$ to $\bar{G}_2$ and thus $\bar{G}_1^{id}$ to $\bar{G}_2^{id}$. Also, since $\bar{G}^{id}_i$ is a closed subgroup of $\PSL_2(\mathbb{R})$ it is a Lie subgroup by the closed subgroup theorem. It follows from general Lie theory (by the bijection between closed connected subgroups of $\PSL_2(\mathbb{R})$ and subalgebras of $\mathfrak{s}\mathfrak{l}_2(\mathbb{R})$) that $\bar G_1^{id}$, as well as $\bar G_2^{id}$, must be one of the following groups up to conjugation in $\PSL_2(\mathbb{R})$:

\vspace{0.5cm}
\begin{tabular}{ll}
\vspace{0.5cm}
$\bullet$ $\displaystyle{T = \left\{ \left. \left( \begin{array}{cc} 1 & t \\ 0 & 1  \end{array} \right) \right| t\in\mathbb{R}   \right\}}$ & $\bullet$  $\PSO(2)$\\
\vspace{0.5cm}
 $\bullet$ $\displaystyle{U = \left\{ \left. \left( \begin{array}{cc} s & t \\ 0 & \tfrac1s  \end{array} \right) \right| s, t\in\mathbb{R}, s > 0   \right\} = \Stab\{\infty\}}$ & $\bullet$ $\PSL_2(\mathbb R)$\\

 $\bullet$ $\displaystyle{D = \left\{ \left. \left( \begin{array}{cc} s & 0 \\ 0 & \tfrac1s  \end{array} \right) \right| s > 0   \right\}} = \Stab\{0\} \cap \Stab\{\infty\}$ & $\bullet$ $\{id\}$\\

\end{tabular} 
\vspace{0.5cm}\\

Note that if $\bar{G}_1^{id} = \{id\}$ then $G_1$, and therefore also $G_2$, is discrete. So suppose $\bar{G}_1^{id}$, $\bar{G}^{id}_2$ are non-trivial. Using the facts that $\gamma\in \bar{G}_1^{id}$ fixes $p\in\partial\mathbb{H}$ if and only if $h\gamma h^{-1}\in\bar{G}_2^{id}$ fixes $h(p)$ and that the five non-trivial groups in the above list can be distinguished by considering fixed points sets and point stabilizers, we see that $\bar{G}^{id}_1$ and $\bar{G}^{id}_2$ are $\PSL_2(\mathbb{R})$--conjugate to the same group. Hence, by composing $h$ with an element in $\PSL_2(\mathbb{R})$ we can assume that $\bar{G}_1^{id}=\bar{G}^{id}_2$ and is one of the above non-trivial groups. Now, conjugation by $h$ defines an orientation preserving continuous automorphism of $\bar{G}_1^{id}$:
$$\rho \colon \bar{G}_1^{id}\to \bar{G}_1^{id}, \quad \gamma\mapsto h\gamma h^{-1}.$$
Below we will see that in all cases except $\bar{G}_1^{id}=D$ this forces $h$ to be an element of $\PSL_2(\mathbb{R})$.\\

\noindent{\em Case 1:} Suppose that $\bar{G}_1^{id}\in\{T, U, \PSL_2(\mathbb{R})\}$ and identify $\partial\mathbb{H}$ with $\mathbb{R}\cup\{\infty\}$. We can assume $h$ fixes infinity: this is necessarily the case if $\bar{G}_1^{id}\in\{T, U\}$ and if $\bar{G}_1^{id}=\PSL_2(\mathbb{R})$ we can further compose $h$ with an element of $\PSL_2(\mathbb{R})$ so that it is true. 
Note that in all cases $T< \bar{G}_1^{id}$ and $h$ must conjugate $T$ to itself (again by considering the fixed point sets). That is, $\rho\vert_T$ defines an orientation preserving continuous automorphism of $T$. Each element in $T$ corresponds to a translation $\gamma_t: x\mapsto x+t$ for some $t\in\mathbb{R}$ and, by identifying $T$ with $\mathbb{R}$ through the identification  $\gamma_t \mapsto t$, $\rho$ gives rise to an orientation preserving continuous automorphism of $\mathbb{R}$. Any such automorphism is of the form $t\mapsto \lambda t$ for some fixed $\lambda > 0$. Hence there exists $\lambda\neq 0$ such that $\rho(\gamma_t)= \gamma_{\lambda t}$ for all $\gamma_t\in T$ and letting $p=h(0)\neq\infty$ we have for each $t\in\mathbb{R}$,
$$h(t) = h(\gamma_t(0)) = h\gamma_t h^{-1}(p) =\rho(\gamma_t)(p) = \gamma_{\lambda t}(p) = \lambda t + p.$$
Hence $h$ is in $\PSL_2(\mathbb{R})$. \\

\noindent{\em Case 2:} Suppose $\bar{G}_1^{id}=\PSO(2)$ and identify $\partial\mathbb{H}$ with the circle $S^1$. By also identifying $\PSO(2)$ with $S^1$ (by sending each element to its angle of rotation) we have that $\rho: \PSO(2)\to\PSO(2)$ induces an orientation preserving continuous automorphism of $S^1$. The only such automorphism is the identity. It follows that $h\gamma h^{-1} = \gamma$ for all $\gamma\in \PSO(2)$, i.e. $h$ commutes with every rotation. Therefore $h$ is a rotation itself, and in particular, $h$ is in $\PSL_2(\mathbb{R})$ . \\


\noindent{\em Case 3:} Suppose $\bar{G}_1^{id}=D$. In fact, we will show that this cannot happen due to cocompactness of $G_1$. Suppose $\bar{G}_1\setminus D\neq \emptyset$ and let $\gamma\in \bar{G}_1\setminus D$. Then $\gamma\notin\Stab\{0\} \cap \Stab\{\infty\}$, but $\gamma \bar{G}_1^{id}\gamma^{-1} = \bar{G}_1^{id} = D = \Stab\{0\} \cap \Stab\{\infty\}$. Hence we must have that $\gamma(0)=\infty$ and $\gamma(\infty)=0$, i.e. $\gamma\in\Stab\{0, \infty\}$. It follows that $\bar{G}_1= \Stab\{0, \infty\} =\displaystyle{ \left\langle D, \left( \begin{array}{cc} 0 & -1 \\ 1 & 0  \end{array} \right) \right\rangle} = D'$. Hence either $\bar{G}_1 = D$ or $\bar{G}_1 = D'$. However, neither $D$ nor $D'$ is cocompact. To see this, set
$$ W_n=\left\{(x, y)\mid y>\frac{\vert x\vert}{n}\right\}$$
and note that $\{ W_n \mid n \in \mathbb N \}$ is an open cover of $\mathbb{H}$ (in the upper half space model) which descends to an open cover of both $\mathbb H/D$ and $\mathbb H/D'$ with no finite subcover. However $G_1$, and hence also $\bar{G}_1$, is cocompact giving a contradiction. 
\end{proof}

We note that Proposition \ref{p:subgroups} above gives no information about the conjugating homeomorphism $h$ when the groups are discrete. However, in a very special case of discrete groups we can also conclude that $h$ is in $\PSL_2(\mathbb{R})$, as the lemma below shows. Recall that a {\em triangle group} is a discrete subgroup of $\Isom(\mathbb{H})$ having a triangle $\Delta$ as its fundamental domain and such that it is generated by reflections in the sides of $\Delta$.  By a {\em Fuchsian triangle group} we mean the orientation preserving subgroup of a triangle group. 

\begin{lemma}\label{l:trianglegroup}
Suppose $G_1, G_2<\PSL_2(\mathbb{R})$ and that there exists an orientation preserving homeomorphism $h:\partial\mathbb{H}\to \partial\mathbb{H}$ conjugating $G_1$ to $G_2$. If $G_1$ is a Fuchsian triangle group, then $h\in\PSL_2(\mathbb{R})$. 
\end{lemma}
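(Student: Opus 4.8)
\emph{The plan.} Write $\iota\colon G_1\hookrightarrow\PSL_2(\mathbb R)$ for the inclusion. Being a Fuchsian triangle group, $G_1$ is a finitely presented, non-elementary lattice, isomorphic to the von Dyck group $\langle a,b,c\mid a^p=b^q=c^r=abc=1\rangle$ for the type $(p,q,r)$ of the underlying triangle, and its limit set is all of $\partial\mathbb H$. The assignment $\gamma\mapsto h\gamma h^{-1}$ is an injective homomorphism $G_1\to G_2$ with image $G_2$ (an element of $\PSL_2(\mathbb R)$ is determined by its action on $\partial\mathbb H$), so conjugation by $h$ is an isomorphism $G_1\cong G_2$. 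The argument is in three steps: first that $G_2$ is discrete, then that $G_2$ is itself a Fuchsian triangle group of type $(p,q,r)$, and finally a rigidity argument pinning $h$ down.

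\emph{Step 1: $G_2$ is discrete.} Conjugation by the homeomorphism $h$ is a topological automorphism of $\Homeo(\partial\mathbb H)$ in the compact--open topology, hence maps the closure of $G_1$ onto the closure of $G_2$. Since $\PSL_2(\mathbb R)$ is closed in $\Homeo(\partial\mathbb H)$ and $G_1$ is a discrete (hence closed) subgroup of it, $G_1$ is closed in $\Homeo(\partial\mathbb H)$; therefore so is $G_2=hG_1h^{-1}$, and consequently $G_2$ is a closed subgroup of $\PSL_2(\mathbb R)$, i.e.\ a Lie subgroup. If $G_2$ were positive dimensional it would be conjugate to one of the groups $T,U,D,\PSO(2),\PSL_2(\mathbb R)$ appearing in the proof of Proposition~\ref{p:subgroups}; but $G_2\cong G_1$ contains a non-abelian free group, which rules out the solvable groups $T,U,D,\PSO(2)$, and $G_2$ is countable, which rules out $\PSL_2(\mathbb R)$. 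Hence $G_2$ is $0$--dimensional, i.e.\ discrete.

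\emph{Step 2: $G_2$ is a triangle group of the same type.} As $h$ conjugates the two actions on $\partial\mathbb H$, it carries the limit set of $G_1$ onto that of $G_2$, so $G_2$ has limit set $\partial\mathbb H$; being discrete, finitely generated (it is isomorphic to $G_1$) and of the first kind, $G_2$ is a lattice. Conjugation by $h$ preserves orders of torsion elements and the number of fixed points on $\partial\mathbb H$, hence sends the standard generators $a,b,c$ to elliptic elements of $G_2$ of orders $p,q,r$ with product $1$, and $G_2$ inherits the presentation $\langle a,b,c\mid a^p=b^q=c^r=abc=1\rangle$. By the classification of Fuchsian groups up to isomorphism, a lattice with this presentation has quotient orbifold the sphere with three cone points of orders $p,q,r$; thus $G_2$ is a Fuchsian triangle group of type $(p,q,r)$.

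\emph{Step 3: rigidity.} A hyperbolic triangle is determined up to isometry by its angles, so all Fuchsian triangle groups of type $(p,q,r)$ are conjugate in $\PSL_2(\mathbb R)$ (if a matching isometry of triangles is orientation reversing, precompose with a reflection from the full reflection triangle group, which normalizes the orientation preserving subgroup). Choose $g\in\PSL_2(\mathbb R)$ with $gG_2g^{-1}=G_1$ and replace $h$ by $gh$; this does not affect whether the original $h$ lies in $\PSL_2(\mathbb R)$, and now $hG_1h^{-1}=G_1$ with $h$ still orientation preserving. Let $\phi\in\mathrm{Aut}(G_1)$ be $\gamma\mapsto h\gamma h^{-1}$. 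Because the orbifold $\mathbb H/G_1$ admits no nontrivial hyperbolic deformation (its Teichm\"uller space is a point), every automorphism of $G_1$ is induced by conjugation by an element of $\Isom(\mathbb H)$ normalizing $G_1$; in particular there is $g'\in\Isom(\mathbb H)$ with $g'\gamma(g')^{-1}=h\gamma h^{-1}$ for all $\gamma\in G_1$. Then $(g')^{-1}h$ commutes with every element of $G_1$, and since $G_1$ acts minimally on $\partial\mathbb H$ with the attracting fixed points of its hyperbolic elements dense there, the only homeomorphism of $\partial\mathbb H$ commuting with $G_1$ is the identity. Hence $h=g'\in\Isom(\mathbb H)$, and as $h$ is orientation preserving, $h\in\PSL_2(\mathbb R)$; unwinding the substitution, the original $h$ lies in $\PSL_2(\mathbb R)$.

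\emph{Main obstacle.} The conceptual core is Step 2---recognizing $G_2$ as a triangle group of the \emph{same} type, which rests on the classification of Fuchsian lattices by signature---together with the two appeals to rigidity of hyperbolic triangles in Step 3. The fiddliest technical point is the orientation bookkeeping needed to keep the intermediate conjugators inside $\PSL_2(\mathbb R)$ (rather than merely in $\Isom(\mathbb H)$) until the final invocation of orientation preservation of $h$.
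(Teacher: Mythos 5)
Your proof is correct, but it takes a genuinely different route from the paper's. The paper's argument is essentially a two-step application of the Douady--Earle extension: $h$ extends equivariantly to a homeomorphism $\bar h$ of $\overline{\mathbb H}$, which therefore descends to a homeomorphism of orbifolds $\mathbb H/G_1 \to \mathbb H/G_2$; since the Teichm\"uller space of a triangle orbifold is a point, this homeomorphism is isotopic to an isometry, and $h$ is the boundary value of that isometry. You avoid Douady--Earle entirely and instead assemble the conclusion from several classification results: discreteness of $G_2$ via closedness in $\Homeo(\partial\mathbb H)$, the isomorphism-invariance of the signature of a cocompact Fuchsian lattice, conjugacy in $\PSL_2(\mathbb R)$ of same-type triangle groups, realization of automorphisms of $G_1$ by isometries (Dehn--Nielsen--Baer for orbifolds plus trivial Teichm\"uller space), and triviality of the centralizer of $G_1$ in $\Homeo(\partial\mathbb H)$. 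Both proofs bottom out at the same fact --- rigidity of the $(p,q,r)$ triangle orbifold --- but yours makes the group-theoretic content explicit at the cost of length and of importing more classification theory, while the paper buys brevity from the equivariant extension. Two small imprecisions in yours, both harmless: in Step 1, what would be one of $T$, $U$, $D$, $\PSO(2)$, $\PSL_2(\mathbb R)$ is the identity component of the closure of $G_2$, not $G_2$ itself --- though countability of $G_2\cong G_1$ already forces a closed subgroup to be discrete by Baire category, so the detour through the classification is unnecessary; and in Step 2 you should say which boundary characterization of the limit set you are transporting by $h$ (the unique minimal closed invariant set, or the closure of the hyperbolic fixed points), since $h$ is only given on $\partial\mathbb H$ and the usual definition via orbit accumulation in $\mathbb H$ is not directly available.
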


\begin{proof}
The Douady-Earle extension \cite{DouadyEarle} extends any self-homeomorphism of $\partial\mathbb{H}$ to a self-homeomorphism of the closure $\overline{\mathbb{H}}$ of the hyperbolic plane in an equivariant way: pre- or post-composing the extension with an element of $\PSL_2(\mathbb{R})$ is the same as first pre- or post-composing the homeomorphism with this element and then extending the resulting homeomorphism. Let $\bar{h}: \overline{\mathbb{H}}\to\overline{\mathbb{H}}$ be the Douady-Earle extension of $h$. The equivariance property implies that $\bar{h}\gamma = \rho(\gamma)\bar{h}$ for all $\gamma\in G_1$, where  $\rho: G_1\to G_2$ is the isomorphism $\gamma\mapsto h\gamma h^{-1}$ induced by conjugation by $h$. It follows that $\bar{h}$ descends to a homeomorphism $\mathcal{O}_1\to \mathcal{O}_2$ between the orbifolds $\mathcal{O}_i = \mathbb{H}/G_i$. However, $\mathcal{O}_1$ is a triangle orbifold---which has a trivial Teichm\"uller space---and hence the orbifold deformation must be isotopic to an isometry. It follows that the boundary map $h$ is the extension of an isometry and hence an element of $\PSL_2(\mathbb{R})$ (in fact, so is $\bar{h}$ by construction). 
\end{proof}

\section{Deformations} \label{S:deformations}

Here we prove that the construction briefly described in the introduction does indeed yield examples with the desired properties. Specifically, we prove the following.

\begin{theorem} \label{T:deforming} Suppose $\varphi_1 \in \tHyp_c(S)$, $q \colon (S,\varphi_1) \to \mathcal O_1$ is a locally isometric branched covering of an orbifold with $q(\cone(\varphi_1)) \subset \mathcal E_1$, the even order orbifold points.   Then for any (orbifold) homeomorphism $F \colon \mathcal O_1 \to \mathcal O_2$, the metric $\varphi_2$ obtained by pulling back the hyperbolic metric by $F \circ q$ has $\CG_{\tphi_1} = \CG_{\tphi_2}$ (in fact, $\CG_{\hphi_1} = \CG_{\hphi_2}$).  If $F$ is not orbifold isotopic to an isometry, then $\varphi_1 \not \sim \varphi_2$.
\end{theorem}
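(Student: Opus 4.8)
The plan is to break the statement into two parts: (i) showing $\CG_{\hphi_1} = \CG_{\hphi_2}$ (which gives $\CG_{\tphi_1} = \CG_{\tphi_2}$ by Lemma~\ref{L:same hat, same tilde}), and (ii) showing that if $F$ is not orbifold-isotopic to an isometry then $\varphi_1 \not\sim \varphi_2$.

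For part (i), first observe that since $\varphi_2$ is pulled back from the hyperbolic orbifold metric on $\mathcal O_2$ via $F\circ q$, and $F$ is an orbifold homeomorphism, the cone set of $\varphi_2$ consists exactly of the preimages under $F\circ q$ of the even order orbifold points of $\mathcal O_2$ where the local degree is at least $2$; in particular, after noting that $q(\cone(\varphi_1))\subset\mathcal E_1$, one checks $\cone(\varphi_1)=\cone(\varphi_2) = C_0$, so the space $\hS$ and its boundary are the same for both metrics. Now I would set up developing maps: since $\widetilde{\dot S}$ is simply connected, the two orbifold structures give developing maps $D_i\colon\hS\to\mathbb H$ with holonomies $\rho_i\colon\pi_1\dot S\to\PSL_2(\mathbb R)$. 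The key point is that $F$ lifts to a homeomorphism $\widetilde F$ of $\mathbb H$ (the universal cover of both orbifolds as topological spaces, or more precisely one passes to the orbifold universal covers); replacing $F$ within its isotopy class we may instead argue as follows. Because both $D_1$ and $D_2$ factor through the orbifold covering maps $\mathbb H\to\mathcal O_i$, and because $F$ is a homeomorphism $\mathcal O_1\to\mathcal O_2$, there is a homeomorphism $\widetilde F\colon\mathbb H\to\mathbb H$ with $\widetilde F\circ D_1 = D_2\circ g$ for a suitable identification $g$ of $\hS$ with itself (coming from the identity on $\dot S$ after the isotopy making $\cone(\varphi_1)=\cone(\varphi_2)$), and $\widetilde F$ conjugates $\rho_1$ to $\rho_2$. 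The crucial geometric input is that $\widetilde F$ sends geodesics of $\mathbb H$ to geodesics of $\mathbb H$ up to bounded Hausdorff distance — this is where I need the hypothesis that $F$ is an orbifold homeomorphism between \emph{compact} hyperbolic orbifolds, so $\widetilde F$ is a $\pi_1$-equivariant quasi-isometry of $\mathbb H$ and hence extends to a homeomorphism $h=\partial\widetilde F$ of $\partial\mathbb H$. Then: a basic $\hphi_1$-geodesic $\eta$ has $D_1(\eta)$ a genuine geodesic of $\mathbb H$ (by the remark before Lemma~\ref{L:Dev surjective}); its image under $\widetilde F$ is at bounded Hausdorff distance from the geodesic with endpoints $h(\partial D_1(\eta))$; and by equivariance this corresponds to $D_2$ of a basic $\hphi_2$-geodesic with the same endpoints in $\hS^1_\infty$ — so $\partial_{\hphi_1}(\eta)\in\CG_{\hphi_2}$. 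The fact that $D_i$ identifies the two boundary circles $\hS^1_\infty$ (as subsets of $S^1$) compatibly with $h$ on the overlapping sets is what lets this conclusion pull back to $\hS$. Running the argument in both directions gives $\CG_{\hphi_1}=\CG_{\hphi_2}$. I expect the \textbf{main obstacle} here to be the bookkeeping of developing maps across the non-compact completion $\hS$ and verifying that the boundary identifications are compatible — in particular that $\widetilde F$, a priori only a homeomorphism, genuinely behaves well enough on the (partial) boundary $\mathcal K$ of Corollary~\ref{C:extendD} to match basic geodesics to basic geodesics with equal endpoints; one likely needs to invoke that the two metrics $\hphi_1,\hphi_2$ are biLipschitz (after an arbitrarily small adjustment of $\varphi_2$, as noted in \S\ref{S:puncturing surfaces}) so the identity $\hS\to\hS$ is a quasi-isometry inducing the identity on $\hS^1_\infty$.

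For part (ii), argue by contrapositive: suppose $\varphi_1\sim\varphi_2$, i.e.\ there is $f\in\Homeo_0(S)$ with $f^*\varphi_2=\varphi_1$. Then $f$ is an isometry $(S,\varphi_1)\to(S,\varphi_2)$ isotopic to the identity, hence preserves the cone set and carries $\varphi_1$-geodesics to $\varphi_2$-geodesics. I claim this forces the branched covers to be compatible in a way that produces an isometry $\mathcal O_1\to\mathcal O_2$ isotopic to $F$. Concretely: the orbifold $\mathcal O_1 = \mathbb H/\Gamma_1$ and the covering $q$ are canonically recoverable from the metric $\varphi_1$ together with the holonomy (the quotient construction in \S\ref{S:prelim develop}), and similarly $F\circ q$ from $\varphi_2$; since $f$ is an isometry lifting to an equivariant isometry $\widehat f$ of $\hS$, it descends to an isometry $\overline f\colon\mathcal O_1\to\mathcal O_2$, and one checks $\overline f\circ q = (F\circ q)\circ f$, so $\overline f$ and $F$ differ by precomposition with $f\in\Homeo_0$ and postcomposition with an identity — hence $F$ is orbifold-isotopic to the isometry $\overline f$, contradicting the hypothesis. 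The point requiring care is that $\mathbb H/\Gamma_1$ as produced by the developing map of $\varphi_1$ need not a priori be the \emph{given} $\mathcal O_1$ if $\Gamma_1$ were indiscrete — but here $\Gamma_1$ \emph{is} discrete by hypothesis (it's the orbifold fundamental group image), so $\mathcal O_1 = \mathbb H/\Gamma_1$ is exactly the given orbifold and the developing/holonomy data is the orbifold covering data. I would present this as a short diagram chase once the developing-map framework from part (i) is in place.
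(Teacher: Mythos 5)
Your part (i) has a genuine gap at its central step. You argue that for a basic $\hphi_1$--geodesic $\eta$, the developed image $D_1(\eta)$ straightens (via the boundary map $h=\partial\widetilde F$) to a geodesic of $\mathbb H$, and that ``by equivariance this corresponds to $D_2$ of a basic $\hphi_2$--geodesic with the same endpoints in $\hS^1_\infty$.'' Nothing in your setup justifies this transfer from $\partial\mathbb H$ back to $\hS^1_\infty$. The developing maps are enormously non-injective: $\bar D_i|_{\mathcal K}\colon \mathcal K\to\partial\mathbb H$ is surjective but nowhere near injective, so $D_i$ does \emph{not} ``identify the two boundary circles $\hS^1_\infty$ as subsets of $S^1$'' in any way compatible with $h$; knowing the developed endpoints match tells you nothing about which lift upstairs you are looking at, hence nothing about endpoints in $\hS^1_\infty$. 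A second, related problem: even granting Lemma~\ref{L:Dev surjective} to produce \emph{some} basic $\hphi_2$--geodesic developing onto the straightened line, you must show the $\hphi_2$--geodesic asymptotic to $\eta$ (the one with the same endpoints in $\hS^1_\infty$) is nonsingular, and a quasi-isometry statement (``bounded Hausdorff distance'') cannot rule out the straightened geodesic, or its relevant lift, running into the $D_2$--images of cone points.

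This is exactly the gap the paper's proof is built to close, and it does so with machinery your proposal omits: the equivariant Delaunay cell decompositions with vertex set the even-order elliptic fixed points (Lemmas~\ref{L:nice Delaunay} and~\ref{L:Delaunay maps}), the replacement of $F$ within its isotopy class by the canonical cellular homeomorphism $\tilde F$, and the combinatorial preservation result (Lemma~\ref{L:Delaunay combinatorics preserved}) saying a geodesic meets a cell of $\tilde\Delta_1$ iff its straightening meets the $\tilde F$--image cell. Because the cell structure descends to $S$ and lifts to $\hS$, the bounded-track homotopy from $\tilde F(D_1(\delta))$ to its straightening \emph{respects cells} and therefore lifts to $\hS$; this is what pins down the correct $\hphi_2$--geodesic $\delta'$, shows it crosses the same cells as $\delta$ (hence is nonsingular), and shows it shares endpoints with $\delta$ in $\hS^1_\infty$. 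Your boundary-extension/quasi-isometry shortcut does not substitute for this. On part (ii), your contrapositive is in a reasonable spirit (the paper itself does not spell this claim out in the displayed proof), but the step where the isometry $f\in\Homeo_0(S)$ ``descends to an isometry $\overline f\colon\mathcal O_1\to\mathcal O_2$'' needs justification: you must show the induced isometry of $\mathbb H$ conjugates the full orbifold deck group of the \emph{given} covering $q$ (not merely the holonomy image $\rho_1(\pi_1\dot S)$) to that of $F\circ q$, and also account for the fact that an isotopy of $S$ to the identity may braid the cone points, so $f$ need not act trivially on $\pi_1\dot S$.
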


Before we begin the proof, we provide an example exhibiting the behavior in this theorem.

\begin{example} \label{Ex:branched example} Let $S$ be a genus $2$ surface, which we view as an octagon with opposite sides identified.  There is an order eight rotational symmetry, and we let $\rho \colon S \to S$ denote its square, which thus has order four.  The quotient $S/\langle \rho \rangle$ is a sphere, and the quotient map $q \colon S \to S/\langle \rho \rangle$ is a branched cover.  The center of the octagon, $\xi \in S$, projects to a single point $P_1 \in S/\langle \rho \rangle$, and $q$ is locally four-to-one near that point. The midpoints and vertices of the octagon account for the rest of the branch points.  The midpoints define four points of $S$ near which $q$ is locally two-to-one, mapping to two points we call $P_2$ and $P_3$, and the vertices define a single point of $S$ near which $q$ is locally four-to-one, mapping to a single point we call $P_4$; see figure~\ref{F:example branched}.

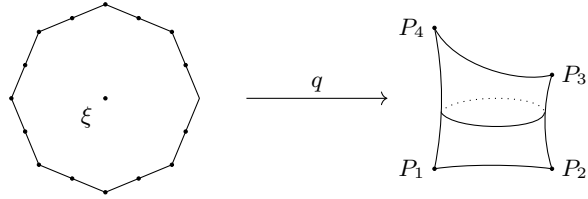
\begin{figure}[htb]
\begin{center}
  \captionsetup{width=.85\linewidth}
\begin{tikzpicture} [scale = 1.25]
\draw (1,0) -- ({cos(45)},{sin(45)}) -- ({cos(90)},{sin(90)}) -- ({cos(135)},{sin(135)}) -- ({cos(180)},{sin(180)}) -- ({cos(225)},{sin(225)}) -- ({cos(270)},{sin(270)}) -- ({cos(315)},{sin(315)}) -- (1,0);
\filldraw ({cos(45)},{sin(45)}) circle [radius = .5pt];
\filldraw ({cos(90)},{sin(90)}) circle [radius = .5pt];
\filldraw ({cos(135)},{sin(135)}) circle [radius = .5pt];
\filldraw ({cos(180)},{sin(180)}) circle [radius = .5pt];
\filldraw ({cos(225)},{sin(225)}) circle [radius = .5pt];
\filldraw ({cos(270)},{sin(270)}) circle [radius = .5pt];
\filldraw ({cos(315)},{sin(315)}) circle [radius = .5pt];
\filldraw ({sqrt(2)/4},{sqrt(2)/4+.5}) circle [radius = .5pt];
\filldraw ({sqrt(2)/4+.5},{sqrt(2)/4}) circle [radius = .5pt];
\filldraw ({-sqrt(2)/4},{sqrt(2)/4+.5}) circle [radius = .5pt];
\filldraw ({-sqrt(2)/4-.5},{sqrt(2)/4}) circle [radius = .5pt];
\filldraw ({sqrt(2)/4},{-sqrt(2)/4-.5}) circle [radius = .5pt];
\filldraw ({sqrt(2)/4+.5},{-sqrt(2)/4}) circle [radius = .5pt];
\filldraw ({-sqrt(2)/4},{-sqrt(2)/4-.5}) circle [radius = .5pt];
\filldraw ({-sqrt(2)/4-.5},{-sqrt(2)/4}) circle [radius = .5pt];
\filldraw (0,0) circle [radius = .5pt];
\node at (-.2,-.2) {\small $\xi$};
\draw[->] (1.5,0) -- (3,0);
\node at (2.25,.15) {\small $q$};
\draw (3.5,1-.25) .. controls (3.6,.5-.25) and (3.6,0-.25) .. (3.5,-.5-.25);
\draw (3.5,1-.25) .. controls (3.7,.6-.25) and (4.4,.4-.25) .. (4.75,.5-.25);
\draw (3.5,-.5-.25) .. controls (3.8,-.45-.25) and (4.4,-.45-.25) .. (4.75,-.5-.25);
\draw (4.75,.5-.25) .. controls (4.65,.2-.25) and (4.65,-.2-.25) .. (4.75,-.5-.25);
\draw (3.57,-.15) .. controls (3.7,-.35) and (4.65,-.35) .. (4.67,-.15);
\draw[dotted] (3.57,-.15) .. controls (3.7,.05) and (4.65,.05) .. (4.67,-.15);
\filldraw (3.5,.75) circle [radius = .5pt];
\filldraw (3.5,-.75) circle [radius = .5pt];
\filldraw (4.75,-.75) circle [radius = .5pt];
\filldraw (4.75,.25) circle [radius = .5pt];
\node[left] at (3.5,-.75) {\small $P_1$};
\node[right] at (4.75,-.75) {\small $P_2$};
\node[right] at (4.75,.25) {\small $P_3$};
\node[left] at (3.5,.75) {\small $P_4$};
\end{tikzpicture}
\caption{A genus $2$ surface obtained by identifying opposite sides admits an obvious order-eight symmetry, and $\rho$ is its order-four square. The branched cover $q \colon S \to S/\langle \rho \rangle$ is branched over the four points $P_1,P_2,P_3,P_4$, and branches at all the points indicated on the octagon, which are identified in $S$ to $6$ points.  We consider $S/\langle \rho \rangle$ as a hyperbolic orbifold with $P_1,P_2,P_3,P_4$ as orbifold points of orders $2,2,2,4$, respectively.} \label{F:example branched}
\end{center}
\end{figure}

Now, if we were to impose a hyperbolic orbifold structure on $S / \langle \rho \rangle$ so that $P_1,P_2,P_3,P_4$ were orbifold points with orders $4,2,2,4$, respectively, then the pull back metric via $q$ would be an honest hyperbolic structure and $q$ would be an orbifold covering map.  Instead, we make $S/\langle \rho \rangle$ into a hyperbolic orbifold $\mathcal O_1$ so that the orders of $P_1,P_2,P_3,P_4$ are $2,2,2,4$, instead.  Pulling back the metric via $q$ now gives $S$ a hyperbolic cone metric $\varphi_1$ on $S$ so that the cone angle of $\xi$ is $4\pi$, and $q$ is a branched cover.  We may deform this hyperbolic orbifold by a homeomorphism to another hyperbolic orbifold $F \colon \mathcal O_1 \to \mathcal O_2$, and we obtain a new hyperbolic cone metric $\varphi_2$ by pulling this metric back via $F \circ q$.  According to Theorem~\ref{T:deforming}, $\CG_{\tphi_1} = \CG_{\tphi_2}$.
\end{example}

Delaunay cell decompositions provide a convenient tool for analyzing hyperbolic orbifolds (and elsewhere below), and so we recall their construction in a special case of interest for us.
Let $G < \PSL_2(\mathbb R)$ be a discrete, cocompact subgroup and assume that it contains an even order elliptic element.  Let $\tCE \subset \mathbb H$ denote the $G$--invariant set of fixed points of elliptic elements of even order.  The associated {\em Delaunay cell decomposition} $\tilde \Delta(\tCE)$ is the unique $G$--invariant cell decomposition of $\mathbb H$ whose $2$--cells are compact, convex polygons with vertices in $\tCE$ characterized by the property that their circumscribing circles contain no points of $\tCE$ in their interiors and so that each $2$--cell is the convex hull of its vertices which are all contained in its circumscribing circle; see e.g. \cite{BTV-Delaunay}.  We collect some observations about this in the next lemma.

\begin{lemma} \label{L:nice Delaunay} Suppose $G < \PSL_2(\mathbb R)$ is a cocompact discrete subgroup containing an even order elliptic element and $\tCE$ is the set of fixed points of even order elliptic elements.  Then
\begin{enumerate}
\item The $1$--skeleton $\tilde \Delta(\tCE)^{(1)}$ is a  $G$--invariant union of biinfinite geodesics through $\tCE$.
\item If $H < G$ is a nontrivial, maximal, cyclic subgroup, and $H$ has odd order, then $H$ fixes a point in the interior of a $2$--cell of $\tilde \Delta(\tCE)$.  Moreover, each $2$--cell contains at most one elliptic fixed point in its interior.
\end{enumerate}
\end{lemma}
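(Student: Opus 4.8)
The plan is to exploit two structural features of the situation: that $\tCE$ is a $G$--invariant net in $\mathbb H$ (nonempty because $G$ contains an even order elliptic, and uniformly discrete and coarsely dense because $G$ is discrete and cocompact), and that the Delaunay cell decomposition $\tilde\Delta(\tCE)$ is \emph{canonically} determined by $\tCE$, so that every $g\in G$ permutes its open cells. The key local observation I would record first is that each $v\in\tCE$ is the fixed point of the order two rotation $r_v\in G$ (a suitable power of an even order elliptic element of $G$ fixing $v$); this $r_v$ fixes $v$, permutes the cells incident to $v$, and rotates tangent directions at $v$ by $\pi$, so it carries each edge $[v,a]$ issuing from $v$ to the edge $[v,r_v(a)]$ issuing from $v$ in the exactly opposite direction.

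For part (1), I would fix an edge $e=[a,b]$ and let $\ell$ be the complete geodesic of $\mathbb H$ through $a$ and $b$. Applying $r_b$ to $e$ shows that $[b,r_b(a)]$ is again an edge, collinear with $e$, on the far side of $b$, and of the same length as $e$; iterating alternately at the two moving endpoints produces a bi-infinite sequence $\dots,a,b,r_b(a),\dots$ of points of $\tCE$ on $\ell$, consecutive ones joined by edges of the same positive length as $e$, so their union exhausts $\ell$. Hence $\ell\subset\tilde\Delta(\tCE)^{(1)}$. Since every point of the $1$--skeleton lies on some edge, the $1$--skeleton is the union of these complete geodesics, which is $G$--invariant because $\tilde\Delta(\tCE)$ is, and each such geodesic meets $\tCE$. (The only property of Delaunay edges needed here is that the relative interior of an edge contains no point of $\tCE$, which is immediate from the empty circumscribed circle property.)

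For part (2), write $H=\langle h\rangle$ with $n=|H|\geq 3$ odd and let $v\in\mathbb H$ be the fixed point of $h$. I would first check $v\notin\tCE$: otherwise some even order elliptic $g\in G$ fixes $v$, so $\Stab_G(v)$ is a finite cyclic group containing $H$, hence equals $H$ by maximality, forcing the even number $|g|$ to divide the odd number $n$ -- impossible. So $v$ lies in the interior of a unique open cell of $\tilde\Delta(\tCE)$, and $h$, permuting the cells and fixing $v$, fixes that cell. That cell cannot be an edge: if $v$ were interior to an edge $[a,b]$ then $h$ preserves $[a,b]$ and cannot fix $a$ (an elliptic isometry fixing the two distinct points $a$ and $v$ is trivial), so it swaps $a$ and $b$, whence $h^2$ fixes both and is trivial, contradicting $n\geq 3$. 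Hence $v$ lies in the interior of a $2$--cell $\sigma$, and all of $H$ fixes $v\in\mathrm{int}(\sigma)$, which is the first assertion.

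For the ``moreover'' I would argue by contradiction: if $v_1,v_2\in\mathrm{int}(\sigma)$ are fixed by nontrivial elliptic elements $h_1,h_2\in G$, then by the same cell-uniqueness argument each $h_i$ fixes $\sigma$, so $h_1,h_2\in\Stab_G(\sigma)$, which is finite since $\sigma$ is compact and $G$ acts properly; a finite subgroup of $\Isom(\mathbb H)$ has a fixed point $w\in\mathbb H$, and a nontrivial elliptic isometry fixes a unique point of $\mathbb H$, so applying this to $h_1$ and $h_2$ gives $v_1=w=v_2$. The step I expect to require the most care is the bookkeeping that ties the $G$--equivariance of the canonical decomposition to the elliptic elements -- producing the $\pi$--rotations $r_v$ and using that any $g\in G$ fixing a point must fix the open cell whose interior contains it; once that is in place the rest is routine.
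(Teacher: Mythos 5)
Your proof is correct. Part (1) and the first assertion of part (2) follow essentially the paper's argument: the order-two rotations $r_v$ at the vertices extend each edge to a complete $G$--invariant geodesic, and the fixed point of an odd-order maximal cyclic $H$ is excluded from the $0$-- and $1$--skeleta before being placed in the interior of a $2$--cell (your edge exclusion via endpoint swapping is a harmless variant of the paper's observation that $H\cdot e$ would consist of several edges through the same interior point). Where you genuinely diverge is the ``moreover'' clause: the paper invokes the defining property of Delaunay cells --- the $H$--invariant circumscribed circle of $\sigma$ forces every elliptic fixed point in the interior of $\sigma$ to be its center --- whereas you use only that $\Stab_G(\sigma)$ is finite (proper discontinuity plus compactness of $\sigma$) together with the Cartan fixed point theorem and the uniqueness of the fixed point of a nontrivial elliptic. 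Your version is more elementary and applies verbatim to any $G$--invariant cell decomposition with compact $2$--cells, at the cost of not identifying the elliptic fixed point as the circumcenter; the paper's version records that extra geometric fact, though the lemma as stated does not require it.
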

Note in condition $2$, since the even order elliptic elements fix vertices, any elliptic fixed point in the interior of a $2$--cell is necessarily the fixed point of an odd order elliptic subgroup.
\begin{proof} $\tilde \Delta(\tCE)^{(1)}$ is a $G$--invariant union of geodesic segments.  Given any $\zeta \in \tCE$ there is a unique elliptic element $\tau_\zeta \in G$  of order two fixing $\zeta$.  For any edge $e$ of $\tilde \Delta(\tCE)^{(1)}$ having $\zeta$ as an endpoint, $\tau_\zeta(e)$ is also an edge having $\zeta$ as an endpoint, and together $e \cup \tau_\zeta(e)$ is a geodesic segment (because it makes angle $\pi$ on both sides at $\zeta$).  Applying the same reasoning at the other endpoints of $e$ and $\tau_\zeta(e)$, and repeating recursively, we see that the unique biinfinite geodesic containing $e$ is a union of edges of $\tilde \Delta (\tCE)$, proving the first statement.

For the second statement, observe that the unique fixed point $\xi$ cannot be in the $1$--skeleton.  To see this, note first that if $\xi$ were a vertex, then $H$ would contain an even order element, and hence $|H|$ would be even, a contradiction.  On the other hand, if $\xi$ were on the interior of an edge $e$, then by $G$--invariance of $\tilde \Delta(\tCE)$, $H \cdot e$ would consist of more than one edge containing $\xi$, meaning that $\xi$ is a vertex, another contradiction.  Therefore, $\xi$ is necessarily in the interior of some $2$--cell $\sigma$.  Since $H$ fixes $\xi$, it fixes $\sigma$, and hence the (unique) circumcircle of $\sigma$ is invariant by $H$ and thus has $\xi$ as its center.  Any other elliptic subgroup with a unique fixed point in the interior of $H$ must also have its fixed point as the center of this circumcenter, and hence is contained in $H$.  That is, there is a unique elliptic fixed point in the interior of $\sigma$, as required.
\end{proof}

We will use the Delaunay cell structure to construct maps from $\mathbb H$ to itself.  To describe the construction of the maps, we first recall that given any two $k$--simplices $\sigma_1,\sigma_2 \in \mathbb H$ and a bijection between the vertices $\sigma_1^{(0)} \to \sigma_2^{(0)}$, there is a natural way of extending this to a map $\sigma_1 \to \sigma_2$ as follows.  In the hyperboloid model, the vertices on the hyperboloid represent linearly independent vectors and the bijection between the vertices of $\sigma_1$ and $\sigma_2$ determines a linear isomorphism between their respective spans. Restricting this map to the simplex $\sigma_1$ and radially projecting back to the hyperboloid defines the {\em canonical map} $\sigma_1 \to \sigma_2$ extending the bijection on the vertices. We observe that (1) pre- and post-composing a canonical map with isometries of $\mathbb H$ is again a canonical map, and (2) the restriction of a canonical map to a face is again a canonical map (c.f.~\cite[Section~11.4]{Ratcliffe}).

Next suppose $\mathcal T_1,\mathcal T_2$ are {\em geometric triangulations} of the hyperbolic plane, a pair of hyperbolic orbifolds, or hyperbolic (cone) surfaces: that is, these are $\Delta$--complex structures (in the sense of \cite{Hatcher}) so that each triangle of either structure admits a locally isometric parameterization by a geodesic triangle in $\mathbb H$.  A cellular homeomorphism is a {\em canonical map} if its restriction to each simplex of $\mathcal T_1$ is a canonical map to a simplex of $\mathcal T_2$; or more precisely, the lift via the locally isometric parameterizations of the restriction to each simplex is a canonical map.  Every cellular homeomorphism is isotopic through cellular maps to a canonical map---indeed, it is straightforward to construct the isotopy inductively over the skeleta.

\begin{lemma} \label{L:Delaunay maps}
Let $G_1,G_2 < \PSL_2(\mathbb R)$ be two discrete, cocompact groups, containing even order elliptic elements, and  $\rho \colon G_1 \to G_2$ an isomorphism.  If $\tCE_1,\tCE_2 \subset \mathbb H$ are the sets of even order elliptic fixed points, then there is a $G_2$--invariant cell structure $\tilde \Delta_2$ with $\tilde \Delta_2^{(0)} = \tCE_2$ and a $\rho$--equivariant cellular homeomorphism $\tilde F \colon (\mathbb H,\tilde \Delta_1) \to (\mathbb H,\tilde \Delta_2)$ where $\tilde \Delta_1 = \tilde \Delta(\tCE_1)$.  Moreover, $\tilde \Delta_2$ satisfies the same conclusions as in Lemma~\ref{L:nice Delaunay}.

In fact, there are subdivisions of the cell structures to triangulations whose vertex set is the set of all elliptic fixed points so that the map is an equivariant canonical map.  The descent $F \colon \mathbb H/G_1 \to \mathbb H/G_2$ of $\tilde F$ is likewise a canonical map with respect to geometric triangulations that have exactly one edge incident to each odd-order orbifold point.
\end{lemma}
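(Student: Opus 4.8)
The plan is to build $\tilde F$ inductively over the skeleta of $\tilde \Delta_1 = \tilde \Delta(\tCE_1)$, transporting the combinatorics across $\rho$, and then to verify at the end that the target cell structure inherits the conclusions of Lemma~\ref{L:nice Delaunay}. First I would define $\tilde \Delta_2$ to be the Delaunay cell decomposition $\tilde \Delta(\tCE_2)$; by Lemma~\ref{L:nice Delaunay} it automatically satisfies the required conclusions, so that part of the statement is for free — the real content is equivariance of the map. To get the map, note that $\rho$ induces a bijection $\tCE_1 \to \tCE_2$: an even-order elliptic fixed point $\zeta$ is the fixed point of a unique maximal cyclic subgroup $H_\zeta < G_1$ of even order, and $\rho(H_\zeta)$ is again a maximal cyclic subgroup of even order (order is an isomorphism invariant), so it fixes a unique point of $\tCE_2$; this assignment is a $\rho$--equivariant bijection on $0$--skeleta. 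The key point is that this bijection carries the Delaunay combinatorics over: because $\rho$ is induced by an abstract isomorphism of cocompact Fuchsian-type groups, and the Delaunay decomposition is \emph{intrinsically} determined by the pair $(G_i, \tCE_i)$ (equivalently by the orbifold $\mathbb H/G_i$ together with its even-order cone locus), the edge and face sets correspond. Concretely, one argues that the $G_1$--orbits of cells of $\tilde \Delta_1$ are in bijection with $G_2$--orbits of cells of $\tilde \Delta_2$ via $\rho$, using that the Delaunay cell structure of a closed hyperbolic orbifold with marked points depends only on its isometry type — and $\rho$ realizes an isomorphism of the \emph{orbifold fundamental groups}, hence (Mostow-type rigidity is \emph{not} available in dimension two, but we are not claiming $\tilde F$ is an isometry, only that the cell structures are abstractly isomorphic as $\rho$--sets, which follows from the topological classification of the orbifold together with the combinatorial recipe for Delaunay).

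Hmm — on reflection, the cleanest route avoids claiming the Delaunay structures are $\rho$--isomorphic on the nose. Instead I would: (i) pick any $\rho$--equivariant homeomorphism $\tilde G \colon \mathbb H \to \mathbb H$ (these exist because $G_1, G_2$ are cocompact Fuchsian groups and $\rho$ is a type-preserving isomorphism — push forward the identity using a homeomorphism of the quotient orbifolds realizing $\rho$, which exists by the topological classification of $2$--orbifolds and the fact that $\rho$ preserves the conjugacy classes and orders of elliptics); (ii) observe $\tilde G(\tCE_1) = \tCE_2$, so $\tilde G(\tilde \Delta_1)$ is \emph{some} $G_2$--invariant cell structure on $\mathbb H$ with vertex set $\tCE_2$ whose cells are topological disks — take this to be $\tilde \Delta_2$ (note: this need not literally be $\tilde \Delta(\tCE_2)$, but since the conclusions of Lemma~\ref{L:nice Delaunay} are about geodesic $1$--skeleta and uniqueness of interior elliptic points, I would instead isotope $\tilde G$ equivariantly so that $\tilde \Delta_2 = \tilde G(\tilde \Delta_1)$ has geodesic edges and one checks directly statements (1)--(2) hold, or simply keep $\tilde \Delta_2 := \tilde \Delta(\tCE_2)$ and use the standard fact that two $G_2$--invariant cell structures on $\mathbb H$ with the same $0$--skeleton and disk $2$--cells are equivariantly cellularly homeomorphic); (iii) set $\tilde F$ to be such an equivariant cellular homeomorphism from $(\mathbb H, \tilde \Delta_1)$ to $(\mathbb H, \tilde \Delta_2)$.

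For the ``in fact'' clause: I would first subdivide each $2$--cell of $\tilde \Delta_i$ by coning to its unique interior odd-order elliptic fixed point if one exists, and otherwise coning to an arbitrary (equivariantly chosen) interior vertex — using Lemma~\ref{L:nice Delaunay}(2) that there is at most one interior elliptic point, so this gives a $G_i$--invariant triangulation $\mathcal T_i$ with vertex set exactly all elliptic fixed points (even-order ones are the Delaunay vertices, odd-order ones are the cone points of the subdivision) and with exactly one edge from each coning vertex into each incident triangle. Since $\tilde F$ is cellular and $\rho$--equivariant and carries interior elliptic points to interior elliptic points (same argument as for $\tCE$: a maximal cyclic odd-order subgroup maps to one of the same order), it carries $\mathcal T_1$ to $\mathcal T_2$ combinatorially; now invoke the stated fact that every cellular homeomorphism between geometric triangulations is isotopic through cellular maps to the canonical map — and this isotopy, built inductively over skeleta, can be made $\rho$--equivariant since the canonical extension of a vertex-bijection over a simplex is unique, hence automatically equivariant on each $G_1$--orbit of simplices once chosen on orbit representatives. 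The descent $F \colon \mathbb H/G_1 \to \mathbb H/G_2$ is then a canonical map with respect to the descended triangulations $\mathcal T_i / G_i$, and each odd-order orbifold point, being the cone vertex of the star of a single $2$--cell upstairs, has exactly one incident edge downstairs.

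The main obstacle is step (ii)/(iii): establishing that the two $G_2$--invariant cell structures with vertex set $\tCE_2$ (namely $\tilde G(\tilde \Delta_1)$ and $\tilde \Delta(\tCE_2)$) are equivariantly cellularly homeomorphic, or equivalently that the abstract isomorphism $\rho$ genuinely matches up Delaunay combinatorics. The honest fix is to \emph{define} $\tilde \Delta_2 := \tilde G(\tilde \Delta_1)$ and then verify conclusions (1)--(2) of Lemma~\ref{L:nice Delaunay} for it directly — (2) is essentially automatic from equivariance and the transport of maximal cyclic subgroups, while (1), the geodesic-edge property, requires isotoping $\tilde G$ equivariantly so that each edge is straightened to the geodesic through its Delaunay endpoints, which is possible because the $2$--cells stay embedded disks under this straightening (here one uses convexity: the Delaunay $2$--cells of $\tilde \Delta(\tCE_2)$ are convex, so straightening $\tilde G(\tilde \Delta_1)$ toward them is an equivariant isotopy). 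This is the step I expect to require the most care to state precisely; everything after it is the routine skeleton-by-skeleton construction of the canonical map.
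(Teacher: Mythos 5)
The central gap is in your steps (ii)/(iii): you never actually establish that the straightened image of $\tilde \Delta_1^{(1)}$ is an embedded, $G_2$--invariant cell structure, and both of your fallback justifications fail. First, the Delaunay decomposition of $(\mathbb H,\tCE_2)$ is \emph{not} an invariant of the abstract isomorphism class of $G_2$: it depends on the actual hyperbolic geometry (the positions of the points of $\tCE_2$), and deforming in Teichm\"uller space changes its combinatorics, so there is no reason for $\tilde \Delta(\tCE_2)$ to be $\rho$--equivariantly isomorphic to $\tilde\Delta(\tCE_1)$ --- and indeed the paper never claims this: in its proof $\tilde\Delta_2$ is \emph{not} $\tilde\Delta(\tCE_2)$ but the image cell structure, which merely satisfies the conclusions of Lemma~\ref{L:nice Delaunay}. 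Second, the ``standard fact'' that two $G_2$--invariant cell structures with the same $0$--skeleton and disk $2$--cells are equivariantly cellularly homeomorphic is false (the same invariant vertex set admits combinatorially inequivalent invariant decompositions). That leaves your straightening of $\tilde G(\tilde\Delta_1)$, but the convexity you invoke is that of the cells of $\tilde\Delta(\tCE_2)$, which are not the cells being straightened. What must be shown is that when each edge of $\tilde G(\tilde\Delta_1)$ is replaced by the geodesic segment between the images of its endpoints, no two segments cross improperly, the vertices appear along each complete geodesic in the correct order, and the $2$--cells remain embedded. This is precisely what the paper's proof verifies: it produces the boundary homeomorphism $h$ conjugating $\rho$ (from the quasi-isometry induced by $\rho$, rather than from an orbifold realization theorem), shows that $\tilde F(\zeta)$ lies on the straightened geodesic $h_*(\eta)$ for every Delaunay geodesic $\eta$ through $\zeta$ by conjugating the order-two elliptic $\tau_\zeta$, shows the linear order of vertices along $\eta$ is carried to the order along $h_*(\eta)$ using the hyperbolic elements $\tau_{\zeta'}\tau_\zeta$, and uses preservation of linking of endpoints to get injectivity of $\tilde F$ on the $1$--skeleton; convexity of the new $2$--cells is then automatic because the new $1$--skeleton is a union of complete geodesics. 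Without some version of these verifications, your key step is an assertion, not a proof.

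A secondary but real problem is your subdivision in the ``in fact'' clause. Coning a $2$--cell with no interior elliptic point ``to an arbitrary (equivariantly chosen) interior vertex'' introduces vertices that are not elliptic fixed points, violating the requirement that the triangulation's vertex set be exactly the set of elliptic fixed points; and coning a cell $\sigma$ containing an odd-order fixed point $\xi$ to $\xi$ in one step produces, after descending, as many edges at the odd-order orbifold point as there are $H_1$--orbits of vertices of $\sigma$, so the ``exactly one edge incident to each odd-order orbifold point'' conclusion fails unless $\sigma$ is already a $|H_1|$--gon. The paper handles both issues by subdividing cells without interior fixed points by diagonals only (no new vertices), and, for a cell containing $\xi$, first joining consecutive vertices of a single $H_1$--orbit to cut out an $H_1$--invariant $k$--gon, coning only that $k$--gon to $\xi$, and subdividing the remaining polygons equivariantly without new vertices.
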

\begin{proof}
Set $\tilde \Delta_1 = \Delta(\tCE_1)$.  By Lemma~\ref{L:nice Delaunay} part 1, there is a set $\Theta_1$ of biinfinite geodesics so that
\[ \tilde \Delta_1 = \bigcup_{\eta \in \Theta_1} \eta.\]
For any $\zeta \in \tilde \Delta_1^{(0)}$ there is a maximal elliptic subgroup $E_1(\zeta) < G_1$ of even order, fixing $\zeta$.  Let
\[ \Theta_1(\zeta) = \{\eta \in \Theta_1 \mid \zeta \in \eta \}\]
and observe that $\Theta_1(\zeta)$ is $E_1(\zeta)$--invariant.

The isomorphism $\rho$ is given as a topological conjugacy by a homeomorphism
\[ h \colon \partial \mathbb H \to \partial \mathbb H,\]
since the isomorphism is a quasi-isometry and thus extends to an equivariant homeomorphism of the boundaries at infinity.  The homeomorphism $h$ defines a straightening map $\eta \mapsto h_*(\eta)$ that sends any biinfinite geodesic $\eta$ to the unique geodesic with endpoints $h(\partial \eta)$.  Since $h$ is $\rho$--equivariant, so is $h_*$. We note that since $h$ is a homeomorphism, two geodesics $\eta$, $\eta'$ intersect in $\mathbb{H}$ if and only if the geodesics $h_*(\eta)$ and $h_*(\eta')$ intersect as well. This is because $h$ must preserve the linking (or lack thereof) of $\partial \eta$ and $\partial \eta'$.

We will define $\tilde{F}$ inductively over the skeleta, and in the process, we define the cell structure $\tilde \Delta_2$.  For any vertex $\zeta \in \tilde \Delta_1^{(0)}$, we define $\tilde F(\zeta)$, to be the unique fixed point of $\rho(E_1(\zeta)) < G_2$.  By construction, the restriction of $\tilde F$ to $\tilde \Delta_1^{(0)}$ is $\rho$--equivariant.  We claim that for any $\zeta \in \tilde \Delta_1^{(0)}$, we have
\[ \tilde F(\zeta) \in \bigcap_{\eta \in \Theta_1(\zeta)} h_*(\eta). \]
To see this, let $\tau_\zeta \in E_1(\zeta)$ be the unique element of order $2$, and note that $\tau_\zeta(\eta) = \eta$ for all $\eta \in \Theta_1(\zeta)$, or for any geodesic passing through $\zeta$.  Since $\rho$ is induced by conjugation by $h \colon \partial \mathbb H \to \partial \mathbb H$, restricting to $\partial \mathbb H$ we have
\[ \tau_{\tilde F(\zeta)} = \rho(\tau_\zeta) =  h \circ \tau_\zeta \circ h^{-1}.
\]
Therefore, for any $\eta$ passing through $\zeta$, we have
\begin{equation} \label{Eq:baby concurrence} \tau_{\tilde F(\zeta)}(\partial h_*(\eta)) = \tau_{\tilde F(\zeta)}(h(\partial \eta)) = h(\tau_\zeta(h^{-1}(h(\partial \eta)))) = h(\tau_\zeta(\partial \eta)) = h(\partial \eta) = \partial h_*(\eta).
\end{equation}
That is, $h_*(\eta)$ is invariant by $\tau_{\tilde F(\zeta)}$ and consequently $\tilde F(\zeta)$ is on $h_*(\eta)$, as required.

For every edge $e$ of $\tilde \Delta_1$ connecting a pair of vertices $\zeta,\zeta' \in \tilde \Delta_1^{(0)}$, define $\tilde F$ to send $e$ by the canonical map to the geodesic segment between $\tilde F(\zeta)$ and $\tilde F(\zeta')$.  We observe that if $\eta \in \Theta_1$ is the unique geodesic containing $e$, then $h_*(\eta)$ contains $\tilde F(e)$.  Moreover, $h_*(\eta)$ preserves the order of the vertices along $\eta$. To see this, orient $\eta$ and suppose that $\zeta, \zeta'\in\eta$ such that $\zeta$ is encountered before $\zeta'$ along $\eta$ with respect to this orientation. Let $\tau_{\zeta}, \tau_{\zeta'} \in G_1$ be order two elliptic elements fixing $\zeta, \zeta'$ respectively. Note that $\tau_{\zeta}$ and $\tau_{\zeta'}$ both fix $\eta$ (but interchange its endpoints) and the hyperbolic element $\tau_{\zeta'}\circ\tau_{\zeta}$ has $\eta$ as its axis and translates along it in the positive direction; see Figure~\ref{F:eta hyperbolic}. It follows that $\rho(\tau_{\zeta'}\circ\tau_{\zeta})$ has $h_*(\eta)$ as its axis and translates along it in the positive direction. Since $\rho(\tau_{\zeta'}\circ\tau_{\zeta}) = \rho(\tau_{\zeta'})\circ\rho(\tau_{\zeta})$ and $\rho(\tau_{\zeta})$ and $\rho(\tau_{\zeta'})$ are elliptic elements fixing $\tilde F(\zeta)$ and $\tilde F(\zeta')$ it follows that $F(\zeta)$ is encountered before $F(\zeta')$ along $h_*(\eta)$ and so the order is preserved. In particular, $\tilde F$ maps each $\eta \in \Theta_1$ to $h_*(\eta)$ by a map which is the canonical map on each segment between consecutive vertices of $\tilde \Delta_1$.  For any two geodesics $\eta,\eta'$, we have $\eta \cap \eta' = \emptyset$ if and only if $h_*(\eta) \cap h_*(\eta') = \emptyset$ and it follows that $\tilde F$ is injective on the $1$--skeleton $\tilde \Delta_1^{(1)}$.   
Since the map on the zero--skeleton is $\rho$--equivariant and the maps on the edges are canonical maps, it follows that $\tilde F$ is a $\rho$--equivariant homeomorphism onto its image.
Consequently, $\tilde F(\tilde \Delta_1^{(1)})$ is the $1$--skeleton of a cell structure $\tilde \Delta_2$ on the image.

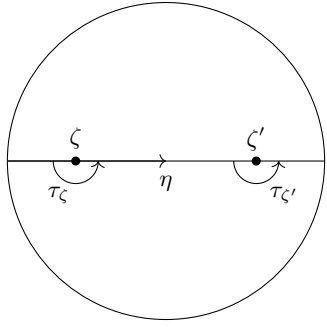
\begin{figure}[htb]
\begin{center}
  \captionsetup{width=.85\linewidth}
\begin{tikzpicture} [scale = 1.5]
\draw (0,0) circle [radius = 40pt];
\draw (-1.4,0) -- (1.4,0);
\draw[->] (-1.4,0) -- (0,0);
\filldraw (-.8,0) circle [radius=1pt];
\filldraw (.8,0) circle [radius=1pt];
\draw [->,domain=180:360] plot ({.8+.2*cos(\x)}, {.2*sin(\x)});
\draw [->,domain=180:360] plot ({-.8+.2*cos(\x)}, {.2*sin(\x)});
\node at (0,-.2) {\small $\eta$};
\node at (-.8,.2) {\small $\zeta$};
\node at (.8,.2) {\small $\zeta'$};
\node at (-.95,-.3) {\small $\tau_\zeta$};
\node at (1.05,-.3) {\small $\tau_{\zeta'}$};
\end{tikzpicture}
\caption{The composition of $\tau_\zeta$ and $\tau_{\zeta'}$ is a hyperbolic isometry with axis $\eta$.} \label{F:eta hyperbolic}
\end{center}
\end{figure}

Evidently, $\tilde \Delta_2$ has property (1) of Lemma~\ref{L:nice Delaunay}. Because of this $\rho$--equivariance, $\tilde F$ preserves the cyclic ordering of edges incident to each vertex. This implies that cycles that bound a $2$--cell in $\tilde \Delta_1$ are sent to cycles that bound a $2$--cell in $\tilde \Delta_2$. This suggests how we may extend $\tilde F$ to a map on all of $\mathbb{H}$. The details of this follow shortly.

To see that property (2) also holds, we note that the boundary of any $2$--cell containing an elliptic fixed point in its interior is sent to the boundary of a $2$--cell of $\tilde \Delta_2$ by $\rho$--equivariance, and that this defines a bijection between the $2$--cells containing elliptic fixed points in their interiors.  We also note that each $2$--cell of $\tilde \Delta_2$ is a convex polygon: the $1$--skeleton is a union of biinfinite geodesics and so the interior angles are all less than $\pi$.

Finally, we subdivide each $\tilde \Delta_i$ to a triangulations $\tilde \Delta_i'$ as follows.  For each non-triangular $2$--cell of $\tilde \Delta_1$ not containing an elliptic fixed point in its interior, we note the interior is mapped disjoint from itself by any nontrivial element of $G_1$, and so the same is true for the corresponding $2$--cell of $\tilde \Delta_2$.  We subdivide the union of all such $2$--cells of $\tilde \Delta_1$ in any $G_1$--invariant way, without introducing new vertices, so that each new $1$--cell is a geodesic segment.  Convexity of the $2$--cells of $\tilde \Delta_2$ implies that there is a $G_2$--invariant subdivision of $2$--cells not containing elliptic fixed points in their interiors so that $\tilde F$ extends to a cellular homeomorphism on the subdivision.  Finally, for each $2$--cell $\sigma$ of $\tilde \Delta_1$ that contains in its interior the fixed point $\xi$ of some maximal, odd order elliptic subgroup $H_1 < G_1$, first pick a vertex of $v$ of $\sigma$ and begin the subdivision by connecting consecutive vertices of the orbit $H_1 \cdot v$ (if they are not already connected by an edge), doing so in a $G_1$--invariant way.  The cell $\sigma$ now contains an $H_1$--invariant $k$--gon with $\xi$ in its interior and $k$ polygons permuted by $H_1$, where $k = |H_1|$, unless $\sigma$ was already a $k$--gon.  Now we cone off the $k$--gon to $\xi$ and subdivide the remaining polygons in a $G_1$--invariant way.  See Figure~\ref{F:subdividing}.  

\begin{figure}[htb]
\begin{center}
  \captionsetup{width=.85\linewidth}
\begin{tikzpicture} [scale = 1.25]
\draw (1,0) -- ({cos(40)}, {sin(40)}) -- ({cos(80)}, {sin(80)}) -- ({cos(120)}, {sin(120)}) -- ({cos(160)}, {sin(160)}) -- ({cos(200)}, {sin(200)}) -- ({cos(240)}, {sin(240)}) -- ({cos(280)}, {sin(280)}) -- ({cos(320)}, {sin(320)}) -- (1,0);
\filldraw (0,0) circle [radius = .5pt];
\node at (-.1,-.1) {\small $\xi$};
\node at (-.75,.75) {\small $\sigma$};
\draw[->] (1.5,0) -- (3,0);
\node at (2.25,.2) {\small subdivide};
\draw (5.5,0) -- ({4.5+cos(40)}, {sin(40)}) -- ({4.5+cos(80)}, {sin(80)}) -- ({4.5+cos(120)}, {sin(120)}) -- ({4.5+cos(160)}, {sin(160)}) -- ({4.5+cos(200)}, {sin(200)}) -- ({4.5+cos(240)}, {sin(240)}) -- ({4.5+cos(280)}, {sin(280)}) -- ({4.5+cos(320)}, {sin(320)}) -- (5.5,0);
\draw (5.5,0)  -- ({4.5+cos(120)}, {sin(120)}) -- ({4.5+cos(240)}, {sin(240)}) -- (5.5,0);
\draw (5.5,0) -- ({4.5+cos(80)}, {sin(80)});
\draw ({4.5+cos(120)}, {sin(120)}) -- ({4.5+cos(200)}, {sin(200)});
\draw ({4.5+cos(240)}, {sin(240)}) -- ({4.5+cos(320)}, {sin(320)});
\draw (4.5,0) -- (5.5,0);
\draw (4.5,0) -- ({4.5+cos(120)}, {sin(120)}) ;
\draw (4.5,0) -- ({4.5+cos(240)}, {sin(240)}) ;
\filldraw (4.5,0) circle [radius = .5pt];
\end{tikzpicture}
\caption{Subdividing a $2$--cell $\sigma$ containing the fixed point $\xi$ of a maximal cyclic subgroup $H_1 < G_1$ of order $k = 3$ in its interior.  In this example, $\sigma$ is the $9$--gon on the left.  The subdivision is as shown on the right.} \label{F:subdividing}
\end{center}
\end{figure}
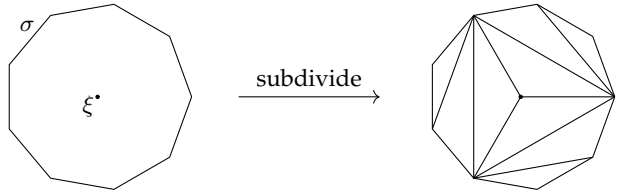

We have thus subdivided $\tilde \Delta_1$ to a $G_1$--invariant triangulation $\tilde \Delta_1'$, and the $\rho$--equivariant homeomorphism $\tilde F|_{\tilde \Delta_1}$, together with the convexity of the $2$--cells of $\tilde \Delta_2$ determines an associated subdivision to a triangulation $\tilde \Delta_2'$ so that $\tilde F|_{\tilde \Delta_1}$ extends to a canonical map with respect to $\tilde \Delta_1'$ and $\tilde \Delta_2'$.  The properties of the descent to the orbifolds has been built into the definition, completing the proof.
\end{proof}

We will also need the following property of isomorphic, cocompact, discrete groups $G_1, G_2$ as in Lemma~\ref{L:Delaunay maps}.
\begin{lemma} \label{L:Delaunay combinatorics preserved} Suppose $G_1,G_2$ are any two cocompact, discrete groups containing even order elliptic elements, $\rho \colon G_1 \to G_2$ is an isomorphism given by conjugation by $h \colon \partial \mathbb H \to \partial \mathbb H$, and suppose
\[ \tilde F \colon (\mathbb H,\tilde \Delta_1) \to (\mathbb H,\tilde \Delta_2)\]
is the cellular homeomorphism from Lemma~\ref{L:Delaunay maps}.  Then for any biinfinite geodesic $\eta$ in $\mathbb H$, $\eta$ nontrivially intersects a $k$--cell $\sigma$ of $\tilde \Delta (\tCE_1)$ if and only if the straightening $h_*(\eta)$ of $\tilde F(\eta)$ nontrivially intersects $\tilde F(\sigma)$.
\end{lemma}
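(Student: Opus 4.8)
The plan is to split into the cases $k=0,1,2$ and, in each, to rephrase ``$\eta$ meets $\sigma$'' as a condition on the linking of ideal endpoints and on the order-two elliptics fixing the vertices of $\sigma$ --- conditions that $h$ transparently preserves. The case $k=2$ should reduce to the others: a $2$--cell $\sigma$ of $\tilde\Delta(\tCE_1)$ is a compact convex hyperbolic polygon, and by Lemmas~\ref{L:nice Delaunay} and~\ref{L:Delaunay maps} so is the $2$--cell $\tilde F(\sigma)$ of $\tilde\Delta_2$; for any biinfinite geodesic $\mu$ and compact convex $K\subset\mathbb H$, the intersection $\mu\cap K$ is a convex subset of the line $\mu$, hence (by compactness of $K$) a point or a compact segment, so it is nonempty iff $\mu$ meets the frontier $\partial K$. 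Therefore $\eta\cap\sigma\neq\emptyset$ iff $\eta$ meets some edge or vertex of $\sigma$, and similarly $h_*(\eta)\cap\tilde F(\sigma)\neq\emptyset$ iff $h_*(\eta)$ meets some edge or vertex of $\tilde F(\sigma)$; since $\tilde F$ is a cellular homeomorphism carrying the faces of $\sigma$ onto those of $\tilde F(\sigma)$, the $k=2$ case will follow once the lemma is known for $0$-- and $1$--cells.

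For $k=0$, say $\sigma=\{\zeta\}$ with $\zeta\in\tCE_1$: let $\tau_\zeta\in G_1$ be the order-two rotation about $\zeta$, so $\tilde F(\zeta)$ is the fixed point of $\rho(\tau_\zeta)$, which acts on $\partial\mathbb H$ as $h\tau_\zeta h^{-1}=\tau_{\tilde F(\zeta)}$. For any biinfinite geodesic $\mu$ one has $\zeta\in\mu$ iff $\tau_\zeta$ preserves $\mu$ iff $\tau_\zeta$ fixes the pair of ideal endpoints $\partial\mu$; applying $h$, this holds iff $\tau_{\tilde F(\zeta)}$ fixes $h(\partial\mu)=\partial h_*(\mu)$, i.e.\ iff $\tilde F(\zeta)\in h_*(\mu)$. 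Taking $\mu=\eta$ settles $k=0$, and I will reuse the general statement $\zeta\in\mu\iff\tilde F(\zeta)\in h_*(\mu)$ below.

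For $k=1$, let $\sigma=e=[\zeta,\zeta']$ be an edge lying on a geodesic $\nu\in\Theta_1$. If $\eta=\nu$ then $h_*(\eta)=h_*(\nu)\supseteq\tilde F(e)$ by Lemma~\ref{L:Delaunay maps}, and both sides of the equivalence hold. If $\eta\neq\nu$, then, since distinct geodesics of $\mathbb H$ meet iff their endpoints link, $\eta$ meets $e$ iff $\partial\eta$ links $\partial\nu$ and the crossing point $x=\eta\cap\nu$ lies in $[\zeta,\zeta']$; because $h$ preserves linking, $h_*(\eta)$ meets $\tilde F(e)$ iff $\partial\eta$ links $\partial\nu$ and $x'=h_*(\eta)\cap h_*(\nu)$ lies in $[\tilde F(\zeta),\tilde F(\zeta')]$. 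So, assuming $\eta$ links $\nu$, the lemma reduces to the \emph{crux}: $x\in[\zeta,\zeta']\iff x'\in[\tilde F(\zeta),\tilde F(\zeta')]$; the endpoint subcases $x=\zeta,\zeta'$ are immediate from the $k=0$ statement, using that $\tilde F(\zeta),\tilde F(\zeta')\in h_*(\nu)$ and $\{x'\}=h_*(\eta)\cap h_*(\nu)$.

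To prove the crux I would work in the space $\mathcal L$ of biinfinite geodesics of $\mathbb H$ linking $\nu$. Writing $\partial\mathbb H\setminus\partial\nu=I^+\sqcup I^-$ for the two boundary arcs, the endpoint map identifies $\mathcal L$ with the open square $I^+\times I^-$, and $h_*$ restricts to the product homeomorphism $h|_{I^+}\times h|_{I^-}$ from $\mathcal L$ onto the corresponding square $\mathcal L'$ for $h_*(\nu)$. The set $A_\zeta\subset\mathcal L$ of geodesics through $\zeta$ is the graph of a homeomorphism $I^+\to I^-$ (a geodesic through $\zeta$ with prescribed endpoint in $I^+$ is unique, and its other endpoint lies in $I^-$ since it crosses $\nu$), hence a cross-cut of the square, and likewise for $A_{\zeta'}$; also $A_\zeta\cap A_{\zeta'}=\emptyset$, since a geodesic through both $\zeta$ and $\zeta'$ would be $\nu\notin\mathcal L$. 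By the $k=0$ statement, $h_*(A_\zeta)=A_{\tilde F(\zeta)}$ and $h_*(A_{\zeta'})=A_{\tilde F(\zeta')}$. The disjoint cross-cuts $A_\zeta,A_{\zeta'}$ cut $\mathcal L$ into three regions, and $\{\mu\in\mathcal L:\mu\cap\nu\text{ lies in the interior of }e\}$ is exactly the ``middle'' one: it is open and connected (it fibers over the open edge, the fiber over a point being the arc of geodesics through it other than $\nu$), its frontier in $\mathcal L$ is contained in $A_\zeta\cup A_{\zeta'}$, and its closure meets both $A_\zeta$ and $A_{\zeta'}$. As $h_*$ is a homeomorphism $\mathcal L\to\mathcal L'$ taking $A_\zeta,A_{\zeta'}$ to $A_{\tilde F(\zeta)},A_{\tilde F(\zeta')}$, it takes this middle region onto the middle region of $\mathcal L'$, namely $\{\mu'\in\mathcal L':\mu'\cap h_*(\nu)\text{ lies in the interior of }\tilde F(e)\}$; putting back the endpoint subcases gives the crux. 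I expect the crux to be the main obstacle, exactly because on $\nu$ the map $\tilde F$ coincides with the straightening but on $\eta$ it does not, so one cannot simply transport $x$ by $\tilde F$ --- the point needing care is the identification of the geodesics crossing $\nu$ between $\zeta$ and $\zeta'$ with the unique complementary region of $A_\zeta\cup A_{\zeta'}$ whose closure meets both arcs.
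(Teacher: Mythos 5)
Your proposal is correct and follows the same skeleton as the paper's proof: reduce the $2$--cell case to the boundary by convexity, handle vertices via the order-two elliptics (this is exactly the paper's equation for $\tau_{\tilde F(\zeta)}(\partial h_*(\eta))$), and isolate the interior-of-an-edge case as the crux. The only place you diverge is in that crux: the paper characterizes ``$\eta$ crosses the interior of $e$'' by sandwiching $\eta$ between two geodesics $\eta_+,\eta_-$ passing through the endpoints $\zeta_+,\zeta_-$ of $e$ --- a configuration read off directly from linking data on $\partial\mathbb H$ --- whereas you run a separation argument in the space $\mathcal L\cong I^+\times I^-$ of geodesics crossing $\nu$, identifying the geodesics through the open edge with the unique complementary component of the two cross-cuts $A_\zeta,A_{\zeta'}$ whose closure meets both. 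The two devices encode the same boundary-at-infinity information; the paper's is shorter, while yours makes the topological characterization (and hence its $h_*$--invariance) fully explicit, at the cost of the extra bookkeeping about cross-cuts of the square.
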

\begin{proof} The fact that the lemma holds for vertices $\zeta \in \tilde \Delta_1^{(0)} = \tCE_1$ and geodesics $\eta$ that intersect $\zeta$ follows immediately from \eqref{Eq:baby concurrence} (the only thing being used in that argument was that $\zeta$ was a point on $\eta$).  

Now we show that the lemma holds for an edge $e$.  By the previous paragraph, it suffices to show that $\eta$ intersects the interior of $e$ if and only if $h_*(\eta)$ intersects the interior of $\tilde F(e)$. For this, observe that $\eta$ intersects the interior of $e$ if and only if there are geodesics $\eta_+$ and $\eta_-$ through the endpoints $\zeta_+$ and $\zeta_-$ of $e$, respectively, so that $\eta$ is in the subset of $\mathbb H$ bounded by $\eta_+$ and $\eta_-$.  Since this configuration of $\eta,\eta_-,\eta_+$ is determined by the endpoints on $\partial \mathbb H$, it follows that $\eta$ is in the subset of $\mathbb H$ bounded by $\eta_+$ and $\eta_-$ if and only if $h_*(\eta)$ is contained in the subset of $\mathbb H$ bounded by $h_*(\eta_+)$ and $h_*(\eta_-)$, and this happens (for some $\eta_+,\eta_-$) if and only if $h_*(\eta)$ intersects the interior of $\tilde F(e)$ (since $h_*(\eta_+)$ and $h_*(\eta_-)$ contain the endpoints $\tilde F(\zeta_+)$ and $\tilde F(\zeta_-)$).  

Since a geodesic that intersects a $2$--cell must also intersect its boundary, the case of $0$--cells and $1$--cells implies the case of $2$--cells.  This observation completes the proof.
\end{proof}

We are now ready to prove Theorem \ref{T:deforming}. 

\begin{proof}[Proof of Theorem~\ref{T:deforming}] Suppose $G_1,G_2 < \PSL_2(\mathbb R)$ are two cocompact discrete groups with quotient orbifolds $p_i \colon \mathbb H \to \mathcal O_i = \mathbb H/G_i$, and suppose that $F \colon \mathcal O_1 \to \mathcal O_2$ is a homeomorphism.  We also let $\CE_i \subset \mathcal O_i$ be the even order orbifold points, so that $p_i(\tCE_i) = \CE_i$.  

Now suppose $q_1 \colon (S,\varphi_1) \to \mathcal O_1$ is a locally isometric branched cover of the orbifold, branched only over even order orbifold points.  Let $q_2 = F \circ q_1 \colon S \to \mathcal O_2$ be the composition, and $\varphi_2$ be the pull back of the hyperbolic metric on $\mathcal O_2$ (so that with respect to $\varphi_2$, $q_2$ is a locally isometric branched cover). 

The homeomorphism $F \colon \mathcal O_1 \to \mathcal O_2$ induces an isomorphism $F_* \colon G_1 \to G_2$.   Lemma~\ref{L:Delaunay maps} produces a homeomorphism $\mathcal O_1 \to \mathcal O_2$ inducing $F_* \colon G_1\to G_2$ that is isotopic to $F$ (by the Dehn-Nielsen-Baer Theorem). Therefore, by applying an isotopy if necessary, we may assume that $F$ is the homeomorphism from Lemma~\ref{L:Delaunay maps}
(the isotopy lifts to $S$, and thus changing $F$ in this way results in a metric equivalent to $\varphi_2$).  
For $i=1,2$, we let $\tilde \Delta_i$ be the cell structures of $\mathbb H$ from that lemma, and $\Delta_i$ the corresponding cell structures on $\mathcal O_i$, for $i =1,2$.  We pull these cell structures back to cell structures $\Lambda = \Lambda_1=\Lambda_2$ on $S$.  Each $2$--cell of $\Lambda$ is a(n immersed) geodesic polyon with respect to either metric $\varphi_i$, whose interior is embedded. The restriction of $q_i$ to (the interior of) each such polygon is either a locally isometric homeomorphism, or else it is a $k$-to-$1$ orbifold quotient, where $k$ is an odd integer, the order of the orbifold point in the image of the interior.  This is perhaps easiest to see by lifting these cell structures back to cell structure $\hat \Lambda$ on $\hS$, which we can view as lifting the cell structures $\tilde \Delta_i$ by the developing maps $D_i \colon (\hS, \hphi_i) \to \mathbb H$ (note that $D_2 = \tilde F \circ D_1$).

Next, consider the commutative diagram, in which all maps are cellular (by construction).
\begin{center}
\begin{tikzpicture}
\node at (0,0){$\begin{tikzcd}[row sep=15]
(\hat S,\hphi_1) \ar[r, "id"] \ar[dd, "\hat p"] 		& (\hat S,\hphi_2) \ar[drr, "D_2" ]  \ar[dd, "\hat p"] 	& \quad \quad \quad & \quad \quad \quad\\
										&										& \mathbb H	\ar[from=ull, crossing over, "D_1" near end] \ar[r,"\tilde F" below]					& \mathbb H \ar[dd, "p_2"] \\
(S,\varphi_1) \ar[r, "id"] 						& (S,\varphi_2)	\ar[drr, "q_2" near start]  \\
										&						&\mathcal O_1  \ar[r,"F" below] \ar[from=uu, crossing over, "p_1"] \ar[from=ull, crossing over, "q_1" below]	&  \, \mathcal O_2.
\end{tikzcd}$};	
\end{tikzpicture}
\end{center}

By Lemma~\ref{L:same hat, same tilde}, if we can show that $\CG_{\hphi_1} = \CG_{\hphi_2}$, then we will have $\CG_{\tphi_1} = \CG_{\tphi_2}$, as required.  For this, assume we have any nonsingular geodesic $\delta$ in $(\hS,\hphi_1)$.  Then $D_1(\delta_1)$ is a biinfinite geodesic.   For any cell $\sigma$ of $\tilde \Delta_1$,  Lemma~\ref{L:Delaunay combinatorics preserved} tells us that the straightened image geodesic $\tilde F_*(D_1(\delta))$ intersects $\tilde F(\sigma)$ if and only if $D_1(\delta)$ intersects $\sigma$.  Since $\tilde F$ is cellular, this means that $\tilde F_*(D_1(\delta))$ meets the cell $\tilde F(\sigma)$ if and only if $\tilde F(D_1(\delta))$ does.  Therefore, there is a homotopy from $\tilde F(D_1(\delta))$ to $\tilde F_*(D_1(\delta))$ with bounded tracks that respects the cell structure, and thus lifts to a homotopy from $\delta$ to the $\hat \varphi_2$--geodesic $\delta'$, also preserving the cell structures (and having bounded tracks).  Note that $\delta$ meets a cell $\sigma$ of $\Lambda$ if and only if $\delta'$ does, and hence $\delta'$ is also nonsingular.  Therefore, $\CG_{\hphi_1} \subset \CG_{\hphi_2}$.  By a symmetric argument, they are equal.  Lemma~\ref{L:same hat, same tilde} completes the proof.
\end{proof}


\section{Geodesics and triangulations} \label{S:geodesics and triangulations}

Recall that $\mathcal{G}(\tphi_i)$ denotes the set of all basic $\tphi_i$--geodesics and that the maps $\partial_{\tphi_i}: \mathcal{G}(\tphi_i) \to \mathcal{G}_{\tphi_i}$, for $i=1,2$, which maps each geodesic to its endpoints are homeomorphisms. Hence, under the assumption that $\mathcal{G}_{\tphi_1}=\mathcal{G}_{\tphi_2}$, this gives rise to a homeomorphism 
$$g: \mathcal{G}(\tphi_1)\to \mathcal{G}(\tphi_2)$$
which maps each basic $\tphi_1$--geodesic $\eta$ to the basic $\tphi_2$--geodesic having the same endpoints as $\eta$. In other words, $g(\eta)$ is the $\tphi_2${\em -straightening} of the $\tphi_1$--geodesic $\eta$. Note that orienting $\eta$ naturally induces an orientation on $g(\eta)$. 

In this section, we will show that the straightening map preserves many properties of basic geodesics.  Along the way we will begin adjusting the metric $\varphi_2$ to equivalent metrics (under the assumption $\CG_{\tphi_1} = \CG_{\tphi_2}$).  A useful tool in the analysis that we also develop are triangulations of $S$ adapted to the two metrics.

\subsection{Concurrency points and partitions}\label{S:concurrency and partitions}

Recall by Lemma \ref{L:basic geodesics} that only countably many basic $\tphi_i$--geodesics pass through more than one cone point and that we denote the set of the corresponding pairs of endpoints by $\CG^2_{\tphi_1}$.  Also, given $\zeta \in \tS$, we will write
\[  \CG(\tphi_i,\zeta) = \{ \gamma \in \CG(\tphi_i) \mid \zeta \in \gamma \} \subset \CG(\tphi_i), \quad \mbox{ and } \quad \CG_{\tphi_i}(\zeta) = \partial_{\tphi_i}(\CG(\tphi_i,\zeta)) \subset \CG_{\tphi_i} \]
for each $i=1,2$. 
That is, $\CG(\tphi_i,\zeta)$ is the set of basic $\tphi_i$--geodesics that contain $\zeta$ and $\CG_{\tphi_i}(\zeta)$ the set of all pairs of endpoints at infinity of such geodesics.  For $\zeta \in \hS$, will use the corresponding notation $\CG(\hphi_i,\zeta)$ and $\CG_{\hphi_i}(\zeta)$ for the analogously defined subsets of $\CG(\hphi_i)$ and $\CG_{\hphi_i}$, respectively, for $i=1,2$.

Using the notion of {\em chains}, the following is proved in \cite{BL} for Euclidean cone metrics, but the proof is essentially identical in the case of hyperbolic cone metrics.
\begin{proposition} \label{P:chain consequence} Suppose $\varphi_1,\varphi_2 \in \tHyp_c(S)$ with $\CG_{\tphi_1} = \CG_{\tphi_2}$.  There is a countable, $\pi_1S$--invariant set $\Omega \subset \CG_{\tphi_1} = \CG_{\tphi_2}$ containing $\CG^2_{\tphi_1} \cup \CG^2_{\tphi_2}$ with the following property.  For any $\zeta_1 \in \cone(\tphi_1)$ there exists $\zeta_2 \in \cone(\tphi_2)$ so that
\[ \CG_{\tphi_1}(\zeta_1) - \Omega = \CG_{\tphi_2}(\zeta_2) - \Omega.\]
Moreover, sending $\zeta_1$ to $\zeta_2$ determines a $\pi_1S$--equivariant bijection $\cone(\tphi_1) \to \cone(\tphi_2)$.
\end{proposition}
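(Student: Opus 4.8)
The plan is to adapt the \emph{chains at infinity} argument of \cite{BL} to the CAT($-1$) setting; I indicate the structure and where the real work lies. Fix $\varphi_1$ and recall from Lemmas~\ref{L:basic geodesics} and~\ref{L:characterizing basics} that a basic $\tphi_1$--geodesic meeting a cone point $\zeta \in \cone(\tphi_1)$ and no other point of $\tC_0$ is exactly a union $r\cup r'$ of two geodesic rays from $\zeta$ whose directions are separated by angle $\pi$ on one side of the link of $\zeta$. Starting from a generic ray $r_0$ from $\zeta$ and turning repeatedly by $\pi$ to a fixed side produces a sequence of rays $(r_i)$, bi--infinite when the cone angle is irrationally related to $\pi$ and cyclic otherwise, so that each $\eta_i := r_i\cup r_{i+1}$ is a basic $\tphi_1$--geodesic through $\zeta$; consecutive $\eta_i,\eta_{i+1}$ are cone point asymptotic, sharing the ray $r_{i+1}$ and hence its endpoint $x_{i+1}\in S^1_\infty$, whereas for $|i-j|\ge 2$ the geodesics $\eta_i,\eta_j$ cross transversally at $\zeta$ and nowhere else, so $\partial_{\tphi_1}(\eta_i)$ and $\partial_{\tphi_1}(\eta_j)$ link. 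Letting $r_0$ and the chosen side vary, the endpoint pairs so obtained exhaust $\CG_{\tphi_1}(\zeta)$ except for those $\partial_{\tphi_1}(\eta)$ with $\eta$ meeting a second cone point; call this set $\mathrm{Ch}(\zeta)$, the \emph{chain of $\zeta$}.

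Next I would fix $\Omega$ to be a countable, $\pi_1S$--invariant subset of $\CG_{\tphi_1}=\CG_{\tphi_2}$ containing $\CG^2_{\tphi_1}\cup\CG^2_{\tphi_2}$ together with the finitely many $\pi_1S$--orbits of further endpoint pairs that \cite{BL} needs to set aside (so that countability and invariance persist). The easy input is disjointness of distinct chains in a fixed metric: if $\zeta\ne\zeta'$ lie in $\cone(\tphi_i)$ and $\{x,y\}\in\CG_{\tphi_i}(\zeta)\cap\CG_{\tphi_i}(\zeta')$, then the unique basic $\tphi_i$--geodesic with endpoints $\{x,y\}$ passes through both points, so $\{x,y\}\in\CG^2_{\tphi_i}\subset\Omega$; hence $\{\,\CG_{\tphi_i}(\zeta)\setminus\Omega : \zeta\in\cone(\tphi_i)\,\}$ is a partition into the blocks $\mathrm{Ch}(\zeta)\setminus\Omega$.

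The crux --- and, exactly as in \cite{BL}, the place where the real work lies --- is a purely endpoint--theoretic characterization of chains: a subset $\mathcal C\subset\CG_{\tphi}\setminus\Omega$ equals $\mathrm{Ch}(\zeta)\setminus\Omega$ for some cone point $\zeta$ precisely when it is maximal among sets built from sequences in $S^1_\infty$ whose consecutive endpoint pairs share one point and whose non--consecutive pairs link, with the linking forcing each one--step move to be cone point asymptotic and the turning to run consistently around the circle. The nontrivial half is that, given such a $\mathcal C$, writing its elements as basic $\tphi$--geodesics $(\eta_i)$, the linking and asymptoticity conditions together with planarity of $\tS$ and CAT($-1$) comparison (replacing the flat computations of \cite{BL}) force all the $\eta_i$ to pass through a common point $z$. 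Since the $\eta_i$ are pairwise distinct while any two geodesics through a \emph{regular} point sharing a direction coincide, and a geodesic ray from a regular point to a boundary point is unique, $z$ cannot be regular; so $z\in\tC_0$, and by the disjointness above $z$ is the unique cone point with $\mathrm{Ch}(z)\setminus\Omega\supset\mathcal C$.

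To finish, I note that both ``being a maximal combinatorial chain'' and ``lying in $\Omega$'' refer only to the common set $\CG_{\tphi_1}=\CG_{\tphi_2}$ and the linking structure of $\CG(\tS)$, and so are computed identically for $\tphi_1$ and $\tphi_2$. Hence the two partitions $\{\CG_{\tphi_1}(\zeta_1)\setminus\Omega\}$ and $\{\CG_{\tphi_2}(\zeta_2)\setminus\Omega\}$ of $\CG_{\tphi}\setminus\Omega$ into maximal chains coincide; matching $\zeta_1\in\cone(\tphi_1)$ with the $\zeta_2\in\cone(\tphi_2)$ realizing the same block (nonempty, since every cone point has nonempty chain) yields a bijection with $\CG_{\tphi_1}(\zeta_1)\setminus\Omega=\mathrm{Ch}(\zeta_1)\setminus\Omega=\mathrm{Ch}(\zeta_2)\setminus\Omega=\CG_{\tphi_2}(\zeta_2)\setminus\Omega$, and $\pi_1S$--equivariance is automatic since every ingredient is $\pi_1S$--invariant. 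The main obstacle, as flagged, is the combinatorial characterization of chains in the third step --- in particular the passage from the abstract chain conditions to genuine concurrency at a cone point --- which is the technical heart of \cite{BL} and must be reverified using CAT($-1$) geometry in place of the Euclidean estimates.
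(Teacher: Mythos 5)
Your overall strategy is the same as the paper's: the paper also proves this proposition via the ``chains at infinity'' of \cite{BL}, gives only a sketch, and defers the technical work to \cite[Section~4]{BL}. The genuine problem is that the endpoint-theoretic characterization of a chain you propose is not the one that works, and the difference is exactly at the crux you flag. You characterize $\mathrm{Ch}(\zeta)\setminus\Omega$ as a maximal set built from a sequence $(x_k)$ in $S^1_\infty$ whose consecutive pairs $\{x_k,x_{k+1}\}$ lie in $\CG_{\tphi}\setminus\Omega$ and share a point, and whose non-consecutive pairs link. These are only \emph{positive} conditions, and they nowhere use the hypothesis that cone angles exceed $2\pi$. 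The condition that does the detecting in \cite{BL} (and in the paper's sketch) is the \emph{negative} one: for every $x$ with $x_{k+1},x,x_{k-1}$ counterclockwise ordered, $\{x,x_k\}\notin \CG_{\tphi}$. This is precisely the statement that at $x_k$ the chain ``turns by exactly $\pi$ and cannot turn by less,'' which holds at a cone point because the link has length $>2\pi$, and which \emph{fails} at a regular point (through a regular point there are basic geodesics in every direction, so shortcut pairs $\{x,x_k\}$ always exist). Without this condition your claim that ``the linking forc[es] each one-step move to be cone point asymptotic'' is unsupported: two distinct basic geodesics sharing an endpoint at infinity need not share a ray at all (they can be disjoint and merely asymptotic), and nothing in shared-endpoint-plus-linking-plus-maximality rules out assembling such configurations into a ``fake chain'' with no common cone point. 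You would also need the converse direction — that the genuine chain of a cone point is maximal among your combinatorial chains — which you do not address.

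Two smaller points. First, your $\Omega$ is defined circularly (``the pairs that \cite{BL} needs to set aside''); the paper defines it concretely as the union, over all cone points $\zeta_i$ of either metric, of the pairs in $\CG_{\tphi_i}(\zeta_i)$ that are not consecutive terms of any chain, and one must check this is countable and $\pi_1S$--invariant. Second, once the correct characterization is in place, your disjointness-of-chains argument and the final matching/equivariance step are fine and agree with the paper. So the fix is not a change of approach but a change of definition: build the cone-angle condition (the non-existence of shortcut pairs $\{x,x_k\}$) into the definition of a combinatorial chain, and then the CAT($-1$) verification you defer becomes the correct analogue of \cite[Proposition~4.1]{BL}.
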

\begin{proof}[Idea of the proof.] A {\em chain} associated to any cone point $\zeta_i \in \cone(\tphi_i)$, for $i=1,2$, is a bi-infinite sequence ${\bf x} = \{x_k\}_{k\in \mathbb Z} \subset S^1_\infty$ so that for any $k \in \mathbb Z$, $x_{k-1},x_k,x_{k+1}$ is a counter-clockwise ordered triple in $S^1_\infty$, and so that
\[ \{x_k,x_{k+1}\} \in \CG_{\tphi_1}(\zeta) \setminus (\CG_{\tphi_1}^2 \cup \CG_{\tphi_2}^2).\]
From the fact that the cone angles are greater than $2\pi$ at $\zeta_i$, it is straightforward to see that for any $x \in S^1_\infty$ with $x_{k+1},x,x_{k-1}$ counterclockwise ordered, $\{x,x_k\}$ is {\em not} in $\CG_{\tphi_i}$.    Any biinfinite sequence $\{x_k\} \subset S^1_\infty$ with the property that $\{x_k,x_{k+1}\} \subset \CG_{\tphi_i} \setminus (\CG_{\tphi_1}^2 \cup \CG_{\tphi_2}^2)$ and the property for triples $x_{k+1},x,x_{k-1}$ just described is in fact a chain associated to a unique cone point of $\tphi_i$.   See \cite[Proposition~4.1]{BL}.

Because chains depend only on $\CG_{\tphi_1} = \CG_{\tphi_2}$ and $\CG_{\tphi_1}^2 \cup \CG_{\tphi_2}^2$, we get functions from the set of chains to both $\cone(\tphi_1)$ and $\cone(\tphi_2)$.  In fact, there is an equivalence relation on chains, describable in terms of $\CG_{\tphi_1} = \CG_{\tphi_2}$ and $\CG_{\tphi_1}^2 \cup \CG_{\tphi_2}^2$, so that two chains are equivalent if and only if they are sent to the same cone point of $\cone(\tphi_i)$, for each $i=1,2$ (see \cite[Lemma~4.4]{BL}).  Therefore, the set of equivalence classes of chains is in a bijective correspondence with $\cone(\tphi_1)$ and $\cone(\tphi_2)$.  This in turn determines a bijection $\cone(\tphi_1) \to \cone(\tphi_2)$.  The chain condition is $\pi_1S$--invariant, and the map to cone points is equivariant, from which we deduce that the bijection $\cone(\tphi_1) \to \cone(\tphi_2)$ to $\pi_1S$--equivariant.  For any $\zeta_i$, $i=1,2$, the set of pairs of endpoints of geodesics in $\CG_{\tphi_i}(\zeta_i)$ which are not consecutive terms in some chain is countable, and thus taking $\Omega$ to be the union of all such pairs produces the countable set $\Omega$ containing $\CG_{\tphi_1}^2 \cup \CG_{\tphi_2}^2$. See \cite[Section 4]{BL} for more details.
\end{proof}

Going forward we let $C_0 = \cone(\varphi_1)$ and $\tC_0 = p^{-1}(C_0) = \cone(\tphi_1)$.  For the remainder of this subsection, we assume $\CG_{\tphi_1} = \CG_{\tphi_2}$, and let $\Omega$ be the set from Proposition~\ref{P:chain consequence}.
The $\pi_1S$--equivariant bijection from that proposition can be extended to a $\pi_1S$--equivariant homeomorphism, which is thus the lift of a homeomorphism $S \to S$, isotopic to the identity.  We replace $\varphi_2$ with its pull back by this homeomorphism, which does not change the equivalence class, but allows us to say that $\cone(\tphi_1) =\tC_0 =  \cone(\tphi_2)$. More precisely, we can promote the conclusion of the proposition to say that for all $\zeta \in \tC_0$, we have
\[ \CG_{\tphi_1}(\zeta) - \Omega = \CG_{\tphi_2}(\zeta) - \Omega. \]

The conclusion of Proposition~\ref{P:chain consequence} may hold for {\em other} points $\zeta$, and adjusting $\varphi_2$ to take this into account will be useful later.  Say that $\zeta_1 \in \tS$ is a {\em $(\tphi_1,\tphi_2,\Omega)$--concurrence point} (or briefly {\em concurrence point}) if there exists $\zeta_2 \in \tS$ so that
\[ \CG_{\tphi_1}(\zeta_1) - \Omega = \CG_{\tphi_2}(\zeta_2) - \Omega\]
where $\Omega$ is as in Proposition \ref{P:chain consequence}.   In particular, the proposition says that every point in $\tC_0$ is a concurrence point.  We note that since $\Omega$ and $\CG_{\tphi_1} = \CG_{\tphi_2}$ are $\pi_1S$--invariant, if $\zeta$ is a concurrence point, then so is $\gamma \cdot \zeta$, for any $\gamma \in \pi_1S$.  We say $\tC \subset \tS$ is a {\em concurrency set} (for $(\tphi_1,\tphi_2,\Omega)$), if $\tC$:
\begin{enumerate}
\item consists of $(\tphi_1,\tphi_2,\Omega)$--concurrence points 
\item contains $\tC_0$,
\item is $\pi_1S$--invariant, and
\item contains only finitely many $\pi_1S$--orbits of points.
\end{enumerate}
If $\tC$ is a concurrency set, then we let $C = p(\tC)$, and also call $C \subset S$ a concurrency set (by $\pi_1S$--invariance, note that $\tC = p^{-1}(C)$).

As we did with the cone points, note that for any concurrency set $\tC$ we can adjust $\varphi_2$ by a homeomorphism isotopic to the identity such that for all $\zeta\in\tC$ we have 
\[ \CG_{\tphi_1}(\zeta) - \Omega = \CG_{\tphi_2}(\zeta) - \Omega. \]
We will say that $\varphi_2$ has been $C$--{\em normalized} with respect to $\varphi_1$.  When the reference to $\varphi_1$ is obvious, we will simply say that $\varphi_2$ has $C$--normalized. We summarize the above discussion for later reference: 

\begin{lemma}\label{l:convention}
Assume $\mathcal{G}_{\tphi_1}=\mathcal{G}_{\tphi_2}$ and let $\tC$ be any concurrency set for $(\tphi_1,\tphi_2,\Omega)$. Then $\tphi_2\sim\tphi_2'$ for a $C$--normalized $\tphi_2'\in\tHyp_c(S)$; that is, $\tphi_2'$ satisfies
\[ \CG_{\tphi_1}(\zeta) - \Omega = \CG_{\tphi_2'}(\zeta) - \Omega \]
for all $\zeta\in \tC$. 
\qed
\end{lemma}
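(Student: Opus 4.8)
The statement packages the normalization discussion preceding it. The plan is to assemble it in three moves: attach to each point of $\tC$ a canonical companion point for $\varphi_2$; realize the resulting $\pi_1S$--equivariant bijection by an equivariant homeomorphism; and check that pulling $\varphi_2$ back by it produces a $C$--normalized metric.

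\textbf{The companion bijection.} Fix $\zeta\in\tC$. By definition of a concurrence point there is $\zeta^\star\in\tS$ with $\CG_{\tphi_1}(\zeta)-\Omega=\CG_{\tphi_2}(\zeta^\star)-\Omega$, and I claim it is unique. The set $\CG_{\tphi_2}(\zeta^\star)$ is uncountable (uncountably many geodesic directions at $\zeta^\star$ give rise to nonsingular, hence basic, geodesics, and these have distinct endpoint pairs since $\partial_{\tphi_2}$ is injective), whereas for $\zeta^\star\neq\zeta^{\star\star}$ the intersection $\CG_{\tphi_2}(\zeta^\star)\cap\CG_{\tphi_2}(\zeta^{\star\star})$ is countable: a basic geodesic through two distinct points contains the unique $\mathrm{CAT}(-1)$ geodesic segment between them, and by (the proof of) Lemma~\ref{L:basic geodesics} only countably many basic geodesics contain a fixed geodesic segment. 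Since $\Omega$ is countable, two distinct candidates would force $\CG_{\tphi_2}(\zeta^\star)\setminus\Omega=\CG_{\tphi_2}(\zeta^{\star\star})\setminus\Omega$ to be an uncountable subset of a countable set. Set $f(\zeta)=\zeta^\star$ and $\tC^\star=f(\tC)$. The same uniqueness argument applied to $\tphi_1$ shows $f$ is injective; $\pi_1S$--invariance of $\Omega$ and of $\CG_{\tphi_1}=\CG_{\tphi_2}$ gives $f(\gamma\zeta)=\gamma f(\zeta)$; and since $\varphi_2$ has already been adjusted (via Proposition~\ref{P:chain consequence}) so that the normalization condition holds on $\tC_0$, uniqueness forces $f|_{\tC_0}=\mathrm{id}$. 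Thus $f\colon\tC\to\tC^\star$ is a $\pi_1S$--equivariant bijection between closed, discrete, finite--orbit, $\pi_1S$--invariant subsets of $\tS$, each containing $\tC_0$ and with $f|_{\tC_0}=\mathrm{id}$.

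\textbf{Realizing $f$ and concluding.} Since $d(\gamma\zeta,f(\gamma\zeta))=d(\zeta,f(\zeta))$ and $\tC$ has finitely many $\pi_1S$--orbits, $\sup_{\zeta\in\tC}d(\zeta,f(\zeta))<\infty$; using this bound and equivariance, the same point--pushing/isotopy--extension argument used for the cone points (cf.\ \cite{BL}) extends $f$ to a $\pi_1S$--equivariant homeomorphism $\tilde h\colon\tS\to\tS$ with $\sup_x d(x,\tilde h(x))<\infty$. As $\tilde h$ is equivariant for the identity automorphism of $\pi_1S$, it descends to $h\colon S\to S$ with $h_*=\mathrm{id}$ on $\pi_1S$, hence homotopic and (on a surface) isotopic to the identity; and being a bounded distance from the identity, $\tilde h$ fixes $S^1_\infty$ pointwise. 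Put $\varphi_2'=h^*\varphi_2$, with lift $\tphi_2'=\tilde h^*\tphi_2$. Then $\varphi_2'\sim\varphi_2$, $\cone(\tphi_2')=\tilde h^{-1}(\tC_0)=\tC_0$, and since $\tilde h\colon(\tS,\tphi_2')\to(\tS,\tphi_2)$ is an isometry with $\partial\tilde h=\mathrm{id}$, for each $\zeta\in\tC$ it carries the basic $\tphi_2'$--geodesics through $\zeta$ to the basic $\tphi_2$--geodesics through $\tilde h(\zeta)=f(\zeta)$ without changing endpoints, so $\CG_{\tphi_2'}(\zeta)=\CG_{\tphi_2}(f(\zeta))$ and therefore $\CG_{\tphi_2'}(\zeta)-\Omega=\CG_{\tphi_1}(\zeta)-\Omega$. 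Hence $\varphi_2'$ is $C$--normalized.

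\textbf{Main obstacle.} The one genuinely delicate step is realizing the equivariant bijection $f$ by an equivariant homeomorphism that agrees with $f$ on $\tC$ \emph{exactly}: exactness is what licenses the final identity $\CG_{\tphi_2'}(\zeta)=\CG_{\tphi_2}(f(\zeta))$, since a stray deck transformation $\tilde h(\zeta)=\delta f(\zeta)$ with $\delta\neq\mathrm{id}$ would, by the uniqueness established above, be incompatible with the conclusion rather than merely harmless. This construction is the standard equivariant point--pushing carried out for the cone points and in \cite{BL}; finiteness of the orbit set makes the needed distance bound automatic, and the remainder is bookkeeping.
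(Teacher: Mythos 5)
Your proof is correct and follows essentially the same route as the paper, which simply packages the preceding discussion (extend the companion assignment on $\tC$ to a $\pi_1S$--equivariant homeomorphism, note it descends to a map isotopic to the identity, and pull $\varphi_2$ back). The extra details you supply—uniqueness of the companion point via the countability of basic geodesics through two fixed points, and the bounded-distance/boundary-fixing argument for the lift—are correct fillings-in of steps the paper leaves implicit.
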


The upshot of this lemma is that, in order to prove the \currentsupport\!\!, we can assume that we have adjusted $\varphi_2$ such that it is $C$--normalized, for some concurrency set $C$. 
In fact, for most of the proof of the \currentsupport\!\!, we can assume our concurrency set is just the set of cone points, $C_0= \cone(\varphi_1) = \cone(\varphi_2)$. However, later we will find additional concurrency points, and rather than returning to all of the setup and preliminaries, we will proceed with an arbitrary concurrency set.\\

Now, let $\tC\subset \tS$ be a concurrency set for $(\tphi_1,\tphi_2,\Omega)$, assume $\varphi_2$ is $C$--normalized, and let $\eta$ be a $\tphi_i$--geodesic. As before, we say that $\eta$ is singular if it contains a cone point and nonsingular otherwise. We say that $\eta$ is {\em $C$--singular} if it contains a point of $\tC$, and {\em non-$C$--singular} otherwise. Let
$$\CG(\tphi_i,\Omega)  = \partial_{\varphi_i}^{-1}(\CG_{\tphi_i} - \Omega) \subset \CG(\tphi_i)$$ 
for $i=1, 2$.
The homeomorphism $g:\mathcal{G}(\tphi_1)\to\mathcal{G}(\tphi_2)$ restricts to a homeomorphism 
$$g \colon \CG(\tphi_1,\Omega) \to \CG(\tphi_2,\Omega)$$ 
Note that by Proposition~\ref{P:chain consequence}, $\eta\in \CG(\tphi_1,\Omega)$ is nonsingular if and only if $g(\eta)$ is nonsingular.  Moreover, $\eta$ passes through $\zeta\in \tC$ if and only if $g(\eta)$ does (assuming, as we have, that $\varphi_2$ is $C$--normalized) by Lemma \ref{l:convention}.  It is worth pointing out that, a priori, $\CG(\tphi_1,\Omega)$ might not contain all nonsingular $\tphi_1$--geodesics because of information loss in $\Omega$:  there is the potential for (at most countably many) nonsingular $\tphi_1$--geodesics to be missing from $\CG(\tphi_1,\Omega)$ (though this ultimately turns out not to be the case).  Below we will need to approximate basic geodesics by sequences of nonsingular geodesics in $\CG(\tphi_1,\Omega)$ and we note that we can always do this: any basic geodesic is, by definition, a limit of nonsingular geodesics and note that the proof of Lemma \ref{L:characterizing basics} actually shows that there are uncountably many sequences of nonsingular geodesics approximating each basic geodesic. 

Let $\eta\in \mathcal{G}(\tphi_i)$ be a basic $\tphi_i$--geodesic and $\zeta \in \tC$. Recall that $\eta$ divides $\tS$ into two half-planes, the positive (left) half plane $\mathcal H^+(\eta)$ and the negative (right) half plane $\mathcal H^-(\eta)$.
If $\zeta\in \tC$ is disjoint from $\eta$, we say that $\zeta$ lies {\em to the left} of $\eta$ if it belongs to $\mathcal{H}^+(\eta)$ and that it lies {\em to the right} if it belongs to $\mathcal{H}^-(\eta)$.  If $\zeta \in \eta$, then we say that $\zeta$ is {\em on} $\eta$.  If $\zeta \in \tC_0$ and is on $\eta$, then $\eta$ must make angle $\pi$ on exactly one side at $\zeta$: if $\eta$ makes angle $\pi$ on the left of $\zeta$ (respectively, on the right of $\zeta$), and we say that $\zeta$ is {\em on and to the right of $\eta$} (respectively, {\em on and to the left of $\eta$}). If $\zeta \in \tC \smallsetminus \tC_0$ and its on $\eta$, then $\eta$ makes angle $\pi$ on both sides at $\zeta$, and we say that $\zeta$ is {\em on and splits $\eta$}.  Therefore, $\eta \in \CG(\tphi_i)$ defines a partition of $\tC$ into five sets depending on whether a point is to the left of $\eta$, to the right of $\eta$, on and to the left of $\eta$, on and to the right of $\eta$, or on and splits $\eta$ (where any of the last three could determine empty sets). Note that while the labels left, right, $+$, $-$ are determined by arbitrarily choosing an orientation on $\eta$, the partition itself is defined by the unoriented geodesic.
We will see that $g$ preserves this partition (c.f.~\cite[Lemma~12]{oldpaper}). We start with the following special case.

\begin{lemma}\label{l:partition} 
Suppose $\mathcal{G}_{\tphi_1} = \mathcal{G}_{\tphi_2}$, $\Omega$ is as in Proposition~\ref{P:chain consequence}, $\tC\subset\tS$ is a concurrency set, and $\varphi_2$ is $C$--normalized. Each nonsingular $\eta\in\CG(\tphi_1,\Omega)$ defines the same partition of $\tC$ as $g(\eta)$ does. 
\end{lemma}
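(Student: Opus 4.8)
The plan is to fix $\zeta\in\tC$ and to show that $\zeta$ lies in the same one of the five parts of the partition for $\eta$ and for $g(\eta)$. Write $\{a,b\}=\partial_{\tphi_1}(\eta)=\partial_{\tphi_2}(g(\eta))$ and let $I^+,I^-\subset S^1_\infty$ be the open arcs of $S^1_\infty\smallsetminus\{a,b\}$ on the positive and the negative side of $\eta$; since $\tS$ is oriented and $g(\eta)$ has the same endpoints as $\eta$, these are also the arcs on the two sides of $g(\eta)$. Because $\eta\in\CG(\tphi_1,\Omega)$, Proposition~\ref{P:chain consequence} tells us that $g(\eta)$ is nonsingular as well, so for each of $\eta,g(\eta)$ the parts ``on and to the left'' and ``on and to the right'' are empty; hence it suffices to show that $\zeta$ is on $\eta$ exactly when it is on $g(\eta)$, and that $\zeta\in\mathcal{H}^+(\eta)$ exactly when $\zeta\in\mathcal{H}^+(g(\eta))$ (the $\mathcal{H}^-$ statement being symmetric).

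The ``on'' case is immediate from the normalization: $\zeta\in\eta$ means $\{a,b\}\in\CG_{\tphi_1}(\zeta)$, and since $\eta\in\CG(\tphi_1,\Omega)$ forces $\{a,b\}\notin\Omega$, Lemma~\ref{l:convention} gives $\{a,b\}\in\CG_{\tphi_1}(\zeta)-\Omega=\CG_{\tphi_2}(\zeta)-\Omega$, so $\partial_{\tphi_2}(g(\eta))=\{a,b\}\in\CG_{\tphi_2}(\zeta)$, i.e.\ $\zeta\in g(\eta)$; the converse is symmetric. As $\eta$ and $g(\eta)$ are nonsingular, such a $\zeta$ lies in $\tC\smallsetminus\tC_0$ and hence splits both geodesics, so we are done in this case.

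For the two side cases I would establish, for a \emph{nonsingular} $\tphi_i$--geodesic $\eta$ and a point $\zeta\notin\eta$, the characterization: $\zeta\in\mathcal{H}^+(\eta)$ if and only if there is a basic $\tphi_i$--geodesic through $\zeta$ with both endpoints in $I^+\smallsetminus\Omega$ (and symmetrically for $\mathcal{H}^-$ and $I^-$). Granting this, the lemma follows, since if $\zeta\in\mathcal{H}^+(\eta)$ then a witnessing basic $\tphi_1$--geodesic $\mu$ has $\partial_{\tphi_1}(\mu)\in\CG_{\tphi_1}(\zeta)-\Omega=\CG_{\tphi_2}(\zeta)-\Omega$, so $g(\mu)$ is a basic $\tphi_2$--geodesic through $\zeta$ whose endpoints $\partial_{\tphi_1}(\mu)$ lie in $I^+\smallsetminus\Omega$, whence $\zeta\in\mathcal{H}^+(g(\eta))$; the reverse implication and the $\mathcal{H}^-$ statement are the same. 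The ``if'' direction of the characterization uses that there are no bigons: a basic geodesic with both endpoints in $I^+$ has both of its ends eventually inside $\mathcal{H}^+(\eta)$, so if $\zeta$ were on $\eta$ or in $\mathcal{H}^-(\eta)$ the geodesic would meet $\eta$ either transversally at $\zeta$ (forcing $\partial\mu$ to link $\{a,b\}$, impossible since $\partial\mu\subset I^+$) or in two points, each impossible because a basic geodesic other than $\eta$ meets the nonsingular geodesic $\eta$ in at most one transverse point (by the trichotomy for pairs of geodesics in \S\ref{S:geodesics and currents}). For the ``only if'' direction: assuming $\zeta\in\mathcal{H}^+(\eta)$, I would apply the CAT$(-1)$ comparison inequality to the right triangles with vertices $\zeta$, the foot of the perpendicular from $\zeta$ to $\eta$, and a varying point of $\eta$, to see that the set of directions at $\zeta$ whose geodesic ray meets $\eta$ is an arc of angular width less than $\pi$; hence the complementary arc $W$ of directions with ray disjoint from $\eta$ (equivalently, contained in $\mathcal{H}^+(\eta)$) has width more than $\pi$, using that the total angle at $\zeta$ is at least $2\pi$. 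Rotating $W$ by angle $\pi$ within the circle of directions at $\zeta$ produces an overlapping arc, and for $v$ in the overlap the two geodesic rays from $\zeta$ in direction $v$ and in the direction at angle $\pi$ from $v$ both lie in $\mathcal{H}^+(\eta)$ and together form a basic geodesic through $\zeta$ contained in $\mathcal{H}^+(\eta)$, hence with both endpoints in $\overline{I^+}$. Discarding a countable set of directions (those whose ray meets a cone point before reaching infinity, and the finitely many with an endpoint equal to $a$ or $b$) leaves uncountably many $v$, which yield pairwise distinct endpoint pairs in $I^+\times I^+$ since asymptotic rays from a point coincide; so we may choose the endpoints outside the countable set $\Omega$.

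I expect the main obstacle to be the ``only if'' direction just sketched, and within it the bookkeeping at a cone point $\zeta$: one must verify that the visual angle subtended at a cone point by the nonsingular geodesic $\eta$ is still strictly less than $\pi$ (so that the arc $W$, of angular width exceeding $\pi$, can genuinely be ``spanned'' by a local geodesic through $\zeta$), and one must keep the endpoints of that geodesic off the countable exceptional set $\Omega$ while insisting that it pass through $\zeta$ and be a legitimate basic geodesic in the sense of Lemma~\ref{L:characterizing basics}.
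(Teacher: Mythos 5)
Your proof is correct, and its skeleton parallels the paper's: dispose of the ``on'' case via the $C$--normalization, then for the side case produce a basic $\tphi_1$--geodesic through $\zeta$, disjoint from $\eta$, with endpoints in $I^+\smallsetminus\Omega$, push it through $g$ (it still passes through $\zeta$ and keeps its endpoints), and use a no-crossing/no-bigon argument to pin $\zeta$ to the correct side of $g(\eta)$. Where you genuinely diverge is the construction of the witness geodesic. The paper takes the shortest path $\mu$ from $\zeta$ to $\eta$ and the geodesic through $\zeta$ orthogonal to $\mu$ (angle $\pi$ at $\zeta$ on the side of $x$); disjointness is a one-line triangle-angle-sum contradiction, and membership in $\CG(\tphi_1,\Omega)$ is arranged by a small perturbation at the end. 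You instead count directions at $\zeta$. This works: the key estimate you flag --- that the directions of the geodesics $[\zeta,y]$, $y\in\eta$, fill an arc of width strictly less than $\pi$ --- follows from exactly the paper's triangle, since the CAT($-1$) comparison applied to $\zeta$, the foot $x$ (where the angle is at least $\tfrac{\pi}{2}$ by minimality), and $y$ bounds each such direction within angle strictly less than $\tfrac{\pi}{2}$ of $\mu$, uniformly as $y\to\pm\infty$ by an angle-of-parallelism bound in terms of $d(\zeta,\eta)>0$. What your version buys is an uncountable family of witnesses from the outset, so avoiding $\Omega$ and the singular directions is automatic rather than a separate perturbation step; note that at a regular point your overlap $W\cap(W-\pi)$ is a small arc around the paper's orthogonal direction, so the two constructions essentially coincide. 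Two small points worth making explicit: in the ``if'' direction you use that a basic geodesic sharing a nontrivial segment with the nonsingular $\eta$ must equal $\eta$ (two geodesics cannot diverge at a regular point), since the trichotomy of \S\ref{S:geodesics and currents} does allow coincidence along segments in general; and the reverse implications of your characterization need not be argued separately, as the three possibilities (on/left/right) are mutually exclusive and exhaustive for both $\eta$ and $g(\eta)$.
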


\begin{proof}
Let $\eta\in \CG(\tphi_1,\Omega)$ be nonsingular and fix an orientation on $\eta$ and recall that this induces an orientation on $g(\eta)$. 
Let $\zeta\in\tC$. Note that $\zeta$ is on $\eta$ if and only if it is on $g(\eta)$ by the definition of concurrency points and the assumption that $\varphi_2$ is $C$--normalized.  Moreover, it cannot be a cone point since $\eta$ is nonsingular, and hence $\zeta$ splits both $\eta$ and $\zeta$.  

So suppose, without loss of generality, that $\zeta$ is to the left of $\eta$, i.e.~$\zeta\in\mathcal{H}^+(\eta)$. We claim that there exists a geodesic $\delta\in \CG(\tphi_1,\Omega)$ passing through $\zeta$ and contained in $\mathcal{H}^+(\eta)$. 

To see that such $\delta$ exists, let $\mu$ be a minimal length geodesic path from $\zeta$ to $\eta$ meeting $\eta$ at a point $x$ (thus $\mu$ meets $\eta$ orthogonally if $x$ is not a cone point, and with angle at least $\pi/2$ on both sides if it is a cone point, c.f. Figure \ref{F:bigon}). Construct $\delta$ by concatenating two basic geodesic rays emanating from $\zeta$ and orthogonal to $\mu$ such that it makes angle $\pi$ at $\zeta$ on the same side as $x$ is on (see the image at left in Figure \ref{F:bigon}). Then $\delta$ and $\eta$ do not intersect, since if they did, this would create a geodesic triangle with angle sum greater than $\pi$ which is not possible. Furthermore, $\delta$ cannot be asymptotic to $\eta$ since this would create a partially ideal triangle with angle sum $\pi$. Therefore, $\delta$ and $\eta$ have distinct unlinked end points in $S^1_\infty$. If necessary we perturb the angle at which $\delta$ meets $\mu$ while maintaining unlinked distinct endpoints from $\eta$ so that $\delta$ is in $\CG(\tphi_1,\Omega)$.

Now, note that $g(\delta)$ also passes through $\zeta$ and that $\delta$ and $g(\delta)$ have the same endpoints as $\eta$ and $g(\eta)$. If $\zeta$ is to the right of $g(\eta)$ there would be a bigon bounded by subsegments of $g(\eta)$ and $g(\delta)$ as seen in Figure \ref{F:bigon}, a contradiction. Hence $\zeta$ lies to the left of $g(\eta)$. 
\end{proof}

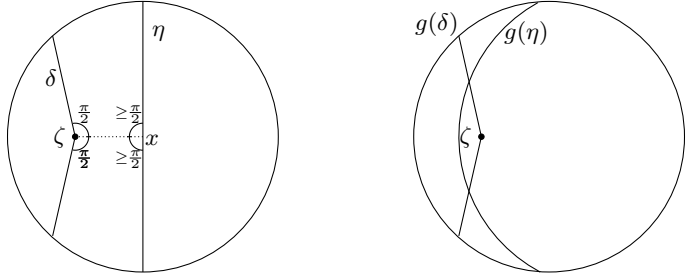
\begin{figure}[htb]
\begin{center}
  \captionsetup{width=.85\linewidth}
\begin{tikzpicture}[scale = .6]
\draw (0,0) circle (3);
\draw (0, 3) -- (0,-3);
\draw (-2,-2.2) -- (-1.5,0) -- (-2,2.2361);
\draw[densely dotted] (-1.5,0) -- (0,0); 
\draw[fill=black] (-1.5,0) circle (.06cm);
\draw [domain=-100:100] plot ({-1.5+0.3*cos(\x)}, {0.3*sin(\x)});
\draw (-1.15, 0) -- (-1.25, 0);
\draw [domain=-90:90] plot ({-0.3*cos(\x)}, {-0.3*sin(\x)});
\draw (-0.25, 0) -- (-0.35, 0);
\node[left] at (-1.5,0) {\small{$\zeta$}};
\node[right] at (0,2.3) {\small{$\eta$}};
\node[left] at (-1.7,1.3) {\small{$\delta$}};
\node[below] at (-1.3,-0.1) {\tiny{$\frac{\pi}{2}$}};
\node[above] at (-1.3,0.1) {\tiny{$\frac{\pi}{2}$}};
\node[below] at (-1.3,-0.1) {\tiny{$\frac{\pi}{2}$}};
\node[above] at (0.2,-0.4) {\small{$x$}};
\node[above] at (-0.3,0.1) {\tiny{$\geq\hspace{-0.1cm}\frac{\pi}{2}$}};
\node[below] at (-0.3,-0.1) {\tiny{$\geq\hspace{-0.1cm}\frac{\pi}{2}$}};

\draw (9,0) circle (3);
\draw [domain=-118:-241] plot ({10.4+3.4*cos(\x)}, {3.4*sin(\x)});
\draw (7,-2.2) -- (7.5,0) -- (7,2.2361); 
\draw[fill=black] (7.5,0) circle (.06cm);
\node[left] at (7.5,0) {\small{$\zeta$}};
\node[left] at (9.2,2.3) {\small{$g(\eta)$}};
\node[left] at (7.2,2.5) {\small{$g(\delta)$}};
\end{tikzpicture}
\caption{\textbf {Left:} The point $\zeta\in \tC$ and basic geodesic $\delta$ lie in $\mathcal{H}^+(\eta)$. \textbf {Right:} The bigon created if $g(\eta)$ did not preserve the partition}
\label{F:bigon}
\end{center}
\end{figure}

The next lemma proves a stronger property for concurrence points and extends Lemma~\ref{l:partition}.


\begin{lemma}\label{l:partition2}
Suppose $\mathcal{G}_{\tphi_1} = \mathcal{G}_{\tphi_2}$, $\Omega$ is as in Proposition~\ref{P:chain consequence}, $\tC\subset\tS$ is a concurrency set, and $\varphi_2$ is $C$--normalized.  For any $\zeta\in \tC$ we have $\CG_{\tphi_1}(\zeta) = \CG_{\tphi_2}(\zeta)$. Moreover, any $\eta\in \mathcal{G}(\tphi_1)$ defines the same partition of $\tC$ (into five sets) as $g(\eta)$ does.
\end{lemma}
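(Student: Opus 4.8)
The plan is to first upgrade Lemma~\ref{l:partition} from nonsingular geodesics in $\CG(\tphi_1,\Omega)$ to arbitrary basic geodesics by a limiting argument, and then deduce the equality $\CG_{\tphi_1}(\zeta) = \CG_{\tphi_2}(\zeta)$ as a consequence of the partition-preservation together with the structure of geodesics through a cone point. The key observation that drives everything is that the partition of $\tC$ (into the five types: left, right, on-and-left, on-and-right, on-and-splits) is a \emph{closed/continuous} condition along convergent sequences of basic geodesics, once one is careful about how ``on'' can degenerate to ``left'' or ``right'' in the limit.

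\emph{Step 1: partition preservation for all basic geodesics.} Fix $\eta \in \CG(\tphi_1)$ and $\zeta \in \tC$; choose an orientation on $\eta$, inducing one on $g(\eta)$. By the remark preceding the lemma (and the proof of Lemma~\ref{L:characterizing basics}), there is a sequence $\eta_i \in \CG(\tphi_1,\Omega)$ of nonsingular geodesics with $\eta_i \to \eta$; moreover by the explicit constructions in Lemma~\ref{L:characterizing basics} we may take the $\eta_i$ so that (after fixing orientations consistently) their positive half-planes $\mathcal H^+(\eta_i)$ converge to $\mathcal H^+(\eta)$ in the appropriate sense, and so that $\zeta$ is on the \emph{same} side of every $\eta_i$ whenever $\zeta \notin \eta$ — this is possible because in those constructions the approximating geodesics lie entirely on a prescribed side of $\eta$ near the relevant point, or fan out from an interior point. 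Since $\partial_{\tphi_1}$ is a homeomorphism, $\partial_{\tphi_2}(g(\eta_i)) = \partial_{\tphi_1}(\eta_i) \to \partial_{\tphi_1}(\eta) = \partial_{\tphi_2}(g(\eta))$, so $g(\eta_i) \to g(\eta)$ as well (again using that $\partial_{\tphi_2}$ is a homeomorphism). Now apply Lemma~\ref{l:partition}: $\zeta$ has the same position relative to $\eta_i$ and $g(\eta_i)$ for every $i$. We then do a short case analysis on the limit. If $\zeta$ is to the left (resp.\ right) of $\eta$, then $\zeta$ is to the left (resp.\ right) of $\eta_i$ for all large $i$ (the half-plane is open and the convergence is locally uniform), hence to the left (resp.\ right) of $g(\eta_i)$, hence in the closed left (resp.\ right) half-plane of $g(\eta)$; but $\zeta$ cannot lie \emph{on} $g(\eta)$ in that case, because the definition of concurrence point together with $C$-normalization forces $\zeta \in \eta \iff \zeta \in g(\eta)$ — so $\zeta$ is strictly to the left (resp.\ right) of $g(\eta)$, as desired. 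If $\zeta$ is on $\eta$, then $\zeta$ is on $g(\eta)$ by the same concurrence/$C$-normalization fact; it remains to match the \emph{type} of ``on''. For this, note that whether $\zeta$ splits $\eta$ (angle $\pi$ on both sides) or is on-and-left / on-and-right (angle $\pi$ on exactly one side) is detected by basic geodesics through $\zeta$ that lie in a prescribed half-plane near $\zeta$: if $\zeta$ is on-and-left of $\eta$ one can find $\delta \in \CG(\tphi_1)$ through $\zeta$ lying in $\mathcal H^+(\eta)$ (cf.\ the construction in Lemma~\ref{l:partition}), and then applying the already-established ``left $\Rightarrow$ left'' part of partition preservation to $\delta$ relative to $\eta$ — or rather its symmetric incarnation — shows $\delta \subset \mathcal H^+(g(\eta))$, which combined with $\zeta \in g(\eta)$ forces $g(\eta)$ to make angle $\pi$ on the left at $\zeta$ too; and if $\zeta$ splits $\eta$ then there are basic geodesics through $\zeta$ in \emph{both} half-planes of $\eta$, forcing the same for $g(\eta)$. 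This closes the case analysis.

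\emph{Step 2: the equality $\CG_{\tphi_1}(\zeta) = \CG_{\tphi_2}(\zeta)$.} Let $\eta \in \CG(\tphi_1,\zeta)$, i.e.\ $\eta$ is a basic $\tphi_1$-geodesic through $\zeta$. By Step 1, $g(\eta)$ is a basic $\tphi_2$-geodesic and $\zeta$ lies on $g(\eta)$; hence $\partial_{\tphi_1}(\eta) = \partial_{\tphi_2}(g(\eta)) \in \CG_{\tphi_2}(\zeta)$. This gives $\CG_{\tphi_1}(\zeta) \subseteq \CG_{\tphi_2}(\zeta)$, and the reverse inclusion follows by the symmetric argument (running $g^{-1}$ in place of $g$, which is the straightening from $\tphi_2$ to $\tphi_1$ and to which Steps above apply verbatim with the roles of $\varphi_1$ and $\varphi_2$ exchanged — note $C$-normalization is symmetric in the sense that the relevant identity $\CG_{\tphi_1}(\zeta)-\Omega = \CG_{\tphi_2}(\zeta)-\Omega$ is symmetric). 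Therefore $\CG_{\tphi_1}(\zeta) = \CG_{\tphi_2}(\zeta)$.

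\emph{Main obstacle.} The delicate point is Step 1: controlling the limiting behaviour of the partition. A naive limit only gives that $\zeta$ lies in the \emph{closed} half-plane of $g(\eta)$, and one must rule out the degenerate possibility that $\zeta$ jumps onto $g(\eta)$ (or changes ``on''-type) in the limit. The resolution is to lean on the concurrence-point hypothesis plus $C$-normalization, which rigidly pins down \emph{whether} $\zeta \in g(\eta)$ independently of the limiting process, and then to use auxiliary basic geodesics $\delta$ through $\zeta$ (constructed exactly as in the proof of Lemma~\ref{l:partition}) to pin down the finer ``on''-type — effectively bootstrapping the lemma from its own nonsingular case. Care is also needed to choose the approximating sequence $\eta_i$ so that the side of $\zeta$ does not oscillate; this is exactly what the explicit constructions in Lemma~\ref{L:characterizing basics} (perpendicular families, or fans from an interior point) provide.
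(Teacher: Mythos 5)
There is a genuine gap at the heart of Step 1. You rule out the degenerate limit (and handle the ``on'' case) by asserting that ``the definition of concurrence point together with $C$--normalization forces $\zeta \in \eta \iff \zeta \in g(\eta)$.'' But the normalization only gives $\CG_{\tphi_1}(\zeta) - \Omega = \CG_{\tphi_2}(\zeta) - \Omega$, so this equivalence is available only for geodesics whose endpoint pair lies outside $\Omega$. For a general basic geodesic $\eta$ --- in particular any geodesic through two or more cone points, whose endpoints lie in $\CG^2_{\tphi_1} \cup \CG^2_{\tphi_2} \subset \Omega$ --- the hypothesis is silent, and the equivalence $\zeta \in \eta \iff \zeta \in g(\eta)$ is precisely the first assertion of the lemma, not something you may quote. (This is exactly the ``information loss in $\Omega$'' the paper flags when introducing $\CG(\tphi_i,\Omega)$.) As written, your argument is circular on the set of geodesics where the lemma does real work, e.g.\ the saddle-connection extensions used later for triangulations.

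The missing ingredient is an argument that forces $\zeta$ onto $g(\eta)$ when $\zeta \in \eta$ (equivalently, keeps it off when $\zeta \notin \eta$). The paper does this as follows: the limit of the $g(\eta_n)$ shows $\zeta$ lies in the closed left half-plane of $g(\eta)$; if $\zeta$ were \emph{not} on $g(\eta)$, one constructs a non-$C$--singular $\tphi_2$--geodesic $\delta \in \CG(\tphi_2,\Omega)$ (via the shortest path from $\zeta$ to $g(\eta)$ and a perpendicular, perturbed to avoid $\Omega$) separating $\zeta$ from $g(\eta)$; Lemma~\ref{l:partition} then applies to $\delta$ (its endpoints are \emph{not} in $\Omega$), so $g^{-1}(\delta)$ also has $\zeta$ strictly on its left while the endpoints of $\eta$ lie on its right --- but $\eta$ passes through $\zeta$, producing a bigon between $\eta$ and $g^{-1}(\delta)$, a contradiction. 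With that step supplied, the rest of your outline (deducing $\CG_{\tphi_1}(\zeta) = \CG_{\tphi_2}(\zeta)$ by symmetry, and pinning down the finer ``on''-type from the one-sided approximating sequence) matches the paper's argument; your auxiliary-geodesic treatment of the on-and-left/right cases is also more involved than necessary, since once the membership question is settled the one-sided limit already determines the type.
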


\begin{proof}
Let $\zeta\in \tC$ and let $\eta\in\CG(\tphi_1,\zeta)$ be an oriented basic $\tphi_1$--geodesic passing through $\zeta$.  Choose $\{\eta_n\} \subset \CG(\tphi_1,\Omega)$ to be a sequence of nonsingular oriented $\tphi_1$--geodesics limiting to $\eta$. If $\zeta$ is a cone point which is, say, on and to the left of $\eta$, we have that $\zeta$ lies to the left of $\eta_n$ for all large $n$.
If $\zeta$ is not a cone point we can choose a sequence of nonsingular geodesics $\eta_n$ that converge to $\eta$ such that $\zeta$ lies to the left of $\eta_n$ for all $n$ (see proof of Lemma~\ref{L:characterizing basics} for a construction of such a sequence). 

Now, $\{g(\eta_n)\}$ is a sequence of nonsingular geodesics which limits to $g(\eta)$ and $\zeta$ lies to the left of each $g(\eta_n)$ by Lemma \ref{l:partition}, and hence $\zeta$ lies to the left of $g(\eta)$ or is on $g(\eta)$. Note that $\eta$ and $g(\eta)$ have the same endpoints. Suppose $\zeta$ is {\em not} on $g(\eta)$. Then there exists (an oriented) $\delta\in \CG(\tphi_2,\Omega)$ such that $\zeta$ is to the left of $\delta$ (it is in $\mathcal{H}^+(\delta)$) and $g(\eta)$ is to the right of $\delta$ (it is contained in $\mathcal{H}^-(\delta)$). Such a geodesic can be constructed by a similar method as was used in the proof of Lemma \ref{l:partition}: take the minimal length geodesic path $\mu$ from $\zeta$ to $g(\eta)$ meeting $g(\eta)$ at a point $x$ (orthogonally if $x$ is not a cone point).  For any point $y$ in the interior of $\mu$ that is not a cone point, let $\delta_y$ be any basic $\tphi_2$--geodesic intersecting $\mu$ orthogonally at $y$, and if necessary, perturb $y$ slightly to avoid the countably many geodesics not in $\CG(\tphi_2,\Omega)$; the resulting geodesic $\delta = \delta_y$ cannot intersect $g(\eta)$ since if it did it would create a geodesic triangle with angle sum at least $\pi$. Again by Lemma \ref{l:partition}, $g^{-1}(\delta)\in  \CG(\tphi_1,\Omega)$ also has $\zeta$ on the left (and has the same endpoints as $\delta$).  However, since $\eta$ passes through $\zeta$ we have then that $\eta$ and $g^{-1}(\delta)$ contain segments bounding a geodesic bigon, a contradiction. Hence we must have that $\zeta$ is on $g(\eta)$ as desired.   Therefore, $\CG_{\tphi_1}(\zeta) \subset \CG_{\tphi_2}(\zeta)$.  The other containment is proved by a symmetric argument, and thus $\CG_{\tphi_1}(\zeta) = \CG_{\tphi_2}(\zeta)$.

The second claim is proved similarly, appealing partially to the first part of the lemma.  Let $\eta\in \mathcal{G}(\tphi_1)$ be an oriented geodesic and $\zeta\in\tC$. If $\zeta$ lies {\em on} $\eta$ then by the first argument, $\zeta$ also lies {\em on} $g(\zeta)$.  If $\zeta \in \tC \smallsetminus \tC_0$, then $\zeta$ splits $\eta$ if and only if $g(\eta)$ splits $\eta$.  So suppose $\zeta$ is either to the left of $\eta$ or on and to the left of $\eta$.  As above, we can approximate $\eta$ with nonsingular geodesics $\{\eta_n\} \subset \CG(\tphi_1,\Omega)$ such that $\zeta$ lies to the left of each $\eta_n$. Then $\zeta$ lies to the left of each $g(\eta_n)$ by Lemma~\ref{l:partition}, and since $g(\eta_n)$ converges to $g(\eta)$, it must be that either $\zeta$ is to the left of $g(\eta)$ or on and to the left of $g(\eta)$.  Since $\zeta$ is on $\eta$ if and only if its on $g(\eta)$ (by the first part of the lemma), it follows that $\zeta$ is on and to the left of $\eta$ if and only if it is on and to the left of $g(\eta)$.  Consequently, $\zeta$ is to the left of $\eta$ if and only if $\zeta$ is to the left of $g(\eta)$.  The case of $\zeta$ to the right of $\eta$ or on and to the right of $\eta$ is handled in exactly the same way.  This completes the proof.
\end{proof}

\begin{remark}
According to Lemma \ref{l:partition2} a $(\tphi_1,\tphi_2,\Omega)$--concurrency point $\zeta$ has the property that (after normalizing $\varphi_2$) all basic $\tphi_1$--geodesics which contain it straighten to $\tphi_2$--geodesics that contain it.  Thus we are justified in simply referring to $\zeta$ as a $(\tphi_1,\tphi_2)$--concurrency point (and likewise referring to concurrency sets for $(\tphi_1,\tphi_2)$) without reference to $\Omega$.  Indeed, going forward $\Omega$ will play no further role and so we discard it from the discussion.
\end{remark}

\subsection{Saddle connections, rays, and triangles}

For the discussion in the remainder of this subsection, assume $\varphi_1,\varphi_2 \in \tHyp_c(S)$ with $\CG_{\tphi_1} = \CG_{\tphi_2}$, we fix a concurrency set $C$ for $(\varphi_1,\varphi_2)$, and assume $\varphi_2$ has been $C$--normalized.

Given a $C$--singular geodesic $\eta$, choosing an orientation of $\eta$ induces an order on the points of $\tC$ it encounters. We say that the (oriented) geodesics $\eta \in \CG(\tphi_1)$ and $g(\eta) \in \CG(\tphi_2)$ have the {\em same combinatorics} (with respect to $\tC$) if they pass through exactly the same set of points of $\tC$ and in the same order.  The goal of this subsection is to show that $\eta$ and $g(\eta)$ have the same combinatorics for all $\eta \in \CG(\tphi_1)$.
Observe that by Lemma \ref{l:partition2}, $\eta$ and $g(\eta)$ encounter the same set of points of $\tC$, so we will need only show that the order in which they encounter the concurrence points is preserved by $g$.  
First we introduce some more terminology.

We say a $\tphi_i$--geodesic ray is a {\em $(C, \tphi_i)$--ray} (or just a {\em $C$-ray} if the metric is understood) if its initial point is in $\tC$ but its interior is disjoint from $\tC$. We say that $\zeta\in\tC$ and $x\in S^1_{\infty}$ determine a $(C, \tphi_i)$--ray if the $\tphi_i$--geodesic ray from $\zeta$ to $x$ is a $(C,\tphi_i)$--ray. 

Let $\zeta, \xi\in \tC$ and $\delta$ be a $\tphi_i$--geodesic segment connecting $\zeta$ and $\xi$. If $\delta$ contains no point of $\tC$ in its interior, we say that $\delta$ is a $(C, \tphi_i)$--{\em saddle connection}, or just a {\em $C$--saddle connection} when the metric is clear. We say that $\zeta, \eta$ determine a $(C, \tphi_i)$--saddle connection when the $\tphi_i$--geodesic segment between them is a $(C,\tphi_i)$--saddle connection.  A $(C, \varphi_i)$--saddle connection in $S$ is the image of a $(C, \tphi_i)$--saddle connection in $\tS$.

\begin{lemma}\label{l:saddle connection}
If $\mathcal{G}_{\tphi_1} = \mathcal{G}_{\tphi_2}$, $\tC$ is a concurrency set, $\varphi_2$ is $C$--normalized, and $\zeta,\xi \in \tC$, $x \in S^1_\infty$, then:
\begin{enumerate}
\item $\zeta, \xi$ determine a $(C, \tphi_1)$--saddle connection if and only if they determine a $(C,\tphi_2)$--saddle connection.
\item $\zeta, x $ determine a $(C, \tphi_1$)--ray if and only if they determine a $(C, \tphi_2)$--ray.
\end{enumerate}
\end{lemma}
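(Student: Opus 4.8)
The plan is to prove both statements by approximating saddle connections and rays by nonsingular basic geodesics and then applying Lemma~\ref{l:partition2}, which we have already established preserves the five-set partition of $\tC$ under the straightening map $g$. For (1), suppose $\zeta,\xi\in\tC$ determine a $(C,\tphi_1)$--saddle connection $\delta$, i.e.~the $\tphi_1$--geodesic segment $[\zeta,\xi]$ meets no point of $\tC$ in its interior. First extend $\delta$ to a basic $\tphi_1$--geodesic $\eta$: using Lemma~\ref{L:characterizing basics}, extend $[\zeta,\xi]$ forward past $\xi$ and backward past $\zeta$ by basic rays making angle $\pi$ on one side, choosing the signs so that the extension is a basic geodesic segment (and hence a basic geodesic). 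By Lemma~\ref{l:partition2}, $g(\eta)$ passes through $\zeta$ and $\xi$ in the same order and—crucially—through no other point of $\tC$ that $\eta$ misses, since $\eta$ and $g(\eta)$ determine the same partition and hence encounter exactly the same subset of $\tC$. In particular, between $\zeta$ and $\xi$ the geodesic $g(\eta)$ meets no point of $\tC$, so the subsegment of $g(\eta)$ from $\zeta$ to $\xi$ is a $(C,\tphi_2)$--saddle connection. The converse is symmetric, swapping the roles of $\tphi_1$ and $\tphi_2$ (using $g^{-1}$).

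For (2), suppose $\zeta\in\tC$ and $x\in S^1_\infty$ determine a $(C,\tphi_1)$--ray $r$, the $\tphi_1$--geodesic ray from $\zeta$ to $x$ whose interior misses $\tC$. Again extend $r$ to a basic $\tphi_1$--geodesic $\eta$: extend $r$ backward past $\zeta$ by a basic ray making angle $\pi$ on the appropriate side, so that the result is a basic geodesic segment and thus, by Lemma~\ref{L:characterizing basics}, a basic $\tphi_1$--geodesic through $\zeta$ with forward endpoint $x$. Since $g(\eta)$ has the same endpoints as $\eta$, its forward endpoint is again $x$, and by Lemma~\ref{l:partition2} it passes through $\zeta$ and through exactly the same points of $\tC$ as $\eta$—with the same cyclic/linear order. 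Hence the forward subray of $g(\eta)$ from $\zeta$ toward $x$ has interior disjoint from $\tC$, i.e.~$\zeta,x$ determine a $(C,\tphi_2)$--ray. The converse direction is symmetric.

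The one point requiring a little care—and the main obstacle—is the choice of signs when extending $\delta$ (resp.~$r$) to a \emph{basic} geodesic: Lemma~\ref{L:characterizing basics} allows the side on which the angle is $\pi$ to switch at most once, so if $\zeta$ or $\xi$ is a cone point we must extend on the correct side to avoid a second switch. Concretely, if the interior segment $\delta$ already has a switch somewhere, we extend so that beyond $\xi$ and before $\zeta$ no new switch is introduced; if $\delta$ has no switch, any consistent choice works, and in the worst case we may introduce one switch. Since $\delta$ meets no point of $\tC$ in its interior, and the only constraint at $\zeta$ and $\xi$ is that the angle be $\pi$ on one side (automatic for a geodesic at a cone point it passes through), such an extension always exists. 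Once $\eta$ is basic, Lemma~\ref{L:basic to basic} is not needed here; Lemma~\ref{l:partition2} applies directly to $\eta\in\mathcal G(\tphi_1)$ and delivers the conclusion. This being a routine verification, I would state it briefly and refer to the construction in the proof of Lemma~\ref{L:characterizing basics}.
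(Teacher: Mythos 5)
Your reduction to Lemma~\ref{l:partition2} has a genuine gap: that lemma only says that $\eta$ and $g(\eta)$ define the same \emph{partition} of $\tC$, i.e.\ they pass through the same \emph{set} of concurrency points (with the same side data). It says nothing about the \emph{order} in which those points occur along the geodesic, and order is exactly what your argument needs. Concretely, your basic extension $\eta$ of the saddle connection $\delta$ may well pass through further points of $\tC$ on the extension rays beyond $\zeta$ and $\xi$ (you have essentially no freedom in choosing these rays, so you cannot arrange for them to avoid $\tC$). If $\omega\in\tC$ lies on $\eta$ but outside $[\zeta,\xi]$, nothing in Lemma~\ref{l:partition2} prevents $\omega$ from lying \emph{between} $\zeta$ and $\xi$ along $g(\eta)$; the partition data of the single geodesic $\eta$ is blind to this. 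The same problem appears in your part (2), where you explicitly assert that $g(\eta)$ meets the points of $\tC$ ``with the same cyclic/linear order'' — that is not part of Lemma~\ref{l:partition2}. In fact, order preservation is precisely the content of Corollary~\ref{c:concatenation}, which the paper deduces \emph{from} the lemma you are trying to prove (the text just before the lemma says the remaining task is exactly to show the order is preserved), so appealing to it here would be circular.

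What is missing is an argument that rules out an extra point $\omega\in\tC$ in the interior of the $\tphi_2$--segment $\delta_2$ from $\zeta$ to $\xi$. The paper does this with a separating-geodesic/bigon argument: since $\omega$ does not lie on the $\tphi_1$--saddle connection, one can choose an oriented non-$C$--singular basic $\tphi_1$--geodesic $\eta$ with $\zeta,\xi$ on one side and $\omega$ on the other; by Lemma~\ref{l:partition2}, $g(\eta)$ is non-$C$--singular and separates $\omega$ from $\zeta,\xi$ in the same way, so $\delta_2$ (which runs from $\zeta$ through $\omega$ to $\xi$) must cross $g(\eta)$ twice, producing a geodesic bigon, impossible in a CAT($-1$) space. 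The ray case is handled the same way, choosing $\eta$ so that the $\tphi_1$--ray $r_1$ lies on one side and does not share an endpoint at infinity with $\eta$, while $\omega$ lies on the other side. Your extension construction via Lemma~\ref{L:characterizing basics} is fine as far as it goes, but without some version of this separation argument (applied to auxiliary geodesics, not just to the single extension $\eta$) the conclusion does not follow.
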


\begin{proof}
Let $\delta_1$ be a $(C, \tphi_1)$--saddle connection between $\zeta$ and $\xi$ and $\delta_2$ the unique $\varphi_2$--geodesic segment between $\zeta$ and $\xi$. We need to show that $\delta_2$ contains no points of $\tC$ in its interior. 
Suppose to the contrary that there were some $\omega \in \tC$, distinct from $\zeta,\xi$ on $\delta_2$.   Since $\omega$ is not on $\delta_1$, we can find an oriented, non-$C$--singular $\tphi_1$--geodesic $\eta \in \CG(\tphi_1)$ so that $\zeta,\xi$ are to the left of $\delta_1$ and $\omega$ is to the right.  By Lemma~\ref{l:partition2}, $g(\eta)$ is non-$C$--singular and $\zeta,\xi$ are to the left and $\omega$ is to the right.  But then subsegments of $\delta_2$ and $g(\eta)$ bound a bigon, which is a contradiction.  Therefore, $\delta_2$ is a $(C,\tphi_2)$--saddle connection.  A symmetric argument proves the other implication.


The proof of the second claim is similar.  Suppose $r_i$ is the $\tphi_i$--ray from $\zeta$ to $x$ and assume $r_1$ is a $C$--ray while $r_2$ contains some point $\omega \in \tC$ different than $\zeta$.  There is an oriented non-$C$--singular $\tphi_1$--geodesic $\eta \in \CG(\tphi_1)$ so that $r_1$ is to the left of $\eta$ {\em and does not share an endpoint with it} while $\omega$ is to the right.  By Lemma~\ref{l:partition2} $g(\eta)$ is non-$C$--singular, $\zeta$ is to the left, and $\omega$ is to the right, but $r_2$ must therefore travel from the left of $g(\eta)$ starting at $\zeta$, to the right passing through $\omega$, and back to the left again to limit to $x$.  Therefore $r_2$ and $g(\eta)$ contain segments that bound a bigon, which is a contradiction.  So $r_2$ is a $C$--ray.  Again a symmetric argument proves the other implication.
\end{proof}

If $r$ is a $(C, \tphi_1$)--ray from $\zeta \in \tC$ to $x \in S^1_\infty$, then Lemma~\ref{l:saddle connection} implies there is a $(C, \tphi_2)$--ray from $\zeta$ to $x$, which we suggestively denote $g(r)$.  Similarly, the lemma implies that for each $(C,\tphi_1)$--saddle connection $\delta$ between a pair $\zeta,\xi \in \tC$ there is a $(C,\tphi_2)$--saddle connection we denote $g(\delta)$ between $\zeta,\xi$ as well.

\begin{corollary}\label{c:concatenation}
Suppose $\CG_{\tphi_1} = \CG_{\tphi_2}$, $\tC$ is a concurrency set, and $\varphi_2$ is $C$--normalized. Then for every $\eta\in\mathcal{G}(\tphi_1)$, we have that $\eta$ and $g(\eta)$ have the same combinatorics. Equivalently, $\eta$ is non-$C$--singular if and only if $g(\eta)$ is, and if $\eta$ is a (finite, infinite, bi-infinite) concatenation
$$\eta = \ldots\delta_i\cdot \delta_{i+1}\cdot \delta_{i+2}\ldots$$ 
of $(C,\tphi_1)$--saddle connections and/or $(C, \tphi_1)$--rays, then 
$$g(\eta) = \ldots g(\delta_i)\cdot  g(\delta_{i+1})\cdot g(\delta_{i+2})\ldots.$$
\end{corollary}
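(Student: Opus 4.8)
The plan is to split the argument into two parts. The easy observations come first: by Lemma~\ref{l:partition2} the basic geodesics $\eta$ and $g(\eta)$ determine the same five-set partition of $\tC$, so in particular they pass through exactly the same points of $\tC$; this already gives the equivalence ``$\eta$ non-$C$--singular $\iff g(\eta)$ non-$C$--singular''. It then remains to show that $g$ preserves the \emph{order} in which a $C$--singular geodesic meets the points of $\tC$, because once this is known the concatenation statement is pure bookkeeping: since $\tC = p^{-1}(C)$ is the full preimage of the finite set $C$ it is discrete in $\tS$, so $\tC\cap\eta$ is a discrete ordered subset and the arcs of $\eta$ between consecutive points of $\tC\cap\eta$ (together with the two end rays, when $\tC\cap\eta$ fails to be bi-infinite) are precisely the $(C,\tphi_1)$--saddle connections $\delta_i$ and $(C,\tphi_1)$--rays in the decomposition of $\eta$. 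By Lemma~\ref{l:saddle connection} each such piece with endpoints $\zeta,\xi\in\tC$ (or $\zeta\in\tC$, $x\in S^1_\infty$) has a $g$--image that is a $(C,\tphi_2)$--saddle connection (or ray) with the same endpoints, and by uniqueness of geodesic segments and rays in the CAT(-1) metric $\tphi_2$ this image must be the corresponding sub-arc of $g(\eta)$; since consecutive points of $\tC$ on $\eta$ remain consecutive on $g(\eta)$, the sub-arcs concatenate in exactly the same pattern, yielding $g(\eta)=\cdots g(\delta_i)\cdot g(\delta_{i+1})\cdots$.

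For the order I would argue by contradiction using a separating geodesic together with the fact that $g$ fixes endpoints at infinity. Orient $\eta$, hence $g(\eta)$, and suppose $\zeta,\xi\in\tC\cap\eta$ appear in this order along $\eta$. Pick a regular point $y$ in the interior of the subsegment of $\eta$ between $\zeta$ and $\xi$ with $y\notin\tC$ (possible as $\tC$ is discrete), and choose a nonsingular (hence basic and non-$C$--singular) $\tphi_1$--geodesic $\eta'$ through $y$ that crosses $\eta$ transversally at $y$; such $\eta'$ exists because for all but countably many directions at $y$ the geodesic through $y$ in that direction avoids the countable set $\tC$, and a geodesic meeting $\eta$ transversally at a point meets it only there. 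Orient $\eta'$. Since $\eta$ crosses $\eta'$ transversally only at $y$, one ray of $\eta$ from $y$ lies in $\mathcal{H}^+(\eta')$ and the other in $\mathcal{H}^-(\eta')$; in particular $\zeta$ and $\xi$ lie in opposite half-planes of $\eta'$, with $\zeta$ on the side containing the backward ray. By Lemma~\ref{l:partition2}, $g(\eta')$ is non-$C$--singular and $\zeta,\xi$ lie in the correspondingly labelled opposite half-planes of $g(\eta')$. The key point is now that $g$ preserves endpoints at infinity, so the pair $\{\partial g(\eta),\partial g(\eta')\}$ is literally the same configuration of labelled points on the oriented circle $S^1_\infty$ as $\{\partial\eta,\partial\eta'\}$; consequently the endpoints of $g(\eta)$ link those of $g(\eta')$, so the two geodesics cross, and the half-plane of $g(\eta')$ into which the backward ray of $g(\eta)$ eventually runs — being determined purely by the cyclic position of these four labelled boundary points — is the same one that contains $\zeta$, while the forward ray runs into the half-plane containing $\xi$. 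Hence $\zeta$ precedes $\xi$ along $g(\eta)$, and interchanging $\zeta$ and $\xi$ shows the order is preserved exactly.

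The step I expect to demand the most care is this final comparison: verifying that the side of $g(\eta')$ reached by the backward ray of $g(\eta)$ really is the \emph{same} side as in the picture for $\eta,\eta'$, keeping the orientation conventions straight, and dealing with the (innocuous) case in which $g(\eta)$ and $g(\eta')$ coincide along a segment rather than crossing at a single point — in which case one reads ``the two rays of $g(\eta)$ off their common overlap'' in place of ``off the crossing point''. This is exactly where one exploits that the orientations of $S^1_\infty$ and of $\tS$ are fixed throughout and that $g$ preserves endpoints, so that the combinatorial type of a linked, oriented pair of geodesics is transported unchanged by $g$. Everything else — discreteness of $\tC$, uniqueness of geodesic segments in a CAT(-1) space, and the routine existence of the transverse geodesic $\eta'$ — is standard.
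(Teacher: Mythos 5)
Your proof is correct, and its skeleton matches the paper's: Lemma~\ref{l:partition2} gives that $\eta$ and $g(\eta)$ meet exactly the same points of $\tC$ (so non-$C$--singularity is preserved), Lemma~\ref{l:saddle connection} together with uniqueness of $\tphi_2$--geodesic segments and rays identifies each $g(\delta_i)$ with a sub-arc of $g(\eta)$, and the only real content left is that the order of the points of $\tC$ along the geodesic is preserved. It is in this last step that you take a different route: you re-run the separating-geodesic technique (build a non-$C$--singular geodesic $\eta'$ transverse to $\eta$ at a point between $\zeta$ and $\xi$, apply Lemma~\ref{l:partition2} to $\eta'$, and use that $g$ fixes endpoints at infinity to see which ray of $g(\eta)$ enters which half-plane of $g(\eta')$), whereas the paper argues more economically that if the order changed, two of the sub-arcs $g(\delta_i)$, $g(\delta_j)$ of $g(\eta)$ would overlap, placing a point of $\tC$ (an endpoint of one) in the interior of the other and contradicting Lemma~\ref{l:saddle connection}. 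Your route costs a bit more work --- you must choose $\eta'$ avoiding the countable set of bad directions at the chosen point and keep the left/right boundary arcs straight --- but it is self-contained and makes the mechanism of order preservation explicit; the paper's argument gets it essentially for free from the containment of the pieces $g(\delta_i)$ in $g(\eta)$. One small remark: the case you flag at the end, in which $g(\eta)$ and $g(\eta')$ coincide along a segment instead of crossing in a point, cannot occur: two distinct geodesics can only merge or separate at a cone point, and $g(\eta')$ is nonsingular by construction, so (since their endpoints link) $g(\eta)\cap g(\eta')$ is a single transverse intersection at a regular point and that extra case analysis is unnecessary.
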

\begin{proof} Given an oriented $\eta \in \CG(\tphi_1)$, if $\eta$ is non-$C$--singular, then by Lemma~\ref{l:partition2}, so is $g(\eta)$.  If $\eta$ is a nontrivial concatenation $\eta =\ldots \delta_i \cdot \delta_{i+1} \ldots$, then all the points of $\tC$ that appear as endpoints of the saddle connections/rays of this concatenation also appear in $g(\eta)$ by Lemma~\ref{l:partition2}.  In particular, each $g(\delta_i)$ is a $(C,\tphi_2)$--saddle connection or $(C, \tphi_2)$--ray contained in $g(\eta)$.  If the points of $\tC$ did not appear in the same order along $g(\eta)$ as they did in $\eta$, then $g(\delta_i)$ would overlap with some $g(\delta_j)$ for some $i \neq j$, contradicting the fact that these are $(C,\tphi_2)$--saddle connections and/or $(C, \tphi_2)$--rays, by Lemma~\ref{l:saddle connection}.
\end{proof}

We say that $T$ is a $(C,\tphi_i)${\em -triangle} if it is a $\tphi_i$--geodesic triangle such that the vertices lie in $\tC$, its edges are $C$--saddle connections, and there are no points of $\tC$ in its interior. Note that, since $\tC_0\subset\tC$, any $(C,\tphi_i)$--triangle is isometric to a hyperbolic triangle in $\mathbb{H}$. If the covering map $p$ restricted to a $(C,\tphi_i)$--triangle is injective on the interior, then we call the image in $S$ a $(C,\varphi_i)$--triangle (in fact, in this case, the negative curvature of $\varphi_i$ implies that the restriction of $p$ can only fail to be injective on the vertices).  We say that three points of $\tC$ {\em determine a $(C,\tphi_i)$--triangle} if each pair determines a $(C,\tphi_i)$--saddle connection, and the region bounded by these $C$--saddle connections is such a triangle. 

\begin{lemma}\label{l:triangle}
Suppose $\CG_{\tphi_1} = \CG_{\tphi_2}$, $\tC$ is a concurrency set,  and $\varphi_2$ is $C$--normalized. If $\zeta, \xi, \chi \in \tC$ determine a $(C,\tphi_1)$--triangle then they also determine a $(C,\tphi_2)$--triangle, and the two triangles have the same orientation. 
\end{lemma}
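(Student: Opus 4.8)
The plan is to extend each side of $T_1$ to a biinfinite \emph{basic} $\tphi_1$--geodesic, express $\mathrm{int}(T_1)$ as an intersection of three half--planes, and then transport the whole picture across by the straightening map $g$, exploiting that $g$ preserves endpoints at infinity (hence crossing/linking patterns), preserves the partition of $\tC$ (Lemma~\ref{l:partition2}), carries $(C,\tphi_1)$--saddle connections to $(C,\tphi_2)$--saddle connections (Lemma~\ref{l:saddle connection}), and preserves the combinatorics of $\tC$--points along a geodesic (Corollary~\ref{c:concatenation}).

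Concretely, I would write $\delta_1,\delta_2,\delta_3$ for the three $(C,\tphi_1)$--saddle connections bounding $T_1$, with vertices $\{\zeta,\xi\},\{\xi,\chi\},\{\chi,\zeta\}$ respectively. Each $\delta_j$ is a basic segment, so by Lemma~\ref{L:characterizing basics} it extends to a basic $\tphi_1$--geodesic; I extend it past both of its endpoints by making angle $\pi$ on the side of $\delta_j$ \emph{not} containing $\mathrm{int}(T_1)$ — a single, unambiguous side, so no side--switch is needed and the extension $\eta_j\in\CG(\tphi_1)$ is basic. Since $T_1$ is isometric to a hyperbolic triangle, its interior angles lie in $(0,\pi)$, and using this together with unique geodesics in the $\mathrm{CAT}(-1)$ space $\tS$ one checks that $\eta_j$ leaves $T_1$ immediately past each endpoint and, being a geodesic meeting the convex set $T_1$ in a connected arc, never returns; hence $\eta_j\cap T_1=\delta_j$. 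It follows that $\eta_1,\eta_2,\eta_3$ pairwise meet in exactly one (transverse) point, namely $\eta_1\cap\eta_2=\{\xi\}$, $\eta_1\cap\eta_3=\{\zeta\}$, $\eta_2\cap\eta_3=\{\chi\}$. Orienting the $\eta_j$, there are unique signs $\epsilon_j$ with $\mathrm{int}(T_1)\subset\mathcal{H}^{\epsilon_j}(\eta_j)$, and since three properly embedded lines pairwise crossing once cut the plane $\tS$ into the standard seven--region pattern with a single bounded region,
\[ \mathrm{int}(T_1)=\mathcal{H}^{\epsilon_1}(\eta_1)\cap\mathcal{H}^{\epsilon_2}(\eta_2)\cap\mathcal{H}^{\epsilon_3}(\eta_3). \]

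Next I would transport this by $g$. By Corollary~\ref{c:concatenation}, $g(\eta_j)$ contains the $(C,\tphi_2)$--saddle connection between the two vertices of $\delta_j$ (which exists by Lemma~\ref{l:saddle connection}) and meets $\tC$ in the same cyclic pattern as $\eta_j$; call that saddle connection $g(\delta_j)$. Since $g(\eta_j)$ has the same endpoints at infinity as $\eta_j$, the triple $\{g(\eta_1),g(\eta_2),g(\eta_3)\}$ has exactly the same crossing/linking pattern as $\{\eta_1,\eta_2,\eta_3\}$; moreover, by Lemma~\ref{l:partition2} any point of $g(\eta_i)\cap g(\eta_j)$ lying in $\tC$ comes from a point of $\eta_i\cap\eta_j$ in $\tC$, so, again using unique geodesics (now in $(\tS,\tphi_2)$), the $g(\eta_j)$ also pairwise meet in the single points $\xi,\zeta,\chi$. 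As the entire configuration — the cyclic order of the six endpoints on $S^1_\infty$ together with the incidence data — is literally unchanged, the three lines $g(\eta_1),g(\eta_2),g(\eta_3)$ bound a triangle $T_2$ whose boundary segments are exactly $g(\delta_1),g(\delta_2),g(\delta_3)$, with vertices $\zeta,\xi,\chi\in\tC$, with $\mathrm{int}(T_2)=\bigcap_j\mathcal{H}^{\epsilon_j}(g(\eta_j))$ for the \emph{same} signs $\epsilon_j$, and with the same orientation relative to $\tS$ as $T_1$ (orientation is part of the combinatorial data of the endpoint configuration, and $g$ fixes $S^1_\infty$). Finally, to see $T_2$ has no points of $\tC$ in its interior: if $\omega\in\tC\cap\mathrm{int}(T_2)$, then $\omega\in\mathcal{H}^{\epsilon_j}(g(\eta_j))$ for every $j$, so by Lemma~\ref{l:partition2} (applied to $\eta_j$ versus $g(\eta_j)$) also $\omega\in\mathcal{H}^{\epsilon_j}(\eta_j)$ for every $j$, whence $\omega\in\mathrm{int}(T_1)$, contradicting that $T_1$ is a $(C,\tphi_1)$--triangle. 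Thus $T_2$ is a $(C,\tphi_2)$--triangle determined by $\zeta,\xi,\chi$, with the same orientation as $T_1$.

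The hard part is Step~1 and its mirror in Step~2: controlling how the extended geodesics $\eta_j$, and then their straightenings $g(\eta_j)$, sit relative to the triangle — i.e.\ that they really do form an honest triangle configuration in the $\mathrm{CAT}(-1)$ surface, pairwise crossing exactly once at the vertices with no stray extra intersections. This rests entirely on unique geodesics in $\mathrm{CAT}(-1)$ spaces together with the side--preservation of $g$ from Lemma~\ref{l:partition2}; once that configuration is pinned down, identifying $\mathrm{int}(T_2)$ with the matching intersection of half--planes and deducing the orientation and the absence of interior $\tC$--points is formal.
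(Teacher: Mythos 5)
Your overall strategy — extend the sides to basic geodesics, realize $\mathrm{int}(T_1)$ as an intersection of three half--planes, and push the configuration through $g$ using Lemmas~\ref{l:partition2}, \ref{l:saddle connection} and Corollary~\ref{c:concatenation} — is exactly the paper's, but your choice of extension is the wrong one, and the configuration you claim does not hold with it. You extend each side $\delta_j$ past its endpoints by making angle $\pi$ on the side \emph{not} containing $\mathrm{int}(T_1)$. Do the computation in the link of a vertex $\zeta\in\tC_0$ of cone angle $\theta>2\pi$, with the two incident sides at directions $0$ and $a$ (so $a<\pi$ is the interior angle and $(0,a)$ is the interior sector). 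Your extensions put the new rays at directions $\theta-\pi$ and $a+\pi$; the two directions of $\eta_1$ are then $\{0,\theta-\pi\}$ and those of $\eta_3$ are $\{a,a+\pi\}$. Whenever $\theta> a+2\pi$ (e.g.\ $\theta=4\pi$, $a=\pi/3$) both directions of $\eta_3$ lie in the single arc $(0,\theta-\pi)$, so $\eta_1$ and $\eta_3$ meet at $\zeta$ \emph{without crossing}: their endpoints at infinity do not link, the ``standard seven--region pattern'' does not occur, and the intersection of the two interior half--planes near $\zeta$ consists of the interior sector $(0,a)$ \emph{together with an extra sector} $(a+\pi,\theta-\pi)$. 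Hence your key identity $\mathrm{int}(T_1)=\bigcap_j\mathcal H^{\epsilon_j}(\eta_j)$ is false in general, and both the ``no $\tC$--points in $\mathrm{int}(T_2)$'' step and the identification of $\bigcap_j\mathcal H^{\epsilon_j}(g(\eta_j))$ with $\mathrm{int}(T_2)$ collapse with it.

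The fix is the opposite convention, which is what the paper uses: orient $T_1$ positively, orient each $\eta_j$ compatibly with $\partial T_1$, and extend each side so that it makes angle $\pi$ on the side \emph{containing} the interior (equivalently, so that the two vertices on $\eta_j$ are ``on and to the right'' of $\eta_j$ while the third vertex is to its left). In the link computation this puts the extension rays at directions $\pi$ and $\theta+a-\pi$, the four directions of $\eta_1,\eta_3$ then alternate around the link for \emph{every} $\theta>2\pi$, the geodesics pairwise cross transversely exactly at the vertices, and $\mathcal H^{+}(\eta_1)\cap\mathcal H^{+}(\eta_2)\cap\mathcal H^{+}(\eta_3)=T_1$ on the nose. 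With that correction the remainder of your argument — transporting the crossing pattern, the half--plane partition of $\tC$, and the orientation via the order of $\zeta,\xi$ along $g(\eta_1)$ — is sound and coincides with the paper's proof.
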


\begin{proof}
Let $T$ be the $(C,\tphi_1)$--triangle determined by $\zeta, \xi, \chi \in \tC$, and orient it positively (its interior is to the left of its boundary) and suppose this induces the cyclic orientation $\zeta<\xi<\chi < \zeta$ on its vertices. Let $\alpha_1$, $\alpha_2$, and $\alpha_3$ be the 
$(C,\tphi_1)$--saddle connection from $\zeta$ and $\xi$, from $\xi$ to $\chi$, and from $\chi$ to $\zeta$, respectively.
By Lemma~\ref{l:saddle connection}, these determine $(C, \tphi_2)$--saddle connections $\alpha_1'=g(\alpha_1)$, $\alpha_2'=g(\alpha_2)$, and $\alpha_3'=g(\alpha_3)$, and these bound a triangular region $T'$. We need to show that $T'$ has no points of $\tC$ in its interior, and that it has the same orientation as $T$. 

Extend $\alpha_1, \alpha_2, \alpha_3$ to oriented basic $\tphi_1$--geodesics $\eta_1, \eta_2, \eta_3$ such that each of the three points $\zeta, \xi, \chi$ lie to the left or on and to the right of each of the geodesics; see Figure~\ref{F:triangle}.  This choice ensures that for each $i \neq j$, $\eta_i$ and $\eta_j$ cross and intersect in a single point (namely one of $\zeta,\xi,\chi$).
By Corollary \ref{c:concatenation} $g(\eta_i)$ is a basic geodesic which has $g(\alpha_i)$ as a subsegment for each $i=1,2,3$.

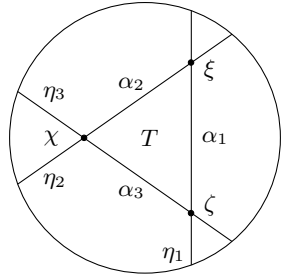
\begin{figure}[htb]
\begin{center}
  \captionsetup{width=.85\linewidth}
\begin{tikzpicture}[scale = .6]
\draw (0,0) circle (3);
\draw ({3*cos(70)},{3*sin(70)}) -- ({3*cos(-70)},{3*sin(-70)});
\draw ({3*cos(50)},{3*sin(50)}) -- ({3*cos(200)},{3*sin(200)});
\draw ({3*cos(-50)},{3*sin(-50)}) -- ({3*cos(-200)},{3*sin(-200)});
\draw[fill=black] (1.02,1.67) circle (.06cm);
\draw[fill=black] (1.02,-1.67) circle (.06cm);
\draw[fill=black] (-1.35,0) circle (.06cm);
\node[right] at (1.05,0) {\small{$\alpha_1$}};
\node[above] at (-.3,.85) {\small{$\alpha_2$}};
\node[below] at (-.3,-.85) {\small{$\alpha_3$}};
\node[right] at (1.1,-1.5) {\small{$\zeta$}};
\node[right] at (1.1,1.5) {\small{$\xi$}};
\node[left] at (-1.7,0) {\small{$\chi$}};
\node[above] at (-2,.6) {\small{$\eta_3$}};
\node[below] at (-2,-.6) {\small{$\eta_2$}};
\node[left] at (1.1,-2.6) {\small{$\eta_1$}};
\node at (.1,0) {\small{$T$}};
\end{tikzpicture}
\caption{The points $\zeta,\xi$ are on and to the right of $\eta_1$, while $\chi$ is to the left of $\eta_1$. The intersection of the positive half-planes is the triangle: $\mathcal H^+(\eta_1) \cap \mathcal H^+(\eta_2) \cap \mathcal H^+(\eta_3) = T$.}
\label{F:triangle}
\end{center}
\end{figure}

Note that the intersection of the three left half-planes $\mathcal{H}^+(\eta_1), \mathcal{H}^+(\eta_2)$, and $\mathcal{H}^+(\eta_3)$ is $T$ and hence contains no other points of $\tC$ besides the vertices $\zeta, \xi, \chi$. By Lemma~\ref{l:partition2} (preservation of partitions), the same is true for the intersection of $\mathcal{H}^+(g(\eta_1)), \mathcal{H}^+(g(\eta_2))$, and $\mathcal{H}^+(g(\eta_3))$. However, by the above, this intersection is exactly $T'$. Hence $T'$ is indeed a $(C,\tphi_2)$--triangle. By Corollary~\ref{c:concatenation}, $\zeta$ precedes $\xi$ along $g(\eta_1)$ and since $\chi$ is to the left of $g(\eta_1)$, $T'$ has the same orientation as $T$.
\end{proof}

We end this subsection with one more fact we will need for future reference.

\begin{lemma}\label{l:transverse}
Suppose $\mathcal{G}_{\tphi_1} = \mathcal{G}_{\tphi_2}$, $\tC$ is a concurrency set and assume $\varphi_2$ is $C$--normalized. Let $\eta$ be a basic $\tphi_1$--geodesic or a $(C, \tphi_1)$--saddle connection. If $\eta$ transversely crosses the interior of a $(C, \tphi)$--saddle connection $\delta$, then $g(\eta)$ transversely crosses the interior of $g(\delta)$.
\end{lemma}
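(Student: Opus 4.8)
The plan is to reduce to biinfinite basic geodesics, where Lemma~\ref{l:partition2} applies directly, and then transfer the intersection data back to the saddle connections. Let $x$ denote the point at which $\eta$ transversely crosses the interior of $\delta$; since $x$ lies in the interior of the $C$--saddle connection $\delta$ we have $x \notin \tC$, so $x$ is not a cone point and, if $\eta$ is itself a saddle connection, $x$ is interior to $\eta$. Fix a basic $\tphi_1$--geodesic $\hat\delta \supseteq \delta$ and a basic $\tphi_1$--geodesic $\hat\eta \supseteq \eta$, taking $\hat\eta = \eta$ if $\eta$ is already biinfinite (such extensions exist because a $C$--saddle connection is nonsingular, hence a basic segment, so Lemma~\ref{L:characterizing basics} applies). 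Near $x$ the geodesics $\hat\eta$ and $\hat\delta$ cross transversely, so they are distinct and, by the dichotomy recalled in \S\ref{S:geodesics and currents}, they meet at exactly the one point $x$. Hence $\hat\eta$ avoids the endpoints $\zeta,\xi$ of $\delta$ (otherwise it would share two points with $\hat\delta$), and similarly, when $\eta$ is a saddle connection, $\hat\delta$ avoids the endpoints $a,b$ of $\eta$. Traversing $\delta$ from $\zeta$ to $\xi$ crosses $\hat\eta$ exactly once, so $\zeta$ and $\xi$ lie strictly on opposite sides of $\hat\eta$; symmetrically, $a$ and $b$ lie strictly on opposite sides of $\hat\delta$.

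Next I would push everything through the straightening map. By Lemma~\ref{l:saddle connection} the points $\zeta,\xi$ are joined by a $(C,\tphi_2)$--saddle connection $g(\delta)$ and $a,b$ by $g(\eta)$; by Corollary~\ref{c:concatenation}, when $\eta$ is a saddle connection $g(\eta)$ is the sub-saddle-connection of $g(\hat\eta)$ between $a$ and $b$, while $g(\eta) = g(\hat\eta)$ when $\eta$ is biinfinite. The crux is that $g(\hat\eta)$ and $g(\hat\delta)$ meet at exactly one point, transversely. They cross, since their endpoints link (those of $\hat\eta$ and $\hat\delta$ do, and $g$ preserves endpoints), so they do intersect; if the intersection were not a single transverse point, the dichotomy would force them to coincide along a nondegenerate concatenation of saddle connections or in a single cone point, and in either case some point $c \in \tC_0$ would lie on both $g(\hat\eta)$ and $g(\hat\delta)$. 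But then $\CG_{\tphi_1}(c) = \CG_{\tphi_2}(c)$ from Lemma~\ref{l:partition2} would force $\hat\eta$ and $\hat\delta$ to pass through $c$, contradicting that they meet only at $x$, which is not a cone point. Let $y$ be the unique point of $g(\hat\eta) \cap g(\hat\delta)$. By Lemma~\ref{l:partition2}, $g(\hat\eta)$ induces the same partition of $\tC$ as $\hat\eta$, so $\zeta,\xi$ lie strictly on opposite sides of $g(\hat\eta)$, and likewise $a,b$ lie strictly on opposite sides of $g(\hat\delta)$. Since a basic geodesic separates $\tS$, the path $g(\delta)$ from $\zeta$ to $\xi$ must meet $g(\hat\eta)$, necessarily at $y$, and because $\zeta,\xi \notin g(\hat\eta)$ the point $y$ lies in the interior of $g(\delta)$; likewise the path $g(\eta)$ from $a$ to $b$ meets $g(\hat\delta)$ at $y$, with $y$ in the interior of $g(\eta)$ (this last point is automatic when $g(\eta) = g(\hat\eta)$). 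As $g(\eta) \subseteq g(\hat\eta)$ and $g(\hat\eta)$ crosses $g(\delta)$ transversely at $y$, this shows $g(\eta)$ transversely crosses the interior of $g(\delta)$, which is the assertion.

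I expect the main difficulty to be the bookkeeping in the second paragraph. The genuinely substantive step is ruling out that $g(\hat\eta)$ and $g(\hat\delta)$ overlap along a segment rather than crossing at a single point: this is exactly where one must use that $g$ preserves the concurrence data at \emph{every} point of $\tC$, via Lemma~\ref{l:partition2}, to pull the cone points of a hypothetical overlap back to a configuration that is impossible for $\hat\eta$ and $\hat\delta$. The remaining subtlety is ensuring the unique intersection point $y$ lands in the \emph{interiors} of the saddle connections $g(\eta)$ and $g(\delta)$, not just on their biinfinite extensions; this is handled by the separation argument once we know from Lemma~\ref{l:partition2} that the endpoints $\zeta,\xi$ and $a,b$ are pushed to the correct sides of $g(\hat\eta)$ and $g(\hat\delta)$. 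The split between $\eta$ being a basic geodesic and $\eta$ being a $C$--saddle connection is only a notational convenience, absorbed by setting $\hat\eta = \eta$ in the former case.
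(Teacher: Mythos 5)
Your proposal is correct and takes essentially the same route as the paper's proof: extend $\eta$ and $\delta$ to basic $\tphi_1$--geodesics, use linking of endpoints together with the partition/concurrency preservation of Lemma~\ref{l:partition2} and Corollary~\ref{c:concatenation}, and then a separation argument to place the crossing in the interiors of the straightened saddle connections. The only difference is that you spell out the uniqueness and transversality of the intersection point of the straightened geodesics (via the cone-point/concurrency argument), a step the paper leaves implicit.
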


\begin{proof}
Extend $\eta$ and $\delta$ to basic $\tphi_1$--geodesics $\eta'$ and $\delta'$ (where $\eta'=\eta$ in the case $\eta$ is a basic geodesic to start). Since $\eta'$ and $\delta'$ cross, their endpoints link, and since $g(\eta')$ and $g(\delta')$ have the same endpoints as $\eta'$ and $\delta'$, respectively, $g(\eta')$ and $g(\delta')$ also cross. 
 
Suppose $\zeta, \xi\in \tC$ are the endpoints of $\delta$. Note that $\zeta$ and $\xi$ are disjoint from and lie on opposite sides of $\eta'$. Hence, since $g$ preserves partitions and concurrency,  $\zeta$ and $\xi$ are disjoint from and lie on opposite sides of $g(\eta')$. It follows that $g(\eta')$ transversely crosses $g(\delta')$ at a point on the interior of the segment connecting $\zeta$ and $\xi$, which is exactly $g(\eta)$ by Corollary \ref{c:concatenation}. 


Now, if $\eta=\eta'$ is a geodesic we are done. If $\eta$ is a saddle connection, say with endpoints $\zeta', \xi'$ we get by the same argument as above that a basic extension of $\delta$ partition $\zeta'$ and $\xi'$ on different sides and, as above, we have that the intersection must happen at a point belonging to the saddle connection $\eta$. 
\end{proof}

\subsection{Triangulations}\label{S:triangulations}
In this section we continue with the assumption that $\CG_{\tphi_1} = \CG_{\tphi_2}$ for some $\varphi_1,\varphi_2 \in \tHyp_c(S)$, and that $\varphi_2$ has been $C$--normalized for a concurrency set $C$.  We will explain how to adjust $\varphi_2$ to an equivalent metric $\varphi_2'$ and find a {\em common triangulation} with vertices in $C$ and edges which are $(C,\varphi_i)$--saddle connections for $i=1$ {\em and} $2$ (see below for precise definitions).  We note that the isotopy to the identity arising in the adjustment of $\varphi_2$ may nontrivially braid $C$.  Once this adjustment is made, however, we will  show that for any other triangulation, the analogous adjustment of $\varphi_2'$ can be done {\em without} additional braiding of $C$.


Now suppose $\CT$ is a {\em $(C,\varphi_1)$--triangulation} (or just $\varphi_1$--triangulation, if $C$ is understood).  This is a $\Delta$--complex structure $\CT$ for $S$ (in the sense of \cite{Hatcher}) so that the $0$--skeleton is precisely $C$, and so that each edge is a $(C, \varphi_1$)--saddle connection.  In particular, each $2$--cell is a $(C,\varphi_1)$--triangle as defined above.

We define a map $f_\CT \colon S \to S$ to be the descent to $S$ of a $\pi_1S$--equivariant map $\tilde f_\CT \colon \tS \to \tS$ which is the identity on $\tC$ and which sends each $\tphi_1$--triangle $T_1$ to the straightened $\tphi_2$--triangle $T_2$.  By Lemma~\ref{l:triangle}, $T_2$ is also isometric to a geodesic triangle in the hyperbolic plane, and $\tilde f_\CT$ is defined to be the canonical triangle map from $T_1$ to $T_2$ (conjugated by the isometries to the hyperbolic triangles).  Equivariance of $\tilde f_\CT$ is immediate.
\begin{proposition} The map $f_\CT$ is a homeomorphism, isotopic to the identity on $S$.
\end{proposition}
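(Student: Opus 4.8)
I would prove the proposition by showing that $\tilde f_\CT \colon \tS \to \tS$ is a homeomorphism which is $\pi_1S$--equivariant and equivariantly properly homotopic (in fact isotopic) to the identity; descending to $S$ then gives the result. The key point is that $\tilde f_\CT$ restricted to each $\tphi_1$--triangle $T_1$ is a \emph{canonical map} onto the straightened triangle $g$-image $T_2$ (a genuine geodesic triangle in $\mathbb H$ by Lemma~\ref{l:triangle}), and canonical maps are homeomorphisms between simplices that agree on common faces.

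First I would check that $\tilde f_\CT$ is well-defined and continuous: the triangles of the lifted triangulation $\widetilde \CT$ meet along edges which are $(C,\tphi_1)$--saddle connections, and by Lemma~\ref{l:saddle connection} the straightenings of these edges are $(C,\tphi_2)$--saddle connections which are shared by the straightened adjacent triangles (here one uses that the restriction of a canonical map to a face is again the canonical map on that face, so the definitions on adjacent triangles agree on the shared edge). Continuity then follows since $\widetilde\CT$ is a locally finite simplicial-type complex. Equivariance is built in. Next, injectivity: two distinct $\tphi_1$--triangles have disjoint interiors, and by Lemma~\ref{l:triangle} their straightened images $T_2,T_2'$ also have disjoint interiors in $(\tS,\tphi_2)$ — this is exactly what ``$T'$ has no points of $\tC$ in its interior and has the same orientation'' buys us, combined with the fact that the straightened edges are honest saddle connections and that the straightened triangulation is again an embedded $\Delta$--complex (consecutive triangles around an edge/vertex have correctly nested or disjoint interiors). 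So $\tilde f_\CT$ is a continuous bijection of $\tS$; properness (it is proper because it is equivariant and a homeomorphism on the compact quotient level, or because it is a cellular bijection between locally finite complexes) makes it a homeomorphism. Then $f_\CT$ is a homeomorphism of $S$.

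To see $f_\CT$ is isotopic to the identity, I would argue that $\tilde f_\CT$ is $\pi_1 S$--equivariantly homotopic to $\mathrm{id}_{\tS}$ through maps fixing $\tC$: since $\tilde f_\CT$ fixes $\tC$ pointwise and $\tS$ is contractible (indeed uniquely geodesic for $\tphi_1$), one can geodesically (in $\tphi_1$, say) straight-line homotope $\tilde f_\CT$ to the identity, and this homotopy is $\pi_1S$--equivariant because both maps and the metric are. Descending, $f_\CT \simeq \mathrm{id}_S$. Finally, a homotopy between homeomorphisms of a closed surface implies they are isotopic (a homeomorphism of a closed orientable surface of genus $\geq 2$ is isotopic to the identity iff it acts trivially on $\pi_1$, and $f_\CT$ does since it is covered by a $\pi_1S$--equivariant map homotopic to the identity — equivalently cite the Dehn–Nielsen–Baer theorem as the paper does elsewhere), so $f_\CT$ is isotopic to the identity.

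**Main obstacle.** The genuinely delicate step is verifying that the straightened triangulation $g(\widetilde\CT)$ is again an embedded $\Delta$--complex filling $\tS$ — i.e. that the straightened triangles tile $(\tS,\tphi_2)$ with disjoint interiors and no overlaps or gaps, so that $\tilde f_\CT$ is a bijection rather than merely a local homeomorphism. Lemma~\ref{l:triangle} handles each triangle individually (no interior points of $\tC$, correct orientation), but one must assemble these into a global statement: around each vertex $\zeta \in \tC$ the cyclic order of the straightened edges emanating from $\zeta$ matches that of the original edges (this follows from Lemma~\ref{l:partition2}, since the partition data of a basic geodesic through $\zeta$ records precisely which side each neighboring vertex lies on), and the straightened triangles close up around $\zeta$ without overlapping. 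With the local (link-of-a-vertex) picture under control, a standard covering-space/degree argument upgrades ``local homeomorphism + fixes $\tC$ + equivariant'' to ``global homeomorphism.''
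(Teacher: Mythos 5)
Your proposal is correct in substance, and in its final form it lands on the same mechanism as the paper's (much shorter) proof, which runs: (i) $\tilde f_\CT$ fixes $\tC$ pointwise and is $\pi_1S$--equivariant, so $(f_\CT)_*$ is the identity on $\pi_1S$, whence $f_\CT$ has degree $1$ and is homotopic to the identity (as $S$ is aspherical); (ii) $f_\CT$ sends each oriented triangle to a consistently oriented triangle, so it is an orientation-preserving local homeomorphism, and a degree-one local homeomorphism of a closed surface is a homeomorphism; (iii) homotopic homeomorphisms of closed surfaces are isotopic. This is precisely the ``covering-space/degree argument'' you defer to at the end of your obstacle paragraph, and it bypasses the delicate point on which you spend most of your effort. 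Two cautions about your primary route: Lemma~\ref{l:triangle} is a statement about a \emph{single} triangle (no points of $\tC$ in its interior, orientation preserved); it does not by itself give that two \emph{distinct} straightened triangles have disjoint interiors, and upgrading the link-of-a-vertex picture to global embeddedness of the straightened triangulation is genuine work that you only sketch. You correctly flag this as the main obstacle, but the lesson is that one should not prove injectivity directly --- the degree-one argument delivers it once local injectivity (orientation preservation triangle by triangle, plus Lemma~\ref{l:partition2} for the behavior along edges and around vertices) is in hand. Your equivariant geodesic straight-line homotopy to the identity is a perfectly good alternative to the paper's appeal to $(f_\CT)_* = \mathrm{id}$, and the final step (homotopy implies isotopy for surface homeomorphisms) matches the paper.
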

\begin{proof} Since $\tilde f_\CT$ is the identity on $\tC$ and is equivariant, $(f_\CT)_*$ is the identity, and so $f_\CT$ has degree $1$ and is homotopic to the identity.  Since it maps each oriented triangle to a similarly oriented triangle, it is in fact a homeomorphism.  Furthermore, since isotopy and homotopy are the same equivalence relation for homeomorphisms for closed surfaces, $f_\CT$ is isotopic to the identity, as required.
\end{proof}

Given a $(C,\varphi_1)$--triangulation $\CT$, we may adjust the metric $\varphi_2$ by pulling back via $f_\CT$ to produce an equivalent metric. Having done so, and replacing $\varphi_2$ with the equivalent metric we say that $\varphi_2$ has been {\em $(C, \mathcal{T})$--normalized} and we note that $\CT$ is simultaneously a $(C,\varphi_1)$--triangulation and a $(C,\varphi_2)$--triangulation, and the map $f_\CT$ is the identity. In this case we say that $\CT$ is a {\em $(C,\varphi_1,\varphi_2)$--triangulation}. In other words, we have the following result:

\begin{lemma}\label{L:triangle normalized}
Suppose $\mathcal{G}_{\tphi_1} = \mathcal{G}_{\tphi_1}$.  If $\tC$ is any concurrency set and $\CT$ a $(C, \varphi_1)$--triangulation, then $\varphi_2\sim \varphi_2'$ for some $(C, \CT)$--normalized $\varphi_2'$. That is, up to equivalence, we can always assume that $\CT$ is a $(C,\varphi_1,\varphi_2)$--triangulation. \qed
\end{lemma}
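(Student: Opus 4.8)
The plan is to take $\varphi_2' := f_\CT^*(\varphi_2)$, the pullback of $\varphi_2$ by the homeomorphism $f_\CT \colon S \to S$ constructed just above, and to check that this representative of the class of $\varphi_2$ is $(C,\CT)$--normalized. First I would note that $\varphi_2' \sim \varphi_2$: by the Proposition preceding the lemma, $f_\CT$ is a homeomorphism of $S$ isotopic to the identity, and $\Hyp_c(S)$ is by definition the quotient of $\tHyp_c(S)$ by pullback along precisely such homeomorphisms (see \S\ref{S:Hyp cone}), so $\varphi_2'$ and $\varphi_2$ determine the same point of $\Hyp_c(S)$. Moreover $\varphi_2' \in \tHyp_c(S)$ and, since $\tilde f_\CT$ is the identity on $\tC \supseteq \tC_0$, the map $f_\CT$ fixes $C$ pointwise; being a bijection it then preserves both $C$ and $S \smallsetminus C$ setwise, so $\cone(\varphi_2') = f_\CT^{-1}(C_0) = C_0$.

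Next I would verify that $\CT$ is simultaneously a $(C,\varphi_1)$--triangulation and a $(C,\varphi_2')$--triangulation. Its $0$--skeleton is $C$ by hypothesis, so it suffices to see that each edge is a $(C,\varphi_2')$--saddle connection and each $2$--cell a $(C,\varphi_2')$--triangle. By definition of the pullback, $f_\CT$ is an isometry from $(S,\varphi_2')$ onto $(S,\varphi_2)$, and on each $\varphi_1$--triangle $T_1$ of $\CT$ with vertices $\zeta,\xi,\chi$ it is the canonical triangle map onto the straightened $\varphi_2$--triangle $T_2$. By Lemma~\ref{l:triangle}, $T_2$ is a genuine hyperbolic triangle with no point of $C$ in its interior, and its edges are, by Lemma~\ref{l:saddle connection}, the $(C,\varphi_2)$--saddle connections $g(\alpha_i)$ joining the same pairs of vertices. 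A canonical triangle map carries edges to edges, so the edges of $T_1$ in $\CT$ are exactly the $f_\CT$--preimages of the $g(\alpha_i)$; since $f_\CT$ is an isometry from $(S,\varphi_2')$ onto $(S,\varphi_2)$ and preserves $S \smallsetminus C$, these preimages are $\varphi_2'$--geodesic segments between points of $C$ whose interiors are disjoint from $C$, i.e. $(C,\varphi_2')$--saddle connections, and the interior of each $T_1$, mapping onto the interior of $T_2$, is disjoint from $C$. Hence $\CT$ is a $(C,\varphi_2')$--triangulation.

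Finally I would observe that $\varphi_2'$ is $(C,\CT)$--normalized, i.e. that the canonical map rebuilt from $\CT$ for the pair $(\varphi_1,\varphi_2')$ is the identity. Indeed, for this pair a $\varphi_1$--triangle $T_1$ of $\CT$ and the straightened $\varphi_2'$--triangle on the same vertices are literally the same subset of $S$ --- namely $T_1$, a geodesic triangle in both metrics with vertices $\zeta,\xi,\chi$ --- so the canonical triangle map between them is the identity on each $2$--cell, hence on all of $S$. Therefore $\CT$ is a $(C,\varphi_1,\varphi_2')$--triangulation, proving the lemma.

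I do not expect a genuine obstacle: the statement merely packages the construction preceding it. The only points requiring a word of care are that $f_\CT$, fixing $C$ pointwise, preserves $C$ and its complement setwise (so that preimages of $(C,\varphi_2)$--saddle connections are $(C,\varphi_2')$--saddle connections and the $\cone$ sets agree), and that the edges of the straightened $\varphi_2$--triangle really are $(C,\varphi_2)$--saddle connections with interior omitting $C$ --- which is exactly what Lemmas~\ref{l:saddle connection} and \ref{l:triangle} supply.
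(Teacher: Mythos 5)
Your proposal is correct and follows the same route as the paper: the paper's ``proof'' of this lemma is exactly the discussion preceding it, namely pulling back $\varphi_2$ by the canonical cellular homeomorphism $f_\CT$ (shown there to be isotopic to the identity) and observing that $\CT$ then becomes a $(C,\varphi_1,\varphi_2)$--triangulation. You simply make explicit the routine verifications (that $f_\CT$ fixes $C$ pointwise, that edges pull back to $(C,\varphi_2')$--saddle connections via Lemmas~\ref{l:saddle connection} and~\ref{l:triangle}, and that the rebuilt canonical map is the identity) which the paper leaves implicit.
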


Having adjusted $\varphi_2$ so that $\CT$ is a $(C,\varphi_1,\varphi_2)$--triangulation, the next lemma says that this adjustment ``anchors" the cone points in relation to the geodesics.
\begin{lemma}  \label{L:relative straightening} Suppose $\mathcal{T}$ is a $(C,\varphi_1,\varphi_2)$--triangulation. 
For any $\eta \in \CG(\tphi_1)$, $g(\eta)$ is isotopic to $\eta$ by an isotopy $h_t \colon \tS \to \tS$ for $t \in [0,1]$ that preserves the triangles of $\CT$.  The same is true for any $(C,\tphi_1)$--saddle connection $\eta$.
\end{lemma}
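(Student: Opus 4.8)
The plan is to reduce the statement to the combinatorial fact that $\eta$ and $g(\eta)$ cross the same edges of $\CT$ and run through the same vertices of $\CT$ in the same order along the two geodesics, and then to assemble $h_t$ triangle by triangle from this data.

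First I would record the relevant ``rigidity'' of $\CT$. Since $\CT$ is a $(C,\varphi_1,\varphi_2)$--triangulation, the map $f_\CT$ is the identity, so every edge $e$ of $\CT$ is a $C$--saddle connection for \emph{both} metrics; as the $\tphi_2$--geodesic segment between two points is unique, this gives $g(e)=e$. Also $\eta$ and $g(\eta)$ have literally the same endpoints (in $S^1_\infty$, or in $\tC$ in the saddle-connection case). I would further use that each $2$--cell $T$ of $\CT$ is convex in $(\tS,\tphi_1)$ and in $(\tS,\tphi_2)$ --- a hyperbolic triangle is convex and its interior embeds isometrically, so a $\tphi_i$--geodesic meets $T$ in at most one arc --- and that two transverse geodesics in a CAT(-1) space meet in a single point. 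In particular $\eta$ crosses each edge of $\CT$ at most once and passes through each vertex at most once, so its \emph{itinerary} with respect to $\CT$ --- the sequence of edge-crossings and vertex-passes read off along $\eta$ --- is well defined, and likewise for $g(\eta)$.

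Next I would show these itineraries agree. By Corollary~\ref{c:concatenation}, $\eta$ and $g(\eta)$ run through exactly the same points of $\tC$ in the same order, decomposing each into a matching concatenation of $(C,\tphi_1)$--saddle connections and $C$--rays (or $\eta$ itself if it meets no point of $\tC$), so it suffices to treat one such piece $\sigma$, whose interior is disjoint from $\tC$. That $\sigma$ and $g(\sigma)$ cross the same edges is Lemma~\ref{l:transverse} together with $g(e)=e$. For the \emph{order}: two consecutive edge-crossings of $\sigma$ are joined by a sub-arc lying in a single triangle $T$ (by convexity, and because $\sigma$'s interior avoids the vertices and no edge is recrossed), so the two edges involved are two sides of $T$ and the sub-arc separates their common vertex from the remaining two vertices of $T$; extending $\sigma$ to a basic $\tphi_1$--geodesic and invoking Lemma~\ref{l:partition2}, the basic extension of $g(\sigma)$ partitions the three vertices of $T$ the same way, and --- using convexity of $T$ in $\tphi_2$ and that the order in which an oriented geodesic crosses two others is determined by the cyclic order of their endpoints on $S^1_\infty$, which $g$ fixes --- this forces $g(\sigma)$ to cross those same two edges consecutively in the same order. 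Induction along $\sigma$ gives the full itinerary; the degenerate cases, where $\eta$ or $\sigma$ contains an edge of $\CT$, follow at once from Corollary~\ref{c:concatenation}.

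Finally, with matching itineraries, I would build $h_t$ in two stages. First, for each edge $e$ with $\eta\cap\mathrm{int}(e)=\{p\}$ and $g(\eta)\cap\mathrm{int}(e)=\{p'\}$, isotope $e$ rel its endpoints carrying $p$ to $p'$; done simultaneously over all edges and extended by the identity off a small neighbourhood of $\CT^{(1)}$, this is an isotopy of $\tS$ fixing every vertex, preserving every edge and every triangle setwise, and carrying $\eta\cap\CT^{(1)}$ onto $g(\eta)\cap\CT^{(1)}$. Second, inside each triangle $T$ the (moved) arcs $\eta\cap T$ and $g(\eta)\cap T$ are embedded arcs with the same endpoints on $\partial T$ landing on the same sides of $T$; since $T$ is a disk, each is isotopic to its counterpart rel $\partial T$, and these isotopies agree on shared edges, so they assemble to an isotopy of $\tS$ preserving every triangle of $\CT$. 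Composing the two stages yields $h_t$ with $h_0=\mathrm{id}$ and $h_1(\eta)=g(\eta)$; since the (normalized) metrics $\tphi_1,\tphi_2$ are biLipschitz, $\eta$ and $g(\eta)$ stay a bounded Hausdorff distance apart, so $h_t$ may be taken supported in a bounded neighbourhood of $\eta$ and is a genuine isotopy even for biinfinite $\eta$. The $(C,\tphi_1)$--saddle-connection case is identical, with $g(\eta)$ the $(C,\tphi_2)$--saddle connection having the same endpoints. I expect the one genuinely delicate point to be the preservation of the \emph{order} of edge-crossings in the third paragraph; once the combinatorial itinerary is shown to match, the isotopy construction is routine surface topology.
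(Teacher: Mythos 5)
Your proposal is correct and follows essentially the same route as the paper: a triangle-by-triangle analysis of $\eta\cap T$ versus $g(\eta)\cap T$ using Lemmas~\ref{l:partition2}, \ref{l:saddle connection}, and \ref{l:transverse} (with Corollary~\ref{c:concatenation} handling the vertex-passes), followed by a two-stage isotopy that first matches the intersections with the $1$--skeleton and then adjusts within each triangle. The paper's write-up is somewhat terser on the ordering of edge-crossings — it lets the per-triangle intersection data carry that information implicitly — but the substance is identical.
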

\begin{proof} Let $\eta$ be any basic $\tphi_1$--geodesic or $(C,\tphi_i)$--saddle connection. For any triangle $T_1$ of $\widetilde{\CT}$, we have that $\eta \cap T_1$ is either empty, a vertex of $T_1$, a side of $T_1$, or an arc cutting through the interior of $T_1$, by convexity.  In the last case, the arc may run between a vertex and the interior of a side or from the interior of one side to the interior of another.  
If $\eta \cap T_1$ is not an arc, then by Lemmas \ref{l:partition2} and \ref{l:saddle connection} $g(\eta) \cap T_1 = \eta \cap T_1$.  On the other hand, if $\eta \cap T_1$ is an arc cutting through the interior of $T_1$, then by Lemma \ref{l:transverse}, $g(\eta) \cap T_1$ is also an arc connecting the same vertex to opposite sides, or the same pair of sides.  

Now we produce the required isotopy $h_t$ in two steps. The first step takes place during times $t \in [0,\frac{1}{2}]$ and the second step takes place during times $t \in [\frac{1}{2}, 1]$. We define $h_t$ for $t \in [0, \frac{1}{2}]$ so that it is the identity outside a small neighborhood of the $1$--skeleton $\widetilde{\CT}^{(1)}$, has $h_t(\widetilde{\CT}^{(1)}) = \widetilde{\CT}^{(1)}$, and so that $h_{\frac{1}{2}}(\eta \cap {\widetilde{\CT}}^{(1)}) = g(\eta) \cap {\widetilde{\CT}}^{(1)}$.  Now for any triangle $T_1$ of $\widetilde{\CT}$, $h_{\frac{1}{2}}(\eta) \cap T_1 = g(\eta) \cap T_1$, or else $h_{\frac{1}{2}}(\eta) \cap T_1$ and $g(\eta) \cap T_1$ are arcs with the same endpoints.  It is easy to complete the isotopy so that for $t \in [\frac{1}{2},1]$, $h_t$ is the identity on $\widetilde{\CT}^{(1)}$ and so that $h_1(\eta) \cap T_1 = g(\eta) \cap T_1$ for every triangle $T_1$.
\end{proof}

If an isotopy is the identity on a set $C$ throughout the isotopy, then we say that it is an {\em isotopy relative to $C$}, and that homeomorphism differing by such an isotopy are {\em isotopic relative to $C$}.
\begin{corollary} \label{C:compatible} 
Suppose $\CT$ is a $(C,\varphi_1)$--triangulation and that $\varphi_2$ has been $(C, \CT)$--normalized. If $\CT'$ is another $(C,\varphi_1)$--triangulation, then the associated homeomorphism $f_{\CT'} \colon S \to S$ is isotopic to the identity relative to $C$.
\end{corollary}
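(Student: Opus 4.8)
The plan is to show that $f_{\CT'}$ is the identity on $C$ and sends every edge of $\CT'$ to an arc isotopic to it rel $C$, and then to conclude by a relative version of the Alexander method: a homeomorphism of $S$ which fixes $C$ pointwise and fixes the isotopy class rel $C$ of each edge of a triangulation of $S$ with vertex set $C$ is isotopic to the identity rel $C$.

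First I would unwind the definition of $f_{\CT'}$. By construction $\tilde f_{\CT'}$ is the identity on $\tC$, so $f_{\CT'}$ is the identity on $C$; and $\tilde f_{\CT'}$ carries each $\tphi_1$--triangle $T_1$ of $\widetilde{\CT'}$ by the canonical triangle map to the $\tphi_2$--triangle $T_2$ spanned by the same three vertices, which is a genuine hyperbolic triangle by Lemma~\ref{l:triangle}. Restricting to an edge $\delta'$ of $\widetilde{\CT'}$---a $(C,\tphi_1)$--saddle connection between a pair $\zeta,\xi\in\tC$---the canonical triangle map restricts to the canonical map of $\delta'$ onto the edge of $T_2$ joining $\zeta$ to $\xi$, a homeomorphism fixing $\zeta$ and $\xi$; and by uniqueness of $\tphi_2$--geodesics between points (the metric is CAT(-1)) this edge of $T_2$ is precisely the $(C,\tphi_2)$--saddle connection $g(\delta')$. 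Thus $f_{\CT'}$ maps each edge $\delta'$ of $\CT'$ onto $g(\delta')$. Since $\varphi_2$ has been $(C,\CT)$--normalized, $\CT$ is a $(C,\varphi_1,\varphi_2)$--triangulation, so Lemma~\ref{L:relative straightening} applies to $\delta'$ and produces an isotopy of $\tS$ from $\delta'$ to $g(\delta')$ that preserves the triangles of $\widetilde\CT$. Such an isotopy fixes $\widetilde\CT^{(0)}=\tC$ pointwise (being isotopic to the identity, it cannot permute the vertices of the $1$--skeleton) and, chosen $\pi_1 S$--equivariantly, descends to an ambient isotopy of $S$ rel $C$. Hence $f_{\CT'}$ fixes the isotopy class rel $C$ of every edge of $\CT'$.

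To finish I would promote this to a single ambient isotopy of $S$, rel $C$, after which $f_{\CT'}$ is the identity on the $1$--skeleton $\CT'^{(1)}$, and then apply the Alexander trick: once $f_{\CT'}$ fixes $\CT'^{(1)}$ pointwise it must carry each $2$--cell of $\CT'$ to itself (it is isotopic to the identity on $S$, and two distinct $2$--cells with the same boundary would produce a $2$--sphere component, impossible since the genus of $S$ is at least $2$), so the trick isotopes it rel $\CT'^{(1)}$ to the identity on each $2$--cell. I expect the assembly step to be the main obstacle, and I would carry it out inside each triangle $T$ of $\widetilde\CT$: there $\widetilde{\CT'}^{(1)}\cap T$ and $g(\widetilde{\CT'}^{(1)})\cap T$ are finite systems of pairwise disjoint arcs and vertices, and by Corollary~\ref{c:concatenation} each edge of $\CT'$ and its straightening meet $T$ in corresponding subarcs with corresponding endpoints on $\partial T$, while Lemma~\ref{l:transverse} together with Lemma~\ref{l:partition2}---and the fact that the order of the crossing points of a geodesic with a saddle connection is recorded by the endpoints at infinity, which $g$ preserves---force these crossing points to appear in the same cyclic order on each side of $T$; hence the two arc systems have the same combinatorial type and are isotopic rel $\partial T$ by an isotopy carrying the straightening of each edge back to that edge. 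These isotopies are relative to the boundaries of the triangles of $\widetilde\CT$, so they glue, $\pi_1 S$--equivariantly, to an ambient isotopy of $\tS$ rel $\tC$ carrying $g(\widetilde{\CT'}^{(1)})$ back to $\widetilde{\CT'}^{(1)}$. The point that makes this go through---rather than needing a delicate induction on the order in which the edges are straightened---is that Lemma~\ref{L:relative straightening} delivers the straightening isotopies \emph{within} the triangles of the common triangulation $\CT$, so the only data needed to glue across triangles is exactly the combinatorial data of arc endpoints and their cyclic order, which is preserved by $g$. Composing all of these isotopies yields $f_{\CT'}\simeq\mathrm{id}_S$ rel $C$.
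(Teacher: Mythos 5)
Your overall architecture matches the paper's: the key input is Lemma~\ref{L:relative straightening} (available precisely because $\varphi_2$ is $(C,\CT)$--normalized), giving that each edge of $\CT'$ is isotopic rel $C$ to its $\varphi_2$--straightening, and your endgame (once the map is the identity on the $1$--skeleton, kill it on the $2$--cells) is the paper's ``further isotopy preserving the $1$--skeleton'' in reverse. The genuine gap is in your assembly step. Inside a triangle $T$ of $\widetilde\CT$, the arc systems $\widetilde{\CT'}^{(1)}\cap T$ and $g(\widetilde{\CT'}^{(1)})\cap T$ do \emph{not} have the same endpoints on $\partial T$: the point where an edge $\delta'$ of $\widetilde{\CT'}$ meets an edge $e$ of $\widetilde\CT$ is in general different from the point where $g(\delta')$ meets $e$ (this is exactly why the proof of Lemma~\ref{L:relative straightening} has a first stage, supported near $\widetilde\CT^{(1)}$, matching the intersection points before working rel the $1$--skeleton). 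So there is no isotopy of $T$ rel $\partial T$ carrying one system to the other, and the claim that your per-triangle isotopies ``glue because they are relative to the boundaries of the triangles'' does not hold as written. To repair it along your lines you must first build an equivariant isotopy along $\widetilde\CT^{(1)}$ (fixing $\tC$) matching up all the crossing points, and for that you need that along each edge $e$ of $\widetilde\CT$ the crossings with the various edges of $\CT'$ occur in the same linear order as the crossings of their straightenings. That is a statement about \emph{pairs} of arcs; Lemma~\ref{l:transverse} and Corollary~\ref{c:concatenation} are per-arc statements and do not directly give it, and your parenthetical appeal to ``the order of the crossing points of a geodesic with a saddle connection'' is empty here, since a geodesic and a saddle connection cross at most once. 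The order statement is true, but it requires an argument (e.g.\ via basic extensions, Lemma~\ref{l:partition2}, and a case analysis) that you have not supplied.

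The paper sidesteps all of this: having shown each edge of $\CT'$ is homotopic rel endpoints in the complement of $C$ to its straightening (from the lifted isotopies), it uses that both collections of arcs have pairwise disjoint interiors to invoke the standard simultaneous-isotopy fact for disjoint arc systems, producing a single ambient isotopy of the identity rel $C$ carrying every edge of $\CT'$ to its straightening at once; a final isotopy preserving the $1$--skeleton then yields exactly $f_{\CT'}$. If you replace your per-triangle gluing by that standard fact (or carry out the two-stage matching together with a proof of the order-preservation claim), the rest of your argument goes through.
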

\begin{proof}  Each $(C,\varphi_1)$--saddle connection making up the edges of $\CT'$ is homotopic {\em in the complement of $C$} to its $(C,\varphi_2)$--straightening (since their lifts are isotopic in $\tS$ fixing $\tC$). Homotopy of arcs in the complement of the cone points implies isotopy of said arcs (fixing $C$ throughout the isotopy).  Since the saddle connections making up the edges of the triangulation have pairwise disjoint interiors, there is an isotopy of the identity, relative to $C$, that sends each $(C,\varphi_1)$--saddle connection of $\CT'$ to a $(C,\varphi_2)$--saddle connection.  After a further isotopy preserving the $1$--skeleton of $\CT'$, we obtain a map which is the canonical triangle map on each triangle of $\CT'$, and is thus $f_{\CT'}$.
\end{proof}

We say that $\varphi_2$ is {\em $C$--uber-normalized} (with respect to $\varphi_1$) if for any $(C,\varphi_1)$--triangulation $\CT$ of $S$, the isotopy to the identity involved in adjusting $\varphi_2$ so  $\CT$ is a $(C,\varphi_2)$--triangulation fixes $C$.  By the corollary, if $\varphi_2$ is $(C,\CT)$--normalized, then it is $C$--uber-normalized.  
We record the following fact about the examples from \S\ref{S:deformations} which will be useful later.

\begin{lemma} \label{L:examples C--uber-normalized} Suppose $\varphi_1,\varphi_2 \in \tHyp_c(S)$ are obtained from branched covers $q \colon (S,\varphi_1) \to \mathcal O_1$ and $F \circ q \colon (S,\varphi_2) \to \mathcal O_2$, for some homeomorphism $F \colon \mathcal O_1 \to \mathcal O_2$ as in Theorem~\ref{T:deforming}, so that $\CG_{\tphi_1} = \CG_{\tphi_2}$.  Then the preimage of the even order orbifold points, $C = q^{-1}(\mathcal E_1)$, is a concurrency set and $\varphi_2$ is $C$--uber-normalized with respect to $\varphi_1$.
\end{lemma}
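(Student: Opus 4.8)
The plan is to run the argument entirely through the structures built in the proof of Theorem~\ref{T:deforming}: the developing maps $D_i\colon(\hS,\hphi_i)\to\mathbb H$ with $D_2=\tilde F\circ D_1$, the canonical cellular homeomorphism $\tilde F$ of Lemma~\ref{L:Delaunay maps}, the common cell structure $\hat\Lambda$ on $\hS$ (descending to $\Lambda$ on $S$) it produces, and the combinatorial invariance recorded in Lemma~\ref{L:Delaunay combinatorics preserved}. First I would identify $\hat C=\hat p^{-1}(C)$: since $q\circ\hat p=p_1\circ D_1$ and $p_1^{-1}(\mathcal E_1)=\tCE_1$, the even-order elliptic fixed points of $G_1$, we have $\hat C=D_1^{-1}(\tCE_1)$, and the same computation with $q_2=F\circ q_1$, $\mathcal E_2=F(\mathcal E_1)$, $p_2^{-1}(\mathcal E_2)=\tCE_2$ gives $\hat C=D_2^{-1}(\tCE_2)$. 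Thus $\hat C$ is exactly the $0$--skeleton of $\hat\Lambda$ and $C$ the $0$--skeleton of $\Lambda$; in particular $C$ is finite (so $\tC=p^{-1}(C)$ is a finite union of $\pi_1S$--orbits), is $\pi_1S$--invariant, and contains $C_0$ because cone points develop to points of $\tCE_1$. This gives three of the four defining properties of a concurrency set.

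For the fourth property I would show that every point of $\tC$ is a $(\tphi_1,\tphi_2)$--concurrence point. Points of $\tC_0$ are concurrence points by Proposition~\ref{P:chain consequence}. For $\zeta\in\tC\smallsetminus\tC_0$ fix a lift $\hat\zeta\in\hat C\smallsetminus\hat C_0$, a $0$--cell of $\hat\Lambda$ and a regular point of $\hS$. If $\hat\eta$ is a nonsingular $\hphi_1$--geodesic through $\hat\zeta$, then by the proof of Theorem~\ref{T:deforming} its straightening $\hat g(\hat\eta)$ --- the basic $\hphi_2$--geodesic with the same endpoints at infinity, which exists since $\CG_{\hphi_1}=\CG_{\hphi_2}$ --- is joined to $\hat\eta$ by a bounded-track homotopy respecting $\hat\Lambda$, so $\hat\eta$ and $\hat g(\hat\eta)$ meet the same cells of $\hat\Lambda$; in particular the $0$--cell $\hat\zeta$, so $\hat\zeta\in\hat g(\hat\eta)$ and $\hat g(\hat\eta)$ is again nonsingular. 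Projecting by $\tilde p$ (which, as in the proof of Lemma~\ref{L:same hat, same tilde}, carries basic geodesics with equal $\hS^1_\infty$--endpoints to basic geodesics with equal $S^1_\infty$--endpoints) shows that the straightening $g$ maps the nonsingular $\tphi_1$--geodesics through $\zeta$ onto the nonsingular $\tphi_2$--geodesics through $\zeta$; since $\zeta$ is a regular point these are dense among the basic geodesics through $\zeta$, and taking closures gives $\CG_{\tphi_1}(\zeta)=\CG_{\tphi_2}(\zeta)$. Hence $C$ is a concurrency set, and the same analysis applied at $\tC_0$ (a cone point being a $0$--cell of the common structure) identifies the bijection of Proposition~\ref{P:chain consequence} with the identity, so in fact $\varphi_2$ is $C$--normalized with respect to $\varphi_1$.

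For the uber-normalization, by Corollary~\ref{C:compatible} it is enough to exhibit a single $(C,\varphi_1)$--triangulation $\CT$ that is simultaneously a $(C,\varphi_2)$--triangulation with $f_\CT=\mathrm{id}$; I would build it by refining $\Lambda$. Its edges are, for both metrics, the same geodesic arcs with endpoints in $C$ and interiors disjoint from $\tC$, being the common lifts via $D_1$ and $D_2=\tilde F\circ D_1$ of edges of $\tilde\Delta_1$ and $\tilde F(\tilde\Delta_1)$. I would triangulate the convex $2$--cells of $\tilde\Delta_1$ by geodesic diagonals in a $G_1$--invariant way following the recipe in the proof of Lemma~\ref{L:Delaunay maps}, except that for a $2$--cell containing an odd-order elliptic fixed point $\xi$ (maximal cyclic subgroup $H_1<G_1$) I would insert the $H_1$--invariant $k$--gon around $\xi$ and triangulate \emph{it} by diagonals rather than coning off to $\xi$; this leaves $\xi$ merely in the interior of a triangle, which is allowed since $\xi\notin\tCE_1$, so the regular points of $S$ lying over it are not in $C$. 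Because $\tilde F$ is a canonical map it sends geodesic diagonals to geodesic diagonals and is canonical on each resulting triangle; lifting via $D_1$ (equivalently via $D_2$) and descending gives a triangulation $\CT$ of $S$ with $0$--skeleton exactly $C$, whose edges are simultaneously $(C,\varphi_1)$-- and $(C,\varphi_2)$--saddle connections, and on each triangle of which the identity map $(S,\varphi_1)\to(S,\varphi_2)$ restricts to the canonical triangle map (since it develops to $\tilde F$). Thus $\CT$ is a $(C,\varphi_1,\varphi_2)$--triangulation with $f_\CT=\mathrm{id}$, i.e.\ $\varphi_2$ is $(C,\CT)$--normalized, and Corollary~\ref{C:compatible} gives that $\varphi_2$ is $C$--uber-normalized.

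The step I expect to be the main obstacle is this last construction: extracting from the Delaunay data a triangulation whose vertex set is \emph{precisely} $C$ --- not $C$ together with the regular points of $S$ lying over odd-order orbifold points --- while keeping every edge geodesic for both metrics simultaneously and keeping the identity map equal to the canonical triangle map. Everything there rests on $\varphi_2$ having been set up so that the developed identity map is the canonical map $\tilde F$ of Lemma~\ref{L:Delaunay maps}, and the $2$--cells of $\tilde\Delta_1$ meeting the odd-order elliptic orbits have to be dealt with by hand as above; by contrast the concurrency part is essentially bookkeeping with the common cell structure.
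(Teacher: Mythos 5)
Your reduction of $\hat C$ to the $0$--skeleton of the common cell structure, and the observation that a nonsingular $\hphi_1$--geodesic and its straightening meet the same cells (hence the same $0$--cells), are fine and match the paper. The first genuine problem is the density claim: it is not true that nonsingular $\tphi_1$--geodesics through a regular point $\zeta\in\tC\smallsetminus\tC_0$ are dense among the basic geodesics through $\zeta$. A nonsingular geodesic through $\zeta$ can meet a basic geodesic $\eta$ through $\zeta$ only at $\zeta$ (they cannot share a segment, since the continuation of the shared segment would force the nonsingular one into a cone point), so it lies on one side of $\eta$ forward of $\zeta$ and on the other side backward of $\zeta$; by the argument of Lemma~\ref{L:basic geodesics}, any limit of such geodesics therefore makes its angle--$\pi$ side constant along the forward portion and constant along the backward portion. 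A basic geodesic through $\zeta$ that meets cone points on both sides of $\zeta$ with angle $\pi$ on the same side (or that switches sides at a cone point strictly forward of $\zeta$) is consequently not a limit of nonsingular geodesics through $\zeta$, so ``taking closures'' does not give $\CG_{\tphi_1}(\zeta)=\CG_{\tphi_2}(\zeta)$. What your limit argument does give is the equality modulo $\Omega$ (geodesics through at least two cone points lie in $\CG^2_{\tphi_1}\cup\CG^2_{\tphi_2}\subset\Omega$), which suffices for ``concurrency set'' and ``$C$--normalized''; the paper instead records that each nonsingular geodesic and its straightening define the same partition of $\tC$ (immediate from your same-cells property) and then runs the separation/bigon argument of Lemma~\ref{l:partition2} to get the full statement. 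Either repair works, but the step as written is false.

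The second, more serious, gap is exactly the one you flagged: the simultaneous triangulation at the odd-order cells does not exist as described. First, for $|H_1|=k\ge 5$ there is no $H_1$--invariant triangulation of the invariant $k$--gon by diagonals (an order-$k$ rotation permuting the $k-2$ triangles would have to fix each of them, which is impossible), so your $G_1$--invariant subdivision cannot be built; and invariance cannot simply be dropped, because it is what lets the subdivision interact with the equivariant map $\tilde F$ and descend. Second, and independently of invariance, the assertion ``because $\tilde F$ is a canonical map it sends geodesic diagonals to geodesic diagonals'' is unjustified: $\tilde F$ is canonical only with respect to the subdivision $\tilde\Delta_1'$ that cones the $k$--gon to $\xi$, every diagonal of the $k$--gon crosses the cone edges, and on such an arc $\tilde F$ is merely piecewise canonical, so its image is a priori a broken geodesic; hence your new edges need not be $\varphi_2$--geodesic and the identity need not be canonical. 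Nor can you rebuild $\tilde F$ canonically with respect to a diagonal triangulation: equivariance forces $\tilde F(\xi)$ to be the fixed point of $\rho(H_1)$, which a triangle-by-triangle canonical map with $\xi$ interior to a triangle will not generally achieve. The paper avoids all of this by \emph{not} demanding simultaneity: having shown $C$ is a concurrency set and $\varphi_2$ is $C$--normalized, it subdivides the common cell structure into $(C,\varphi_1)$--triangles only, then adjusts $\varphi_2$ by a homeomorphism isotopic to the identity \emph{relative to $C$} (possible since the $2$--cells are convex for both metrics, so the straightened diagonals stay inside them) so that the triangles become $(C,\varphi_2)$--triangles with the identity canonical, and concludes with Corollary~\ref{C:compatible}; since every adjustment, including the initial replacement of $F$ by the homeomorphism of Lemma~\ref{L:Delaunay maps} (whose orbifold isotopy preserves orbifold points and hence lifts relative to $C$), is relative to $C$, the original $\varphi_2$ is $C$--uber-normalized. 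Note also that your proposal silently assumes $D_2=\tilde F\circ D_1$ with $\tilde F$ canonical, i.e.\ that $F$ is already the Delaunay homeomorphism; for the $F$ given in the statement this reduction needs the rel-$C$ remark just made, since uber-normalization is not preserved by arbitrary adjustments within the equivalence class.
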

\begin{proof} As in the proof of Theorem~\ref{T:deforming}, we adjust $F$ by a homeomorphism isotopic to the identity so that $\mathcal O_1 \to \mathcal O_2$ is the homeomorphism of Lemma~\ref{L:Delaunay maps}.  Since an isotopy on an orbifold preserves the orbifold points, the lift of the isotopy is an isotopy of the identity $S \to S$ relative to $C$.  It follows that the adjustment to $F$ changes $\varphi_2$ within its equivalence class by a homeomorphism isotopic to the identity relative to $C$.  Therefore, it suffices to prove the theorem under this assumption.  From the proof of Theorem~\ref{T:deforming}, we have a cell structure on $S$ with vertices in $C$ for which the $2$--cells are convex hyperbolic polygons with respect to both $\varphi_1$ and $\varphi_2$.  As in the proof of Theorem~\ref{T:deforming}, pulling back these cell structures to $\tS$ with vertex set $\tC = p^{-1}(C)$ we found that for every nonsingular $\tphi_1$--geodesic $\eta$, the $\tphi_2$--straightening $g(\eta)$ is nonsingular and it intersects the exact same set of cells as $\eta$ (this is proved in $\hS$, but this property pushes down to $\tS$).  Consequently, $\eta$ and $g(\eta)$ define the same partition of the set $\tC$.  Now we may make a nearly identical argument to the one in the proof of Lemma~\ref{l:partition2} to see that $\CG_{\tphi_1}(\zeta) = \CG_{\tphi_2}(\zeta)$ for all $\zeta \in \tC$, proving that $\tC$ is a concurrency set (by assumption, $\tC$ contains all cone points) and that $\varphi_2$ is $C$--normalized with respect to $\varphi_1$.

Now subdivide each $2$--cell into $(C,\varphi_1)$--triangles.  We can adjust $\varphi_2$ by a homeomorphism isotopic to the identity relative to $C$ so that the triangles are also $(C,\varphi_2)$--triangles, and the identity is the canonical map.  Since the isotopy is relative to $C$, the new metric is $C$--uber-normalized if and only if the original one is.  However, the new metric is $C$--uber-normalized by Corollary~\ref{C:compatible}.  This completes the proof.
\end{proof}


\section{Holonomy} \label{S:holonomy}

A key ingredient in the proof of the \currentsupport is Proposition~\ref{P:holonomy conjugation} below, which states that if $\CG_{\tphi_1} = \CG_{\tphi_2}$ (and $\varphi_2$ is suitable normalized), then there is an orientation preserving homeomorphism $h \colon \partial \mathbb H \to \partial \mathbb H$ topologically conjugating the associated holonomy homomorphisms $\rho_1$ to $\rho_2$ (see \S\ref{S:top conjugacy}).  The majority of this section is devoted to proving this proposition, and ends by showing that if the conjugating homeomorphism $h$ is in $\PSL_2(\mathbb R)$, then then $\varphi_1 \sim \varphi_2$. 

For the remainder of this section we shall assume $\CG_{\tphi_1} = \CG_{\tphi_2}$ for $\varphi_1,\varphi_2 \in \tHyp_c(S)$, that $C$ is a concurrency set for $(\varphi_1, \varphi_2)$, and that $\varphi_2$ has been $C$--uber-normalized.
In particular we have $C_0=\cone(\varphi_1)=\cone(\varphi_2) \subseteq C$.  

Recall from \S\ref{S:puncturing surfaces} that $\hS$ is the completion of the universal cover of $\dot S=S\setminus C_0$ with respect to (either of) the pulled back metrics $\hphi_i$ of $\varphi_i$, for $i=1,2$, and that the universal covering map extends to the completion which we denote by $\hat p: \hS\to S$.
Because $\varphi_2$ is $C$--uber-normalized, for any $(C,\varphi_1)$--triangulation $\mathcal T$ we may adjust $\varphi_2$ by a homeomorphism isotopic to the identity by an isotopy which fixes $C$, so that $\mathcal T$ is a $(C,\varphi_1,\varphi_2)$--triangulation.  Since $C_0 \subseteq C$, the $C$--uber-normalization implies that the isotopy lifts to $\hS$, and thus $\CG_{\hphi_2}$ does not change under such an adjustment.  The developing map for $\hphi_i$ extends to the completion $D_i: \hS\to \mathbb{H}$, and is equivariant with respect to the holonomy homomorphism $\rho_i: \pi_1 \dot S\to \PSL_2(\mathbb{R})$, for $i=1,2$, where $\dot S = S\setminus C_0$; see \S\ref{S:prelim develop}.

\subsection{Combinatorics of basic geodesics in $\hS$}

Analogous to the definition for $\tphi_i$--geodesics in $\tS$, we say that a $\hphi_i$--geodesic is {\em non-$C$--singular} if it is disjoint from $\hC=\hat p^{-1}(C)$ (and simply nonsingular if it is disjoint from the completion points $\hC_0=\hat p^{-1}(C_0)$).  A $(C, \hphi_i)$--saddle connection is a $\hphi_i$--geodesic segment between two points of $\hC$ such that its interior is disjoint from $\hC$. Also, recall that $\mathcal{G}(\hphi_i)$ denotes the set of all basic $\hphi_i$--geodesics, i.e. the closure of the set of nonsingular $\hphi_i$--geodesics. Moreover, as we did in $\tS$, we say that two basic geodesics  $\eta_1 \in \CG(\hphi_1)$ and $\eta_2 \in \CG(\hphi_2)$ have the {\em same combinatorics} if they have the same endpoints in $\hat S^1_{\infty}$ and pass through exactly the same set of points of $\hat C$, in the same order.

\begin{lemma}\label{L:hat combinatorics} Let $\varphi_1,\varphi_2 \in \tHyp_c(S)$ with $\CG_{\tphi_1}=\CG_{\tphi_2}$, $C$ any concurrency set, and suppose $\tphi_2$ is $C$--uber-normalized. Then $\CG_{\hphi_1}=\CG_{\hphi_2}$ and hence we have a well-defined straightening map $\hat g \colon \CG(\hphi_1) \to \CG(\hphi_2)$. Moreover, for all $\eta \in \CG(\hphi_1)$, $\eta$ and $\hat g(\eta)$ have the same combinatorics. 

\end{lemma}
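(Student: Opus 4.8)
The plan is to deduce the statement about $\hS$ from the analogous facts already established in $\tS$, using the equivariance of $\tilde p \colon \hS \to \tS$ and the behavior of basic geodesics under this projection (Lemma~\ref{L:basic to basic}). First I would establish $\CG_{\hphi_1} = \CG_{\hphi_2}$. The containment argument: given a basic $\hphi_1$--geodesic $\hat\eta_1$, its projection $\tilde p(\hat\eta_1) = \eta_1$ is a basic $\tphi_1$--geodesic by Lemma~\ref{L:basic to basic}. Since $\CG_{\tphi_1} = \CG_{\tphi_2}$, it has a $\tphi_2$--straightening $\eta_2 = g(\eta_1)$, which by Corollary~\ref{c:concatenation} passes through exactly the same points of $\tC$ in the same order; in particular $\eta_1$ and $\eta_2$ cobound a locally isometric ``flat strip''-like region, so they stay bounded Hausdorff distance apart in $\tS$. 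Now I lift: since $\eta_1$ and $\eta_2$ are homotopic in $\dot S = S \smallsetminus C_0$ (indeed isotopic, by Lemma~\ref{L:relative straightening} applied to a $(C,\varphi_1,\varphi_2)$--triangulation, noting $C_0 \subseteq C$), the lift $\hat\eta_1$ of $\eta_1$ determines a lift $\hat\eta_2$ of $\eta_2$ a bounded Hausdorff distance away in $\hS$ (the lift of the isotopy has bounded tracks since it is supported near a triangulation). By Lemma~\ref{L:basic to basic}, $\hat\eta_2$ is a basic $\hphi_2$--geodesic, and being bounded Hausdorff distance from $\hat\eta_1$ it has the same endpoints in $\hS_\infty^1$. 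This gives $\CG_{\hphi_1} \subseteq \CG_{\hphi_2}$, and the reverse inclusion is symmetric; hence equality, and since $\partial_{\hphi_i}$ is a homeomorphism onto $\CG_{\hphi_i}$ the straightening map $\hat g$ is well-defined.

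Next I would prove that $\eta$ and $\hat g(\eta)$ have the same combinatorics with respect to $\hC$. By construction $\hat g(\eta)$ has the same endpoints as $\eta$ in $\hS_\infty^1$, so it remains to show they pass through exactly the same points of $\hC$, in the same order. The key point is that $\tilde p$ is a local isometry carrying $\hC_0$ bijectively-on-components to $\tC_0$ (and more generally $\hC$ to $\tC$), so a point $\hat\zeta \in \hC$ lies on a geodesic $\mu$ in $\hS$ iff $\tilde p(\hat\zeta) \in \tC$ lies on $\tilde p(\mu)$ and the relevant lift of the germ of $\tilde p(\mu)$ through $\tilde p(\hat\zeta)$ agrees with $\mu$ near $\hat\zeta$. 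Applying Corollary~\ref{c:concatenation}, $\tilde p(\eta) = \eta_1$ and $\tilde p(\hat g(\eta)) = g(\eta_1)$ pass through the same points of $\tC$ in the same order and, decomposing $\eta_1$ as a bi-infinite concatenation of $(C,\tphi_1)$--saddle connections and $(C,\tphi_1)$--rays, $g(\eta_1)$ is the corresponding concatenation of $g(\delta_i)$. Since $\hat g(\eta)$ is constructed as the lift of $g(\eta_1)$ obtained by tracking the isotopy from $\eta_1$ starting at $\hat\eta_1$'s basepoint, lifting this concatenation saddle-connection-by-saddle-connection (each $g(\delta_i)$ is the straightening of $\delta_i$ rel endpoints) shows $\hat g(\eta)$ meets precisely the cone points $\hat\zeta$ over the $\tilde p(\hat\zeta)$ that $\eta$ meets, in the same cyclic/linear order along the geodesic.

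The main obstacle I anticipate is the bookkeeping in the lifting step: showing that the $\tphi_2$--straightening $g(\eta_1)$ lifts to \emph{the} geodesic $\hat g(\eta)$ in $\hS$ that is uniformly close to $\hat\eta_1$, and that this lift hits exactly the lifts of the cone points on $\eta_1$ (and no others spuriously picked up by the covering $\hat p$). This is where $C$--uber-normalization is essential: it guarantees the isotopy relating $\eta_1$ to $g(\eta_1)$ can be taken relative to $C$ (hence relative to $C_0$), so it lifts to $\hS$ with bounded tracks, pinning down which lift of $g(\eta_1)$ to take and ensuring the combinatorial data with respect to $\hC$ is transported faithfully. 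The Hausdorff-boundedness in $\tS$ comes from the fact that a basic $\tphi_1$--geodesic and its straightening share endpoints and both project to quasigeodesics for a common reference metric; alternatively, one invokes Lemma~\ref{L:relative straightening} directly to get an isotopy (hence bounded-track homotopy) between them, and the same isotopy upstairs in $\hS$ yields the needed boundedness there. Once these lifting details are in place, the statement follows; I would also remark that this lemma is exactly the converse promised after Lemma~\ref{L:same hat, same tilde}.
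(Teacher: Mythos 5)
Your argument is correct and essentially the paper's own: fix a $(C,\varphi_1,\varphi_2)$--triangulation (with $C$--uber-normalization guaranteeing the adjusting isotopy fixes $C_0$ and lifts to $\hS$, so the adjustment is harmless), apply Lemma~\ref{L:relative straightening} to get a triangle-preserving isotopy from $\tilde p(\eta)$ to $g(\tilde p(\eta))$, lift it triangle-by-triangle to $\hS$, and conclude equal endpoints and equal combinatorics, exactly as in the paper's proof. The one spot where the paper is more careful is the singular case: it first treats non-$C$--singular geodesics (whose lifts are automatically geodesics, being locally geodesic in the CAT($-1$) space $\hS$) and then obtains general basic geodesics as limits analyzed triangle-by-triangle via Corollary~\ref{c:concatenation}, whereas your appeal to Lemma~\ref{L:basic to basic} only gives existence of \emph{some} basic lift of $g(\tilde p(\eta))$, so you still owe a short check (e.g.\ via the angle-$\pi$ criterion of Lemma~\ref{L:characterizing basics} at points of $\hC_0$) that the isotopy-tracked lift is itself a basic $\hphi_2$--geodesic.
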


As a consequence of Lemma~\ref{L:hat combinatorics}, the $\hphi_2$--straightening map $\hat{g}$ may be extended to a map from all basic $\hphi_1$--geodesic segments and rays (as well as lines), with endpoints (possibly at infinity) in the set $\hC \cup \hS^1_\infty$, to basic $\hphi_2$--geodesic segments and rays having the same combinatorics. This is obtained by extending any basic $\hphi_1$--geodesic segment or ray to a (biinfinite) basic $\hphi_1$--geodesic (see Lemma~\ref{L:characterizing basics}), applying $\hat g$, then appealing to the lemma to find an appropriate subsegment or ray of the resulting basic $\hphi_2$--geodesic.

\begin{proof} Fix a $(C,\varphi_1,\varphi_2)$--triangulation $\CT$, adjusting $\varphi_2$ to an equivalent metric if necessary.  Let $\widetilde{\CT}$ be the lifted triangulation to $\tS$ and further lift $\widetilde{\CT}$ to a triangulation $\widehat{\CT}$ in $\hS$.  As noted above, because $\varphi_2$ was $C$--uber-normalized, the isotopy to the identity of the homeomorphism used to adjust $\varphi_2$ fixes $C$ (and hence $C_0$) and so lifts to $\hS$.  In particular, proving the conclusion of the lemma for the adjusted metric implies it for the original metric.

As in Lemma~\ref{L:characterizing basics}, every non-$C$--singular $\hphi_i$--geodesic projects to a non-$C$--singular geodesic in $\tS$ by $\tilde p$ since this map is a local isometry (away from the cone points).  Let $\eta$ be a non-$C$--singular $\hphi_1$--geodesic. The $\tphi_2$--straightening $g(\tilde p(\eta))$ is isotopic to $\tilde p(\eta)$ in the complement of $\tC$, by an isotopy that preserves each triangle of $\widetilde{\CT}$ by Lemma~\ref{L:relative straightening}. We can lift this isotopy, triangle-by-triangle, to an isotopy from $\eta$ to a non-$C$--singular $\hphi_2$--geodesic $\eta'$ passing through the same triangles of $\widehat{\CT}$, which therefore has the same endpoints in $\hat{S}^1_{\infty}$.  Any basic $\hphi_1$--geodesic $\eta$ is a limit of nonsingular $\hphi_1$--geodesics, and by analyzing the limits in each triangle (and appealing to Corollary~\ref{c:concatenation}), it follows that there is a basic $\hphi_2$--geodesic $\eta'$ running through precisely the same set of triangles as $\eta$ (and through the same set of vertices, in the same order).  It follows that $\CG_{\hphi_1} \subset \CG_{\hphi_2}$ and the straightening map $\hat g$ preserves combinatorics.  A symmetric argument proves the other inclusion, hence $\CG_{\hphi_1} = \CG_{\hphi_2}$, and we are done.
\end{proof}

The following is an immediate consequence of Lemma~\ref{L:hat combinatorics}.  Recall that $\CG(\hphi_i,\zeta)$ is the set of all basic $\hphi_i$--geodesics through a point $\zeta\in\hC$.

\begin{corollary}\label{C:hat concurrency}
Suppose $\CG_{\tphi_1} =\CG_{\tphi_2}$, $C$ is a concurrency set, and $\varphi_2$ is $C$--uber-normalized.  Then we have $\hat g(\CG(\hphi_1,\zeta)) = \CG(\hphi_2,\zeta)$ for all $\zeta\in\hC$. \qed
\end{corollary}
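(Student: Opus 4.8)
The plan is to read this off directly from Lemma~\ref{L:hat combinatorics}; essentially all the work has already been done there. First I would recall the two ingredients that lemma provides under the standing hypotheses ($\CG_{\tphi_1}=\CG_{\tphi_2}$, $C$ a concurrency set, $\varphi_2$ being $C$--uber-normalized): namely that $\CG_{\hphi_1}=\CG_{\hphi_2}$, and that every $\eta\in\CG(\hphi_1)$ and its straightening $\hat g(\eta)$ pass through exactly the same set of points of $\hC$ (in the same order). Since $\partial_{\hphi_i}\colon\CG(\hphi_i)\to\CG_{\hphi_i}$ is a homeomorphism for each $i$, the equality $\CG_{\hphi_1}=\CG_{\hphi_2}$ makes $\hat g=\partial_{\hphi_2}^{-1}\circ\partial_{\hphi_1}$ a bijection $\CG(\hphi_1)\to\CG(\hphi_2)$, with inverse the $\hphi_1$--straightening map $\partial_{\hphi_1}^{-1}\circ\partial_{\hphi_2}$.

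For the forward inclusion, take $\eta\in\CG(\hphi_1,\zeta)$, so $\zeta$ is one of the points of $\hC$ lying on $\eta$. By Lemma~\ref{L:hat combinatorics}, $\hat g(\eta)$ passes through precisely this same set of points of $\hC$, hence $\zeta\in\hat g(\eta)$, i.e. $\hat g(\eta)\in\CG(\hphi_2,\zeta)$. This gives $\hat g(\CG(\hphi_1,\zeta))\subseteq\CG(\hphi_2,\zeta)$. For the reverse inclusion I would not re-run Lemma~\ref{L:hat combinatorics} with the roles of $\varphi_1$ and $\varphi_2$ exchanged — the $C$--uber-normalization hypothesis is a priori asymmetric in the two metrics — but instead use $\hat g^{-1}$: given $\eta'\in\CG(\hphi_2,\zeta)$, set $\eta=\hat g^{-1}(\eta')$, so that $\hat g(\eta)=\eta'$. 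The same-combinatorics conclusion applied to the pair $(\eta,\hat g(\eta))$ says $\eta$ and $\eta'$ meet the same subset of $\hC$; since $\zeta\in\eta'$, we get $\zeta\in\eta$, so $\eta\in\CG(\hphi_1,\zeta)$ and $\eta'=\hat g(\eta)\in\hat g(\CG(\hphi_1,\zeta))$. Combining the two inclusions yields the claimed equality for every $\zeta\in\hC$.

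I do not expect any real obstacle here: the statement is a bookkeeping consequence of Lemma~\ref{L:hat combinatorics}, and the only point that merits a sentence of care is the asymmetry just mentioned, which is harmless because $\hat g^{-1}$ is literally the $\hphi_1$--straightening map and so the combinatorial conclusion transfers without rescaling of hypotheses.
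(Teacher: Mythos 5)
Your proposal is correct and matches the paper, which states this corollary as an immediate consequence of Lemma~\ref{L:hat combinatorics} with no further argument: the same-combinatorics conclusion plus the bijectivity of $\hat g$ gives both inclusions exactly as you describe. Your extra care about the asymmetry of the uber-normalization hypothesis (using $\hat g^{-1}$ rather than swapping roles) is a reasonable refinement but does not change the substance.
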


For this reason, we will refer to the points of $\hC$ as $(\hphi_1,\hphi_2)$--\textit{concurrence points} and $\hC$ a concurrency set for $(\hphi_1,\hphi_2)$.
Another useful fact is the following, which relates angles between intersecting basic geodesic rays.

\begin{lemma} \label{L:pi intervals} Given oriented basic $\hphi_1$--geodesic rays $\eta,\eta'$ emanating from a cone point $\zeta \in \hC_0$ making angle $\theta_1 > 0$, let $\theta_2 > 0$ be the angle between $\hat g(\eta)$ and $\hat g(\eta')$.  Then $\frac{\theta_1}\pi \in \mathbb Z$ if and only if $\frac{\theta_2}{\pi} \in \mathbb Z$, and in this case the angles are equal.  In general, we have
$\left\lfloor \frac{\theta_1}{\pi} \right\rfloor = \left\lfloor \frac{\theta_2}{\pi} \right\rfloor$.

As a consequence, if two oriented $\hphi_1$--geodesics $\eta, \eta'$ intersect at a cone point, and $D_1 (\eta) = D_1 (\eta')$, then $D_2 (\hat{g} (\eta))=D_2 (\hat{g} (\eta'))$.
\end{lemma}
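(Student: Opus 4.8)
The plan is to prove the two assertions of Lemma~\ref{L:pi intervals} in turn, beginning with the statement about floors of angle ratios, since the ``integer iff integer, and equal'' claim is the special case where both floors are attained exactly.

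\medskip

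\textbf{Step 1: reduce to rays through the cone point.} Let $\zeta \in \hC_0$ be the common initial cone point of $\eta, \eta'$, with cone angle $\alpha_1 > 2\pi$ in $\hphi_1$ and $\alpha_2 > 2\pi$ in $\hphi_2$ (the angles need not agree a priori). The developing map $D_1$ restricted to the unit tangent cone $T^1_\zeta \hS \to T^1_{D_1(\zeta)}\mathbb H$ is a (metric) covering map by Lemma~\ref{L:Dev surjective}; concretely, going counterclockwise around $\zeta$ by total angle $\theta$ moves $D_1$ of the emanating geodesic ray through a rotation by $\theta$ about $D_1(\zeta)$ in $\mathbb H$. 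So $D_1(\eta) = D_1(\eta')$ precisely when $\theta$ is an integer multiple of $\pi$ — the ray ``develops onto the same biinfinite geodesic line through $D_1(\zeta)$'', the two ends alternating — and more generally the geodesic lines in $\mathbb H$ obtained by extending $D_1(\eta)$ and $D_1(\eta')$ to full lines coincide iff $\theta_1 \in \pi\mathbb Z$.

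\medskip

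\textbf{Step 2: count saddle connections / intermediate rays using Lemma~\ref{L:hat combinatorics} and Corollary~\ref{C:hat concurrency}.} Here is the key idea: between $\eta$ and $\eta'$ (say, rotating counterclockwise from $\eta$ to $\eta'$ through angle $\theta_1$) there are some number $N_1$ of ``intermediate'' basic $\hphi_1$-geodesic rays emanating from $\zeta$ that are \emph{extensions to full basic geodesics of rays making angle exactly $0$ or $\pi$ on one side at $\zeta$}; equivalently, I want to count how many times, as one sweeps from $\eta$ to $\eta'$, a ray sweeps past the ``antipodal'' direction of $\eta$. The cleanest invariant to track is: the extension of $\eta$ to a full basic $\hphi_1$-geodesic $\bar\eta$ passes through $\zeta$ making angle $\pi$ on one side (Lemma~\ref{L:basic geodesics}); as we rotate the ray by $\theta_1$ to reach $\eta'$, the number of integer multiples of $\pi$ strictly between $0$ and $\theta_1$ is exactly $\lceil \theta_1/\pi\rceil - 1$ if $\theta_1 \notin \pi\mathbb Z$ and $\theta_1/\pi - 1$ otherwise, which is a combinatorial quantity: it counts the number of \emph{distinct biinfinite basic $\hphi_1$-geodesics through $\zeta$} that separate $\eta$ from $\eta'$ locally at $\zeta$, in the sense that near $\zeta$ they lie ``between'' the two rays. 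This local separation data is exactly what $\hat g$ preserves: by Corollary~\ref{C:hat concurrency}, $\hat g$ is a bijection $\CG(\hphi_1,\zeta)\to\CG(\hphi_2,\zeta)$, and by Lemma~\ref{L:hat combinatorics} it preserves the cyclic order of geodesics through $\zeta$ (since it preserves endpoints at infinity, hence the cyclic order on $\hat S^1_\infty$, and by Lemma~\ref{l:partition2}-type reasoning the ``sides at $\zeta$'' data); so $\hat g$ carries the set of biinfinite basic $\hphi_1$-geodesics through $\zeta$ lying locally between $\eta$ and $\eta'$ bijectively onto the analogous set for $\hat g(\eta), \hat g(\eta')$. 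Hence the count is the same: $\lfloor \theta_1/\pi\rfloor = \lfloor \theta_2/\pi\rfloor$, with equality of the fractional parts forced to be simultaneously zero — i.e. $\theta_1/\pi\in\mathbb Z \iff \theta_2/\pi\in\mathbb Z$. When this integrality holds, $\theta_i = \pi \lfloor\theta_i/\pi\rfloor$ for $i=1,2$ and the floors agree, so $\theta_1 = \theta_2$.

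\medskip

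\textbf{Step 3: the consequence.} Suppose $\eta, \eta'$ are oriented $\hphi_1$-geodesics meeting at a cone point $\zeta$ with $D_1(\eta) = D_1(\eta')$. By Step 1, $D_1(\eta) = D_1(\eta')$ forces the angle $\theta_1$ between them at $\zeta$ to be an integer multiple of $\pi$ (and, accounting for orientations, we need the full lines to coincide, which is the $\theta_1 \in \pi\mathbb Z$ case; the orientation either matches or is reversed according to the parity of $\theta_1/\pi$, and $\hat g$ preserves the orientation on each geodesic by construction, cf.\ the remark after the definition of $\hat g$). By the main statement, $\theta_2 := $ angle between $\hat g(\eta)$ and $\hat g(\eta')$ at $\zeta$ equals $\theta_1 \in \pi\mathbb Z$. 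Applying Step 1 in the $\hphi_2$ direction, $D_2(\hat g(\eta)) = D_2(\hat g(\eta'))$, with the correct orientation because $\theta_2/\pi = \theta_1/\pi$ has the same parity. This is the desired conclusion.

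\medskip

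\textbf{Main obstacle.} The genuinely delicate point is Step~2: making precise the claim that $\hat g$ preserves the ``local betweenness at $\zeta$'' of geodesics through $\zeta$, and that this local count equals $\lfloor\theta_i/\pi\rfloor$. The count could in principle be corrupted because a basic geodesic through $\zeta$ that is locally between $\eta$ and $\eta'$ near $\zeta$ might not be ``between'' them at infinity — the cone angle exceeding $2\pi$ means the local and global cyclic orders differ. I would handle this by working entirely with the unit tangent cone picture: the rays at $\zeta$ in one $\pi$-sector are genuinely ordered (not just cyclically), and I would select, for counting purposes, specific basic geodesics through $\zeta$ whose \emph{other} ray also stays controlled — e.g. by using, as in the proof of Lemma~\ref{L:characterizing basics}, basic geodesics that make angle $\pi$ on a fixed side at $\zeta$ so their two tangent directions at $\zeta$ are exactly antipodal, and then the set of such with one tangent direction in the sector between $\eta$ and $\eta'$ is in bijection (via $\hat g$, using Corollary~\ref{C:hat concurrency} and order-preservation) with the analogous set for $\hat g(\eta),\hat g(\eta')$, and has cardinality $\lfloor\theta_i/\pi\rfloor$. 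Getting this bookkeeping exactly right — including the boundary cases $\theta_i \in \pi\mathbb Z$ — is where the real work lies.
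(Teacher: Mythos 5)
Your overall architecture matches the paper's: reduce everything to detecting integer multiples of $\pi$ at $\zeta$, then deduce the floor equality and the statement about $D_1(\eta)=D_1(\eta')$. But the central claim of Step~2 is not actually established, and as written it contains a real gap. First, the counting set you propose is not finite: for a fixed side, \emph{every} direction in the sector between $\eta$ and $\eta'$ is the tangent direction of some basic geodesic through $\zeta$ making angle $\pi$ on that side (Lemma~\ref{L:characterizing basics}), so ``basic geodesics with antipodal tangent directions, one of which lies in the sector'' is an uncountable family parametrized by an interval of length $\theta_1$ — it cannot have cardinality $\lfloor\theta_1/\pi\rfloor$. Second, and more fundamentally, the tool you lean on — that $\hat g$ preserves cyclic order at infinity and hence ``local betweenness'' at $\zeta$ — is a purely order-theoretic invariant; it can never certify the \emph{metric} statement that the angle between two specific straightened rays is exactly $\pi$ rather than some other value compatible with the same ordering. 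Order preservation gives you a bijection between the two uncountable families, which tells you nothing about floors.

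The missing idea is the following metric mechanism, which is what the paper uses. If two rays $\mu,\mu'$ at $\zeta$ make angle exactly $\pi$ (say counterclockwise from $\mu$ to $\mu'$), then after arranging the sides on which they make angle $\pi$ at subsequent cone points, the concatenation $\mu\cdot\mu'$ (reversing $\mu$) is a single \emph{basic} geodesic by Lemma~\ref{L:characterizing basics}; by Lemma~\ref{L:hat combinatorics}, $\hat g(\mu\cdot\mu')=\hat g(\mu)\cdot\hat g(\mu')$ is again a basic geodesic through $\zeta$, and by Lemma~\ref{L:basic geodesics} a basic geodesic makes angle \emph{exactly} $\pi$ on one side at any cone point it meets — so $\hat g(\mu)$ and $\hat g(\mu')$ make angle exactly $\pi$ (with the correct side pinned down by the cyclic order at infinity of the approximating nonsingular geodesics). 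Granting this, one interpolates: set $\eta=\eta_0$ and let $\eta_{i+1}$ be the ray at angle $\pi$ counterclockwise from $\eta_i$, so $\theta_1=k\pi+\theta_1^0$ with $\theta_1^0\in[0,\pi)$ and $\eta'$ in the sector from $\eta_k$ to $\eta_{k+1}$; the straightened chain $\hat g(\eta_i)$ then consists of rays at consecutive angle exactly $\pi$, and $\hat g(\eta')$ is trapped between $\hat g(\eta_k)$ and $\hat g(\eta_{k+1})$ (equal to $\hat g(\eta_k)$ precisely when $\theta_1^0=0$), which gives $\lfloor\theta_1/\pi\rfloor=\lfloor\theta_2/\pi\rfloor$ and the integrality/equality claim. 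Your Steps~1 and~3 are fine once this is in place; note also that the interior count of strict multiples of $\pi$ alone does not distinguish $\theta_1=k\pi$ from $\theta_1\in((k-1)\pi,k\pi)$, so the endpoint case must be tracked separately, as the chain argument does automatically.
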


\begin{proof} First note that the angles $\theta_1$ and $\theta_2$ depend only on the initial segments of $\eta$ and $\eta'$ (the case of $\theta_2$ requires an application of Lemma~\ref{L:hat combinatorics}).  In particular, $\theta_1$ remains unchanged by modifying the side on which $\eta$  and $\eta'$ make angle $\pi$ at various cone points they meet, since their initial segments are preserved by this modification.

First suppose the angle between $\eta$ and $\eta'$ is $\pi$.  Without loss of generality, we may assume that the angle from $\eta$ to $\eta'$ is $\pi$ (i.e.~counterclockwise) and that $\eta$ makes angle $\pi$ on the left and $\eta'$ makes angle $\pi$ on the right at every cone point they encounter.  Then reversing the orientation on $\eta$ and concatenating with $\eta'$ we get $\eta\cdot\eta'$ is a basic $\hphi_1$--geodesic making angle $\pi$ on the right at every cone point encountered.  Straightening, we see that $\hat g(\eta \cdot \eta') = \hat g(\eta) \cdot \hat g(\eta')$ is also a basic geodesic, implying that $\hat g(\eta)$ and $\hat g(\eta')$ also make angle $\pi$ (in fact, it's counter clockwise from $\hat g(\eta)$ to $\hat g(\eta')$ using the cyclic ordering on $\hS^1_\infty$).

Now given any two rays $\eta$ and $\eta'$, without loss of generality assume the angle from $\eta$ to $\eta'$ is positive (counterclockwise) so that the angle from $\hat g(\eta)$ to $\hat g(\eta')$ is also positive.  Let $\eta = \eta_0,\eta_1,\ldots,\eta_k,\eta_{k+1}$ be a finite sequence of geodesic rays emanating from $\zeta$ so that the angle from $\eta_i$ to $\eta_{i+1}$ is $\pi$ (counterclockwise) and the angle from $\eta_k$ to $\eta'$ is $\theta_1^0 \in [0,\pi)$.  If $\eta_k = \eta'$ then $\theta_1^0 = 0$, but otherwise, let $\eta_{k+1}$ be such that the angle from $\eta'$ to $\eta_{k+1}$ is $\pi - \theta^1_0 >0$.
Then $\theta_1 = k \pi + \theta_1^0$ and $\theta_1^0 \in [0,\pi)$.  By the first part of the proof, the angle from $\hat g(\eta_i)$ to $\hat g(\eta_{i+1})$ is $\pi$.  If $\theta_1^0 = 0$, then $\hat g(\eta_k) = \hat g(\eta')$ and thus $\theta_2 = k \pi = \theta_1$.  In this case $\tfrac{\theta_1}\pi = \tfrac{\theta_2}\pi$ and these are integers.  On the other hand, if $\theta_1^0 \in (0,\pi)$, then $\theta_2$ must be between $k\pi$ (the measure of the angle from $\hat g(\eta_0)$ to $\hat g(\eta_k)$) and $(k+1)\pi$ (the measure of the angle from $\hat g(\eta_0)$ to $\hat g(\eta_{k+1})$).  Therefore $\left\lfloor \frac{\theta_1}\pi \right\rfloor = k\pi = \left\lfloor \frac{\theta_2}\pi \right\rfloor$.

Finally note that if $\eta,\eta' \in \CG(\hphi_1,\zeta)$ and $D_1(\eta) = D_1(\eta')$, then $\eta,\eta'$ must make angle at $\zeta$ which is an integral multiple of $\pi$.  By the first part of the lemma the same is true for $\hat g(\eta),\hat g(\eta')$, and thus $D_2(\hat g(\eta)) = D_2(\hat g(\eta'))$.
\end{proof}

\subsection{Conjugating circle actions}\label{S:conjugating}

The fundamental group, $\pi_1 \dot S$, acts on $\hS$ by isometries with respect to $\hphi_1$ and $\hphi_2$. More generally we consider any group $G$ acting by homeomorphisms on $\hS$ that are isometries with respect to both $\hphi_1$ and $\hphi_2$. In this situation, there are holonomy homomorphisms $\rho_i: G \to\PSL_2(\mathbb{R})$, for $i=1, 2$.

The goal of this subsection is to use the developing maps and behavior of basic geodesics to prove the following, which is a key ingredient in the proof of the \currentsupport\!.

\begin{proposition} \label{P:holonomy conjugation} Suppose $\CG_{\tphi_1} = \CG_{\tphi_2}$, $C$ is a concurrency set and $\varphi_2$ is $C$--uber-normalized.  Suppose a group $G$ acts on $\hS$ by homeomorphisms that are isometries with respect to $\hphi_1$ and $\hphi_2$. Then there is an orientation preserving homeomorphism $h \colon \partial \mathbb H \to \partial \mathbb H$ topologically conjugating $\rho_1$ to $\rho_2$.  That is, for all $x \in \partial \mathbb H$ and $\gamma \in G$, we have
\begin{equation}
\label{E:conjugating homs}
h(\rho_1(\gamma) \cdot x) = \rho_2(\gamma) \cdot h(x).
\end{equation}
Moreover, for all $\eta \in \CG(\hphi_1)$, $h(\partial D_1(\eta)) = \partial D_2(\hat g(\eta))$.
\end{proposition}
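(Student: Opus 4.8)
The plan is to construct $h$ directly from the developing maps. By Lemma~\ref{L:hat combinatorics} we have $\CG_{\hphi_1}=\CG_{\hphi_2}$, so the set $\mathcal K\subset\hS^1_\infty$ of endpoints of basic geodesics is common to both metrics and the straightening map $\hat g$ restricts to the identity on $\mathcal K$; moreover $G$ acts on $\hS$ preserving basic geodesics of either metric, hence acts on $\mathcal K$, and $\hat g$ is $G$--equivariant since it is characterized by ``same endpoints in $\hS^1_\infty$''. By Corollary~\ref{C:extendD} the boundary extensions $\bar D_i|_{\mathcal K}\colon\mathcal K\to\partial\mathbb H$ are surjective, and from $D_i\circ\gamma=\rho_i(\gamma)\circ D_i$ on $\hS$ one reads off $\bar D_i(\gamma\cdot k)=\rho_i(\gamma)\cdot\bar D_i(k)$ for $k\in\mathcal K$, $\gamma\in G$. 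I would then \emph{define} $h$ by $h\circ\bar D_1=\bar D_2$ on $\mathcal K$, i.e.\ $h(y)=\bar D_2(k)$ for any $k\in\mathcal K$ with $\bar D_1(k)=y$; since by Lemma~\ref{L:Dev surjective} $D_1$ maps basic $\hphi_1$--geodesics onto all geodesics of $\mathbb H$, this is the same as prescribing $h(\partial D_1(\eta))=\partial D_2(\hat g(\eta))$ for all $\eta\in\CG(\hphi_1)$, which also yields the ``moreover'' clause. Granting well-definedness, $h$ is automatically surjective, and the conjugation relation \eqref{E:conjugating homs} is immediate: $h(\rho_1(\gamma)\cdot\bar D_1(k))=h(\bar D_1(\gamma\cdot k))=\bar D_2(\gamma\cdot k)=\rho_2(\gamma)\cdot\bar D_2(k)=\rho_2(\gamma)\cdot h(\bar D_1(k))$, and $\bar D_1$ is onto $\partial\mathbb H$.

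The crux is \textbf{well-definedness}: if $k,k'\in\mathcal K$ satisfy $\bar D_1(k)=\bar D_1(k')$, then $\bar D_2(k)=\bar D_2(k')$. After reducing (using that a geodesic of $\mathbb H$ is determined by a point together with an endpoint) this amounts to showing: for $\eta,\eta'\in\CG(\hphi_1)$ whose $D_1$--images share a forward endpoint, the $D_2$--images of $\hat g(\eta)$ and $\hat g(\eta')$ share a forward endpoint, and in particular $D_1(\eta)=D_1(\eta')$ forces $D_2(\hat g(\eta))=D_2(\hat g(\eta'))$. I would prove this by connecting $\eta$ to $\eta'$ through a finite chain $\eta=\eta_0,\eta_1,\dots,\eta_m=\eta'$ of basic $\hphi_1$--geodesics in which consecutive geodesics are related by one of two elementary moves, each of which is respected by $\hat g$: \emph{(i)} $\eta_j$ and $\eta_{j+1}$ are forward asymptotic in $\hS$, so they share an endpoint of $\hS^1_\infty$, which $\hat g$ fixes, and since $D_2$ is $1$--Lipschitz forward-asymptotic $\hphi_2$--geodesics develop to forward-asymptotic geodesics of $\mathbb H$; or \emph{(ii)} $\eta_j$ and $\eta_{j+1}$ pass through a common cone point $\zeta\in\hC_0$ with $D_1(\eta_j)=D_1(\eta_{j+1})$, whence by the consequence of Lemma~\ref{L:pi intervals} together with Corollary~\ref{C:hat concurrency} (which keeps $\hat g(\eta_j),\hat g(\eta_{j+1})$ incident to $\zeta$) we get $D_2(\hat g(\eta_j))=D_2(\hat g(\eta_{j+1}))$. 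Producing such a chain is the delicate point and the main obstacle: the failure of $D_1$ to be injective is ``generated'' by loops about the points of $\hC_0$, so the passage from $\eta$ to $\eta'$ should decompose into finitely many unwindings about cone points; here I would use the part of Lemma~\ref{L:Dev surjective} that $dD_{1,\zeta}$ restricted to $T^1_\zeta\hS$ is the universal cover of $T^1_{D_1(\zeta)}\mathbb H$ to rotate a basic geodesic about $\zeta$ while tracking how its developed image winds about $D_1(\zeta)$. This is the analysis of intersection patterns of basic geodesics alluded to in the outline, and the part most unlike the Euclidean case.

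Finally, to see $h$ is an \textbf{orientation-preserving homeomorphism} of $\partial\mathbb H$, I would show that each $\bar D_i|_{\mathcal K}$ is monotone for the cyclic orders on $\mathcal K\subset\hS^1_\infty$ and on $\partial\mathbb H$, and that the two induce the \emph{same} identification and cyclic order on $\mathcal K$; a monotone bijection of the circle is then an orientation-preserving homeomorphism (injectivity of $h$ being the symmetric counterpart of the well-definedness above). Monotonicity and the coincidence of the orders rest on a linking analysis: $\hat g$ preserves linking of $\hS^1_\infty$--endpoints, hence crossing of basic geodesics in $\hS$, and a case analysis of how a crossing in $\hS$ develops into $\mathbb H$ (transverse at a regular point, meeting at a cone point, or overlapping in a saddle connection), combined with the combinatorial control of Lemma~\ref{L:hat combinatorics} and Corollary~\ref{C:hat concurrency}, shows that $D_1(\eta)$ and $D_1(\eta')$ link on $\partial\mathbb H$ if and only if $D_2(\hat g(\eta))$ and $D_2(\hat g(\eta'))$ do. Together with the well-definedness and equivariance already established, this gives all the asserted properties of $h$.
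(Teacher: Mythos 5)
Your overall architecture matches the paper's: you define $h$ by pushing forward endpoints of developed basic geodesics, reduce everything to showing that $\hat g$ preserves the relation ``developed forward asymptotic,'' and then the conjugation identity, surjectivity, and the ``moreover'' clause are the same formal computations the paper performs. The gap is in the one place you flag as delicate, and it is a genuine gap rather than a routine verification: your two elementary moves do not suffice to connect two basic $\hphi_1$--geodesics whose $D_1$--images are forward asymptotic by a \emph{finite} chain. The problem is already visible for two members $\eta,\eta'$ of the family of basic geodesics crossing a single saddle connection $\sigma$ whose developed images all have forward endpoint $x$ (the family $\Lambda(\sigma,x)$ of Proposition~\ref{asymptotic_family}). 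The region of $\hS$ swept out by this family is noncompact, and its boundary can contain \emph{infinitely} many points of $\hC_0$ (only finitely many within any bounded distance of $\sigma$, but with no global bound). Each such cone point $\zeta$ carries a slit: the two leaves $f^l,f^r$ through $\zeta$ have the same developed image but separate the family into pieces lying in different ``sheets'' of $\hS$ around $\zeta$; since the total angle at $\zeta$ in $\hS$ is infinite, leaves on opposite sides of the slit are \emph{not} forward asymptotic in $\hS$ and share no endpoint in $\hS^1_\infty$. Hence any chain from $\eta$ to $\eta'$ built from your moves (i) and (ii) must pass through every intervening slit, and there may be infinitely many of them. Your proposed remedy---unwinding about cone points using the universal-cover structure of $dD_{1,\zeta}$ on unit tangent vectors---only handles pairs with equal developed image meeting at a common cone point, i.e.\ your move (ii); it does not produce the finite chain in the asymptotic case.

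This is exactly why the paper replaces the step-by-step chain with an argument that treats an entire family at once: Proposition~\ref{prop:forward_intersecting} (the foliated-triangle pullback, with the finiteness-of-cone-points and injectivity-into-$\tS$ arguments), its limiting version Proposition~\ref{prop:forward_asyptotic}, and then Proposition~\ref{asymptotic_family}, where the swept region is ``zipped up'' along the slits to a nonsingular surface that $D_2$ maps isometrically into $\mathbb H$, so that all straightened leaves are seen to be developed forward asymptotic simultaneously. Only after that does a finite chain appear (Proposition~\ref{P:developed_forward_asymptotic}), and there it is a chain of \emph{families} $\Lambda(\sigma_i,z)$ along a concatenation of saddle connections, with consecutive families overlapping in the countably many geodesics through the shared cone point---which is the correct finite combinatorial skeleton underlying your move (ii). Your closing argument that $h$ is an orientation-preserving homeomorphism via monotonicity of $\bar D_i|_{\mathcal K}$ is plausible but also left schematic (the paper instead shows $h$ and $h^{-1}$ are open using fans of geodesics through a cone point); the essential missing content, however, is the family-level asymptotics argument, without which the well-definedness of $h$, and hence its injectivity, is not established.
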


We will define the map $h: \partial \mathbb{H} \rightarrow \partial \mathbb{H}$ as follows. Suppose $x \in \partial \mathbb{H}$, and take any oriented geodesic $\alpha$ in $\mathbb H$ with forward end point $x$.  Let $\eta$ be any basic $\hphi_1$--geodesic with $D_1(\eta) = \alpha$  (see Lemma~\ref{L:Dev surjective}) and define $h(x)$ to be the forward endpoint of $D_2(\hat{g}(\eta))$ in $\partial \mathbb{H}$. An immediate concern is whether $h$ is well-defined, and \S\ref{hwelldefined} is dedicated to showing that it is.

\subsubsection{Developed asymptotic geodesics}
\label{hwelldefined}

Given oriented basic $\hphi_i$--geodesics $\eta_1,\eta_2$, we say that they are \emph{developed forward asymptotic} if $D_i(\eta_1)$ and $D_i(\eta_2)$ are forward asymptotic in $\mathbb{H}$ (note we allow the possibility that $D_i(\eta_1) = D_i(\eta_2)$).

In this section, our goal is to show that if $\eta$ and $\eta'$ are basic $\hphi_1$--geodesics that are developed forward asymptotic, then $\hat{g}(\eta)$ and $\hat{g}(\eta')$ are developed forward asymptotic as well, which is an important ingredient in verifing that $h$ is well-defined.  The proof of this preservation of developed forward asymptoticity by $\hat g$ is somewhat complicated.  In fact, it turns out to be easier to prove that $\hat g$ preserves the property that developed geodesics {\em intersect} in the forward direction, and then observe that being developed forward asymptotic occurs as the limiting case of developed geodesics intersecting.  The intersecting behavior we prove is described by the next proposition.


\begin{proposition}\label{prop referee}
\label{prop:forward_intersecting}
Suppose $\eta$ and $\eta'$ are oriented basic $\hphi_1$--geodesics and let $\sigma$ be a $\hphi_1$--saddle connection with one end point on $\eta$ and the other on $\eta'$. Suppose further that
	\begin{enumerate}[(i)]
	\item the angle between $\sigma$ and the forward rays of each of $\eta$ and $\eta'$ measures strictly between $0$ and $\pi$, and
	\item $D_1(\eta)$ and $D_1(\eta')$ intersect forward of the end points of $D_1(\sigma)$.
	\end{enumerate}
Then $\hat g(\eta)$, $\hat g(\eta')$, and $\hat g(\sigma)$ satisfy (i) and (ii) with respect to $D_2$ as well.
\end{proposition}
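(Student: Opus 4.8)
The statement is really about the combinatorics of how basic geodesics and saddle connections sit in $\hS$, transported back and forth by $\hat g$, combined with the fact that the developing maps $D_i$ only depend on this local combinatorial/metric data. My plan is to reduce hypothesis (ii)---that $D_1(\eta)$ and $D_1(\eta')$ intersect forward of $D_1(\sigma)$---to a statement purely about $\hphi_1$-geodesic segments and angles that visibly makes sense for any developing map, and then run it in reverse for $D_2$. Concretely, since $\sigma$ has one endpoint $p$ on $\eta$ and the other $p'$ on $\eta'$, I would consider the triangle-like configuration formed by the forward ray $\eta^+$ of $\eta$ from $p$, the forward ray $\eta'^+$ of $\eta'$ from $p'$, and $\sigma$. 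Developing via $D_1$, the hypotheses say: the two angles $\sigma$ makes with $\eta^+$ and $\eta'^+$ are in $(0,\pi)$, and on the side of $D_1(\sigma)$ toward which both forward rays point, the geodesics $D_1(\eta)$ and $D_1(\eta')$ meet. In $\mathbb H$, whether two geodesics emanating from the two ends of a segment, on a prescribed side, actually cross (as opposed to diverging) is governed by an explicit trigonometric inequality involving the two angles and the length $\ell = \ell_{\hphi_1}(\sigma)$: they intersect iff the sum of the ``angles of parallelism'' condition fails, i.e. iff the angles are large enough relative to $\ell$. Since $\hat g(\sigma)$ need not have the same length as $\sigma$, I cannot just say the lengths agree---this is where I expect the real work to be.

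**Handling the length discrepancy.** The key observation is that although $\hat g$ does not preserve lengths of saddle connections, condition (ii) is equivalent to a purely combinatorial statement that $\hat g$ \emph{does} preserve. Here is the mechanism I would use. In $\mathbb H$, $D_1(\eta)$ and $D_1(\eta')$ intersect forward of $D_1(\sigma)$ if and only if the oriented geodesic $D_1(\eta')$, when one tries to ``slide'' its basepoint along $D_1(\sigma)$ toward the $\eta$-end keeping the same angle, must at some point cross $D_1(\eta)$; equivalently, considering the pencil of geodesics through points of $D_1(\sigma)$ at the fixed angle, $D_1(\eta)$ lies in the region swept out forward. But this is a statement about \emph{betweenness of endpoints on $\partial \mathbb H$}: $D_1(\eta)$ and $D_1(\eta')$ intersecting forward of $D_1(\sigma)$ is equivalent to saying that the forward endpoint of $D_1(\eta)$ and the forward endpoint of $D_1(\eta')$ are "linked" with the endpoints of an auxiliary geodesic built from $D_1(\sigma)$ and the angle data in a specific way. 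Rather than chasing that, I would instead argue directly with basic geodesics in $\hS$: using Lemma~\ref{L:characterizing basics} and Lemma~\ref{L:hat combinatorics}, I can find, for the configuration $(\eta,\eta',\sigma)$, a \emph{third} basic $\hphi_1$-geodesic $\tau$ (or saddle connection) witnessing the forward intersection---for instance $\tau$ a basic geodesic that crosses both $\eta$ and $\eta'$ forward of $\sigma$, chosen so that $D_1(\tau)$ crosses $D_1(\eta)$ and $D_1(\eta')$ on the correct sides. The existence of such a witness $\tau$ is equivalent to (ii) (one direction is the definition of crossing; the other uses that two geodesics in $\mathbb H$ crossing forward of a segment can always be separated-or-joined by a suitable transversal). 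Then Lemma~\ref{l:transverse} (or its $\hS$-analogue via Lemma~\ref{L:hat combinatorics} and the combinatorics-preservation) says $\hat g(\tau)$ crosses $\hat g(\eta)$ and $\hat g(\eta')$, and tracking sides via the partition-preservation (Lemma~\ref{l:partition2} pulled up to $\hS$) shows the crossings are forward of $\hat g(\sigma)$. Since $D_2$ is a local isometry respecting these crossings, $D_2(\hat g(\eta))$ and $D_2(\hat g(\eta'))$ must then intersect forward of $D_2(\hat g(\sigma))$, giving (ii) for $D_2$.

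**Condition (i) and assembling the proof.** Condition (i) is the easy part: the angles $\sigma$ makes with the forward rays of $\eta$ and $\eta'$ at the cone points $p,p'$ are determined by the initial segments of $\eta,\eta',\sigma$ at those points, and Lemma~\ref{L:pi intervals} tells us that $\hat g$ sends these angles to angles with the same floor of $\theta/\pi$ and, crucially, preserves whether the angle is an integral multiple of $\pi$. So an angle strictly in $(0,\pi)$ is sent to an angle strictly in $(0,\pi)$ (the floor is $0$ and it is not a multiple of $\pi$, hence not $0$ and not $\pi$). This is exactly the first half of (i) for $D_2$. For the assembly: I would (a) first treat the generic case where $p,p'$ are regular points, so $D_i$ is a genuine local isometry near them and the angles are honest Euclidean angles, then (b) handle the case where $p$ or $p'$ is a cone point using Lemma~\ref{L:pi intervals} and the fact (Lemma~\ref{L:Dev surjective}) that $dD_\zeta$ restricted to the unit tangent cone is a covering onto $T^1_{D(\zeta)}\mathbb H$, so the relevant angle in $\mathbb H$ is the image angle mod the cone structure; one checks that ``strictly between $0$ and $\pi$'' and ``intersect forward of the endpoints'' are the right metric-independent conditions. **The main obstacle** I anticipate is precisely Step (ii): making rigorous the equivalence between ``$D_1(\eta)$ and $D_1(\eta')$ intersect forward of $D_1(\sigma)$'' and a $\hat g$-invariant combinatorial condition in $\hS$, because $\hat g$ distorts lengths and the naive ``same triangle in $\mathbb H$'' argument fails; the resolution must route everything through crossing/linking data of endpoints at infinity (which $\hat g$ and $h$ do preserve, as in Lemma~\ref{l:transverse}) rather than through metric quantities. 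Once that combinatorial reformulation is in hand, the proposition follows by symmetry between the roles of $\varphi_1$ and $\varphi_2$.
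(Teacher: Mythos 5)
Your treatment of condition (i) via Lemma~\ref{L:pi intervals} is exactly what the paper does and is fine. The gap is in your reduction of condition (ii) to the existence of a single ``witness'' transversal $\tau$ crossing both $\eta$ and $\eta'$ forward of $\sigma$. This equivalence fails in both directions. First, a common transversal does not certify intersection: two geodesics in $\mathbb H$ (or in $\hS$) can both be crossed by a third without crossing each other, so the forward direction of your claimed equivalence is false as stated. Second, and more fundamentally, the forward intersection of $D_1(\eta)$ and $D_1(\eta')$ is a statement about developed images in $\mathbb H$ that in general has \emph{no} counterpart in $\hS$: the developing map is not injective, $\eta$ and $\eta'$ may be disjoint in $\hS$ with non-linking endpoints in $\hS^1_\infty$ even though $D_1(\eta)$ and $D_1(\eta')$ cross, and the intersection point $a\in\mathbb H$ need not be the $D_1$--image of any common point of $\eta$ and $\eta'$. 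Consequently the crossing/partition data that $\hat g$ preserves (Lemmas~\ref{l:partition2}, \ref{l:transverse}, \ref{L:hat combinatorics}) lives upstairs in $\hS$ and cannot, by itself, be converted into a statement about linking of endpoints of $D_2(\hat g(\eta))$ and $D_2(\hat g(\eta'))$ in $\partial\mathbb H$ --- note also that the boundary map $h$ relating the two developed pictures is precisely what this proposition is being used to construct, so it is not available.

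The missing idea is an interpolation. The paper foliates the developed triangle $T$ bounded by $D_1(\sigma)$, $D_1(\eta)$, $D_1(\eta')$ by segments to the intersection point $a$, pulls this foliation back to a region $\hat T\subset\hS$ whose leaves are basic $\hphi_1$--geodesic arcs, and proves (via an injectivity argument into $\tS$) that $\hat T$ meets only finitely many cone points. This yields a finite chain $\eta=f_0, f_1,\dots,f_{n+1}=\eta'$ of basic geodesics in which \emph{consecutive} members genuinely cross in $\hS$ and all cross $\sigma$; each of these crossings is preserved by $\hat g$ and forces a crossing of the corresponding $D_2$--images, and then transitivity of the clockwise order on the two arcs of $\partial\mathbb H$ cut out by $\partial D_2(\hat g(\sigma))$ chains the finitely many linkings together to show that $\partial D_2(\hat g(\eta))$ and $\partial D_2(\hat g(\eta'))$ link forward of $D_2(\hat g(\sigma))$. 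Without this interpolating family (and the finiteness statement that makes the chain finite), your argument does not close.
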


\begin{proof} Observe that (i) holds for $\hat g(\eta)$, $\hat g(\eta')$, and $\hat g(\sigma)$ by Lemma~\ref{L:pi intervals}, so we need only prove (ii).
Let $\zeta$ and $\xi$ be the cone points at which $\sigma$ intersects $\eta$ and $\eta'$ respectively, and we orient $\sigma$ in the direction from $\zeta$ to $\xi$ so that $\eta$ and $\eta'$ cross $\sigma$ from left to right; see Figure~\ref{F:first developing pic}. Without loss of generality we can replace $\eta$ and $\eta'$ by geodesics that agree on their initial segments forward of $\zeta$ and $\xi$, respectively, so that they make angles $\pi$ on the left and right, respectively, at every cone point they encounter (by Lemma~\ref{L:pi intervals} if the conclusion holds for these replacements, it must hold for the original geodesics).

\begin{figure}[h]
\begin{center}
  \captionsetup{width=.85\linewidth}
\begin{tikzpicture}[scale=.9]
\draw[directedd](-6,-1.54) -- (-6,1.54);
\draw[directedd] (-7,1.8) .. controls (-4, 1) .. (-1,2+1);
\draw[directedd] (-7,-1.8) .. controls (-4,-1) .. (-1,-2+1);
\filldraw (-6, 1.54) circle (1pt);
\node at (-6.2,1.85) {\small $\xi$};
\filldraw (-6, -1.54) circle (1pt);
\node at (-6.2,-1.8) {\small $\zeta$};
\node at (-6.2,0) {\small $\sigma$};
\node at (-4,1.8) {\small $\eta'$};
\node at (-4,-1.5) {\small $\eta$};
\node at (-4,-2) {\small $(\hS, \hphi_1)$};
\draw[directedd] (2,-1.35) -- (2,1.35);
\draw[directedd] (1,1.8) .. controls (5,0) .. (7,-0.2+1);
\draw[directedd] (1,-1.8) .. controls (5,0) .. (7,0.2+1);
\filldraw (2, 1.35) circle (1pt);
\node at (2.6,1.5) {\small $D_1(\xi)$};
\filldraw (2, -1.35) circle (1pt);
\node at (2.6,-1.5) {\small $D_1(\zeta)$};
\node at (1.4,0) {\small $D_1(\sigma)$};
\node at (4,1) {\small $D_1(\eta')$};
\node at (4,-1) {\small $D_1(\eta)$};
\node at (4,-2) {\small Developed in $\mathbb{H}$};
\end{tikzpicture}
\caption{A pair of developed forward intersecting geodesics connected by a saddle connection and their developed image.}
\label{F:first developing pic}
\end{center}
\end{figure}
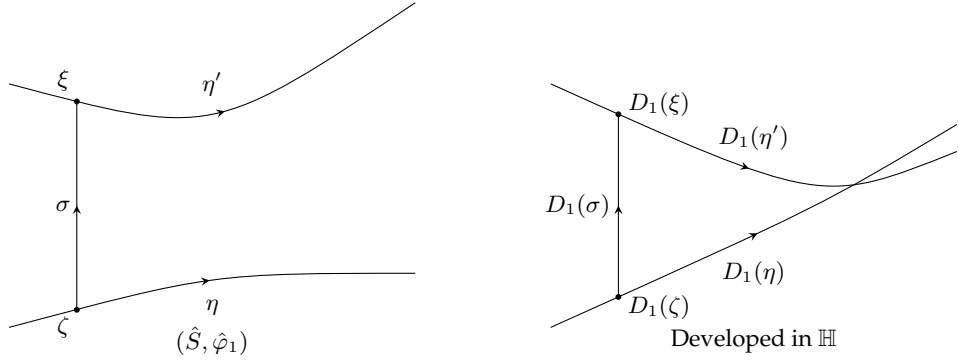

\paragraph{Creating a foliated triangle in $\mathbb{H}$.} The assumptions of the proposition imply that $D_1 (\sigma)$, $D_1(\eta)$ and $D_1(\eta')$ form a triangle $T$. Let $a$ be the point of intersection $D_1(\eta) \cap D_1(\eta')$. Foliate $T \subseteq \mathbb{H}$ by geodesic segments from $D_1(\sigma)$ to $a$, see Figure~\ref{figureT}. Orient each segment from $D_1(\sigma)$ to $a$.  The set of leaves can be parametrized by their intersection with $D_1(\sigma)$.\\ 

\begin{figure}[h]
\begin{center}
  \captionsetup{width=.85\linewidth}
\begin{tikzpicture}[scale=.9]
\draw (2,1.35) -- (2,-1.35);
\draw (1,1.8) .. controls (5,0) .. (7,-0.2+1);
\draw (1,-1.8) .. controls (5,0) .. (7,0.2+1);
\filldraw (2, 1.35) circle (1pt);
\node at (2.6,1.6) {\small $D_1(\xi)$};
\filldraw (2, -1.35) circle (1pt);
\node at (2.6,-1.6) {\small $D_1(\zeta)$};
\node at (1.3,-0.23) {\small $D_1(\sigma)$};
\node at (4,1) {\small $D_1(\eta')$};
\node at (4,-1) {\small $D_1(\eta)$};
\draw (2,.9) .. controls (4.2,.2) .. (5.48, 0.31);
\draw (2,0.4) .. controls (4,.1) .. (5.48, 0.31);
\draw (2,0) .. controls (4,0) .. (5.48, 0.31);
\draw (2,-0.4) .. controls (4,-.15) .. (5.48, 0.31);
\draw (2,-.9) .. controls (4,-.3) .. (5.48, 0.31);
\filldraw (5.48, 0.31) circle (1pt);
\node at (5.5,-0.3+1) {\small $a$};
\end{tikzpicture}
\caption{Foliated triangle $T$ in $\mathbb{H}$.}
\label{figureT}
\end{center}
\end{figure}
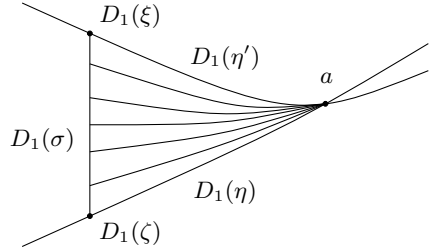

The goal for what follows is to "pull this foliation back" to a foliation of a region containing $\sigma$ whose leaves are basic $\hphi_1$--geodesic segments. We will then argue that a sufficiently robust subset of these basic geodesics are configured in a similar way after $\hphi_2$--straightening so that we may conclude that $\hat{g}(\eta)$ and $\hat g(\eta')$ are developed forward intersecting.\\

\paragraph{Pulling back the foliation.} Now we describe a foliated region in $\hat S$ that maps to the foliated triangle $T$ by $D_1$.  Consider a point $z \in \sigma$.  If there is a unique basic $\hphi_1$--geodesic arc starting at $z$ whose $D_1$--image is a leaf of the foliation of $T$ through $D_1(z)$, then denote this arc $f_z$.  If not, then all such arcs must agree on some initial subarc, $\delta$, from $z$ to a cone point.  In this case we let $f_z^l$ and $f_z^r$ be the arcs of the unique basic $\hphi_1$--geodesics through $z$, containing $\delta$, making angle $\pi$ on the left and right, respectively, at every cone point they encounter and that each map to the leaf through $D_1(z)$.  Now let $\hat T$ be the union of the arcs $f_z$ (or $f_z^l$ and $f_z^r$) over all $z \in \sigma$, which is a region in $\hat S$ (singularly) foliated by these arcs.

\paragraph{Finitely many cone points.} Next we want to show that $\hat T$ contains only finitely many cone points, and hence only finitely many leaves of the foliation of $\hat T$ encounter cone points.  To see this, we first note that $\hat T$ has finite diameter: if $d$ is the maximal length of a leaf of $T$, then every point of $\hat T$ is within $d$ of the compact segment $\sigma$.  Next, we claim that $\hat T$ projects injectively to $\tS$ by $\tilde p$. Assuming this claim, observe that since there are only finitely many cone points in the bounded image of $\hat T$ in $\tS$, there are only finitely many cone points in $\hat T$.  We are left to prove the claim.

To prove $\hat T$ projects injectively to $\tS$, we argue by contradiction. If it does not project injectively, there must be two distinct points $x$ and $y$ in $\hat T$ that project to a single point in $\tS$. The $\hphi_1$--geodesic leaves through $x$ and $y$ must be distinct since basic geodesics project injectively from $\hS$ to $\tS$. In the case where the two leaves are $f_z^l$ and $f_z^r$ for some $z$, $x$ and $y$ projecting to the same point of $\tS$ would contradict the fact that the total cone angle at each cone point of $\tS$ exceeds $2\pi$. In the case where the two leaves through $x$ and $y$ intersect $\sigma$ at distinct points $z$ and $z'$, we call the leaves $f_z$ and $f_{z'}$, suppressing the $r,l$ superscripts that may decorate these since they do not make a difference in the argument that follows.

The subsegment of $\sigma$ from $z$ to $z'$, together with the segments of $f_z$ and $f_{z'}$ from $z$ to $x$ and $z'$ to $y$, respectively, together project to a geodesic triangle $\Delta$ in $\tS$; see Figure~\ref{F:projecting to a triangle}.  Note that this triangle must either have a cone point in its interior, or a cone point on the boundary which makes an angle greater than $\pi$ toward the inside: otherwise, the entire triangle would lift to $\hS$, and hence $x$ and $y$ would be the same point of $\hat T$, a contradiction.  We will say that such cone points are {\em fully inside} $\Delta$.

\begin{figure}[h]
\begin{center}
  \captionsetup{width=.85\linewidth}
\begin{tikzpicture}[scale=1.1]
\draw (0,-.5) -- (0,2.2);
\draw (0,.3) -- (3,.3);
\draw (0,1.8) -- (3,1.8);
\filldraw (0,.3) circle (1pt);
\filldraw (0,1.8) circle (1pt);
\node at (-.25,-.3) {$\sigma$};
\node at (-.25,.4) {$z'$};
\node at (-.25,1.8) {$z$};
\filldraw (2.5,.3) circle (1pt);
\filldraw (2.5,1.8) circle (1pt);
\node at (2.5,2.05) {$x$};
\node at (2.5,0) {$y$};
\draw[->] (3.7,1) .. controls (4.4,1.25) and (5.5,1.25) .. (6.2,1);
\node at (5,1.45) {$\tilde p$};
\draw (8,-.5) -- (8,2.2);
\draw (8,.3) -- (10.9,1);
\draw (8,1.8) -- (10.9,.8);
\filldraw (8,.3) circle (1pt);
\filldraw (8,1.8) circle (1pt);
\node at (7.5,.3) {$\tilde p(z')$};
\node at (7.5,1.8) {$\tilde p(z)$};
\node at (7.5,-.3) {$\tilde p(\sigma)$};
\node at (10.5,1.4) {$\tilde p(x)$};
\node at (10.5,.5) {$\tilde p(y)$};
\filldraw (10.55,.92) circle (1pt);
\node at (9,1) {$\Delta$};
\end{tikzpicture}
\caption{Failure of injectivity produces a triangle $\Delta$ in $\tS$.}
\label{F:projecting to a triangle}
\end{center}
\end{figure}
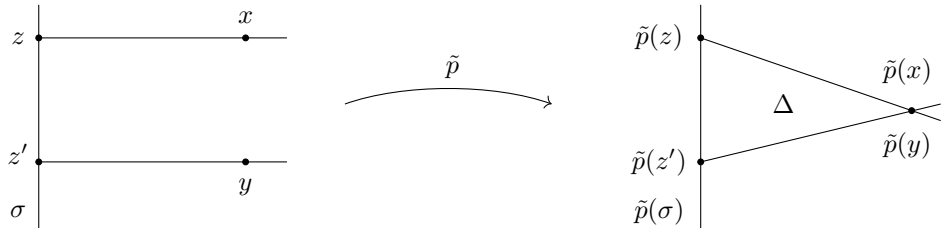

Because $\Delta$ is compact, there are only finitely many cone points fully inside $\Delta$.  Observe that there must be some $w$ in $\sigma$ between $z$ and $z'$ (possibly equal to one of $z$ or $z'$) so that $f_w^r$ and $f_w^l$ pass through one of these cone points fully inside $\Delta$.  One or both of these must project to a geodesic segment in $\tS$ that cuts off a ``sub-triangle", coming from two more distinct points $x'$ and $y'$ in $\hat T$ that project to the same point; see Figure~\ref{F:cone points in the triangle}.  This new triangle $\Delta'$ contains fewer cone points.  We may repeat this procedure, replacing $x,y$ with $x',y'$ and eventually arrive at a triangle containing no cone points fully inside it, which is a contradiction.  Therefore, $\hat T$ must inject into $\tS$, and consequently $\hat T$ contains only finitely many cone points.

\begin{figure}[h]
\begin{center}
  \captionsetup{width=.85\linewidth}
\begin{tikzpicture}[scale=1.5]
\filldraw[opacity=.15] (0,1.8) -- (1.45,1.3) -- (.7,1) -- (0,1) -- (0,1.8); 
\draw (0,1) -- (.7,1) -- (1.7,1.4);
\draw (.7,1) -- (1.3,.9);
\draw[dotted] (1.3,.9) -- (1.6,.85);
\filldraw (1.45,1.3) circle (1pt);
\draw (0,-.5) -- (0,2.2);
\draw (0,.3) -- (2.9,1);
\draw (0,1.8) -- (2.9,.8);
\filldraw (0,.3) circle (1pt);
\filldraw (0,1.8) circle (1pt);
\filldraw (0,1) circle (1pt);
\node at (-.35,.3) {$\tilde p(z')$};
\node at (-.35,1.8) {$\tilde p(z)$};
\node at (-.35,1) {$\tilde p(w)$};
\node at (-.35,-.3) {$\tilde p(\sigma)$};
\node at (2,1.85) {$\tilde p(x') = \tilde p(y')$};
\draw[->] (1.65,1.7) -- (1.45,1.35);
\node at (2.5,1.2) {$\tilde p(x)$};
\node at (2.5,.7) {$\tilde p(y)$};
\node at (.6,1.2) {$\tilde p(f_w^l)$};
\node at (.6,.8) {$\tilde p(f_w^r)$};
\filldraw (2.55,.92) circle (1pt);
\node at (.3,1.5) {$\Delta'$};
\filldraw[opacity=.15] (5,1.8) -- (6.45,1.3) -- (6.835,.742) -- (5,.3) -- (5,1.8);
\draw (5,-.5) -- (5,2.2);
\draw (5,.3) -- (7.9,1);
\draw (5,1.8) -- (7.9,.8);
\filldraw (5,.3) circle (1pt);
\filldraw (5,1.8) circle (1pt);
\draw (5,1.8) -- (6.45,1.3) -- (7,.5);
\filldraw (6.835,.742) circle (1pt);
\draw[->] (6.3,.2) -- (6.78,.68);
\node at (6.55,0) {$\tilde p(x') = \tilde p(y')$};
\node at (4.65,.3) {$\tilde p(z')$};
\node at (4.3,1.8) {$\tilde p(w) = \tilde p(z)$};
\node at (4.65,-.3) {$\tilde p(\sigma)$};
\node at (6.3,1) {$\tilde p(f_w^r)$};
\node at (6.7,1.5) {$\tilde p(f_w^l)$};
\node at (7.5,1.2) {$\tilde p(x)$};
\node at (7.5,.7) {$\tilde p(y)$};
\filldraw (7.55,.92) circle (1pt);
\node at (5.5,1.1) {$\Delta'$};
\end{tikzpicture}
\caption{Two possibilities for a subtriangle $\Delta'$ of $\Delta$ in $\tS$ (shaded) with fewer cone points fully inside.}
\label{F:cone points in the triangle}
\end{center}
\end{figure}
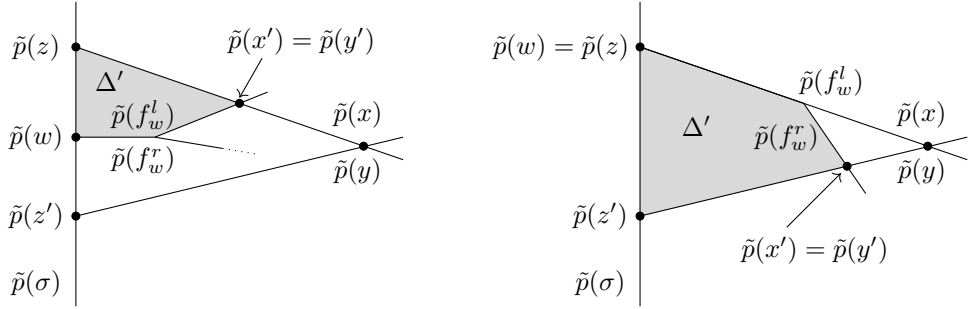


\paragraph{$\hat T$ is a union of hyperbolic triangles} We note that $\hat T$ has piecewise $\hphi_1$--geodesic boundary (consisting of subarcs of the arcs $f_z^l$ and $f_z^r$, together with arcs of $\eta$, $\eta'$, and $\sigma$), does not contain cone points in the interior, and the angle between consecutive boundary segments is less than or equal to $2\pi$; see Figure~\ref{hatT}.  Therefore, $D_1$ maps $\hat{T}$ to $T$, and is injective except along consecutive boundary arcs that make angle $2\pi$: these are arcs of $f_z^l$ and $f_z^r$ which are "zipped together" by $D_1$.  The leaves of $\hat T$ are sent to leaves of $T$ by construction.

The orientation of $\sigma$ in the direction from $\zeta$ to $\xi$ orders the leaves of the foliation of $\hat T$, except pairs $f_z^l$ and $f_z^r$ that meet cone points, which we order so that $f_z^r < f_z^l$.  Now write $f_1^r, f_1^l, f_2^r, f_2^l, \ldots, f_n^r, f_n^l$ for the (possibly empty) set of all leaves in $\hat{T}$, in order, that encounter cone points away from $\sigma$, and for convenience, write $f_0^l$ and $f_{n+1}^r$ for the leaves contained in $\eta$ and $\eta'$, respectively.
For each $0 \leq i \leq n$, let $\hat T_i \subset \hat T$ be the region bounded by $f_i^l$, $f_{i+1}^r$, and $\sigma$. 
Note that this region contains no cone points in its interior, and maps isometrically by $D_1$ to a sub-triangle of $T$ in $\mathbb H$ which is a union of leaves.  We view $\hat T$ as a "polygon" decomposed into a union of hyperbolic triangles in this way. See figure \ref{hatT}. The special case that $n=0$ is possible (then $\hat T$ is itself a hyperbolic triangle) and our argument is valid in that case as well, though one could also argue it directly.


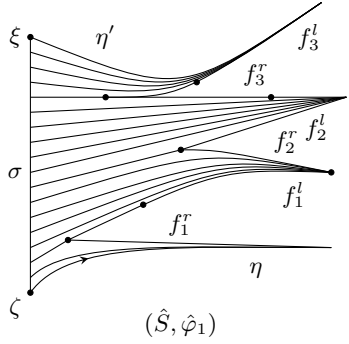
\begin{figure}[h]
\begin{center}
  \captionsetup{width=.85\linewidth}
\begin{tikzpicture}
\draw (-6,1.8) -- (-6,-1.6);
\draw (-6,1.8) .. controls (-4, 1) .. (-2.13,2.26);
\draw (-6,1.6) .. controls (-4,0.99) .. (-2.13,2.26);
\draw (-6,1.4) .. controls (-4,0.98) .. (-2.13,2.26);
\draw (-6,1.2) .. controls (-4,0.97) .. (-2.13,2.26);
\draw (-6,1) -- (-5,1); 
\draw (-5,1) .. controls (-4,1) .. (-2.13,2.26); 
\draw (-5,1) -- (-1.8,1); 
\draw (-6,0.8) -- (-1.8,1);
\draw (-6,0.6) -- (-1.8,1);
\draw (-6,0.4) -- (-1.8,1);
\draw (-6,0.2) -- (-1.8,1);
\draw (-6,0) -- (-1.8,1);
\draw (-6,-0.2) -- (-4,0.3); 
\draw (-4,0.3) -- (-1.8,1); 
\draw (-4,0.3) .. controls (-3.5, 0.3) .. (-2,0);
\draw (-6,-0.4) .. controls (-3.7, 0.3) .. (-2,0);
\draw (-6,-0.6) .. controls (-3.5, 0.2) .. (-2,0);
\draw (-6,-0.8) .. controls (-3.5, 0.15) .. (-2,0);
\draw (-6,-1) .. controls (-3.5, 0.1) .. (-2,0);
\draw (-6,-1.2) -- (-5.5,-0.9); 
\draw (-5.5,-0.9) .. controls (-3.5, 0.05) .. (-2,0);
\draw (-5.5,-0.9) -- (-2,-1);
\draw (-6,-1.4) .. controls (-5.5,-0.9) and (-4,-1) .. (-2,-1);
\draw [directed](-6,-1.6) .. controls (-5.5,-0.9) and (-4,-1) .. (-2,-1);
\filldraw (-2,0) circle (1pt);
\filldraw (-6, 1.8) circle (1pt);
\node at (-6.2,1.8) {\small $\xi$};
\filldraw (-6, -1.6) circle (1pt);
\node at (-6.2,-1.8) {\small $\zeta$};
\filldraw (-5,1) circle (1pt);
\filldraw (-4,0.3) circle (1pt);
\filldraw (-5.5,-0.9) circle (1pt);
\filldraw (-3.789,1.2) circle (1pt);
\filldraw (-2.8,1) circle (1pt);
\filldraw (-4.5,-.43) circle (1pt);
\node at (-6.2,0) {\small $\sigma$};
\node at (-5, 1.8) {\small $\eta'$};
\node at (-2.3, 1.8) {\small $f_3^l$};
\node at (-3, 1.3) {\small $f_3^r$};
\node at (-2.2, 0.6) {\small $f_2^l$};
\node at (-2.6, 0.4) {\small $f_2^r$};
\node at (-2.5, -0.3) {\small $f_1^l$};
\node at (-4, -0.7) {\small $f_1^r$};
\node at (-3,-1.8+0.5) {\small $\eta$};
\node at (-4,-2) {\small $(\hat{S}, \hat{\varphi}_1)$};
\end{tikzpicture}
\caption{the "polygon" $\hat{T}$ created in $(\hat{S}, \hat{\varphi}_1)$.}
\label{hatT}
\end{center}
\end{figure}

\paragraph{Intersections and orientations preserved.}  Going forward, we abuse notation and let
\[ \sigma, f_0^l,f_1^r, f_1^l,\ldots,f_n^r,f_n^l,f_{n+1}^r\]
denote oriented basic $\hphi_1$--geodesic containing the arc of the same name.  For $f_0^r$ and $f_{n+1}^l$, this basic $\hphi_1$--geodesic is $\eta$ and $\eta'$, respectively.  For $1 \leq i \leq n$, we assume the basic $\hphi_1$--geodesics $f_i^l$ and $f_i^r$ are as described above, thus making angles $\pi$ on the left and right, respectively, at every cone point they meet.  Finally, we choose the extension for $\sigma$ so that it crosses $\eta$ and $\eta'$ at $\zeta$ and $\xi$, respectively.


The pairs $(f_i^l, f_i^r)$ have the same $D_1$--image in $\mathbb{H}$ because they share a common subarc from their point of intersection with $\sigma$ to their first cone point after that in the forward direction. Therefore the basic $\hat{\varphi}_2$--geodesics $\hat{g}(f_i^l)$ and $\hat{g}(f_i^r)$ must also have the same $D_2$--images since they also share a common arc (since $f_i^l$ and $\hat g(f_i^l)$ have the same combinatorics, as do $f_i^r$ and $\hat g(f_i^r)$, by Lemma~\ref{L:hat combinatorics}).

We also observe that $f_i^r$ and $f_i^l$ cross $\sigma$ from left to right and $f_i^l$ crosses $f_{i+1}^r$ from right to left, for all indices $i$ for which the corresponding geodesics are defined.  By construction, these geodesics clearly intersect in a single point (possibly a cone point) and cross because the intersection is transverse or because of the choices of the sides on which geodesics make angle $\pi$ if at a cone point.  Since the crossing of geodesics and orientations of such (e.g.~left to right or right to left) is encoded by the endpoints at infinity, it follows that
 $\hat g(f_i^r)$ and $\hat g(f_i^l)$ cross $\hat g(\sigma)$ from left to right and $\hat g(f_i^l)$ crosses $\hat g(f_{i+1}^r)$ from right to left, for all  $i$ as above.  Moreover, because $\delta$ and $\hat g(\delta)$ have the same combinatorics for every $\delta \in \CG(\hphi_1)$ (Lemma~\ref{L:hat combinatorics}), these intersections occur at a single point.


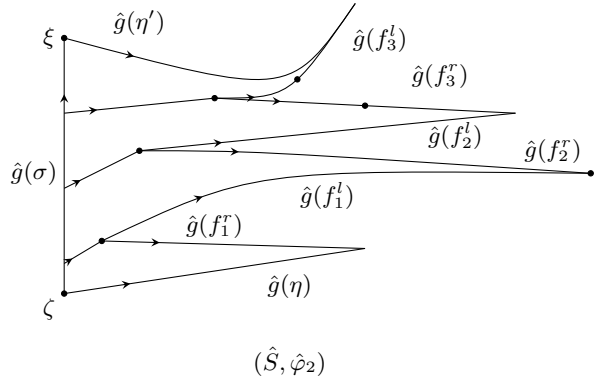
\begin{figure}[h]
\begin{center}
  \captionsetup{width=.85\linewidth}
\begin{tikzpicture}
\draw [reverse 
directed] (-6,1.8) -- (-6,-1.6);
\draw [directed] (-6,1.8) .. controls (-3, 1) .. (-2.13,2.26);
\draw [directed] (-6,0.8) -- (-4,1); 
\draw [directed](-4,1) .. controls (-3,1) .. (-2.13,2.26); 
\draw [directed](-4,1) -- (0,0.8); 
\draw [directed](-6,-0.2) -- (-5,0.3); 
\draw [directed](-5,0.3) -- (0,0.8); 
\draw [directed](-5,0.3) .. controls (-3.5, 0.3) .. (1,0);
\draw [directed](-6,-1.2) -- (-5.5,-0.9); 
\draw [directed](-5.5,-0.9) .. controls (-3.5, 0.05) .. (1,0);
\draw [directed](-5.5,-0.9) -- (-2,-1);
\draw [directed](-6,-1.6) -- (-2,-1);
\filldraw (1,0) circle (1pt);
\filldraw (-6, 1.8) circle (1pt);
\node at (-6.2,1.8) {\small $\xi$};
\filldraw (-6, -1.6) circle (1pt);
\node at (-6.2,-1.8) {\small $\zeta$};
\filldraw (-4,1) circle (1pt);
\filldraw (-5,0.3) circle (1pt);
\filldraw (-5.5,-0.9) circle (1pt);
\filldraw (-2.9,1.25) circle (1pt);
\filldraw (-2, 0.9) circle (1pt);
\node at (-6.4,0) {\small $\hat g(\sigma)$};
\node at (-5, 2) {\small $\hat{g}(\eta')$};
\node at (-1.8, 1.8) {\small $\hat{g}(f_3^l)$};
\node at (-1, 1.3) {\small $\hat{g}(f_3^r)$};
\node at (-0.8, 0.5) {\small $\hat{g}(f_2^l)$};
\node at (0.5, 0.3) {\small $\hat{g}(f_2^r)$};
\node at (-2.5, -0.3) {\small $\hat{g}(f_1^l)$};
\node at (-4, -0.7) {\small $\hat{g}(f_1^r)$};
\node at (-3,-1.5) {\small $\hat{g}(\eta)$};
\node at (-3,-2.5) {\small $(\hat{S}, \hat{\varphi}_2)$};
\end{tikzpicture}
\caption{The $\hphi_2$--straightenings of the important $\hphi_1$--geodesics.}
\label{hatT2}
\end{center}
\end{figure}

\paragraph{When developed, $\hat{g}(\eta)$ and $\hat{g}(\eta')$ intersect.}
As noted above, $D_2(\hat{g}(f_i^l)) = D_2(\hat{g}(f_i^r))$ since they share an arc.  We denote this image simply as $D_2(\hat{g}(f_i))$, noting that $D_2(\hat g(f_0)) = D_2(\hat g(\eta))$ and $D_2(\hat g(f_{n+1})) = D_2(\hat g(\eta'))$.  Since $\hat g(f_i^l)$ crosses $\hat g(f_{i+1}^r)$ from right to left for each $0 \leq i \leq n$ it follows that
\begin{enumerate}
\item[(1)] $D_2(\hat g(f_i))$ crosses $D_2(\hat g(f_{i+1}))$ from right to the left, for all $0 \leq i \leq n$.
\end{enumerate}
Similarly, since $\hat g(f_i^r)$ and $\hat g(f_i^l)$ cross $\hat g(\sigma)$ from left to right for all $1 \leq i \leq n$, as do $\hat g(\eta) = \hat g(f_0^l)$ and $\hat g(\eta') = \hat g(f_{n+1}^r$), we also have
\begin{enumerate}
\item[(2)]  $D_2(\hat g(f_i))$ crosses $D_2(\hat g(\sigma))$ from left to right for all $0 \leq i \leq n+1$.
\end{enumerate}

We now claim that condition (ii) in Proposition \ref{prop referee} holds for $\hat g(\eta)$, $\hat g(\eta')$, and $\hat g(\sigma)$. For this, note that the end points of $D_2 ( \hat{g} (\sigma))$ divide $\partial \mathbb{H}$ into two intervals: the left interval (left of $D_2 ( \hat{g} (\sigma))$) and the right interval (right of $D_2(\hat{g}(\sigma))$). Statement (2) implies that $D_2(\hat g(f_i))$ must have initial endpoint on the left interval and final endpoint on the right interval, for all $0 \leq i \leq n+1$.   Statement (1) implies that for all $0 \leq i \leq n$, the initial (final) endpoint of $D_2(\hat g(f_i))$ must occur clockwise of the initial (final) endpoint of $D_2(\hat g(f_{i+1}))$ within the left (right) intervals.  Since clockwise ordering is a total order on each of the left and right intervals, transitivity implies that the initial (final) endpoint of $D_2(\hat g(\eta')) = D_2(\hat g(f_{n+1}))$ must occur clockwise of the initial (final) endpoint of $D_2(\hat g(\eta)) = D_2(\hat g(f_0))$ within the left (right) intervals.
See figure \ref{linkedpairs}. This implies not only
that $D_2(\hat{g}(\eta))$ and $D_2(\hat{g}(\eta'))$ intersect (because their endpoints are linked), but that the intersection point must be forward of the intersections of $D_2 (\hat{g}(\eta))$ and $D_2 (\hat{g}(\eta'))$ with $D_2(\hat{g}(\sigma))$. The latter is because $D_2(\xi)$ is forwards of $D_2(\zeta)$ along $D_2(\hat{g}(\sigma))$. This proves condition (ii), and thus completes the proof.
\end{proof}

\newcommand{\segments}[4]{
\filldraw ({4*cos(#1)},{4*sin(#1)}) circle (#4 pt);
\filldraw ({4*cos(#2)},{4*sin(#2)}) circle (#4 pt);
\draw[#3] ({4*cos(#1)},{4*sin(#1)}) -- ({4*cos(#2)},{4*sin(#2)});}
\begin{figure}[htb]
\begin{center}
  \captionsetup{width=.85\linewidth}
\begin{tikzpicture}
\draw [gray] (0,0) circle (4);
\segments{230}{130}{directedd,ultra thick}{1.5}
\segments{220}{80}{directed}{1}
\segments{205}{40}{directedd}{1}
\segments{185}{0}{directed}{1}
\segments{160}{-35}{directedd}{1}
\segments{140}{-80}{directed}{1}

\node at (-1.8,-2.5) {\small $D_2(\hat g(\sigma))$};
\node at (-.5,3.3) {\small $D_2(\hat g(\eta))$};
\node at (1.9,2.5) {\small $D_2(\hat g(f_1))$};
\node at (3.1,.25) {\small $D_2(\hat g(f_2))$};
\node at (2.8,-1.4) {\small $D_2(\hat g(f_3))$};
\node at (1.2,-3.1) {\small $D_2(\hat g(\eta'))$};

\draw [domain=140:160, arrows=<-] plot ({4.2*cos(\x)}, {4.2*sin(\x)});
\draw [domain=160:185, arrows=<-] plot ({4.2*cos(\x)}, {4.2*sin(\x)});
\draw [domain=185:205, arrows=<-] plot ({4.2*cos(\x)}, {4.2*sin(\x)});
\draw [domain=205:220, arrows=<-] plot ({4.2*cos(\x)}, {4.2*sin(\x)});
\draw [domain=220:230, arrows=<-] plot ({4.2*cos(\x)}, {4.2*sin(\x)});

\draw [domain=-80:-35, arrows=<-] plot ({4.2*cos(\x)}, {4.2*sin(\x)});
\draw [domain=-35:0, arrows=<-] plot ({4.2*cos(\x)}, {4.2*sin(\x)});
\draw [domain=0:40, arrows=<-] plot ({4.2*cos(\x)}, {4.2*sin(\x)});
\draw [domain=40:80, arrows=<-] plot ({4.2*cos(\x)}, {4.2*sin(\x)});
\draw [domain=80:130, arrows=<-] plot ({4.2*cos(\x)}, {4.2*sin(\x)});
\end{tikzpicture}
\caption{Linking and ordering of end points of $D_2$--images of the important $\hphi_2$--straightened geodesics (shown here in the Klein model of $\mathbb{H}$).}
\label{linkedpairs}
\end{center}
\end{figure}
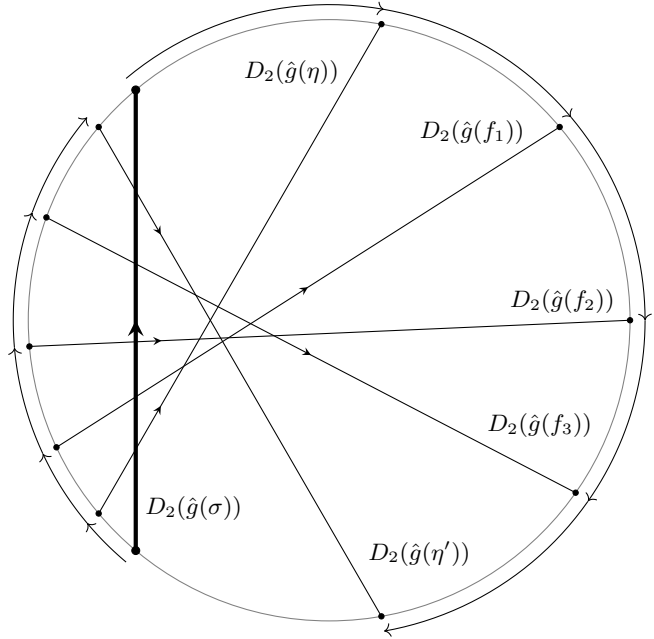


We now prove an asymptotic version of Proposition \ref{prop:forward_intersecting}.

\begin{proposition}
\label{prop:forward_asyptotic}
Suppose $\eta$ and $\eta'$ are oriented basic $\hphi_1$--geodesics that are developed forward asymptotic. Suppose there is a saddle connection $\sigma$ with one end point on $\eta$ and one end point on $\eta'$, such that the angles between $\sigma$ and the forward rays of $\eta$ and $\eta'$ lie in the interval $[0,\pi]$. Then $\hat{g}(\eta)$ and $\hat{g}(\eta')$ are developed forward asymptotic as well.
\end{proposition}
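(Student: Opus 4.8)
The plan is to deduce Proposition~\ref{prop:forward_asyptotic} from Proposition~\ref{prop:forward_intersecting} by realizing the ``developed forward asymptotic'' configuration as a limit of ``developed forward intersecting'' configurations, and by observing that the hypothesis angles landing in the closed interval $[0,\pi]$ can be perturbed into the open interval $(0,\pi)$ required by Proposition~\ref{prop:forward_intersecting}.

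First I would set up the saddle connection $\sigma$ with one endpoint $\zeta$ on $\eta$ and the other $\xi$ on $\eta'$, oriented from $\zeta$ to $\xi$, and orient $\eta,\eta'$ so that the forward rays make angles $\theta,\theta'\in[0,\pi]$ with $\sigma$. The degenerate cases where $\theta$ or $\theta'$ equals $0$ or $\pi$ mean the forward ray of $\eta$ (or $\eta'$) runs along $\sigma$ itself or its continuation; in those cases $D_1(\eta)$ and $D_1(\sigma)$ (or their extensions) already share an endpoint and one argues directly using Lemma~\ref{L:hat combinatorics} that $\hat g(\eta)$ passes through the same cone points along $\sigma$, forcing the developed images to share the relevant ideal endpoint — I would dispose of these boundary cases separately and quickly. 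So assume $\theta,\theta'\in(0,\pi)$.

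For the main case, since $D_1(\eta)$ and $D_1(\eta')$ are forward asymptotic (sharing an ideal endpoint $x\in\partial\mathbb H$, possibly with $D_1(\eta)=D_1(\eta')$), I would produce, for each small $\epsilon>0$, a basic $\hphi_1$--geodesic $\eta'_\epsilon$ obtained by tilting $\eta'$ slightly about its intersection point with $\sigma$ (using Lemma~\ref{L:characterizing basics} and Lemma~\ref{L:Dev surjective} to realize the tilted developed geodesic by a basic geodesic, after a perturbation avoiding bad directions) so that: (a) $\eta'_\epsilon$ still crosses $\sigma$ at $\xi$ with angle in $(0,\pi)$, and (b) $D_1(\eta'_\epsilon)$ now \emph{crosses} $D_1(\eta)$ at a point forward of the endpoints of $D_1(\sigma)$, with $\eta'_\epsilon\to\eta'$ and $D_1(\eta'_\epsilon)\to D_1(\eta')$ as $\epsilon\to0$. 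Proposition~\ref{prop:forward_intersecting} applies to the triple $(\eta,\eta'_\epsilon,\sigma)$, giving that $D_2(\hat g(\eta))$ and $D_2(\hat g(\eta'_\epsilon))$ intersect forward of $D_2(\hat g(\sigma))$. Now I would pass to the limit: by Lemma~\ref{L:hat combinatorics} the straightening $\hat g$ is continuous on the relevant families (basic geodesics converging with fixed combinatorial pattern on a compact stretch straighten to convergent basic geodesics), so $\hat g(\eta'_\epsilon)\to\hat g(\eta')$ and hence $D_2(\hat g(\eta'_\epsilon))\to D_2(\hat g(\eta'))$; the forward intersection points, being confined to the region forward of $D_2(\hat g(\sigma))$ and converging, either converge to an honest forward intersection point of $D_2(\hat g(\eta))$ and $D_2(\hat g(\eta'))$ or escape to a common ideal endpoint. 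Either way $D_2(\hat g(\eta))$ and $D_2(\hat g(\eta'))$ are forward asymptotic (a transverse forward intersection of distinct geodesics is impossible here because the limits of the forward-intersecting pairs cannot cross — their endpoints, being limits of linked-and-ordered pairs as in Figure~\ref{linkedpairs}, can at worst coincide pairwise), which is exactly the conclusion.

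The step I expect to be the main obstacle is the limiting/continuity argument: one must be careful that ``$D_1(\eta'_\epsilon)\to D_1(\eta')$ in $\mathbb H$'' genuinely forces ``$D_2(\hat g(\eta'_\epsilon))\to D_2(\hat g(\eta'))$'', since $\hat g$ is defined combinatorially and $D_2$ is only a developing map, not a global isometry. The key is that along the compact saddle connection $\sigma$ and its fixed finite list of cone-point crossings, the combinatorics of $\eta'_\epsilon$ stabilize for small $\epsilon$ (the forward ray of $\eta'_\epsilon$ shares an initial segment past $\xi$ with a fixed basic geodesic up to a definite distance), so $\hat g(\eta'_\epsilon)$ and $\hat g(\eta')$ agree combinatorially on that stretch and their $D_2$--images share an initial segment; compactness then upgrades combinatorial closeness to metric closeness of the developed images. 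I would also need to verify that the forward-intersection conclusion of Proposition~\ref{prop:forward_intersecting} is \emph{uniform} enough (the intersection stays forward of $D_2(\hat g(\sigma))$, and in fact forward of $D_2(\xi)$) that the limit cannot produce a spurious backward asymptote — this is guaranteed by the final paragraph of the proof of Proposition~\ref{prop:forward_intersecting}, which places the intersection forward of $D_2(\xi)$ along $D_2(\hat g(\sigma))$.
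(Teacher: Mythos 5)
Your overall strategy---perturb one of the two geodesics so that the developed images genuinely intersect forward of the saddle connection, apply Proposition~\ref{prop:forward_intersecting}, and then pass to the limit---is exactly the paper's strategy (the paper tilts $\eta$ about $\zeta$ through angles $\theta\in(0,\theta_1)$ rather than tilting $\eta'$ about $\xi$, which is immaterial), and your worries about continuity of $\hat g$ along the perturbation are legitimate but resolvable in the way you indicate. The genuine gap is in your final step, where you must rule out the possibility that in the limit $D_2(\hat g(\eta))$ and $D_2(\hat g(\eta'))$ still intersect transversally forward of $D_2(\hat g(\sigma))$. Your stated reason---that the limits of the linked-and-ordered endpoint pairs ``cannot cross'' and ``can at worst coincide pairwise''---is false as a general principle: linking is an open condition, and a convergent sequence of linked pairs of boundary points can perfectly well converge to a pair that is still linked (e.g.\ $\{1+\tfrac1n,3\}$ links $\{0,2\}$ for every $n$ and so does the limit $\{1,3\}$). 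The ordering data from Figure~\ref{linkedpairs} lives inside a single application of Proposition~\ref{prop:forward_intersecting} and gives no control relating the $\epsilon$-family of $D_2$--endpoints to the endpoints of $D_2(\hat g(\eta))$ in the limit, so nothing in your argument excludes the crossing case; indeed the limit on the $D_1$ side degenerates from crossing to asymptotic, and a priori the $D_2$ side need not degenerate with it.

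The correct way to close this, and what the paper does, is to use the symmetry of the hypotheses: if $D_2(\hat g(\eta))$ and $D_2(\hat g(\eta'))$ did intersect forward of $D_2(\hat g(\sigma))$, then Proposition~\ref{prop:forward_intersecting} applied with the roles of $\hphi_1$ and $\hphi_2$ exchanged (to $\hat g(\eta)$, $\hat g(\eta')$, $\hat g(\sigma)$, whose angle hypotheses (i) hold by Lemma~\ref{L:pi intervals}) would force $D_1(\eta)$ and $D_1(\eta')$ to intersect forward of $D_1(\sigma)$, contradicting the hypothesis that they are forward asymptotic. Hence only the asymptotic alternative survives. You should insert this two-sided application explicitly; without it the proof does not go through. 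A smaller point: in the boundary cases $\theta\in\{0,\pi\}$, the clean statement is that $D_1(\sigma)$, $D_1(\eta)$, $D_1(\eta')$ all lie on one geodesic and Lemma~\ref{L:pi intervals} (angle $0$ or $\pi$ configurations are preserved by $\hat g$), rather than Lemma~\ref{L:hat combinatorics} alone, is what transfers this to the $D_2$--images.
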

\begin{proof} Let $\theta_1, \theta_1' \in [0,\pi]$ be the angles between $\sigma$ and the forward rays of $\eta$ and $\eta'$, respectively, and $\theta_2,\theta_2'$ be the corresponding angles between $\hat g(\sigma)$ and the forward rays of $\hat{g}(\eta)$ and $\hat{g}(\eta')$, respectively.  If $\theta_1 = 0$ or $\pi$, then $\theta_1' = \pi$ or $0$, respectively, and then the $D_1$--image of $\sigma$, $\eta$, and $\eta'$ are all contained in a single geodesic. Appealing to Lemma~\ref{L:pi intervals} we see that the same is true of $\hat g(\sigma)$, $\hat g(\eta)$, and $\hat g(\sigma)$, completing the proof in these cases.

Now assume that $\theta_1,\theta_1' \in (0,\pi)$, so by Lemma~\ref{L:pi intervals}, $\theta_2,\theta_2' \in (0,\pi)$.  Let $\zeta$ and $\xi$ be the end points of $\sigma$ where $\eta$ and $\eta'$ cross it, respectively.  Observe that for all $\theta \in (0,\theta_1)$, any geodesic $\eta_\theta$ through $\zeta$ whose forward ray makes angle $\theta$ with $\sigma$ must have $D_1(\eta_\theta)$ intersecting $D_1(\eta')$ forward of $\sigma$.  By Proposition~\ref{prop:forward_intersecting}, $D_2(\hat g(\eta_\theta))$ must intersect $D_2(\hat g(\eta'))$ forward of $D_2(\hat g(\sigma))$.  

We may assume that as $\theta \to \theta_1$, $\eta_\theta \to \eta$, after replacing $\eta$ and each $\eta_\theta$ by geodesics through $\zeta$ with the same $D_1$--images, if necessary.  Therefore as $\theta \to \theta_1$, we have $\hat g(\eta_\theta) \to \hat g(\eta)$, and so also $D_2(\hat g(\eta_\theta)) \to D_2(\hat g(\eta))$.  Since $D_2(\hat g(\eta_\theta))$ intersects $D_2(\hat g(\eta'))$ forward of $D_2(\hat g(\sigma))$ for all $\theta \in (0,\theta_0)$, we see that either $D_2(\hat g(\eta))$ intersects $D_2(\hat g(\eta'))$ forward of $D_2(\hat g(\sigma))$ or else $D_2(\hat g(\eta))$ and $D_2(\hat g(\eta'))$ are forward asymptotic. We cannot be in the former situation since reversing the roles of $\hphi_1$ and $\hphi_2$ and applying Proposition~\ref{prop:forward_asyptotic} to $\hat g(\sigma)$, $\hat g(\eta)$, and $\hat g(\eta')$, would imply $D_1(\eta)$ and $D_1(\eta')$ intersect forward of $D_1(\sigma)$, a contradiction.  We are thus in the latter situation, and so $\hat g(\eta)$ and $\hat g(\eta')$ are developed forward asymptotic. This completes the proof.
\end{proof}

Now, given a $\hphi_i$--saddle connection $\sigma$ and $x \in \partial \mathbb H$, we consider the family $\Lambda(\sigma,x) \subset \CG(\hphi_i)$ which consists of the set of basic $\hphi_i$--geodesics $\eta$ intersecting $\sigma$ that may be oriented so that $D_i(\eta)$ limits to $x$ in the forward direction.  We view $\Lambda(\sigma,x)$ as a subset of $\CG(\hphi_i)$, together with a choice of orientation for each geodesic.

If $D_i(\sigma)$ can be extended to an oriented geodesic $\eta$ with $x$ as its forward end point, then $\Lambda(\sigma,x)$ is the set of all oriented basic $\hphi_i$--geodesics meeting $\sigma$ with developed image equal to $\eta$. Note that in this case $\Lambda(\sigma,x)$ is countable.

If not, then note that $\Lambda(\sigma,x)$ contains the pullback (as in the proof of Proposition~\ref{prop:forward_intersecting}) of the family of geodesics in $\mathbb{H}$ that limit to $x$ in the forward direction and intersect $D_i(\sigma)$ transversely. In particular, in this case, $\Lambda(\sigma,x)$ is uncountable.

\begin{proposition}\label{asymptotic_family} For any $\hphi_1$--saddle connection $\sigma$ and $x \in \partial \mathbb H$, there exists a point $h(\sigma,x) \in \partial \mathbb H$ so that
\[ \hat g(\Lambda(\sigma,x)) = \Lambda(\hat g (\sigma),h(\sigma,x)).\]
\end{proposition}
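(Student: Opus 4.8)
The plan is to pin down $h(\sigma,x)$ by means of reference geodesics through the endpoints of $\sigma$, to show that \emph{every} member of $\hat g(\Lambda(\sigma,x))$ is developed forward asymptotic to the developed image of those references (this is the crux, and will rest on Propositions~\ref{prop:forward_intersecting} and~\ref{prop:forward_asyptotic} together with Lemma~\ref{L:pi intervals}), and then to obtain the reverse inclusion by the evident symmetry of the construction. First I would note $\Lambda(\sigma,x)\neq\emptyset$: by Lemma~\ref{L:Dev surjective}, the geodesic of $\mathbb H$ through $D_1(z)$ (for an interior point $z$ of $\sigma$) with forward endpoint $x$ is the $D_1$--image of a basic $\hphi_1$--geodesic through $z$, which lies in $\Lambda(\sigma,x)$. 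Let $\zeta,\xi\in\hC$ be the endpoints of $\sigma$, and again by Lemma~\ref{L:Dev surjective} choose basic $\hphi_1$--geodesics $\alpha$ through $\zeta$ and $\beta$ through $\xi$ whose $D_1$--images are the geodesics from $D_1(\zeta)$, respectively $D_1(\xi)$, to $x$, oriented toward $x$; both lie in $\Lambda(\sigma,x)$. Applying Proposition~\ref{prop:forward_asyptotic} to the pair $\alpha,\beta$ with connecting saddle connection $\sigma$ (the angles at $\zeta$ and $\xi$ being brought into $[0,\pi]$ by choosing the covering sheets of $\alpha$ and $\beta$, and if necessary chaining through intermediate rays making successive angles $\pi$, as in the proof of Lemma~\ref{L:pi intervals}) shows that $D_2(\hat g(\alpha))$ and $D_2(\hat g(\beta))$ share a forward endpoint; I would define $h(\sigma,x)$ to be this point. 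It is independent of the choices: two admissible choices through the same cone point have equal $D_1$--image, hence equal $D_2\hat g$--image by the last statement of Lemma~\ref{L:pi intervals}.

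The heart of the proof is to show that for every $\eta\in\Lambda(\sigma,x)$, oriented so $D_1(\eta)$ limits forward to $x$, the geodesic $D_2(\hat g(\eta))$ limits forward to $h(\sigma,x)$. Let $z\in\eta\cap\sigma$. If $\eta$ passes through a cone point lying on $\sigma$ --- necessarily $\zeta$ or $\xi$, since the interior of $\sigma$ avoids $\hC$ --- then $D_1(\eta)$ coincides with $D_1(\alpha)$ or $D_1(\beta)$ and the conclusion is again Lemma~\ref{L:pi intervals}; the case where $\eta$ overlaps $\sigma$ in a segment forces $x$ to be the forward endpoint of the extension of $D_1(\sigma)$, whence $D_1(\eta)=D_1(\alpha)$ and it reduces to the previous case. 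Otherwise $z$ is a regular interior point of $\sigma$ at which $\eta$ crosses transversely, and I would use the subsegment $\tau=[\zeta,z]\subset\sigma$: its interior is disjoint from $\hC$, one endpoint $\zeta$ lies on $\alpha$, the other endpoint $z$ lies on $\eta$, the angle at $z$ is automatically in $(0,\pi)$ by transversality, and the angle at $\zeta$ can be arranged as before. I would then run the argument of Propositions~\ref{prop:forward_intersecting} and~\ref{prop:forward_asyptotic} with $\tau$ in place of the connecting saddle connection. This requires only the mild variant in which one endpoint of the connecting segment is a regular point rather than a cone point, and that variant is benign: the cone-point hypothesis is used in those proofs only to replace the relevant geodesic by one making angle $\pi$ on a fixed side (which changes neither its developed image nor, by Lemma~\ref{L:pi intervals}, its $\hat g$--development) and to approximate it by geodesics of varying angle through that cone point; here one performs both operations on $\alpha$ at $\zeta$ while keeping $\eta$ and $z$ fixed, and the foliated-triangle construction, its pullback to a region of finite diameter, the count of cone points inside, and the final linking/ordering argument all go through verbatim. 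The upshot is that $\hat g(\eta)$ is developed forward asymptotic to $\hat g(\alpha)$, i.e. $D_2(\hat g(\eta))$ limits forward to $h(\sigma,x)$.

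Granting the heart, the inclusion $\hat g(\Lambda(\sigma,x))\subseteq\Lambda(\hat g(\sigma),h(\sigma,x))$ is immediate, since $\hat g$ preserves combinatorics and hence transverse crossings with $\hat g$--images of saddle connections (Lemma~\ref{L:hat combinatorics}, exactly as in Lemma~\ref{l:transverse}), so $\hat g(\eta)$ meets $\hat g(\sigma)$, and by the heart it can be oriented to limit forward to $h(\sigma,x)$. For the reverse inclusion I would take $\mu\in\Lambda(\hat g(\sigma),h(\sigma,x))$, write $\mu=\hat g(\eta)$ for the unique $\eta\in\CG(\hphi_1)$ with this image (oriented so $D_2(\mu)$ limits forward to $h(\sigma,x)$), and run the entire argument with the roles of $\hphi_1$ and $\hphi_2$ interchanged --- legitimate because the normalizations of \S\ref{S:geodesics and triangulations} make the setup symmetric in the two metrics, and Propositions~\ref{prop:forward_intersecting} and~\ref{prop:forward_asyptotic} hold with the metrics swapped. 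That argument shows every geodesic in $\hat g^{-1}(\Lambda(\hat g(\sigma),h(\sigma,x)))$ is developed forward asymptotic to $\alpha$, hence has forward endpoint $x$; thus $D_1(\eta)$ limits forward to $x$, so $\eta\in\Lambda(\sigma,x)$ and $\mu=\hat g(\eta)\in\hat g(\Lambda(\sigma,x))$.

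The step I expect to be the main obstacle is precisely the ``regular-endpoint variant'' of Proposition~\ref{prop:forward_intersecting} used in the heart: one must check carefully that the lengthy foliated-triangle argument survives when the connecting segment is a subsegment of $\sigma$ with one endpoint on a geodesic at a non-cone point, and handle the bookkeeping needed to push all the angle hypotheses into $[0,\pi]$ (choice of covering sheets at $\zeta$, reduction of large angles via intermediate $\pi$--rays). Everything else --- nonemptiness, well-definedness of $h(\sigma,x)$, the cone-point cases, and the reverse inclusion --- is either immediate from Lemmas~\ref{L:Dev surjective} and~\ref{L:pi intervals} or a formal symmetry argument.
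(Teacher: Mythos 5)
Your proposal is sound in outline and would prove the proposition, but it takes a genuinely different route from the paper at the crucial step, and the step you flag as the main obstacle is a real piece of work rather than a formality. You treat each $\eta\in\Lambda(\sigma,x)$ individually by connecting it to a reference geodesic $\alpha$ through the endpoint $\zeta$ of $\sigma$ via the subsegment $\tau=[\zeta,z]$, which forces you to reprove Propositions~\ref{prop:forward_intersecting} and~\ref{prop:forward_asyptotic} for a connecting segment with one \emph{regular} endpoint. I believe this variant does go through (the foliated triangle, the injectivity of the projection of $\hat T$ to $\tS$, and the final linking argument use only that the interior of the segment avoids $\hC$ and that the endpoints lie on the respective geodesics; at the regular end there is no branching of leaves, so the construction is if anything simpler), but two points need care: $\hat g(\tau)$ is not defined by the paper's conventions since $z\notin\hC$, so you must take it to be the subsegment of $\hat g(\sigma)$ from $\zeta$ to $\hat g(\eta)\cap\hat g(\sigma)$ (using Lemma~\ref{L:hat combinatorics} to locate that crossing); and the replacement of $\eta$ by a geodesic making angle $\pi$ consistently on one side forward of $z$ is justified not directly by Lemma~\ref{L:pi intervals} (which concerns rays emanating from a cone point) but by applying it at the first cone point of $\eta$ forward of $z$. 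The paper avoids all of this: it pulls back the \emph{entire} pencil of geodesics of $\mathbb H$ through $D_1(\sigma)$ with forward endpoint $x$ to a family $\mathcal R\subset\Lambda(\sigma,x)$, shows the union of $\hat g(\mathcal R)$ projects injectively to $\tS$ and, after ``zipping'' the slits along its countably many boundary cone points, is a nonsingular surface on which $D_2$ restricts to an isometry; the straightened family is then a family of disjoint geodesics trapped between $\hat g(\eta_0)$ and $\hat g(\eta_1)$ (the members through the cone-point endpoints of $\sigma$), which are developed forward asymptotic by Proposition~\ref{prop:forward_asyptotic} applied to the genuine saddle connection $\sigma$, forcing the whole family to be forward asymptotic. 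Your approach buys a local, geodesic-by-geodesic argument; the paper's buys the ability to quote Propositions~\ref{prop:forward_intersecting} and~\ref{prop:forward_asyptotic} exactly as stated. Your definition of $h(\sigma,x)$, the cone-point and overlap cases via Lemma~\ref{L:pi intervals}, and the symmetry argument for the reverse inclusion all match the paper's logic.
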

We clarify that $\hat g(\Lambda(\sigma,x)) \subset \CG(\hphi_2)$ and each geodesic in the $\hat g$--image is assigned the orientation coming from its orientation in $\Lambda(\sigma,x)$. 

\begin{proof}
We begin by addressing the case where  $\Lambda(\sigma,x)$ is countable.  In this situation, $D_1( \Lambda(\sigma,x))$ is a single oriented geodesic in $\mathbb{H}$. By Lemma~\ref{L:pi intervals}, $\hat{g} ( \Lambda(\sigma,x))$ consists of oriented $\hphi_2$--geodesics that develop to a single geodesic $\eta$ in $\mathbb{H}$ which contains $D_2(\hat g(\sigma))$. Letting the forward end point of this geodesic be $h(\sigma, x)$ gives us that $\hat{g} ( \Lambda(\sigma,x)) \subset \Lambda(\hat g(\sigma),h(\sigma,x))$.
Reversing the roles of $\hphi_1$ and $\hphi_2$ proves the other inclusion.

In the case where $\Lambda(\sigma,x)$ is uncountable, $D_1 (\Lambda(\sigma, x))$ consists of all oriented geodesics in $\mathbb H$ that intersect $D_1(\sigma)$ and have $x$ as their forward end point. As in Proposition \ref{prop:forward_intersecting}, we systematically pull back each of these geodesics to one or two $\hphi_1$--geodesics to get a family $\mathcal{R} \subseteq \Lambda(\sigma,x)$.

More precisely, we begin by pulling back short segments of forward asymptotic geodesics that intersect the interior of $D_1 (\sigma)$ and extending them to basic geodesics in $(\hat{S}, \hat{\varphi}_1)$. Again, as in Proposition~\ref{prop:forward_intersecting}, if in the extension process, a cone point is encountered, we continue extending in two ways: by always making angle $\pi$ on the right, and by always making angle $\pi$ on the left. At most countably many of these pullback geodesics, $\{f_i^l,f_i^r\}_{i \in A}$, where $A$ is a countable indexing set, are such extensions (because there are only countably many cone points in $(\hat{S}, \hat{\varphi}_1)$). The superscript tells us which side of the oriented basic geodesic makes angle $\pi$. Again as in Proposition \ref{prop:forward_intersecting}, this construction always has $D_1 (f_i^l) = D_1 (f_i^r)$. For the two geodesics in $\mathbb{H}$ intersecting the end points of $D_{\sigma}$, we also include in $\mathcal{R}$ the two geodesics $\eta_0$ and $\eta_1$ in $\Lambda(\sigma,x)$ that intersect the end points of $\sigma$ and whose forward rays make an angle less than $\pi$ with $\sigma$.

We show that $\hat g(\mathcal{R})$ consists of basic $\hphi_2$--geodesics that are developed forward asymptotic, and use this to prove the desired result. The geodesics of $\hat{g}(\mathcal R)$  in $(\hat{S}, \hat{\varphi}_2)$ must also be a family of pairwise non-intersecting geodesics since the end points of any two are not linked, and because the cone points visited by each geodesic are preserved by the straightening map $\hat{g}$. Betweenness of geodesics is also preserved by $\hat{g}$. This implies that the geodesics of $\hat{g}(\mathcal R)$ intersect $\hat{g}(\sigma)$ in the same order as in $(\hat{S}, \hat{\varphi}_1)$. In particular, $\hat{g}(\mathcal R)$ is a family of geodesics between $\hat{g}(\eta_0)$ and $\hat{g}(\eta_1)$. Finally, $D_2  (\hat{g} (f_i^l)) = D_2 (\hat{g}(f_i^r))$, by Lemma~\ref{L:pi intervals}, and we denote this geodesic by $D_2(\hat{g}(f_i))$.

Consider the region $\mathcal{S}$ of $(\hat{S}, \hat{\varphi}_2)$ given by the union of geodesics in $\hat{g}(\mathcal{R})$. Using an argument similar to the proof of  Proposition~\ref{prop:forward_intersecting} (finitely many cone points), we can show that $\mathcal{S}$ projects injectively to $\tS$ and therefore has the following property. Given any distance $d$, there are only finitely many cone points in $\mathcal{S}$ distance $d$ away from $\sigma$. This implies that $\mathcal{S}$ is a hyperbolic surface with boundary. $\mathcal{S}$ does not contain any cone points in its interior, but it does have at most countably many cone points on its boundary. Through the $i$th cone point, there are two geodesics of $\hat{g}(\mathcal{R})$: $\hat{g}(f_i^l)$ and $\hat{g}(f_i^r)$. Extending the overlap of $\hat{g}(f_i^l)$ and $\hat{g}(f_i^r)$ to an isometric identification of the two geodesics (``zipping up'' the slits of $\mathcal{S}$), we obtain a new surface $\mathcal{S}_{\text{zip}}$. Note that the metric on $\mathcal{S}_{\text{zip}}$ is nonsingular, since after the identification, the total angle at each cone point is $2\pi$. Therefore, the restriction of $D_2$ to $\mathcal S$ descends to an isometry $\mathcal S_{\text{zip}} \to D_2(\mathcal{S})$. Since $\hat{g}(\mathcal{R})$ is a set of non-intersecting geodesics between these two, the image of $\hat{g}(\mathcal{R})$ in $\mathcal{S}_{\text{zip}}$ is a set of parallel geodesics between the images of $\hat{g}(\eta_0)$ and $\hat{g}(\eta_1)$. Further since $\hat{g}(\eta_0)$ and $\hat{g}(\eta_1)$ are developed forward asymptotic, the image of $\hat{g}(\mathcal{R})$ must be forward asymptotic (a family of parallel geodesics between two forward asymptotic geodesics in the hyperbolic plane must all be forward asymptotic to each other).

Note that every $\hphi_1$-geodesic $\eta \in \Lambda(\sigma,x)$ is either contained in $\mathcal{R}$ or there exists $\eta' \in \mathcal R$ such that $\eta$ and $\eta'$ intersect at a cone point and $D_1(\eta) = D_1(\eta')$. Using Lemma~\ref{L:pi intervals}, we then have $D_2(\hat g(\Lambda(\sigma,x))) = D_2(\hat g(\mathcal{R}))$. Therefore, $\hat g(\Lambda(\sigma,x))$ is a developed forward asymptotic family, and we let $h(\sigma, x)$ be the forward end point of $D_2(\hat g(\Lambda(\sigma,x)))$. From here, since every geodesic of $\hat g(\Lambda(\sigma,x))$ must instersect $\hat g (\sigma)$, the inclusion $\hat g(\Lambda(\sigma,x)) \subseteq \Lambda(\hat g (\sigma),h(\sigma,x))$ follows by definition, and the reverse inclusion follows by a symmetric argument with the roles of $\hphi_1$ and $\hphi_2$ exchanged.
\end{proof}

Finally, we are able to show the desired result that developed forward asymptoticity is preserved by the straightening map

\begin{proposition}
\label{P:developed_forward_asymptotic}
Suppose $\eta_1$ and $\eta_2$ are developed forward asymptotic $\hphi_1$--geodesics. Then $\hat{g}(\eta_1)$ and $\hat{g}(\eta_2)$ are developed forward asymptotic as well.
\end{proposition}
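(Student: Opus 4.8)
The plan is to reduce Proposition~\ref{P:developed_forward_asymptotic} to Proposition~\ref{prop:forward_asyptotic}, whose hypotheses require a connecting saddle connection meeting the forward rays at angles in $[0,\pi]$. The first step is to produce such a saddle connection. Given developed forward asymptotic $\hphi_1$--geodesics $\eta_1$ and $\eta_2$, note that in $\hS$ their forward rays both limit (up to bounded distance) into a region of $\hS$ accumulating at a common endpoint of $\hS^1_\infty$, so we can find a point $p_1$ on $\eta_1$ and a point $p_2$ on $\eta_2$ and a geodesic segment $\sigma_0$ between them. We then replace $\sigma_0$ by a genuine $\hphi_1$--saddle connection: push $p_i$ forward along $\eta_i$ if necessary so that the minimal-length $\hphi_1$--geodesic segment $\sigma$ from $\eta_1$ to $\eta_2$ (realizing the distance between the appropriate forward sub-rays) has interior disjoint from $\hC_0$ — this uses that $\hC_0$ is discrete and the rays are eventually "parallel" so the connecting segment can be taken short, hence to miss cone points, and the minimality forces the endpoints to be regular points on $\eta_1,\eta_2$ and the angles on each side to be at least $\pi/2$, in particular in $[0,\pi]$ (after possibly choosing orientations and subrays correctly). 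One must be slightly careful that $\sigma$ meets $\eta_i$ in their \emph{forward} rays and that the angle with the forward ray, not the backward ray, lands in $[0,\pi]$; this can be arranged because asymptotic geodesics converge in the forward direction, so a minimal connector sufficiently far forward is nearly perpendicular to both and certainly makes an angle in $(0,\pi)$ with each forward ray.

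With $\sigma$ in hand, Proposition~\ref{prop:forward_asyptotic} applies directly: $\eta_1,\eta_2$ are developed forward asymptotic, $\sigma$ is a saddle connection with one endpoint on each, and the angles between $\sigma$ and the forward rays of $\eta_1,\eta_2$ lie in $[0,\pi]$. The conclusion is exactly that $\hat g(\eta_1)$ and $\hat g(\eta_2)$ are developed forward asymptotic, which is what we want. So the proof is essentially: (1) produce the saddle connection $\sigma$ with the angle condition; (2) invoke Proposition~\ref{prop:forward_asyptotic}.

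The main obstacle is Step (1): guaranteeing the existence of a connecting \emph{saddle connection} (interior disjoint from $\hC_0$) between the two forward rays with the required angle bounds. The subtlety is that a priori the minimal-length connector between $\eta_1$ and $\eta_2$ might pass through a cone point, or the geodesics might be "cone point asymptotic" in a way that complicates things. I would handle this by first translating far enough forward along both rays: since $D_1(\eta_1)$ and $D_1(\eta_2)$ are forward asymptotic in $\mathbb H$, their $D_1$--images come arbitrarily close together far forward, and by the local isometry property of $D_1$ away from $\hC_0$, one can find regular points $q_1\in\eta_1$, $q_2\in\eta_2$ arbitrarily far forward whose distance in $\hS$ is as small as we like — in particular smaller than the distance from $q_1$ to $\hC_0$, so the geodesic segment between them is a saddle connection (indeed a nonsingular segment), and by making it short we force the angles with $\eta_1,\eta_2$ to be close to $\pi/2$. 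An alternative, if one prefers to avoid appealing to how close the developed images get, is to use that $\hphi_1$ is CAT($-1$) and the two rays are forward asymptotic, so the distance function from $\eta_1$ restricted to the forward ray of $\eta_2$ is eventually monotone decreasing with infimum $0$ but never achieving it along the ray, and pick any value where a realizing segment is shorter than the injectivity radius / distance to $\hC_0$; minimality then gives the angle bounds. Either route works; the first is cleaner.

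Once Step (1) is justified the rest is immediate. I would write the proof as: \emph{Since $\eta_1$ and $\eta_2$ are developed forward asymptotic, $D_1(\eta_1)$ and $D_1(\eta_2)$ are forward asymptotic in $\mathbb H$; choosing regular points far forward on $\eta_1$ and $\eta_2$ at sufficiently small distance in $\hS$, the $\hphi_1$--geodesic segment $\sigma$ between them is a $\hphi_1$--saddle connection (in fact nonsingular) making angles in $(0,\pi)$ with the forward rays of $\eta_1$ and $\eta_2$. Proposition~\ref{prop:forward_asyptotic} applied to $\eta_1$, $\eta_2$, and $\sigma$ then gives that $\hat g(\eta_1)$ and $\hat g(\eta_2)$ are developed forward asymptotic.} This is short enough that I would not expect to need any further lemmas beyond what is already in the excerpt.
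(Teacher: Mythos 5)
Your plan to reduce the statement to Proposition~\ref{prop:forward_asyptotic} has a genuine gap in Step (1), and it is not a repairable technicality. In this paper a \emph{saddle connection} is by definition a nonsingular geodesic segment \emph{between two cone points}, and the hypothesis of Proposition~\ref{prop:forward_asyptotic} requires a saddle connection with one endpoint on $\eta_1$ and one on $\eta_2$ --- that is, cone points lying on the two geodesics themselves, joined by a nonsingular segment. The segment $\sigma$ you construct joins two \emph{regular} points $q_1\in\eta_1$, $q_2\in\eta_2$, so it is not a saddle connection; worse, the straightening $\hat g$ is only defined on basic segments with endpoints in $\hC\cup\hS^1_\infty$ (it is built from the preservation of combinatorics with respect to $\hC$), so $\hat g(\sigma)$ is not even a well-defined object for your $\sigma$, and the proofs of Propositions~\ref{prop:forward_intersecting} and~\ref{prop:forward_asyptotic} use the cone-point endpoints $\zeta,\xi$ essentially (to modify $\eta,\eta'$ forward of those points and to anchor $\hat g(\sigma)$). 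Finally, the hypothesis cannot in general be arranged at all: $\eta_1$ and $\eta_2$ may be nonsingular geodesics containing no cone points whatsoever, in which case no saddle connection has an endpoint on either of them.

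The paper's proof avoids this by not putting the connecting segment's endpoints on the geodesics. After disposing of the case where $\eta_1$ and $\eta_2$ share a segment or cone point (Lemma~\ref{L:pi intervals}), it picks cone points $\zeta$ and $\xi$ in the two complementary regions bounded by $\eta_1$ alone and by $\eta_2$ alone, joins them by a geodesic, which is a concatenation of saddle connections $\sigma_1,\dots,\sigma_k$ \emph{crossing} $\eta_1$ and $\eta_2$ transversely, and then uses the family machinery of Proposition~\ref{asymptotic_family}: each $\Lambda(\sigma_i,z)$ (with $z$ the common forward endpoint) straightens to a developed forward asymptotic family $\Lambda(\hat g(\sigma_i),h(\sigma_i,z))$, and consecutive families share a geodesic through the cone point $\sigma_i\cap\sigma_{i+1}$, forcing $h(\sigma_1,z)=\cdots=h(\sigma_k,z)$ and hence the conclusion. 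If you want to salvage your outline, you would need to replace your single connector by such a chain and route the argument through Proposition~\ref{asymptotic_family} rather than Proposition~\ref{prop:forward_asyptotic}; as written, the reduction does not go through.
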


\begin{proof}
If $\eta_1$ and $\eta_2$ meet in a segment or cone point, then $\hat g(\eta_1)$ and $\hat g(\eta_2)$ are developed forward asymptotic by Lemma~\ref{L:pi intervals}.  For the rest of the proof, suppose that this is not the case.
The geodesics $\eta_1$ and $\eta_2$ must not intersect transversely in $(\hat{S}, \hat{\varphi}_1)$ (otherwise they would intersect transversely when developed). Therefore, they partition $\hat{S}$ into three regions: one bounded by $\eta_1$, one bounded by $\eta_2$ and one bounded by both. Pick a cone point $\zeta$ in the region bounded by $\eta_1$ alone and another cone point $\xi$ in the region bounded by $\eta_2$ alone. Take the geodesic segment between $\zeta$ and $\xi$. It must be a concatenation of saddle connections. After forgetting some initial and final saddle connections, we may assume the remaining concatenation $\sigma_1, \sigma_2, \ldots \sigma_k$ has only $\sigma_1$ intersecting $\eta_1$ and only $\sigma_k$ intersecting $\eta_2$.

Let $z$ be the forward end point of $D_1 (\eta_1)$ and $D_1(\eta_2)$ in $\partial \mathbb{H}$. Then consider the developed forward asymptotic families $\Lambda(\sigma_i, z)$. We know that $\eta_1 \in \Lambda(\sigma_1, z)$ and $\eta_2 \in \Lambda(\sigma_k, z)$, and from Proposition~\ref{asymptotic_family} we know that $\hat g (\Lambda(\sigma_i, z))= \Lambda(\hat g (\sigma_i), h(\sigma_i, z))$. To complete our proof, we will show that $\cup_i \Lambda(\hat g (\sigma_i), h(\sigma_i, z))$ is a developed forward asymptotic family by showing that $h(\sigma_1, z)=h(\sigma_2, z) = \ldots = h(\sigma_n, z)$.

Let $c_i$ be the cone point $\sigma_i \cap \sigma_{i+1}$, the intersection of two consecutive saddle connections. The intersection of any two consecutive families $\Lambda(\sigma_i, z) \cap \Lambda(\sigma_{i+1}, z)$ consists of the basic $\hphi_1$--geodesics through $c_i$ that develop to have $z$ as their forward end point. This set is countably infinite, so in particular, it is not empty. For any $\eta \in \Lambda(\sigma_i, z) \cap \Lambda(\sigma_{i+1}, z)$, the $\hphi_2$--straightening $\hat g (\eta)$ is contained in $\Lambda(\hat g(\sigma_i), h(\sigma_i z)) \cap \Lambda(\hat g (\sigma_{i+1}), h(\sigma_{i+1}, z))$. The geodesic $\hat g (\eta)$ cannot have two distinct forward end points, and so $h(\sigma_{i}, z)=h(\sigma_{i+1}, z)$. This argument holds for all $i$, and so $\cup_i \Lambda(\hat g (\sigma_i), h(\sigma_i, z))$ is a developed forward asymptotic family. In particular, this implies that $\hat{g}(\eta_1)$ and $\hat{g}(\eta_2)$ are developed forward asymptotic.
\end{proof}

As proposed earlier, we may now define $h: \partial \mathbb{H} \rightarrow \partial \mathbb{H}$ by requiring $h(x)$ to be the forward endpoint of $D_2(\hat g(\eta))$ where $\eta \in \CG(\hphi_1)$ is a basic geodesic oriented so that $x$ is the forward endpoint of $D_1(\eta)$, and Proposition~\ref{P:developed_forward_asymptotic} implies that this is indeed well-defined.  Moreover, by the proposition, for any $\hphi_1$--saddle connection $\sigma$, we have $h(x) = h(\sigma,x)$ where $h(\sigma,x)$ is as in Proposition~\ref{asymptotic_family}.

At long last, we are able to prove Proposition~\ref{P:holonomy conjugation}.

\begin{proof}[Proof of Proposition \ref{P:holonomy conjugation}]
The definition of $h$ implies that for any biinfinite basic $\hat \varphi_1$--geodesic $\eta$, $h(\partial D_1(\eta)) = \partial D_2(\hat g (\eta))$. It remains to show that $h$ is an orientation preserving homeomorphism, and that it topologically conjugates the holonomies.

Recall that the developing maps $D_i$ can be extended to $\bar D_i \colon \hS \cup \mathcal K_i \to \overline{\mathbb H}$, where $\mathcal K_i$ is the set of end points of basic $\hphi_i$--geodesics (see discussion after Lemma~\ref{L:Dev surjective}). Note that $\mathcal K_1= \mathcal K_2=: \mathcal K$ since $\CG_{\hphi_1}=\CG_{\hphi_2}$. Using this extension, we may write $h(\bar D_1(k))=\bar D_2(k)$, for any $k \in \mathcal K \subseteq \hat S_{\infty}^1$.

If $h$ were not injective, there would be two developed forward asymptotic families of oriented basic $\hphi_1$--geodesics that are not developed forward asymptotic to each other, however their images by $\hat g$ are developed forward asymptotic to each other. This contradicts Proposition \ref{P:developed_forward_asymptotic}, and thus $h$ is injective. The map $h$ is surjective because $\bar D_2$  maps $\mathcal K$ onto $\partial \mathbb H$ by Corollary~\ref{C:extendD}. So, for every $y \in \partial \mathbb{H}$, there exists $k \in \mathcal K$ such that $\bar D_2(k) = y$. Then $h(\bar D_1(k))=\bar D_2(k)$, so we have $h(\bar D_1(k))=y$, as desired.

In order to show $h$ is a homeomorphism, we show that $h$ and $h^{-1}$ are both open maps. Let $U$ be an open interval in $\partial \mathbb{H}$. Choose a point $\zeta_1 \in \mathbb{H}$ such that $\zeta_1 = D_1(\zeta)$, where $\zeta$ is a cone point in $(\hat{S}, \hat{\varphi}_1)$. Consider the set of oriented geodesics through $\zeta_1$ that have forward end point in $U$. These geodesics sweep out some interval of angles $(\alpha, \beta)$ measured at $\zeta_1$. From the description of neighborhoods of cone points in $\hS$ in \S\ref{S:puncturing surfaces}, we may choose a family of basic $\hphi_1$-geodesics $\Lambda$ through $\zeta$ that sweep out an interval of angles of the same measure as $(\alpha,\beta)$ which develop to the above mentioned set of geodesics in $\mathbb{H}$.
The $\hat{\varphi}_2$--straightening of this family, $\hat{g}(\Lambda)$, is a family of basic geodesics in $(\hat{S}, \hat{\varphi}_2)$ that are concurrent at $\zeta$ and sweep out an open interval of angles at $\zeta$, though the measure of this interval is not necessarily $\beta - \alpha$. The image $D_2 (\hat{g}(\Lambda))$ must be a family of concurrent geodesics, passing through $\zeta_2 = D_2(\zeta)$, that sweep out an open interval of angles at $\zeta_2$. Therefore the forward end points of the geodesics in $D_2 (\hat{g}(\Lambda))$ must form an open interval in $\partial \mathbb{H}$. This implies that $h$ is an open map.
The same argument with the subscripts 1 and 2 interchanged shows that $h^{-1}$ is an open map. Therefore $h$ is a homeomorphism. Moreover, sweeping through angles counterclockwise with respect to $\hphi_1$ is the same as sweeping through them counterclockwise with respect to $\hphi_2$, and thus $h$ is orientation preserving.

We now show that $h$ topologically conjugates $\rho_1$ to $\rho_2$. That is, for $\gamma \in G$ and $x \in \partial \mathbb H$, we prove Equation~\eqref{E:conjugating homs}.  
Recall that since $\gamma$ acts on $\hS$ by isometries with respect to both $\hphi_1$ and $\hphi_2$, it extends to $\hS^1_\infty$, and thus $\mathcal K$.  Since $D_i$ also extends to $\mathcal K$, we have:
\[ \begin{array}{rclll} 
h(\rho_1(\gamma) \cdot x) & = & h(\rho_1(\gamma) \cdot \bar D_1(k)), &\quad & \text{ for some } k \in \mathcal K.\\
&=& h(\bar D_1(\gamma \cdot k)) & & \text{ by definition of the holonomy homomorphism.}\\
&=& \bar D_2(\gamma \cdot k) & & \text{ by definition of $h$.}\\
&=& \rho_2 (\gamma) \cdot \bar D_2(k) & &  \text{ by definition of the holonomy homomorphism.}\\
&=& \rho_2 (\gamma) \cdot h(\bar D_1 (k)) & & \text{ by definition of } h \text{ and extensions of } D_i.\\
&=& \rho_2 (\gamma) \cdot h(x) & & \text{ by definition of }k.
\end{array}
\]
\end{proof}

\subsection{Rigidity from $h \in \PSL_2(\mathbb R)$}

We end this section by noting that for the rigidity statement in the \currentsupport\!\!, we want the homeomorphism $h$ from Proposition~\ref{P:holonomy conjugation} to be in $\PSL_2(\mathbb R)$.

\begin{proposition} \label{P:first rigidity}
If $\CG_{\tphi_1} = \CG_{\tphi_2}$ and $h \colon \partial \mathbb H \to \partial \mathbb H$ from Proposition~\ref{P:holonomy conjugation} lies in $\PSL_2(\mathbb R)$, then $\varphi_1\!\sim\!\varphi_2$. 
\end{proposition}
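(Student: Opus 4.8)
\textbf{Proof plan for Proposition~\ref{P:first rigidity}.}
The plan is to run the argument already sketched in the introduction: use $h \in \PSL_2(\mathbb R)$ to force the interior angles of the triangles of a common triangulation to agree in the two metrics, and conclude $\varphi_1 = \varphi_2$ in $\Hyp_c(S)$. First I would set up the normalizations. Starting from $\CG_{\tphi_1} = \CG_{\tphi_2}$, invoke Proposition~\ref{P:chain consequence} and Lemma~\ref{l:convention} to arrange that $\varphi_1$ and $\varphi_2$ have the same cone set $C_0$, and then choose a $(C_0,\varphi_1)$--triangulation $\CT$ of $S$ (one exists; its edges are $(C_0,\varphi_1)$--saddle connections and its triangles are $(C_0,\varphi_1)$--triangles). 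By Lemma~\ref{L:triangle normalized} and Corollary~\ref{C:compatible}, after adjusting $\varphi_2$ within its equivalence class I may assume $\varphi_2$ is $C_0$--uber-normalized and that $\CT$ is simultaneously a $(C_0,\varphi_1,\varphi_2)$--triangulation, with $f_\CT$ the identity. Then pass to $\hS = $ completion of the universal cover of $\dot S = S \smallsetminus C_0$, with the two pulled-back metrics $\hphi_1,\hphi_2$, developing maps $D_i \colon \hS \to \mathbb H$ and holonomies $\rho_i \colon \pi_1\dot S \to \PSL_2(\mathbb R)$. By Lemma~\ref{L:hat combinatorics} we have $\CG_{\hphi_1} = \CG_{\hphi_2}$ and a combinatorics-preserving straightening map $\hat g$, and by Proposition~\ref{P:holonomy conjugation} there is an orientation-preserving homeomorphism $h \colon \partial\mathbb H \to \partial\mathbb H$ topologically conjugating $\rho_1$ to $\rho_2$ with $h(\partial D_1(\eta)) = \partial D_2(\hat g(\eta))$ for every $\eta \in \CG(\hphi_1)$ — this is the $h$ named in the statement, which we are assuming lies in $\PSL_2(\mathbb R)$.

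The core of the argument is then a local angle comparison. Fix a triangle $T$ of $\widehat{\CT}$ in $\hS$ (the lift of $\CT$) with a vertex $\zeta \in \hC_0$ and the two sides $e, e'$ of $T$ at $\zeta$; let $\theta_1$ be the interior angle of $T$ at $\zeta$ in the metric $\hphi_1$ and $\theta_2$ the interior angle of the straightened triangle $\hat g(T)$ at $\zeta$ in the metric $\hphi_2$ (by Lemma~\ref{l:triangle}, applied in $\hS$, $\hat g(T)$ is indeed a $(C_0,\hphi_2)$--triangle with the same orientation, so $\theta_1,\theta_2 \in (0,\pi)$). The goal is $\theta_1 = \theta_2$. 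Extend $e$ and $e'$ to basic $\hphi_1$--geodesics $\lambda, \lambda'$ through $\zeta$ (Lemma~\ref{L:characterizing basics}); then $D_1(\lambda)$ and $D_1(\lambda')$ are geodesics of $\mathbb H$ through the common point $D_1(\zeta)$ meeting at angle $\theta_1$ (the derivative of the developing map at a point of $\hS$, restricted suitably, is an isometry on the tangent cone, and the relevant rays subtend exactly the interior angle of $T$). Now $\hat g(\lambda), \hat g(\lambda')$ pass through $\zeta$ (combinatorics are preserved), and $D_2(\hat g(\lambda)), D_2(\hat g(\lambda'))$ are geodesics of $\mathbb H$ through $D_2(\zeta)$ meeting at angle $\theta_2$. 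Since $h \in \PSL_2(\mathbb R)$ and $h(\partial D_1(\lambda)) = \partial D_2(\hat g(\lambda))$, $h(\partial D_1(\lambda')) = \partial D_2(\hat g(\lambda'))$, the isometry $h$ carries the configuration $\{D_1(\lambda), D_1(\lambda')\}$ to $\{D_2(\hat g(\lambda)), D_2(\hat g(\lambda'))\}$ up to the endpoint relation; in particular $h$ carries the (unordered) pair of geodesics in $\mathbb H$ with endpoint pairs $\partial D_1(\lambda), \partial D_1(\lambda')$ to the pair with endpoint pairs $\partial D_2(\hat g(\lambda)), \partial D_2(\hat g(\lambda'))$, and the angle between two geodesics of $\mathbb H$ is determined by their pairs of endpoints on $\partial\mathbb H$ and is preserved by $\PSL_2(\mathbb R)$. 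Hence the crossing angle of $D_1(\lambda), D_1(\lambda')$ equals that of $D_2(\hat g(\lambda)), D_2(\hat g(\lambda'))$; since $\theta_1$ (resp. $\theta_2$) is this crossing angle cut down by the orientation data $T$ (resp. $\hat g(T)$) on the side on which the angle is measured, and these sides correspond under $h$, we get $\theta_1 = \theta_2$. (A small point to nail down: when $D_1(\lambda) = D_1(\lambda')$, i.e. $\theta_1 \in \pi\mathbb Z$, one uses Lemma~\ref{L:pi intervals} to get $D_2(\hat g(\lambda)) = D_2(\hat g(\lambda'))$; but here $\theta_1 \in (0,\pi)$, so this degenerate case does not arise for a genuine triangle corner.)

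Applying this to every corner of every triangle of $\widehat{\CT}$, and noting that $\widehat{\CT}$ descends to $\CT$ on $S$ with the same triangle shapes, I conclude that each triangle of $\CT$ has the same three interior angles in $\varphi_1$ and in $\varphi_2$; since a hyperbolic triangle is determined up to isometry by its angles, each $(C_0,\varphi_1)$--triangle of $\CT$ is isometric to the corresponding $(C_0,\varphi_2)$--triangle by an orientation-preserving isometry fixing the vertices. These triangle isometries agree on shared edges (saddle connections between the same pair of cone points of the same hyperbolic length) and so glue to an isometry $(S,\varphi_2) \to (S,\varphi_1)$ that is cellular with respect to $\CT$ and induces the identity on $\pi_1 S$; hence it is homotopic, and therefore isotopic, to the identity. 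Thus $\varphi_1 \sim \varphi_2$, i.e. $\varphi_1 = \varphi_2$ in $\Hyp_c(S)$. The main obstacle I anticipate is the bookkeeping in the angle-comparison step: carefully identifying which of the (at most) two supplementary crossing angles of $D_i(\lambda), D_i(\lambda')$ is the interior triangle angle, and checking that $h \in \PSL_2(\mathbb R)$ really matches these up (rather than matching an interior angle on one side to an exterior angle on the other) — this is where the orientation-preservation of $h$, the orientation-preservation of $\hat g$ on triangles from Lemma~\ref{l:triangle}, and the fact that $h$ respects the cyclic order on $\partial\mathbb H$ are all needed; everything else is assembly from results already in the excerpt.
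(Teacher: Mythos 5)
Your proposal is correct and follows essentially the same route as the paper: normalize so that a common $(C_0,\varphi_1,\varphi_2)$--triangulation exists, use $h\in\PSL_2(\mathbb R)$ together with $h(\partial D_1(\eta))=\partial D_2(\hat g(\eta))$ to show that angles between intersecting basic $\hphi_1$--geodesics are preserved under straightening (this is exactly the paper's Lemma~\ref{L:all angles preserved}), apply this to extensions of the triangle sides at each vertex, and conclude that corresponding triangles are isometric so the identity is an isometry. The only cosmetic difference is that the paper phrases the final assembly via the canonical map on each triangle being an isometry rather than gluing triangle isometries along shared edges.
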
 

This proposition will follow easily from the next lemma. 

\begin{lemma} \label{L:all angles preserved} Suppose $\CG_{\tphi_1} = \CG_{\tphi_2}$ and $h \colon \partial \mathbb H \to \partial \mathbb H$ from Proposition~\ref{P:holonomy conjugation} lies in $\PSL_2(\mathbb R)$.  If $\varphi_2$ is $C_0$--uber-normalized, with $C_0 = \cone(\varphi_1) = \cone(\varphi_2)$, then for any two basic $\hphi_1$--geodesics $\eta,\eta' \in \CG(\hphi_1)$ that intersect making angle $\theta<\pi$, $\hat g(\eta)$ and $\hat g(\eta')$ also intersect making angle $\theta$.
\end{lemma}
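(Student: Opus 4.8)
The plan is to deduce the statement from the fact that $h \in \PSL_2(\mathbb R)$ combined with Proposition~\ref{P:holonomy conjugation}, which tells us that $h(\partial D_1(\eta)) = \partial D_2(\hat g(\eta))$ for every basic $\hphi_1$--geodesic $\eta$. Since $h$ is now an isometry of $\mathbb H$, it carries the geodesic $D_1(\eta)$ (the unique geodesic with endpoints $\partial D_1(\eta)$) to the geodesic with endpoints $h(\partial D_1(\eta)) = \partial D_2(\hat g(\eta))$, which is exactly $D_2(\hat g(\eta))$. So for every basic $\hphi_1$--geodesic $\eta$ we have the key identity
\[
h(D_1(\eta)) = D_2(\hat g(\eta))
\]
as geodesics in $\mathbb H$.

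First I would reduce to comparing angles at a single point. Since $\eta$ and $\eta'$ intersect making angle $\theta < \pi$, they cross (their endpoints link), and by Lemma~\ref{L:hat combinatorics} (or Corollary~\ref{C:hat concurrency}) the straightenings $\hat g(\eta)$ and $\hat g(\eta')$ also cross, hence intersect. Let $y$ be the intersection point of $\eta$ and $\eta'$ in $\hS$, and let $y'$ be the intersection point of $\hat g(\eta)$ and $\hat g(\eta')$. The developing map $D_1$ is a local isometry on a neighborhood of $y$ (or a local branched cover if $y$ is a cone point, but in either case angle-preserving on the tangent cone), so the angle between $D_1(\eta)$ and $D_1(\eta')$ at $D_1(y)$ equals the angle $\theta$ between $\eta$ and $\eta'$ at $y$; similarly the angle between $D_2(\hat g(\eta))$ and $D_2(\hat g(\eta'))$ at $D_2(y')$ equals the angle between $\hat g(\eta)$ and $\hat g(\eta')$ at $y'$. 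Actually I need to be slightly careful about which of the (up to two) angles of intersection is meant, but since $\theta<\pi$ there is a well-defined smaller angle and I will track that one; the case $y \in \hC_0$ requires noting that the restriction of $dD_i$ to the tangent cone is a local isometry onto $T^1_{D_i(y)}\mathbb H$ (Lemma~\ref{L:Dev surjective}), so angles are still read off correctly modulo the ambiguity already handled by Lemma~\ref{L:pi intervals}.

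Next, applying $h$: since $h \in \PSL_2(\mathbb R)$ is an isometry of $\mathbb H$, it sends the pair of geodesics $D_1(\eta), D_1(\eta')$ to the pair $h(D_1(\eta)) = D_2(\hat g(\eta))$ and $h(D_1(\eta')) = D_2(\hat g(\eta'))$, preserving the angle between them. Thus the angle between $D_2(\hat g(\eta))$ and $D_2(\hat g(\eta'))$ equals the angle between $D_1(\eta)$ and $D_1(\eta')$, which is $\theta$. Combining with the previous paragraph, the angle between $\hat g(\eta)$ and $\hat g(\eta')$ at their intersection point is $\theta$ as well. One subtlety to address: $D_1(\eta)$ and $D_1(\eta')$ are distinct geodesics in $\mathbb H$ (so that ``the angle between them'' makes sense), because if they coincided then $\eta$ and $\eta'$ would be developed forward and backward asymptotic, forcing them to meet in a segment or cone point and making $\theta$ a multiple of $\pi$ — excluded by $\theta<\pi$ unless $\theta=0$, in which case there is nothing to prove; by Lemma~\ref{L:pi intervals} $\hat g(\eta)$ and $\hat g(\eta')$ would then also make angle $0$.

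The main obstacle I anticipate is purely bookkeeping: making precise the claim that the developing map reads off the intersection angle correctly, in the degenerate cases where the intersection point is a cone point of $\hS$ (where $D_i$ is only locally a branched cover) and in disambiguating ``the'' angle of intersection. Both are handled by the material already developed — Lemma~\ref{L:Dev surjective} for the cone-point case and Lemma~\ref{L:pi intervals} for the fact that angles that are (or are not) integer multiples of $\pi$ are preserved, with equality in the integer case — so the argument is essentially a short assembly of Proposition~\ref{P:holonomy conjugation}, the hypothesis $h\in\PSL_2(\mathbb R)$, and the local structure of the developing map. Proposition~\ref{P:first rigidity} then follows by applying this lemma to the extensions of the sides of each triangle in a $(C_0,\varphi_1,\varphi_2)$--triangulation, deducing that all interior angles of all triangles agree in the two metrics, so corresponding triangles are isometric and hence $\varphi_1 \sim \varphi_2$.
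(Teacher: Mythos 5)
Your proposal is correct and follows essentially the same route as the paper's proof: use Proposition~\ref{P:holonomy conjugation} to get $h(\partial D_1(\eta)) = \partial D_2(\hat g(\eta))$, extend $h\in\PSL_2(\mathbb R)$ to an isometry of $\mathbb H$ carrying $D_1(\eta)$ to $D_2(\hat g(\eta))$ and $D_1(\eta')$ to $D_2(\hat g(\eta'))$, and conclude that the developed (hence the original) intersection angles agree, with Lemma~\ref{L:pi intervals} handling intersections at cone points. Your extra care about the degenerate case $D_1(\eta)=D_1(\eta')$ and about which of the two angles is tracked is consistent with, and slightly more explicit than, the paper's treatment.
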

\begin{proof} From the hypotheses and Lemma~\ref{L:hat combinatorics}, we have $\CG_{\hphi_1} = \CG_{\hphi_2}$, $\cone(\hphi_1) = \cone(\hphi_2) = \hC_0$, and since $\eta$ and $\eta'$ intersect in a single point making angle less than $\pi$, the same is true of $\hat g(\eta)$ and $\hat g(\eta')$ (by Lemma~\ref{L:pi intervals} if $\eta$ intersects $\eta'$ in a point of $\hC_0$).  Moreover the angle between $\eta$ and $\eta'$ (respectively, $\hat g(\eta)$ and $\hat g(\eta')$) are the same as their images by $D_1$ (respectively, $D_2$).  By Proposition~\ref{P:holonomy conjugation}, $h(\partial D_1(\eta)) = \partial D_2(\hat g(\eta))$ and $h(\partial D_1(\eta')) = \partial D_2(\hat g(\eta'))$.   Since $h \in \PSL_2(\mathbb R)$, we extend it to a map $\bar h \colon \overline {\mathbb H} \to \overline {\mathbb H}$ which is an isometry in $\mathbb{H}$ and
\[ \bar h(D_1(\eta)) = D_2(\hat g(\eta)) \quad \mbox{ and } \quad \bar h(D_1(\eta')) = D_2(\hat g(\eta')).\]
Since $\bar h$ is an isometry in $\mathbb{H}$, the angle between $D_1(\eta)$ and $D_1(\eta')$ is equal to the angle between their $\bar h$--images, $D_2(\hat g(\eta))$ and $D_2(\hat g(\eta'))$.
\end{proof}

\begin{proof}[Proof of Proposition~\ref{P:first rigidity}] After adjusting $\varphi_2$ if necessary, we may assume it has been $C_0$--uber-normalized with respect to $\varphi_1$, so that $\cone(\tphi_1)=\cone(\tphi_2)=\tC_0$, and thus $\CG_{\hphi_1} = \CG_{\hphi_2}$ and $\cone(\hphi_1) = \cone(\hphi_2) = \hC_0$ by Lemma~\ref{L:hat combinatorics}.  Let $\mathcal{T}$ be a $(C_0, \varphi_1)$--triangulation of $S$ and further adjust $\varphi_2$ by an isotopy if necessary so that it is a $(C_0, \varphi_1, \varphi_2)$--triangulation. Let $\widehat{\mathcal{T}}$ be the lift of $\mathcal{T}$ to $\hat S$. 

Since $\widehat{\mathcal{T}}$ is simultaneously a $(C_0, \tphi_1)$--triangulation and a $(C_0, \tphi_2)$--triangulation, the image of each triangle $\widehat T$ in $\widehat{\mathcal{T}}$ under both developing maps is a geodesic triangle in $\mathbb{H}$. Further $D_1(\widehat T)$ is isometric to $(\widehat T, \hphi_1)$ and $D_2(\widehat T)$ is isometric to $(\widehat T, \hphi_2)$. This is because the developing maps are local isometries on sets that do not contain cone points in their interiors, and since all interior angles of triangles measure less than $\pi$, this can be promoted to an isometry in the case of a triangle.  

For each vertex of each triangle of $\widehat{\mathcal{T}}$, extending the two sides adjacent to that vertex to basic $\hphi_1$--geodesics $\eta$ and $\eta'$, Lemma~\ref{L:all angles preserved} implies the angle between $\eta$ and $\eta'$ agrees with the angle between $\hat g(\eta)$ and $\hat g(\eta')$.  Since $\hat g(\eta)$ and $\hat g(\eta')$ are extensions of the sides of the triangle to basic $\hphi_2$--geodesics, it follows that the interior angle of the triangle is the same when measured in both metrics.  
Since the identity on $\hat S$ is a canonical map on each triangle with respect to $\hphi_1$ on the domain and $\hphi_2$ on the range, and since the interior angles of the triangles are equal, this map is an isometry.  Therefore, the identity $(\hS,\hphi_1) \to (\hS,\hphi_2)$ is an isometry, and hence so is the identity $(S,\varphi_1) \to (S,\varphi_2)$, proving $\varphi_1 \sim \varphi_2$.
\end{proof}

\section{Proof of the \currentsupport} \label{S:proof of current}

The conclusion of the \currentsupport has two parts: a rigidity statement and a flexibility statement.  Proposition~\ref{P:first rigidity} says that to prove the rigidity statement---that is, if $\CG_{\tphi_1} = \CG_{\tphi_2}$ then $\varphi_1 \sim \varphi_2$---it suffices to show that the homeomorphism $h$ from Proposition~\ref{P:holonomy conjugation} lies in $\PSL_2(\mathbb R)$.   Proposition~\ref{p:subgroups} easily implies that if $\Gamma_1=\rho_1(\pi_1\dot S)$ is indiscrete, then $h$ lies in $\PSL_2(\mathbb R)$.  In this section we first describe a weaker condition that still suffices to guarantee that $h$ lies in $\PSL_2(\mathbb R)$.  We then show that the only way this condition can fail is if we are in the situation described in  Theorem~\ref{T:deforming}, proving the flexibility statement.

Throughout this section, we will assume $\varphi_1,\varphi_2 \in \tHyp_c(S)$ with $\CG_{\tphi_1} = \CG_{\tphi_2}$ and that $C_0 = \cone(\varphi_1) = \cone(\varphi_2)$ with $\varphi_2$ being $C_0$--uber-normalized (so that $\CG_{\hphi_1} = \CG_{\hphi_2}$ by Lemma~\ref{L:hat combinatorics}).  For each $i=1,2$, recall that $D_i \colon \hat S \to S$ is the developing map for $\hphi_i$, $\rho_i$ is the holonomy homomorphism defined on $\pi_1\dot{S}$ (or any group acting by homeomorphisms, isometrically with respect to both $\hphi_1$ and $\hphi_2$), and $\Gamma_i = \rho_i(\pi_1\dot{S})$ with $q_i \colon (S,\varphi_i) \to \mathcal O_i = \mathbb H/\Gamma_i$, the quotient space by $\Gamma_i$ (which is an orbifold if $\Gamma_i$ is discrete).

\subsection{Rigidity}\label{S:rigidity}

The right condition to ensure rigidity turns out to involve a certain enlargement of $\Gamma_1$ which we describe below.  First, we need the following fact.  Given $\xi \in \mathbb H$, let $\tau_\xi$ denote the involution given by a rotation of order $2$ about $\xi$.

\begin{proposition} \label{P:extending involutions} 
Suppose $\CG_{\tphi_1} = \CG_{\tphi_2}$, $\varphi_2$ is $C_0$--uber-normalized, and $h \colon \partial \mathbb H \to \partial \mathbb H$ is the homeomorphism from Proposition~\ref{P:holonomy conjugation}.  Given $\zeta \in \hC_0$, set $\zeta_i = D_i(\zeta)$, for $i=1,2$.  Then $h \tau_{\zeta_1} h^{-1} = \tau_{\zeta_2}$.
\end{proposition}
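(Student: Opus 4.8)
The plan is to exploit the fact, established earlier in the proof, that all basic $\hphi_1$--geodesics through a cone point $\zeta \in \hC_0$ straighten to basic $\hphi_2$--geodesics through $\zeta$ (Corollary~\ref{C:hat concurrency}), together with the fact that $\tau_{\zeta_i}$ is the geometric incarnation of the ``rotation by $\pi$'' of the link of $\zeta$ realized by the developing map $D_i$. Concretely, I will show that $h$ intertwines $\tau_{\zeta_1}$ and $\tau_{\zeta_2}$ by checking the identity $h\tau_{\zeta_1}h^{-1}(y) = \tau_{\zeta_2}(y)$ on a dense (in fact, cofinite-in-$\mathcal K$) set of points of $\partial\mathbb H$, namely the $\bar D_1$--images of endpoints of basic $\hphi_1$--geodesics, and then conclude by continuity since both sides are homeomorphisms of $\partial\mathbb H$.

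First I would recall the key geometric observation about $\tau_{\zeta_i}$: if $\eta$ is an oriented basic $\hphi_i$--geodesic through $\zeta$, then $D_i(\eta)$ is a geodesic through $\zeta_i$, and $\tau_{\zeta_i}(D_i(\eta))$ is the same geodesic with reversed orientation. More precisely, if $\eta^-$ denotes the basic $\hphi_i$--geodesic through $\zeta$ obtained by following $\eta$ backwards from $\zeta$ and then continuing as a basic geodesic making angle $\pi$ on the appropriate side, then $D_i(\eta^-)$ shares the geodesic $D_i(\eta)$ but the forward endpoint of $D_i(\eta^-)$ is the backward endpoint of $D_i(\eta)$. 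So $\tau_{\zeta_i}$ acts on forward endpoints of $D_i$--images of the basic $\hphi_i$--geodesics through $\zeta$ by sending the forward endpoint of $D_i(\eta)$ to the backward endpoint of $D_i(\eta)$, which equals the forward endpoint of $D_i(\eta')$ for any oriented basic geodesic $\eta'$ through $\zeta$ developing to the reversed $D_i(\eta)$. Here one uses Lemma~\ref{L:Dev surjective} and the description of neighborhoods of cone points: the restriction of the derivative $dD_{i,\zeta}$ to $T^1_\zeta\hS$ is a universal cover of $T^1_{\zeta_i}\mathbb H$, so reversing orientation of a developed geodesic is realized, upstairs, by passing to another basic geodesic through $\zeta$.

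Next I would combine this with Proposition~\ref{P:holonomy conjugation}, which gives $h(\partial D_1(\eta)) = \partial D_2(\hat g(\eta))$ for every basic $\hphi_1$--geodesic $\eta$, and with Corollary~\ref{C:hat concurrency}, which gives $\hat g(\CG(\hphi_1,\zeta)) = \CG(\hphi_2,\zeta)$. Let $x$ be the forward endpoint of $D_1(\eta)$ for some oriented basic $\hphi_1$--geodesic $\eta$ through $\zeta$. Then $h(x)$ is the forward endpoint of $D_2(\hat g(\eta))$, and $\hat g(\eta)$ passes through $\zeta$ by the corollary. Now $\tau_{\zeta_1}(x)$ is the backward endpoint of $D_1(\eta)$; choose $\eta^-$ through $\zeta$ developing to $D_1(\eta)$ with reversed orientation, so $\tau_{\zeta_1}(x)$ is the forward endpoint of $D_1(\eta^-)$. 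Then $h(\tau_{\zeta_1}(x))$ is the forward endpoint of $D_2(\hat g(\eta^-))$. The point is that $D_2(\hat g(\eta^-))$ and $D_2(\hat g(\eta))$ are the \emph{same} unoriented geodesic with opposite orientations: indeed, $\hat g(\eta)$ and $\hat g(\eta^-)$ are basic $\hphi_2$--geodesics through $\zeta$ whose initial rays from $\zeta$ point in opposite directions (this is where Lemma~\ref{L:pi intervals} is used, since the angle at $\zeta$ between the forward rays of $\eta$ and $\eta^-$ is $\pi$, hence the angle between the forward rays of $\hat g(\eta)$ and $\hat g(\eta^-)$ is also $\pi$, putting their $D_2$--images on a common geodesic with opposite orientations). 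Therefore the forward endpoint of $D_2(\hat g(\eta^-))$ is the backward endpoint of $D_2(\hat g(\eta))$, which is exactly $\tau_{\zeta_2}(h(x))$. Hence $h\tau_{\zeta_1}h^{-1}$ and $\tau_{\zeta_2}$ agree on the set $\bar D_1(\mathcal K_\zeta)$ of forward endpoints of $D_1$--images of basic $\hphi_1$--geodesics through $\zeta$.

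Finally I would upgrade agreement on this special set to agreement on all of $\partial\mathbb H$. By Lemma~\ref{L:Dev surjective} every geodesic through $\zeta_1 = D_1(\zeta)$ is the $D_1$--image of a basic $\hphi_1$--geodesic through $\zeta$, so $\bar D_1(\mathcal K_\zeta)$ is in fact \emph{all} of $\partial\mathbb H$ — every point of $\partial\mathbb H$ is an endpoint of some geodesic through $\zeta_1$. Thus $h\tau_{\zeta_1}h^{-1}(y) = \tau_{\zeta_2}(y)$ for all $y \in \partial\mathbb H$, and since an orientation-reversing (as $\tau$ reverses orientation and $h$ preserves it) involution of $\partial\mathbb H$ extends uniquely to an isometry of $\mathbb H$, we get $h\tau_{\zeta_1}h^{-1} = \tau_{\zeta_2}$ as elements of $\mathrm{Isom}(\mathbb H)$, completing the proof. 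The main obstacle I anticipate is the careful bookkeeping with orientations of developed geodesics through a cone point and the verification that ``reversing orientation of $D_i(\eta)$'' is realized by a genuine basic $\hphi_i$--geodesic through $\zeta$ making angle $\pi$ with $\eta$'s initial ray — i.e.\ getting Lemma~\ref{L:pi intervals} to do precisely the work of showing $D_2(\hat g(\eta))$ and $D_2(\hat g(\eta^-))$ are oppositely-oriented copies of one geodesic; everything else is a continuity/surjectivity formality.
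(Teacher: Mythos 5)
Your proof is correct and follows essentially the same route as the paper's: both rest on Corollary~\ref{C:hat concurrency}, the identity $h(\partial D_1(\eta)) = \partial D_2(\hat g(\eta))$ from Proposition~\ref{P:holonomy conjugation}, the fact that $\tau_{\zeta_i}$ interchanges the endpoints of any developed geodesic through $\zeta_i$, and Lemma~\ref{L:Dev surjective} for surjectivity. The paper avoids your orientation bookkeeping (the auxiliary $\eta^-$ and Lemma~\ref{L:pi intervals}) by working with unordered endpoint pairs $\{x,\tau_{\zeta_1}(x)\}$ directly; also note a small slip at the end: $\tau_{\zeta_i}$ is an elliptic element of $\PSL_2(\mathbb R)$ and so acts on $\partial\mathbb H$ preserving orientation, though this does not affect your argument.
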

\begin{proof}
Fix any $\zeta \in \hC_0$. By Corollary~\ref{C:hat concurrency}, $\hat g(\CG(\hphi_1,\zeta)) = \CG(\hphi_2,\zeta)$. Moreover, for any $\eta \in \CG(\hphi_1,\zeta)$ we have $h(\partial D_1(\eta)) = \partial D_2(\hat g(\eta))$ by Proposition~\ref{P:holonomy conjugation}.
Next observe that $\tau_{\zeta_1}$ interchanges the endpoints of $D_1(\eta)$, while $\tau_{\zeta_2}$ interchanges the endpoints of $D_2(\hat g(\eta))$.  Therefore, $\partial D_1(\eta)$ has the form $\{x,\tau_{\zeta_1}(x)\}$ for some $x \in \partial \mathbb H$, and $\partial D_2(\hat g(\eta)) = \{h(x),\tau_{\zeta_2}(h(x))\}$.  Combining these facts, we have $\tau_{\zeta_2}(h(x)) = h(\tau_{\zeta_1}(x))$.  Every $x \in \partial \mathbb H$ is in $\partial D_1(\eta)$ for some $\eta \in \CG(\hphi_1,\zeta)$ by Lemma~\ref{L:Dev surjective}, and thus $\tau_{\zeta_2} h = h \tau_{\zeta_1}$, as required.
\end{proof}

For each $\zeta \in \hC_0 \subset \hS$ and $i=1,2$, let $\zeta_i = D_i(\zeta)$, let $\tau_{\zeta_i}\in\PSL_2(\mathbb R)$ be the involution as above, and set 
\[\Gamma_i^0 = \langle \Gamma_i, \{\tau_{\zeta_i}\}_{\zeta \in \hat C_0} \rangle. \]

\begin{corollary} \label{C:extending by involutions}  Suppose $\CG_{\tphi_1} = \CG_{\tphi_2}$ and $\varphi_2$ is $C_0$--uber-normalized.  The homeomorphism $h \colon \partial \mathbb H \to \partial \mathbb H$ from Proposition~\ref{P:holonomy conjugation} topologically conjugates $\Gamma_1^0$ to $\Gamma_2^0$ sending $\tau_{\zeta_1}$ to $\tau_{\zeta_2}$ for all $\zeta \in \hC_0$.
\end{corollary}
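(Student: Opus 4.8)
The plan is to obtain this as a direct consequence of the two facts just established. Proposition~\ref{P:holonomy conjugation} gives $h(\rho_1(\gamma)\cdot x) = \rho_2(\gamma)\cdot h(x)$ for all $\gamma \in \pi_1\dot S$ and all $x \in \partial\mathbb H$, which in particular says $h$ topologically conjugates $\Gamma_1 = \rho_1(\pi_1\dot S)$ to $\Gamma_2 = \rho_2(\pi_1\dot S)$; and Proposition~\ref{P:extending involutions} gives $h\tau_{\zeta_1}h^{-1} = \tau_{\zeta_2}$ as homeomorphisms of $\partial\mathbb H$, for every $\zeta \in \hC_0$. The second of these already yields the clause ``sending $\tau_{\zeta_1}$ to $\tau_{\zeta_2}$'', so the only content is to promote conjugation of the generating sets to conjugation of the generated groups $\Gamma_i^0 = \langle \Gamma_i, \{\tau_{\zeta_i}\}_{\zeta \in \hC_0}\rangle$.

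First I would fix $g_1 \in \Gamma_1^0$ and write it as a finite word $g_1 = a_1 a_2 \cdots a_m$ in the generators, where each letter $a_j$ is either of the form $\rho_1(\gamma_j)$ for some $\gamma_j \in \pi_1\dot S$, or equals $\tau_{D_1(\zeta_j)}$ for some $\zeta_j \in \hC_0$; no separate inverse letters are needed since $\Gamma_1$ is already a group and the elements $\tau_{D_1(\zeta_j)}$ are involutions. Restricting everything to $\partial\mathbb H$ and applying the appropriate relation letter by letter, one has $h(a_j\cdot x) = b_j\cdot h(x)$ for all $x\in\partial\mathbb H$, where $b_j = \rho_2(\gamma_j) \in \Gamma_2$ in the first case and $b_j = \tau_{D_2(\zeta_j)} \in \Gamma_2^0$ in the second. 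Composing the $m$ identities gives $h(g_1\cdot x) = (b_1 b_2\cdots b_m)\cdot h(x)$ for every $x \in \partial\mathbb H$, so $h g_1 h^{-1}$ agrees on $\partial\mathbb H$ with the element $g_2 := b_1 b_2 \cdots b_m$ of $\Gamma_2^0$. By the definition of topological conjugation in \S\ref{S:top conjugacy}, this is precisely the assertion that $h$ topologically conjugates $\Gamma_1^0$ to $\Gamma_2^0$; and since $h^{-1}$ topologically conjugates $\rho_2$ to $\rho_1$ and satisfies $h^{-1}\tau_{\zeta_2}h = \tau_{\zeta_1}$, the symmetric argument shows the conjugation is onto, so it is a genuine topological conjugacy.

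There is essentially no obstacle here; the only point deserving a word of care is that the identities furnished by Propositions~\ref{P:holonomy conjugation} and~\ref{P:extending involutions} are equalities of homeomorphisms of $\partial\mathbb H$ (not of $\overline{\mathbb H}$), so the composition argument must be carried out on the circle, and the resulting equality $h g_1 h^{-1} = g_2$ is an equality on $\partial\mathbb H$ — which is exactly what the definition of topological conjugation requires. I would also note in passing that the generating set $\{\tau_{\zeta_i}\}_{\zeta \in \hC_0}$ is already invariant under conjugation by $\Gamma_i$: since $\hC_0$ is $\pi_1\dot S$-invariant and $D_i$ is $\rho_i$-equivariant, $\rho_i(\gamma)\tau_{D_i(\zeta)}\rho_i(\gamma)^{-1} = \tau_{D_i(\gamma\cdot\zeta)}$. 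This streamlines the word manipulation but is not logically required for the statement.
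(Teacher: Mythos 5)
Your proof is correct and follows essentially the same route as the paper: both deduce the corollary directly from Proposition~\ref{P:holonomy conjugation} and Proposition~\ref{P:extending involutions} by noting that $h$ conjugates the generators of $\Gamma_1^0$ to elements of $\Gamma_2^0$, hence conjugates the generated groups. The paper states this in one sentence; your word-by-word verification and the symmetry remark just make the routine step explicit.
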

\begin{proof} As a consequence of Proposition~\ref{P:holonomy conjugation}, $h$ topologically conjugates $\Gamma_1$ to $\Gamma_2$, and by Proposition \ref{P:extending involutions}, $h \tau_{\zeta_1} h^{-1}= \tau_{\zeta_2}$, for all $\zeta \in \hC_0$.  Therefore $h$ topologically conjugates $\Gamma^0_1$ to $\Gamma_2^0$. 
\end{proof}

For each $i=1,2$, we write $p_i \colon \mathbb H \to \mathcal O_i^0 = \mathbb H/\Gamma_i^0$ for the quotient space.  If $\Gamma_i^0$ is discrete, then $\mathcal O_i^0$ is a hyperbolic orbifold.  Otherwise, we view it simply as the quotient topological space.
Since $\rho_i(\pi_1\dot{S}) = \Gamma_i < \Gamma_i^0$, and the map $D_i \colon \hS \to \mathbb H$  is $\rho_i$--equivariant, it follows that $D_i$ descends to a continuous map $q_i^0 \colon (S,\varphi_i) \to \mathcal O_i^0$ fitting into the following diagram
\[ \xymatrix{ \hat S \ar[r]^{D_i} \ar[d]_{\hat p} & \mathbb H \ar[d]^{p_i} \\
S \ar[r]^{q_i^0} & \mathcal O_i^0.}\]
As with $q_i$, the map $q_i^0$ is a locally isometric branched cover with respect to $\varphi_i$ if $\Gamma_i^0$ is discrete, but is only a continuous map in general.

The next proposition provides a sufficient condition for rigidity.
\begin{proposition} \label{P:indiscrete is rigid} If $\CG_{\tphi_1} = \CG_{\tphi_2}$ and $\Gamma_1^0$ is indiscrete, then $\varphi_1\sim\varphi_2$.
\end{proposition}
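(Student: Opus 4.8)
The strategy is to combine the topological rigidity for cocompact subgroups (Proposition~\ref{p:subgroups}) with the fact that $h$ conjugates the enlarged groups $\Gamma_1^0$ to $\Gamma_2^0$ (Corollary~\ref{C:extending by involutions}), and then invoke the rigidity conclusion already established for $h \in \PSL_2(\mathbb R)$ (Proposition~\ref{P:first rigidity}). First I would recall that, after adjusting $\varphi_2$ within its equivalence class, we may assume $\varphi_2$ is $C_0$--uber-normalized, so that the homeomorphism $h$ from Proposition~\ref{P:holonomy conjugation} exists and, by Corollary~\ref{C:extending by involutions}, $h$ topologically conjugates $\Gamma_1^0$ to $\Gamma_2^0$ (in the orientation preserving sense, since $h$ is orientation preserving by Proposition~\ref{P:holonomy conjugation}).

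Next I would check the hypotheses of Proposition~\ref{p:subgroups} for the pair $\Gamma_1^0, \Gamma_2^0$. Cocompactness: $\Gamma_1 = \rho_1(\pi_1\dot S)$ is already cocompact (noted in \S\ref{S:prelim develop}, since $S$ is compact and $q_1$ is continuous and surjective onto $\mathcal O_1$), and $\Gamma_1^0 \supseteq \Gamma_1$, so $\Gamma_1^0$ is cocompact as well; the same applies to $\Gamma_2^0$. Both contain even order elliptic elements (indeed order two elliptics $\tau_{\zeta_i}$), so in particular both are nontrivial. Now apply Proposition~\ref{p:subgroups}: since $h$ is an orientation preserving homeomorphism of $\partial\mathbb H$ topologically conjugating the cocompact group $\Gamma_1^0$ to the cocompact group $\Gamma_2^0$, either both groups are discrete or $h \in \PSL_2(\mathbb R)$. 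By hypothesis $\Gamma_1^0$ is indiscrete, so the first alternative is excluded, and therefore $h \in \PSL_2(\mathbb R)$.

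Finally, with $h \in \PSL_2(\mathbb R)$ in hand, Proposition~\ref{P:first rigidity} immediately yields $\varphi_1 \sim \varphi_2$, completing the proof. I do not anticipate a serious obstacle here: the proposition is essentially an assembly of already-proved pieces. The only points that need a sentence of care are (i) confirming cocompactness of $\Gamma_i^0$ passes up from $\Gamma_i$, and (ii) making sure the ``indiscreteness'' is applied to the correct group—indiscreteness of $\Gamma_1^0$ forces indiscreteness of at least one of the pair, which is all Proposition~\ref{p:subgroups} needs to conclude $h \in \PSL_2(\mathbb R)$. (One should also note that discreteness of $\Gamma_1^0$ would force discreteness of $\Gamma_2^0$ via the conjugacy, so the dichotomy is genuinely between ``both discrete'' and ``$h \in \PSL_2(\mathbb R)$''.)
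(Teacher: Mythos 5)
Your proof is correct and follows essentially the same route as the paper: normalize $\varphi_2$, invoke Corollary~\ref{C:extending by involutions} to get the topological conjugacy of $\Gamma_1^0$ to $\Gamma_2^0$, apply Proposition~\ref{p:subgroups} to conclude $h \in \PSL_2(\mathbb R)$, and finish with Proposition~\ref{P:first rigidity}. The only cosmetic difference is your justification of cocompactness (passing it up from $\Gamma_1 \le \Gamma_1^0$, versus the paper's observation that $\mathbb H/\Gamma_1^0$ is the continuous image of the compact surface $S$); both are fine.
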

\begin{proof}  Observe that $\mathcal O_1 = \mathbb H/\Gamma_1^0$ is compact, being the image of $S$ by $q_1^0$.  Adjusting $\varphi_2$ by a homeomorphism isotopic to the identity if necessary, we may assume that $\varphi_2$ is $C_0$--uber-normalized.  By Corollary~\ref{C:extending by involutions},  $\Gamma_1^0$ and $\Gamma_2^0$ are topologically conjugate by $h$, and since $\Gamma_1^0$ is indiscrete, Proposition~\ref{p:subgroups} implies $h \in \PSL_2(\mathbb R)$.  Therefore, $\varphi_1 \sim \varphi_2$ by Proposition~\ref{P:first rigidity}.
\end{proof}

The next corollary describes one situation where $\Gamma_1^0$ is discrete, and yet the homeomorphism $h$ topologically conjugating $\Gamma_1^0$ to $\Gamma_2^0$ is necessarily in $\PSL_2(\mathbb{R})$, giving us another case where we have rigidity.

\begin{corollary} \label{C:triangle is rigid} If $\CG_{\tphi_1} = \CG_{\tphi_2}$ and $\Gamma_1^0$ is a Fuchsian triangle group, then $\varphi_1 \sim \varphi_2$.
\end{corollary}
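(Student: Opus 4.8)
The plan is to reduce Corollary~\ref{C:triangle is rigid} to the combination of Corollary~\ref{C:extending by involutions}, Lemma~\ref{l:trianglegroup}, and Proposition~\ref{P:first rigidity}. First I would adjust $\varphi_2$ within its equivalence class (by a homeomorphism isotopic to the identity) so that it is $C_0$--uber-normalized with respect to $\varphi_1$; this is permissible by the $C_0$--normalization machinery of \S\ref{S:concurrency and partitions} and \S\ref{S:triangulations}, and it does not change the hypothesis $\CG_{\tphi_1}=\CG_{\tphi_2}$. Now Proposition~\ref{P:holonomy conjugation} supplies the orientation-preserving homeomorphism $h\colon\partial\mathbb H\to\partial\mathbb H$ topologically conjugating $\rho_1$ to $\rho_2$, and Corollary~\ref{C:extending by involutions} upgrades this: $h$ topologically conjugates $\Gamma_1^0$ to $\Gamma_2^0$ (sending $\tau_{\zeta_1}$ to $\tau_{\zeta_2}$ for every $\zeta\in\hC_0$).

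Next I would invoke the hypothesis that $\Gamma_1^0$ is a Fuchsian triangle group. Since topological conjugacy preserves discreteness, $\Gamma_2^0$ is discrete as well, so both $\Gamma_1^0$ and $\Gamma_2^0$ are discrete subgroups of $\PSL_2(\mathbb R)$ (they are automatically in $\PSL_2(\mathbb R)$ since the holonomy developing maps are orientation preserving, and the $\tau_{\zeta_i}$ are rotations, hence orientation preserving). Because $h$ is an orientation-preserving homeomorphism conjugating $\Gamma_1^0$ to $\Gamma_2^0$ with $\Gamma_1^0$ a Fuchsian triangle group, Lemma~\ref{l:trianglegroup} applies verbatim and yields $h\in\PSL_2(\mathbb R)$.

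Finally, with $h\in\PSL_2(\mathbb R)$ and $\CG_{\tphi_1}=\CG_{\tphi_2}$, Proposition~\ref{P:first rigidity} gives $\varphi_1\sim\varphi_2$, completing the proof. I do not anticipate a genuine obstacle here: the corollary is essentially a bookkeeping assembly of results already in place. The only point requiring a line of care is confirming that $\Gamma_i^0\subset\PSL_2(\mathbb R)$ so that Lemma~\ref{l:trianglegroup} is applicable (rather than needing the orientation-reversing version), and noting that the conjugacy of the \emph{groups} $\Gamma_1^0\to\Gamma_2^0$ given by Corollary~\ref{C:extending by involutions} is exactly the hypothesis of Lemma~\ref{l:trianglegroup} — the lemma only needs a topological conjugacy of the groups, not of the homomorphisms.
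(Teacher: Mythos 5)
Your proof is correct and follows essentially the same route as the paper: the paper's own argument is exactly the chain Corollary~\ref{C:extending by involutions} (giving the topological conjugacy of $\Gamma_1^0$ to $\Gamma_2^0$), then Lemma~\ref{l:trianglegroup} to place $h$ in $\PSL_2(\mathbb R)$, then Proposition~\ref{P:first rigidity}. Your extra care in checking that $\Gamma_i^0<\PSL_2(\mathbb R)$ and in making the normalization explicit is sound but goes beyond what the paper records, which states the conclusion in one line.
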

\begin{proof} Since $h$ conjugates $\Gamma_1^0$ to $\Gamma_2^0$, it follows from Lemma \ref{l:trianglegroup} that $h \in \PSL_2(\mathbb R)$, and Proposition~\ref{P:first rigidity} again implies $\varphi_1 \sim \varphi_2$.
\end{proof}

\subsection{Flexibility}\label{S:flexibility proof}

We now turn to the situation that $\CG_{\tphi_1} = \CG_{\tphi_2}$, but $\varphi_1 \not \sim \varphi_2$.  We continue with the assumptions on $\varphi_1$, $\varphi_2$, $C_0$, etc., and with the definitions of $\Gamma_i^0$, $\mathcal O_i^0$, etc., as above.  According to Corollary~\ref{C:extending by involutions}, $h$ topologically conjugates $\Gamma_1^0$ to $\Gamma_2^0$, and we let
\[ \psi^0 \colon \Gamma_1^0 \to \Gamma_2^0 \]
denote the isomorphism; that is, in terms of the action on $\partial \mathbb H$, $\psi^0(\gamma) = h \gamma h^{-1}$, for all $\gamma \in \Gamma_1^0$.

To prove the \currentsupport\!\!, we need to show that $\varphi_1$ comes from a branched cover of an orbifold and that $\varphi_2$ is (up to equivalence) obtained by deforming the orbifold as in Theorem~\ref{T:deforming}.  By Proposition~\ref{P:indiscrete is rigid}, we need only consider the case that $\Gamma_1^0$ (and hence also $\Gamma_2^0$) is discrete. The goal is to show that for the branched covers of orbifolds $q_i^0: S\to \mathcal O_i^0$, for $i=1,2$, there is a homeomorphism $F:\mathcal O_1^0 \to \mathcal O_2^0$ such that $q_2^0$ and $F\circ q_1^0$ differ by a homeomorphism of $S$ isotopic to the identity. Let $\mathcal{E}^0_i$ be the set of even order orbifold points in $\mathcal{O}_i^0$.  We begin with the following.

\begin{lemma} \label{L:preferred concurrence} Suppose $\CG_{\tphi_1} = \CG_{\tphi_2}$ and $\Gamma_1^0$ is discrete.  The set $C= (q^0_1)^{-1}(\mathcal{E}^0_1)$ is a concurrency set for $(\varphi_1,\varphi_2)$. 
\end{lemma}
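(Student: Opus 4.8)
\textbf{Proof proposal for Lemma~\ref{L:preferred concurrence}.}

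The plan is to show that every point of $C = (q_1^0)^{-1}(\mathcal E_1^0)$ is a $(\varphi_1,\varphi_2)$--concurrence point, and that $C$ satisfies the remaining axioms (1)--(4) of a concurrency set from \S\ref{S:concurrency and partitions}. Axioms (2), (3) and (4) will be essentially immediate: since the even order orbifold points $\mathcal E_1^0 \subset \mathcal O_1^0$ form a finite set, and $q_1^0$ is a branched covering, $C$ is a finite union of $\pi_1S$--orbits and it contains $C_0 = \cone(\varphi_1)$ because the branched cover $q_1^0$ sends cone points to even order orbifold points (that is exactly how $\Gamma_1^0$ was built: the cone points $\hat C_0$ develop to even order elliptic fixed points of $\Gamma_1^0$). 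So the heart of the matter is axiom (1): every $\zeta \in C$ is a concurrence point. Equivalently, lifting to $\hS$, I want to show that for each $\hat\zeta \in \hat C := \hat p^{-1}(C)$ there is a well-defined point through which all the straightenings of basic $\hphi_1$--geodesics through $\hat\zeta$ pass.

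First I would identify $\hat C$ concretely. A point $\hat\zeta \in \hS$ lies in $\hat C$ if and only if $D_1(\hat\zeta) \in \widetilde{\mathcal E}_1 := p_1^{-1}(\mathcal E_1^0)$, the set of fixed points of even order elliptic elements of $\Gamma_1^0$. For such $\hat\zeta$, let $\tau$ be the order two elliptic element of $\Gamma_1^0$ fixing $D_1(\hat\zeta)$. By Corollary~\ref{C:extending by involutions}, $h\tau h^{-1} = \tau'$ is the order two elliptic element of $\Gamma_2^0$ fixing some point $w \in \widetilde{\mathcal E}_2$ — indeed by the construction of $\Gamma_2^0$ and the fact that $\psi^0$ carries the even order elliptic stabilizers to even order elliptic stabilizers, $\tau'$ fixes exactly the point $w$. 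I claim this $w$ equals $D_2(\hat\zeta')$ for an appropriate $\hat\zeta' \in \hS$, and that $\hat\zeta'$ (after normalizing $\varphi_2$) can be taken to be $\hat\zeta$ itself. The key computation mirrors equation~\eqref{Eq:baby concurrence}: for any basic $\hphi_1$--geodesic $\eta$ through $\hat\zeta$, $D_1(\eta)$ passes through $D_1(\hat\zeta)$, hence is $\tau$--invariant (with endpoints swapped by $\tau$); by Proposition~\ref{P:holonomy conjugation}, $h(\partial D_1(\eta)) = \partial D_2(\hat g(\eta))$, and since $h\tau h^{-1} = \tau'$, the pair $\partial D_2(\hat g(\eta))$ is $\tau'$--invariant with endpoints swapped, so $D_2(\hat g(\eta))$ passes through $w$. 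Thus \emph{all} the developed straightenings $D_2(\hat g(\eta))$, for $\eta \in \CG(\hphi_1,\hat\zeta)$, pass through the single point $w = \text{Fix}(\tau')$. Pulling back through $D_2$ (which is a local isometry away from $\hat C_0$, and a branched cover near points of $\hat C_0$, by Lemma~\ref{L:Dev surjective}), I get that the $\hat g$--images all pass through a single point $\hat\zeta' \in D_2^{-1}(w)$ lying in the same component near $\hat\zeta$; this is the concurrence point downstairs, so $\zeta = \hat p(\hat\zeta)$ is a concurrence point for $(\varphi_1,\varphi_2)$.

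The one subtlety — and the step I expect to be the main obstacle — is matching up the lifts: showing the concurrence point $\hat\zeta'$ associated to $\hat\zeta$ can be chosen $\pi_1S$--equivariantly and compatibly with the already-fixed identification $\cone(\varphi_1) = \cone(\varphi_2)$, so that after one further adjustment of $\varphi_2$ by a homeomorphism isotopic to the identity (and fixing $C_0$, to preserve the $C_0$--uber-normalization hypotheses used elsewhere) we actually have $\CG_{\tphi_1}(\zeta) - \Omega = \CG_{\tphi_2}(\zeta) - \Omega$ for all $\zeta \in C$ — i.e.\ exactly axiom (1) of Definition in \S\ref{S:concurrency and partitions}. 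For this I would argue: the assignment $\hat\zeta \mapsto \hat\zeta'$ is $\pi_1\dot S$--equivariant because $h$ conjugates $\rho_1$ to $\rho_2$ and both developing maps are equivariant; it restricts to the identity on $\hat C_0$ because there the concurrence points were already matched during the $C_0$--normalization (Proposition~\ref{P:chain consequence}, and the fact that $D_i$ sends $\hat C_0$ to the even order elliptic fixed points in a way compatible with $\hat p$); hence $\hat\zeta \mapsto \hat\zeta'$ extends over the (finitely many) extra orbits to a $\pi_1S$--equivariant homeomorphism of $\tS$ fixing $\tC_0$, which descends to a homeomorphism $S \to S$ isotopic to the identity. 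Replacing $\varphi_2$ by its pullback does not change $\varphi_2$'s equivalence class nor the $C_0$--uber-normalization, and now gives a $C$--normalized metric, so $C$ is a concurrency set. I would also remark that, as in the remark following Lemma~\ref{l:partition2}, once $C$ is a concurrency set the reference to $\Omega$ can be dropped.
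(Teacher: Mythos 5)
Your overall strategy is the same as the paper's: use the order-two elliptic $\tau\in\Gamma_1^0$ fixing $D_1(\hat\zeta)$, conjugate by $h$ to get $\tau'=\psi^0(\tau)$ with fixed point $w$, and deduce from $h(\partial D_1(\eta))=\partial D_2(\hat g(\eta))$ that every developed straightening $D_2(\hat g(\eta))$ is $\tau'$--invariant and hence passes through $w$. That part is right and is exactly the paper's computation.

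The step you gloss over, and the one that actually needs an argument, is the passage from ``all the $D_2$--images pass through $w$'' to ``all the $\hat g(\eta)$ pass through a single point of $\hS$.'' You assert this by ``pulling back through $D_2$,'' but $D_2$ is far from injective on $\hS$, and a priori different straightenings could meet different points of $D_2^{-1}(w)$ (or fail to meet each other at all). The paper closes this as follows: since $\hat\zeta\notin\hC_0$, any two geodesics in $\CG(\hphi_1,\hat\zeta)$ intersect transversely in the single point $\hat\zeta$, so their endpoints link; by preservation of endpoints and combinatorics (Lemma~\ref{L:hat combinatorics}), any two straightenings $\hat g(\eta_i),\hat g(\eta_j)$ therefore intersect in exactly one (non-cone) point of $\hS$. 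Then, fixing $\eta_0$ and comparing $\hat\xi_1=\hat g(\eta_0)\cap\hat g(\eta_1)$ with $\hat\xi_2=\hat g(\eta_0)\cap\hat g(\eta_2)$, one uses that $w$ is the \emph{unique} intersection point of $D_2(\hat g(\eta_0))$ with $D_2(\hat g(\eta_i))$ and that $D_2$ is injective on the single geodesic $\hat g(\eta_0)$ to conclude $\hat\xi_1=\hat\xi_2$. Without some version of this (pairwise intersection upstairs plus injectivity of $D_2$ along one geodesic), your ``same component near $\hat\zeta$'' clause does not pin down a common point. This is a fillable gap, but it is the actual content of the lemma.

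Separately, your second paragraph does more than the lemma requires: being a concurrency set only asks that each $\zeta\in C$ admit \emph{some} partner $\zeta_2$ with $\CG_{\tphi_1}(\zeta)-\Omega=\CG_{\tphi_2}(\zeta_2)-\Omega$, together with conditions (2)--(4). The equivariant adjustment making $\zeta_2=\zeta$ is the $C$--normalization, which the paper handles separately (Lemma~\ref{l:convention} and the uber-normalization discussion), so you need not build it into this proof.
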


\begin{proof} 
Note that by the definition of $\Gamma_1^0$, $C$ is a finite set and $\cone(\varphi_1)=C_0\subset C$. Now fix any $\zeta\in C$. If $\zeta\in C_0$ we know that it is a concurrency point and its lifts $\hat{p}^{-1}(\zeta)\subset \hat S$ are concurrency points by Corollary~\ref{C:hat concurrency}. So assume $\zeta\in (q_1^0)^{-1}(\mathcal{E}^0_1)\setminus C_0$ and consider any $\hat \zeta\in \hat{p}^{-1}(\zeta)\subset \hat S$.  Recall that $\CG(\hphi_1,\hat \zeta)$ consists of all basic $\hphi_1$--geodesics passing through $\hat \zeta$.  Since $\hat \zeta$ is not a cone point, any two geodesics in $\CG(\hphi_1,\hat \zeta)$ intersect in exactly the one point $\hat \zeta$.

\begin{claim} The set of basic $\hphi_2$--geodesics $\{ \hat g(\eta) \mid \eta \in \CG(\hphi_1,\hat \zeta)\}$ are concurrent.
\end{claim}
\begin{proof} First, for any two geodesics $\eta_1,\eta_2 \in \CG(\hphi_1,\hat \zeta)$, their $\hphi_2$--straightenings $\hat g(\eta_1),\hat g(\eta_2)$ must intersect in exactly one point, and that point is not a cone point.  To see this, note that the endpoints of $\hat g(\eta_i)$ are the same as the endpoints of $\eta_i$, for $i=1,2$, and the endpoints of $\eta_1$ and $\eta_2$ link.  The intersection is a single non-cone point because $\eta_i$ and $\hat g(\eta_i)$ have the same combinatorics, for $i=1,2$ by Lemma~\ref{L:hat combinatorics}.  Now, suppose $\eta_0,\eta_1,\eta_2 \in \CG(\hphi_1,\hat \zeta)$ are any three distinct geodesics, let $\hat g (\eta_0) \cap \hat g (\eta_1) = \hat\xi_1$ and $\hat g (\eta_0 ) \cap \hat g ( \eta_2) = \hat \xi_2$.  We must show that $\hat \xi_1 = \hat \xi_2$.  To do this, we first analyze the point $\hat \zeta$ in more detail.

Since $p_1(D_1(\hat\zeta)) \in \mathcal O_1^0$ is an even order orbifold point,  there exists an involution $\tau \in \Gamma_1^0$, rotating about the fixed point $D_1(\hat \zeta)$ through angle $\pi$. 
Since $\psi^0$ is induced by conjugation by $h$,  $\psi^0(\tau)$ is also a rotation through angle $\pi$ about some point $\xi \in \mathbb H$.

For each $i=0,1,2$, $D_1(\eta_i)$ is a geodesic through $D_1(\hat \zeta)$, and we let $x_i$ and $y_i=\tau \cdot x_i$ denote its endpoints in $\partial \mathbb H$. By Proposition~\ref{P:holonomy conjugation}, the endpoints of $D_2(\hat g(\eta_i))$ are $h(x_i)$ and $h(y_i)$. On the other hand, by Proposition~\ref{P:extending involutions}, we have
\[ h (y_i) = h(\tau \cdot x_i) = (h\tau h^{-1}) \cdot h(x_i) = \psi^0(\tau) \cdot h(x_i). \]
Therefore, $D_2(\hat g(\eta_i))$ is also invariant by $\psi^0(\tau)$, and thus passes through $\xi$, for $i=0,1,2$.  Since $\xi$ is the unique point of $D_2(\hat g(\eta_0)) \cap D_2(\hat g(\eta_i))$, for $i=1,2$, we must have $D_2(\hat \xi_1) = \xi = D_2(\hat \xi_2)$.  But $\hat \xi_1,\hat \xi_2 \in \hat g(\eta_0)$, and $D_2$ is injective on $\hat g(\eta_0)$, and therefore $\hat \xi_1=\hat \xi_2$, as required.
\end{proof}
Let $\hat \xi$ be the concurrent point of $\{ \hat g(\eta) \mid \eta \in \CG(\hphi_1,\hat \zeta)\}$.  We have thus shown that
\[ \CG_{\hphi_1}(\hat \zeta) \subset \CG_{\hphi_2}(\hat \xi).\]
We may reverse the roles of $\hphi_1$ and $\hphi_2$ since $D_2(\hat \xi)$ is the fixed point of an even order elliptic element of $\Gamma_2^0$, and is not a cone point.  Doing so proves the reverse inclusion, and shows that $\zeta$ is a concurrency point for $(\varphi_1,\varphi_2)$, as required.
\end{proof}

The next proposition is the final ingredient for the proof of the \currentsupport\!.

\begin{proposition} \label{P:induced orbifold homeo} Suppose $\CG_{\tphi_1} = \CG_{\tphi_2}$, $\Gamma_1^0$ is discrete, $C = (q_1^0)^{-1}(\mathcal E_1^0)$ is the concurrency set from Lemma~\ref{L:preferred concurrence}, and that $\varphi_2$ is $C$--uber-normalized.  Then there is a homeomorphism $f \colon S \to S$, isotopic to the identity relative to $C$, and a homeomorphism $F \colon \mathcal O_1^0 \to \mathcal O_2^0$ so that $q_2^0 \circ f = F \circ q_1^0$.
\end{proposition}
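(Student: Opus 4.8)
The plan is to reconstruct, out of $\Gamma_1^0$, the explicit ``deformation data'' and then show that $\varphi_2$ fits it on the nose up to a homeomorphism supported away from $C$. Since $q_i^0\colon S\to\mathcal O_i^0$ is continuous and $S$ is compact, $\Gamma_i^0$ is cocompact; it is discrete by hypothesis and contains the even order elliptic elements $\tau_{\zeta_i}$. By Corollary~\ref{C:extending by involutions} the homeomorphism $h$ of Proposition~\ref{P:holonomy conjugation} topologically conjugates $\Gamma_1^0$ to $\Gamma_2^0$, inducing an isomorphism $\psi^0\colon\Gamma_1^0\to\Gamma_2^0$. First I would apply Lemma~\ref{L:Delaunay maps} to $\psi^0$ to obtain the Delaunay cell structures $\tilde\Delta_i$ on $\mathbb H$ with $0$--skeleton $\tCE_i$ (the even order elliptic fixed points of $\Gamma_i^0$), a $\psi^0$--equivariant cellular homeomorphism $\tilde F\colon(\mathbb H,\tilde\Delta_1)\to(\mathbb H,\tilde\Delta_2)$ descending to a homeomorphism $F\colon\mathcal O_1^0\to\mathcal O_2^0$ with $p_2\circ\tilde F=F\circ p_1$, and I would record the two facts from that lemma that matter below: $\tilde F$ carries each biinfinite geodesic of $\tilde\Delta_1^{(1)}$ to its $h$--straightening, preserving the order of its vertices, and $\tilde F(\zeta)$ is the fixed point of $\psi^0(E_1(\zeta))$ at each vertex $\zeta$. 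This $F$ is the homeomorphism in the statement.

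Next I would pull everything back through the developing maps. Since $q_1^0\circ\hat p=p_1\circ D_1$ and $C=(q_1^0)^{-1}(\mathcal E_1^0)$, we have $\hat p^{-1}(C)=D_1^{-1}(\tCE_1)=\hC$, so $\hat\Delta_1:=D_1^{-1}(\tilde\Delta_1)$ is a $\pi_1\dot S$--invariant cell structure on $\hS$ with $0$--skeleton $\hC$, whose $1$--skeleton is a union of basic $\hphi_1$--geodesics (Lemma~\ref{L:nice Delaunay}(1) and Lemma~\ref{L:Dev surjective}), whose edges are $(C,\hphi_1)$--saddle connections, and whose $2$--cells are convex. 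Applying the straightening map $\hat g$ of Lemma~\ref{L:hat combinatorics} to these geodesics, Proposition~\ref{P:holonomy conjugation} gives that $D_2(\hat g(\eta))$ has endpoints $h(\partial D_1(\eta))$, hence equals $\tilde F(D_1(\eta))$ for $\eta$ in $\tilde\Delta_1^{(1)}$, while Proposition~\ref{P:extending involutions} gives $D_2(\hat\zeta)=\tilde F(D_1(\hat\zeta))$ for $\hat\zeta\in\hC$; combined with the order preservation of $\tilde F$, this shows $\hat g$ sends each edge $e$ of $\hat\Delta_1$ to the $(C,\hphi_2)$--saddle connection whose $D_2$--image is exactly the Delaunay edge $\tilde F(D_1(e))$ of $\tilde\Delta_2$. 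The symmetric form of Lemma~\ref{L:preferred concurrence}, together with Proposition~\ref{P:extending involutions}, gives $C=(q_2^0)^{-1}(\mathcal E_2^0)$ and the alignment of the two metrics over $\hC$, so $\hat g(\hat\Delta_1^{(1)})$ is the $1$--skeleton of $\hat\Delta_2:=D_2^{-1}(\tilde\Delta_2)$. As $\hat g$ is $\pi_1\dot S$--equivariant and, like $\tilde F$, respects the cyclic order of edges at each vertex, it carries each $2$--cell boundary of $\hat\Delta_1$ to a $2$--cell boundary of $\hat\Delta_2$ (using convexity and simple connectivity of $\hS$), and I would thereby build a $\pi_1\dot S$--equivariant cellular homeomorphism $\hat f_0\colon(\hS,\hat\Delta_1)\to(\hS,\hat\Delta_2)$ that is the identity on $\hC$, sends each $e$ to $\hat g(e)$, and is canonical on cells, so that $D_2\circ\hat f_0=\tilde F\circ D_1$. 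Descending, $\hat f_0$ yields $f_0\colon S\to S$ fixing $C$ pointwise; chasing the identities $q_2^0\circ\hat p=p_2\circ D_2$, $\hat p\circ\hat f_0=f_0\circ\hat p$, $p_2\circ\tilde F=F\circ p_1$ and surjectivity of $\hat p$ gives $q_2^0\circ f_0=F\circ q_1^0$, and $\pi_1\dot S$--equivariance of $\hat f_0$ forces $(f_0)_\ast=\mathrm{id}$, so $f_0$ is isotopic to the identity.

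The remaining point is to upgrade to isotopy \emph{relative to} $C$, and this is exactly where the hypothesis that $\varphi_2$ is $C$--uber-normalized is used. I would fix a $(C,\varphi_1)$--triangulation $\CT$ of $S$ refining $\Lambda_1=\hat p(\hat\Delta_1)$, adjust $\varphi_2$ within its equivalence class by a homeomorphism isotopic to the identity relative to $C$ so that $\CT$ becomes a $(C,\varphi_1,\varphi_2)$--triangulation (absorbing this adjustment into $f_0$ at the end), and then invoke Lemma~\ref{L:relative straightening}, lifted to $\hS$ just as in the proof of Lemma~\ref{L:hat combinatorics}, to isotope each edge $e$ of $\widehat\CT$ to $\hat g(e)$ through the triangles of $\widehat\CT$ without moving $\hC$; performing this equivariantly over the finitely many edge orbits and extending over the $2$--cells exhibits $\hat f_0$ as equivariantly isotopic, relative to $\hC$, to a homeomorphism fixing $\widehat\CT^{(1)}$, which descends to the desired $f\colon S\to S$, isotopic to the identity relative to $C$, with $q_2^0\circ f=F\circ q_1^0$.

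The main obstacle is the middle step: proving that $\hat g$ carries the $D_1$--developed Delaunay structure \emph{precisely} onto the $D_2$--developed one. This requires assembling Proposition~\ref{P:holonomy conjugation}, Proposition~\ref{P:extending involutions}, and the fine structure of $\tilde F$ from Lemma~\ref{L:Delaunay maps} (cellularity, order preservation along geodesics, and the normalization $\tilde F(\zeta)=\mathrm{Fix}(\psi^0(E_1(\zeta)))$), and it also forces one to check that $2$--cells correspond and that $C$ is symmetric in the two metrics. A secondary but genuine subtlety is that $\hat g$ matches arcs only up to homotopy in $\hS$, which is strictly weaker than isotopy relative to $\hC$ once $\hC$ has more than two points; so the $C$--uber-normalization together with Lemma~\ref{L:relative straightening} is really needed to pin the final isotopy down relative to $C$, rather than merely up to isotopy.
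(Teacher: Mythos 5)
Your proposal is correct and follows essentially the same route as the paper: both apply Lemma~\ref{L:Delaunay maps} to $\psi^0$ to obtain $\tilde F$ and $F$, pull the Delaunay structure back through $D_1$, match vertices via the conjugated involutions and edges via the endpoint correspondence of Proposition~\ref{P:holonomy conjugation}, and use the $C$--uber-normalization together with Lemma~\ref{L:relative straightening} (and the subdivided triangulation) to control everything relative to $C$. The only difference is organizational: the paper adjusts $\varphi_2$ within its equivalence class (the adjustments constituting $f$) until $\tilde F \circ D_1 = D_2$ holds with the identity map, whereas you construct the intertwining homeomorphism $f_0$ directly and then pin down its isotopy class relative to $C$ --- the same argument read in the dual direction.
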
 

Observe that the conclusion of this proposition implies that $\varphi_1$ and $\varphi_2$ arise as in Theorem~\ref{T:deforming}, and thus proving this proposition will essentially complete the proof of the \currentsupport\!.

\begin{proof} Let $\tilde \Delta_i$ be the cell structures on $\mathbb H$ from Lemma~\ref{L:Delaunay maps} applied to $G_i = \Gamma^0_i$ and the isomorphism $\psi^0$, let $\tilde \Delta_i'$ be the triangulation of $\mathbb H$ obtained by subdividing $\tilde \Delta_i$, for $i=1,2$, and $\tilde F \colon (\mathbb H,\tilde \Delta_1') \to (\mathbb H,\tilde \Delta_2')$ the canonical map given by the second half of Lemma~\ref{L:Delaunay maps}. We write $F \colon \mathcal O_1^0 \to \mathcal O_2^0$ to denote the descent of $\tilde F$, by the universal (orbifold) coverings $p_i \colon \mathbb H \to \mathcal O_i^0$, for $i=1,2$.  The map $F$ is the canonical map with respect to the descended triangulations, $\Delta_1'$ and $\Delta_2'$ on $\mathcal O_1^0$ and $\mathcal O_2^0$, respectively.

We will adjust $\varphi_2$ by a homeomorphism $f \colon S \to S$ isotopic to the identity relative to $C$ (without changing its name, as usual).  Thus to prove the proposition, it suffices to show that after this adjustment, the following diagram commutes.
\[
\begin{tikzcd}[row sep=15]
(\hat S,\hphi_1) \ar[r, "id_{\hS}"] \ar[dd, "\hat p"] 		& (\hat S,\hphi_2) \ar[drr, "D_2" ]  \ar[dd, "\hat p"] 	& \quad \quad \quad & \quad \quad \quad\\
										&										& \mathbb H	\ar[from=ull, crossing over, "D_1" near end] \ar[r,"\tilde F" below]					& \mathbb H \ar[dd, "p_2"] \\
(S,\varphi_1) \ar[r, "id_S"] 						& (S,\varphi_2)	\ar[drr, "q_2^0" near start]  \\
										&						&\mathcal O_1^0  \ar[r,"F" below] \ar[from=uu, crossing over, "p_1"] \ar[from=ull, crossing over, "q_1^0" below]	&  \mathcal O_2^0
\end{tikzcd}
\]

The "back" square obviously commutes ($\hat p = \hat p$), and since $F$ is the descent of the equivariant map $\tilde F$ this implies that the "front" square commutes ($p_2 \circ \tilde F = F \circ p_1$).  Similarly, the two "sides'' commute  ($p_i \circ D_i = q_1^0 \circ \hat p$, for $i = 1,2$) by equivariance and descent.  Since the "bottom'' square is what we want to prove, and is the descent of the "top'' square, it suffices to prove the top commutes; that is, we must show that $\tilde F \circ D_1 = D_2$.

Let $\hat \Delta$ be the cell structure on $\hS$ obtained by pulling back $\tilde \Delta_1$ with respect to $D_1 : \hS \to \mathbb H$.
The vertex set of $\hat \Delta$ is the concurrency set, that is, $\hat \Delta^{(0)} = \hat C$.  Since the $1$--skeleton of $\tilde \Delta_1$ is a union of biinfinite geodesics, the $1$--skeleton of $\hat \Delta$ is a union of basic $\hphi_1$--geodesics.  Specifically, there is a maximal set of basic geodesics $\Upsilon_1 \subset \CG(\hphi_1)$ so that
\[ \hat \Delta^{(1)} = \bigcup_{\eta \in \Upsilon_1} \eta.\]
Since $D_1$ is $\rho_1$--equivariant, and $\rho(\pi_1\dot S) = \Gamma_1 < \Gamma_1^0$, it follows that $\hat \Delta$ is $\pi_1\dot S$--invariant, and hence by maximality, $\Upsilon_1$ is $\pi_1\dot S$--invariant.

The $2$--cells of $\hat \Delta$ are convex hyperbolic polygons that map isometrically by $D_1$ to their image $2$--cells in $\mathbb H$.  Since the action of $\pi_1\dot S$ on $\hat S$ is free except at the points of $\hat C_0$, it follows that the stabilizer of each $2$--cell (a compact polygon) is trivial: otherwise it would have a fixed point in the interior.  It follows that $\hat p$ maps the interior of every $2$--cell homeomorphically to $S$, and thus $\hat \Delta$ descends to a cell structure $\Delta$ on $S$, all of whose $2$--cells are hyperbolic polygons with respect to $\varphi_1$ (possibly with some edges identified).  The zero-skeleton is $\Delta^{(0)} = \hat p(\hat C) = C$.

Subdivide $\Delta$ to a triangulation $\mathcal T$, then adjust $\varphi_2$ by a homeomorphism isotopic to the identity so that $\mathcal T$ is a $(C,\varphi_1,\varphi_2)$--triangulation.  Since $\varphi_2$ is $C$--uber-normalized, the adjustment is by a homeomorphism isotopic to the identity relative to $C$.   This pulls back to a $\pi_1\dot S$--invariant triangulation $\widehat{\mathcal T}$.  Any geodesic $\eta \in \Upsilon_1$ is a concatenation of edges of $\hat \Delta$, and hence edges in $\widehat{\mathcal T}$, and thus it follows that $\hat g(\eta) = \eta$.  Moreover, the identity restricts to the canonical map on each $C$--saddle connection of $\eta$, with respect to $\hphi_1$ on the domain and $\hphi_2$ on the range.  

Now we see how $\tilde F$ is related to $D_1$ and $D_2$.  For any vertex $\zeta \in \hat \Delta^{(0)} = \hC$, if $\zeta_1 = D_1(\zeta)$ and $\zeta_2 = D_2(\zeta)$, then because
\[ \psi^0(\tau_{\zeta_1}) = h \tau_{\zeta_1} h^{-1} = \tau_{\zeta_2},  \]
we have $\tilde F(\zeta_1) = \zeta_2$.  That is
\[ \tilde F \circ D_1(\zeta) = D_2(\zeta).\]
Furthermore, for any $\eta \in \Upsilon_1$, since $\eta$ is a basic geodesic for both $\hphi_1$ and $\hphi_2$, $D_1(\eta)$ and $D_2(\eta)$ are both geodesics in $\mathbb H$.  The geodesic $\eta$ passes through infinitely many points of $\hC$.  Let $\zeta,\zeta' \in \eta \cap \hC$ be two distinct such points, and observe that by the equation above we have
\[ \tilde F \circ D_1(\zeta) = D_2(\zeta) \quad \mbox{ and } \quad \tilde F \circ D_1(\zeta') = D_2(\zeta').\]
Since $D_1(\eta)$ and $D_2(\eta)$ are determined by two points on them, it follows that $\tilde F \circ D_1(\eta) = D_2(\eta)$.  Moreover, for every edge $e$ of $\hat \Delta^{(1)}$ contained in $\eta$, the identity in $\hat S$ is the canonical geodesic segment map when restricted to $e$, and since $\tilde F$ is also the canonical map from $D_1(e)$ to $D_2(e)$, naturality of canonical maps implies $\tilde F \circ D_1$ and $D_2$ agree on $e$.  So, $\tilde F \circ D_1$ and $D_2$ agree on $\hat \Delta^{(1)}$.  

For any $2$--cell $\sigma$ of $\hat \Delta$, the restrictions of $D_1$ and $D_2$ to $\sigma$ are isometries with respect to $\hphi_1$ and $\hphi_2$, respectively, and the composition
\[ D_2|_{\sigma}^{-1} \circ \tilde F \circ D_1|_{\sigma} \]
is the identity on $\partial \sigma$.  We may therefore adjust $\varphi_2$ by a homeomorphism $S \to S$ isotopic to the identity by an isotopy which is the identity outside $\hat p(\sigma)$, so that $D_2|_{\sigma}^{-1} \circ \tilde F \circ D_1|_{\sigma}$ is the identity.  Equivalently, $\tilde F \circ D_1|_{\sigma} = D_2|_\sigma$.  Performing this adjustment on each of the finitely many $2$--cell of $\Delta$ we have $\tilde F \circ D_1 = D_2$, as required.  This completes the proof.
\end{proof}

\begin{proof}[Proof of the \currentsupport\!\!] Suppose $\varphi_1,\varphi_2 \in \tHyp_c(S)$ and $\CG_{\tphi_1} = \CG_{\tphi_2}$.  If $\Gamma_1^0$ is indiscrete, then Proposition~\ref{P:indiscrete is rigid}  implies $\varphi_1 \sim \varphi_2$.  We may therefore assume $\Gamma_1^0$ is discrete, in which case Proposition~\ref{P:induced orbifold homeo} guarantees that $\varphi_1$ and $\varphi_2$ come from branched covers of orbifolds, as in Theorem~\ref{T:deforming} completing the proof.
\end{proof}

\begin{remark}
Given $\varphi_1,\varphi_2 \in \tHyp_c(S)$ with $\CG_{\tphi_1} = \CG_{\tphi_2}$ and $\varphi_1 \not \sim \varphi_2$, we know that the $\varphi_2$ is obtained by deforming $\mathcal O_1^0$ and pulling back via $q_1^0$.  It follows that the space of equivalence classes of all such metrics in $\Hyp_c(S)$,
\[ \{ \varphi \in \tHyp_c(S) \mid \CG_{\tphi} = \CG_{\tphi_1}\}/\!\sim \]
are parameterized by the Teichm\"uller space of the orbifold $\mathcal O_1^0$.  In particular, the dimension of this space can be arbitrarily large (depending on $\mathcal O_1^0$).
\end{remark}

\subsection{Too many cone points implies rigidity} \label{S:too many cone points}

As another consequence of the \currentsupport\!, we show that when there are too many cone points, the metric is forced to be rigid.

\medskip

\noindent
{\bf Corollary~\ref{C:too many cone points}.} {\em If $\varphi \in \Hyp_c(S)$ has at least $32(g-1)$ cone points (where $g$ is the genus of $S$) then $\varphi$ is rigid.}

\medskip

\begin{proof} If $\varphi$ is flexible then by the \currentsupport there is a locally isometric branched covering $(S,\varphi) \to \mathcal O$ over a hyperbolic orbifold $\mathcal O$ sending each of the cone points to an even order orbifold point.  We will thus show that if there is such a branched covering, then the number of cone points is less than $32(g-1)$.

The proof requires a repeated application of the Gauss-Bonnet Theorem, and is similar in spirit to the proof of the ``$84(g-1)$ Theorem"; see e.g.~\cite[Theorem~7.4]{farb:MCG}.  The Gauss-Bonnet Theorem implies in our case that the total area of a hyperbolic cone surface or orbifold of genus $g_0$ with $n$ cone/orbifold points having cone angles $\theta_1,\ldots,\theta_n$ is given by
\begin{equation} \label{E:GaussBonnet} \Area = 4\pi(g_0-1) + 2\pi n - \sum_{i=1}^n \theta_i.
\end{equation}

Fix any $\varphi \in \Hyp_c(S)$ and observe that by \eqref{E:GaussBonnet}, we have $\Area(\varphi) < 4\pi(g-1)$. Suppose that $p \colon (S,\varphi) \to \mathcal O$ is a branched cover as above and let $d$ be the degree.  Then we have
\begin{equation} \label{E:Area 1} d  = \tfrac{\Area(\varphi)}{\Area(\mathcal O)} < \tfrac{4\pi(g-1)}{\Area(\mathcal O)}.
\end{equation}
Next, let $k$ be the number of cone points of $\varphi$ and let $r$ be the number of even-order orbifold points of $\mathcal O$. We write $b_1,\ldots, b_r \in 2 \mathbb Z_+$ to denote the orders of the orbifold points, so that the $i^{th}$ orbifold point has cone angle $\tfrac{2\pi}{b_i}$.  Let $k_i$ be the number of cone points that map to the $i^{th}$ orbifold point so that $k = \sum_i k_i$.  Now observe that since the cone angle of any cone point of $\varphi$ is {\em strictly greater} than $2\pi$, it follows that the local degree of $p$ near any cone point is at least $3$.  This gives us
\[ d \geq 3 \max_i k_i \geq 3 \tfrac{k}r.\]
Combining this with \eqref{E:Area 1} we get 
\begin{equation} \label{E:Area 2}
k < \tfrac{4r\pi(g-1)}{3 \Area(\mathcal O)}.
\end{equation}

All that remains is to show that the right-hand side of \eqref{E:Area 2} is at most $32(g-1)$.  For this, we consider a case-by-case analysis depending on $r \geq 1$: for any fixed $r$, the right-hand side is maximized when $\Area(\mathcal O)$ is minimized.  For $r \geq 5$, we note that \eqref{E:GaussBonnet} implies that the area of $\mathcal O$ is at least $\pi(r-4)$, which occurs when $\mathcal O$ is a sphere with $r$ orbifold points, all of order $2$.  In this case we have
\[ k < \tfrac{4 \pi r(g-1)}{3\pi(r-4)} = \tfrac{4(g-1)}3\left(\tfrac{r}{r-4}\right) = \tfrac{4(g-1)}3\left( 1+ \tfrac{4}{r-4} \right) \leq \tfrac{20(g-1)}{3} < 32(g-1).\]
Again appealing to \eqref{E:GaussBonnet}, for the remaining values of $r$ one can check that the minimum area of an orbifold with at least $r$ even-order orbifold points is realized by a sphere with $n$ orbifold points of orders $(b_1,\ldots, b_n)$ described by the following table:
\[ \begin{array}{c|c|c|c}
r & \Area & (b_1,\ldots,b_n) & \displaystyle{\phantom{\int}  \tfrac{4 r \pi(g-1)}{3 \Area(\mathcal O)}  \phantom{\int}} \\
\hline 
1 & \tfrac{\pi}{21} & \displaystyle{\phantom{\int} (2,3,7) \phantom{\int}} & 28(g-1) \\
\hline 
2 & \tfrac{\pi}{12} &  \displaystyle{\phantom{\int} (2,3,8) \phantom{\int}} & 32(g-1) \\
\hline 
3 & \tfrac{\pi}{6} &  \displaystyle{\phantom{\int} (2,4,6) \phantom{\int}}  & 24(g-1) \\
\hline 
4 & \tfrac{\pi}2 &  \displaystyle{\phantom{\int} (2,2,2,4) \phantom{\int}}  & \tfrac{32}3(g-1)\\
\end{array} \]
The case $r=1$ is classically known to be the minimal area of any (orientable) hyperbolic orbifold (see e.g.~\cite[Theorem~7.10]{farb:MCG}).  The cases $r=2,3,4$ follow a similar argument to this classical result by enumerating the possibilities, appealing to the following two facts: (1) the genus must be zero (since positive genus and at least $2$ orbifold points gives area at least $2\pi$) and (2) we must have
\[ n-\sum_{i=1}^n \tfrac{1}{b_i} > 2\]
if $(b_1,\ldots,b_n)$ are the orders of orbifold points of a hyperbolic orbifold of genus zero.

Since each quantity in the last column is at most $32(g-1)$, this completes the proof.
\end{proof}


\section{Billiards}\label{S:billiards}

Consider a finite sided (simply connected, but not necessarily convex) polygon $P$ in the hyperbolic plane and trajectories of "billiard balls", modeled by continuous piecewise geodesic paths $\beta: \mathbb{R} \rightarrow P$. The billiard trajectories are geodesic segments in the interior of $P$ and reflect off the sides of the polygon with angle of incidence equal to angle of reflection. Note that we do not consider trajectories that encounter vertices of $P$. We call these dynamical systems polygonal billiard tables in the hyperbolic plane.

As in the introduction, we assume an $n$--gon $P$ comes equipped with a labelling of the sides by elements of the set $\mathcal{A}=\{1, 2, \ldots, n\}$, in counterclockwise cyclical order. Given a billiard trajectory $\beta:\mathbb{R}\to P$ its corresponding {\em bounce sequence} is the biinfinite sequence $\mathbf{b}(\beta)=(\ldots, b_{-1}, b_0, b_1, \ldots)\in\mathcal{A}^{\mathbb{Z}}$ corresponding to the ordered sequence of labels of the sides of $P$ that $\beta$ encounters, where $b_0$ is the first side encountered by $\beta([0, \infty))$. The {\em bounce spectrum} of $P$, denoted $\mathcal{B}(P)$, is the set of all bounce sequences realized by billiard trajectories on $P$.   We say that two labeled $n$--gons $P_1, P_2$ have the {\em same bounce spectrum} if $\mathcal{B}(P_1)=\mathcal{B}(P_2)$.


We recall the main theorem about billiards from the introduction.

\begin{billiardtheorem} Given hyperbolic polygons $P_1,P_2$, we have $\CB (P_1) = \CB (P_2)$ if and only if
\begin{enumerate}
\item $P_1$ is isometric to $P_2$ by a label preserving isometry, or
\item $P_1,P_2$ are reflectively tiled and there exists a label-preserving homeomorphism $H \colon P_1 \to P_2$ that maps tiles to tiles, preserving their interior angles.
\end{enumerate}
\end{billiardtheorem}


We note that the two cases are not mutually exclusive as the homeomorphism $H$ may be an isometry. 
Before launching into the proof of the \billiardrigidity\!\!, we make the notion of reflective tilings precise and introduce the tools we need in the sections below. 

\newcommand{\tile}{\bf t}

\subsection{Reflective tilings} \label{S:reflective tilings}
A {\em tiling} of $\mathbb H$ is a cell decomposition so that the closed $2$--cells, called the {\em tiles}, are compact convex hyperbolic polygons.  For each tile, we require the intersection with the $0$--skeleton to be the vertices (thus having interior angles less than $\pi$).  The $1$--cells are called the {\em edges} of the tiling; an edge contained in a tile will also be called a {\em side} of the tile.  
Given a tiling $T$ of $\mathbb H$ let $R(T)$ be the group generated by reflections in the (geodesic lines containing the) edges.

We say that a polygon in $\mathbb{H}$ is {\em good} if it is a finite sided compact polygon with each of its interior angles being an integral submultiple of $\pi$, that is, of the form $\frac{\pi}{k}$ for some $k\in\mathbb{Z}$.  The Poincar\'e polygon theorem implies that the group $R$ generated by reflections in the sides of a good polygon $\tile$ is discrete and the polygon is a fundamental domain  (see, e.g.~\cite[Theorem~7.1.3]{Ratcliffe}).  The translates of $\tile$ by the group $R$ determine a tiling $T$ with $R = R(T)$.  We call such a tiling of $\mathbb H$ a {\em reflective tiling}. 

\begin{lemma} \label{L:reflective H2} A tiling $T$ of $\mathbb H$ is reflective if and only if the fixed point set of any reflection in $R(T)$ is contained in the $1$--skeleton.
\end{lemma}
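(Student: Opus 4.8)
The plan is to prove the two implications of Lemma~\ref{L:reflective H2} separately, with the reverse direction relying on the structure theory of geometric reflection groups with locally finite mirror arrangements.

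For the forward implication, suppose $T$ is reflective, so $T$ is the tiling by the $R$--translates of a good polygon $\mathbf{t}$, where $R = R(T)$ is generated by the reflections in the sides of $\mathbf{t}$ and $\mathbf{t}$ is an \emph{exact} fundamental domain for $R$ by the Poincar\'e polygon theorem. Let $r \in R(T)$ be any reflection, with fixed geodesic $\ell$. If some $x \in \ell$ failed to lie in $T^{(1)}$, it would lie in the interior of a tile $\tau$; since $r$ fixes $x$ and carries tiles to tiles, $r(\tau)$ would be a tile containing $x$ in its interior, forcing $r(\tau) = \tau$ because distinct tiles have disjoint interiors. But the $R$--stabilizer of a tile $g\mathbf{t}$ is $g(\mathrm{Stab}_R \mathbf{t})g^{-1} = \{1\}$ by exactness of the fundamental domain, so $r = \mathrm{id}$ --- a contradiction. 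Hence $\ell \subseteq \bigcup_\tau \partial \tau = T^{(1)}$, which is the claim.

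For the reverse implication, assume $\mathrm{Fix}(r) \subseteq T^{(1)}$ for every reflection $r \in R(T)$. First I would identify the mirror arrangement of $R(T)$ with the edge set of $T$: letting $\mathcal{L}$ denote the union of the fixed geodesics of all reflections in $R(T)$, the hypothesis applied to the generators $r_e$ (reflections in the lines $\ell_e$ carrying the edges $e$) gives $\ell_e \subseteq T^{(1)}$, so $T^{(1)} = \bigcup_e e \subseteq \bigcup_e \ell_e \subseteq \mathcal{L}$, while the hypothesis applied to \emph{all} reflections gives $\mathcal{L} \subseteq T^{(1)}$; hence $\bigcup \mathcal{L} = T^{(1)}$, and each line of $\mathcal{L}$, being a complete geodesic contained in the $1$--skeleton $T^{(1)}$, is a bi-infinite concatenation of edges, so $\mathcal{L} = \{\ell_e : e \text{ an edge of } T\}$. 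This collection is locally finite (near any point only finitely many edges, hence only finitely many of the lines $\ell_e$, occur) and $R(T)$--invariant (conjugation permutes reflections and their axes). Then I would invoke the reflection-group structure theorem (see, e.g., \cite[Chapter~7]{Ratcliffe} and Vinberg's work on hyperbolic reflection groups): a subgroup of $\mathrm{Isom}(\mathbb{H})$ generated by reflections with locally finite mirror arrangement is discrete, acts properly discontinuously, takes the closure of each complementary chamber as an exact fundamental domain, is generated by the reflections in the walls of any one chamber, and has all angles between adjacent walls of a chamber of the form $\pi/k$ with $k \in \mathbb{Z}_{\ge 2}$. Since $\bigcup \mathcal{L} = T^{(1)}$, the chambers of $R(T)$ are exactly the tile interiors of $T$; hence $R(T)$ acts transitively on the tiles, all tiles are isometric to a single tile $\mathbf{t}$, this $\mathbf{t}$ is a good polygon, $R(T)$ is generated by the reflections in the sides of $\mathbf{t}$, and the $R(T)$--translates of $\mathbf{t}$ are precisely the tiles of $T$ --- that is, $T$ is the reflective tiling determined by $\mathbf{t}$.

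The main obstacle is the second step of the reverse direction. One has to check carefully that the full hypothesis --- on \emph{all} reflections in $R(T)$, not just the generators --- is exactly what forces the mirror arrangement to be locally finite: a priori $R(T)$ could be indiscrete, for instance with a dense family of mirrors through some vertex, and it is precisely this possibility that the hypothesis excludes. One then has to quote the reflection-group structure theorem in a form precise enough to extract both that the fundamental polygon is \emph{good} (angles $\pi/k$) and that the tiling it generates coincides with the original $T$, rather than merely being combinatorially equivalent to it.
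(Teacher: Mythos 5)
Your proof is correct, and the forward direction is essentially the paper's argument: the paper first notes that the angle condition forces the full geodesic extension of each edge into the $1$--skeleton and then rules out any other reflection via the fundamental-domain property, while your trivial-stabilizer argument handles all reflections at once; the two are interchangeable. The reverse direction, however, takes a genuinely different route. The paper argues from scratch: it shows the $1$--skeleton is a union of complete geodesics, uses the conjugation identity $g r_\eta g^{-1} = r_{g(\eta)}$ to show $R(T)$ preserves the $1$--skeleton and hence the tiling (whence discreteness), obtains transitivity on tiles by composing the reflections in the sides of the tiles crossed by a generic geodesic segment joining two tile interiors, and reads off the $\pi/k$ angle condition from the fact that vertex stabilizers are dihedral groups whose reflection lines are the extended edges at that vertex. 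You instead identify the mirror arrangement of $R(T)$ with the set of complete geodesics in $T^{(1)}$, verify it is locally finite, and invoke the structure theorem for geometric reflection groups with locally finite mirror arrangements. Your route buys brevity and situates the lemma in its natural general context; the paper's buys self-containedness, and indeed the reference you lean on needs care --- Ratcliffe's \S 7.1 only gives the Poincar\'e polygon theorem, so for the ``locally finite mirrors $\Rightarrow$ discrete, chambers are strict fundamental domains, dihedral angles are $\pi/k$'' package you should cite Vinberg or Davis explicitly. Two small points worth making explicit in your version: (i) local finiteness of the edge set of $T$, which you use to deduce local finiteness of the mirrors, holds because a CW decomposition of the locally compact space $\mathbb{H}$ is automatically locally finite (the paper uses this tacitly too, in its transitivity argument); and (ii) the identification of chambers with tile interiors uses that distinct complete geodesics cannot share an edge, so the mirror through an edge is uniquely determined --- you state this implicitly and it is fine, but it is the hinge on which the whole identification turns.
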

\begin{proof} If $T$ is a reflective tiling, then all tiles sharing a vertex differ by an element of $R(T)$ fixing that vertex.  Since the interior angles of tiles are integral submultiples of $\pi$, we see that the  biinfinite geodesic containing any edge is contained in the $1$--skeleton.  Therefore, the reflections in sides of any tile are contained in the $1$--skeleton.  If there were any other reflection in $R(T)$ whose fixed point set was not contained in the $1$--skeleton, then it would necessarily pass through the interior of a tile.  Since any tile is a fundamental domain, this is impossible, and hence the fixed point set of any reflection is in the $1$--skeleton, as required.

Now we assume that the fixed point set of any reflection in $R(T)$ is contained in the $1$--skeleton, and prove that $T$ is reflective.  

First observe that the $1$--skeleton is a union of biinfinite geodesics: since the reflection in any edge is in $R(T)$, it must have its entire fixed line in the $1$--skeleton by assumption, and thus it follows that for every edge of $T$, the unique geodesic containing it is in the $1$--skeleton.

Next, we claim that $R(T)$ preserves the tiling.  To prove this, it suffices to show that it preserves the collection of biiinfinite geodesics whose union is the $1$--skeleton (since $R(T)$ preserves the tiling if and only if it preserves the $1$--skeleton).  Thus let $g \in R(T)$ be any element and $\eta \subset \mathbb H$ a biinfinite geodesic in the $1$--skeleton.  The reflection $r_\eta$ in $\eta$ is a generator of $R(T)$, and hence $g r_\eta g^{-1} = r_{g(\eta)}$, the reflection in $g(\eta)$, is in $R(T)$.  By assumption $g(\eta)$ is contained in the $1$--skeleton, as required.

Since $R(T)$ preserves the titling, it is discrete and acts on the set of tiles.  In fact, this action on the tiles is transitive.  To see this, take any two tiles $\tile_1,\tile_2$ and consider a geodesic segment from a point in the interior of $\tile_1$ to a point in the interior of $\tile_2$ that misses the vertex set.  Then the composition of the reflections in the sides of tiles encountered  by the geodesic (in order) maps $\tile_1$ to $\tile_2$.  

From the proof of transitivity, we see that $R(T)$ is generated by reflections in the sides of any fixed tile $\tile$.
All that remains to prove is that the interior angle at each vertex of a tile is an integral submultiple of $\pi$.  This follows because the stabilizer of any vertex $\zeta$ is a dihedral group whose lines of reflection are precisely the biinfinite geodesic extensions of the edges adjacent to $\zeta$.  Therefore, $\tile$ is a good polygon, and the tiling is reflective.
\end{proof}

\begin{corollary} \label{C:gen reflective} Given a discrete group $R$ generated by reflections, including reflections in the sides of a compact polygon $P$, then the union of the fixed point sets of all reflections in $R$ is the $1$--skeleton of a reflective tiling.
\end{corollary}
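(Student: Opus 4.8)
The plan is to reduce Corollary~\ref{C:gen reflective} to Lemma~\ref{L:reflective H2} by exhibiting the union $X$ of fixed point sets of all reflections in $R$ as the $1$--skeleton of some tiling whose reflection group is $R$. First I would note that each reflection in $R$ fixes a biinfinite geodesic in $\mathbb{H}$, and $R$ permutes these geodesics: for $g \in R$ and a reflection $r_\eta$ in a geodesic $\eta$, we have $g r_\eta g^{-1} = r_{g(\eta)}$, which again lies in $R$, so $g(\eta) \subseteq X$. Thus $X$ is an $R$--invariant union of biinfinite geodesics. Because $R$ contains the reflections in the sides of the compact polygon $P$, and those reflections generate a discrete (Poincar\'e) subgroup, $R$ itself contains such a discrete subgroup; but I need $R$ to be discrete. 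Here I would invoke the hypothesis that $R$ itself is given as a discrete group — the statement says ``a discrete group $R$ generated by reflections, including reflections in the sides of a compact polygon $P$'' — so discreteness is assumed, not to be proved.

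Next I would argue that $X$ is a locally finite union of geodesics, hence cuts $\mathbb{H}$ into a cell decomposition $T$ whose closed $2$--cells are convex polygons: local finiteness follows from discreteness of $R$, since if infinitely many fixed geodesics accumulated at a point, the corresponding reflections would accumulate in $\mathrm{Isom}(\mathbb{H})$, contradicting discreteness. The complementary regions of $X$ are intersections of half-planes bounded by these geodesics, hence convex; I would need to check each is compact (bounded) and has finitely many sides, which follows from the fact that $R$ acts cocompactly — $P$ being compact and $R \supseteq R_P$ (the discrete reflection group of $P$, for which $P$ is a fundamental domain) forces $\mathbb{H}/R$ to be compact, so the tile containing a given point is contained in a bounded neighborhood. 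I would set $T$ to be this cell decomposition, with $1$--skeleton $X$.

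Then I would verify the hypothesis of Lemma~\ref{L:reflective H2}: the fixed point set of any reflection in $R(T)$ is contained in the $1$--skeleton of $T$. By construction $R \subseteq R(T)$ since every edge of $T$ lies in some fixed geodesic of a reflection in $R$; conversely $R(T) \subseteq R$ because $R(T)$ is generated by reflections in edges of $T$, and each such edge lies in a geodesic of the form $\mathrm{Fix}(r)$ for $r \in R$, and the reflection in that geodesic is $r$ itself. Hence $R(T) = R$. Now any reflection in $R(T) = R$ has, by the very definition of $X$, its fixed geodesic contained in $X = T^{(1)}$. So Lemma~\ref{L:reflective H2} applies and $T$ is a reflective tiling, whose $1$--skeleton is exactly the union of fixed point sets of reflections in $R$, as claimed.

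The main obstacle I anticipate is the topological bookkeeping in the middle step: showing that the $R$--invariant union of geodesics $X$ is genuinely locally finite and that its complementary regions are compact finite-sided convex polygons meeting the $0$--skeleton only in vertices — i.e.\ that $X$ is honestly the $1$--skeleton of a \emph{tiling} in the precise sense defined in \S\ref{S:reflective tilings}, with interior angles of tiles less than $\pi$. This is where one must use discreteness and cocompactness of $R$ carefully (via the containment $R_P \leq R$ and compactness of the fundamental domain $P$ for $R_P$), rather than just formal group-theoretic manipulation; once the tiling structure is in place, the identification $R(T) = R$ and the appeal to Lemma~\ref{L:reflective H2} are routine.
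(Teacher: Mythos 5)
Your overall strategy --- show that the fixed lines form a locally finite, $R$--invariant family, take the induced cell decomposition $T$, identify $R(T)$ with $R$, and invoke Lemma~\ref{L:reflective H2} --- is exactly the paper's, and your local finiteness argument from discreteness is the same as well. The gap is in the step you yourself single out as the main obstacle: compactness of the complementary components. Your justification rests on the parenthetical claim that $P$ is a fundamental domain for the subgroup $R_P$ generated by reflections in its sides. That is false in general: the Poincar\'e polygon theorem requires $P$ to be a \emph{good} polygon (interior angles of the form $\pi/k$), whereas the corollary is applied precisely to polygons that need not be good --- in the proof of the Billiard Rigidity Theorem, $P_1$ is a union of several tiles of the resulting tiling, not a single tile. (Discreteness of $R$ only forces the interior angles to be rational multiples $p\pi/q$ of $\pi$; when $p>1$ the $R_P$--translates of $P$ overlap.) Moreover, even if you establish cocompactness of $R$ by some other route (say, an open--closed argument showing $R_P\cdot P=\mathbb{H}$), cocompactness only bounds the \emph{inradius} of each complementary component, and a convex intersection of half-planes in $\mathbb{H}$ with bounded inradius can still be unbounded (e.g.\ the thin region between two disjoint, nearly touching geodesics), so ``contained in a bounded neighborhood'' does not follow from it.

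The paper closes this gap differently and more cheaply. Pick a point $x$ in the interior of $P$ off the union $X$ of fixed lines; any path from $x$ to the complement of $P$ must cross a side of $P$, hence a line of $X$ (the extensions of the sides lie in $X$), so the complementary component $U\ni x$ satisfies $\bar U\subseteq P$ and is therefore a compact convex polygon $\tile$. For any other component, join a point of $\tile$ to a point of it by a geodesic segment transverse to $X$; by local finiteness the segment crosses only finitely many lines, and the composition of the reflections in those lines, taken in order, carries $\tile$ onto the closure of the target component, which is therefore also compact. If you replace your cocompactness appeal with this trapping-and-transport argument, the rest of your write-up (the $R$--invariance of $X$, the identification $R(T)=R$, and the appeal to Lemma~\ref{L:reflective H2}) is correct as it stands.
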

\begin{proof} Since $R$ is discrete, the set of fixed point sets of the reflections is a locally finite collection of lines: otherwise there would be a sequence of such lines that meet a fixed compact set, and from this it is easy to construct a nonconstant sequence of elements of $R$ as products of pairs of reflections that converge to the identity.  For any point in the interior of $P$ which is disjoint from the union of the lines, the intersection of half planes containing the point and bounded by the lines in our set is contained in $P$, and is thus a compact, convex polygon $\tile$.  That is, the closure of one component of the complement of the lines in our set is a compact polygon $\tile$.  Given any other point $\zeta$ in the complement of the union of the lines, we can construct a geodesic from a point in $\tile$ to $\zeta$, and as in the proof above, from this construct a sequence of reflections whose composition takes $\tile$ to the closure of the complementary component containing $\zeta$.  It follows that the lines form the $1$--skeleton of a tiling, and by construction, the fixed point set of any reflection of $R$ is contained in it.  Thus, by Lemma~\ref{L:reflective H2}, the tiling is reflective.
\end{proof}

We now define a {\em reflective tiling of a polygon $P$} similarly to be a cell decomposition of $P$ such that: 
\begin{itemize}
\item the closed 2-cells (the {\em tiles}) are isometric copies of a good polygon, and any tile meets the $0$--skeleton precisely in its vertices,
\item if two tiles share a {\em side} (or {\em edge}, i.e.~$1$--cell) then they differ by a reflection in that side, and
\item if a tile $\tile$ shares a vertex with $P$, then the interior angle of $\tile$ at that vertex is an {\em even} integer submultiple of $\pi$, i.e. is of the form $\frac{\pi}{2k}$ for some $k\in\mathbb{N}$.
\end{itemize}

The third condition comes up naturally in the billiards theorem, though it is not clear what relevance it has outside our context.  We will need the following lemma in our proof of the Billiard Rigidity Theorem.
\begin{lemma}\label{L:reflective polygon}
If $P$ is a reflectively tiled polygon, then we can extend this tiling to a reflective tiling of $\mathbb{H}$. That is, there is a reflective tiling $T$ of $\mathbb{H}$  such that the reflective tiling of $P$ is a subcomplex of $T$.  
\end{lemma}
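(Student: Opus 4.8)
The plan is to reconstruct the ambient reflective tiling of $\mathbb{H}$ from the combinatorial and metric data of the tiling of $P$, and then invoke the characterizations of reflective tilings already established in Lemma~\ref{L:reflective H2} and Corollary~\ref{C:gen reflective}. First I would set up $P$ concretely: place a copy of $P$ in $\mathbb{H}$ with its reflective tiling, and let $\{\tile_\alpha\}$ be the tiles, each an isometric copy of a fixed good polygon $\tile$. Let $R_P < \Isom(\mathbb H)$ be the group generated by reflections in the lines extending all the \emph{internal} edges of the tiling of $P$ together with the lines extending the \emph{sides} of $P$ (i.e.\ reflections in every $1$--cell of the tiling of $P$ that lies on $\partial P$). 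The key claim is that $R_P$ is a discrete reflection group and that $P$ is a union of tiles of the reflective tiling it generates; once this is known, Corollary~\ref{C:gen reflective} produces the desired reflective tiling $T$ of $\mathbb H$, and we only need to check that the original tiling of $P$ is literally the restriction of $T$ to $P$.

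The main work is proving discreteness of $R_P$, which I would do by a local (Poincar\'e-polygon-theorem-style) analysis at every vertex of the tiling, split into three cases. \emph{Interior vertices of $P$}: since the tiling of $P$ is reflective, all tiles around such a vertex $v$ differ by reflections in edges through $v$, and the interior angles of $\tile$ at those corners are submultiples of $\pi$ summing to $2\pi$ (a full turn), so the reflections in the edge-lines through $v$ generate a finite dihedral group and the link condition at $v$ is satisfied. \emph{Vertices of the tiling lying on $\partial P$ but not at a vertex of $P$}: here the tiles fill out a half-disk (angle sum $\pi$) on the interior side, and the relevant edge-lines through $v$ consist of those internal edge-lines together with the line carrying $\partial P$ near $v$; because each interior angle of $\tile$ there is a submultiple of $\pi$, the reflection group is again finite dihedral and the angles tile a half-disk, hence (with the $\partial P$-line as a mirror) a full disk. \emph{Vertices of $P$ itself}: this is where the hypothesis that the interior angle of a tile at a vertex of $P$ has the form $\pi/2k$ is used; the interior angles of tiles meeting such a vertex $v$ are even submultiples of $\pi$ and sum to the interior angle of $P$ at $v$, and adjoining the two boundary $\partial P$-lines through $v$ as mirrors we get a dihedral group whose fundamental sector is $\tile$'s corner — the evenness guarantees that reflecting $P$ across its two sides at $v$ and iterating closes up consistently (the two $\partial P$-mirrors together with the internal mirrors generate a finite dihedral group). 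In all three cases the Poincar\'e polygon theorem (as in \cite[Theorem~7.1.3]{Ratcliffe}, exactly as used to define reflective tilings) applies to $\tile$ as a fundamental domain, giving that $R_P$ is discrete with $\tile$ as a fundamental domain.

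Granting discreteness, Corollary~\ref{C:gen reflective} applies to $R = R_P$ with the compact polygon $P$: the union of fixed-point sets of all reflections in $R_P$ is the $1$--skeleton of a reflective tiling $T$ of $\mathbb H$. It remains to identify the restriction of $T$ to $P$ with the given tiling. By construction every edge-line of the tiling of $P$ (internal or on $\partial P$) is a mirror of $R_P$, hence lies in the $1$--skeleton of $T$; therefore each tile $\tile_\alpha$ of $P$ is a union of tiles of $T$. But $\tile_\alpha$ is an isometric copy of $\tile$, which is itself a fundamental domain for $R_P$ and hence a single tile of $T$; so $\tile_\alpha$ is exactly one tile of $T$, and no edges of $T$ cross the interior of any $\tile_\alpha$ or of $P$. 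Thus the $1$--skeleton of $T$ meets $P$ precisely in the $1$--skeleton of the given tiling, i.e.\ the reflective tiling of $P$ is a subcomplex of $T$, as claimed.

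\emph{Main obstacle.} The delicate point is the vertex-of-$P$ case: one must check that reflecting $P$ in its two sides meeting at a vertex $v$, together with the internal mirrors, produces a \emph{consistent, finite} dihedral stabilizer of $v$ — this is precisely what fails without the evenness hypothesis $\pi/2k$ (with an odd submultiple, reflecting $P$ across its sides would force an inconsistent identification, i.e.\ a mirror passing through the interior of a tile). So the crux is to extract from the evenness condition exactly the statement that the boundary mirrors and internal mirrors at $v$ fit together into a finite reflection group with $\tile$'s corner as fundamental sector; everything else is a routine application of the Poincar\'e polygon theorem and Corollary~\ref{C:gen reflective}.
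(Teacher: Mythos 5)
Your overall strategy---generate a reflection group from the edges of the tiling, apply Corollary~\ref{C:gen reflective} to obtain a reflective tiling $T$ of $\mathbb{H}$, and then identify the given tiling of $P$ with the restriction of $T$ to $P$---is sound, but the final identification has a genuine gap. You assert that a fixed tile $\mathbf{t}$ ``is itself a fundamental domain for $R_P$ and hence a single tile of $T$.'' The Poincar\'e polygon theorem only gives that $\mathbf{t}$ is a fundamental domain for the group $R(\mathbf{t})$ generated by reflections in the sides of $\mathbf{t}$; your group $R_P$ is generated by reflections in \emph{all} edge-lines of the tiling of $P$, and you never show $R_P = R(\mathbf{t})$. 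If $R_P$ were strictly larger, the tiling $T$ produced by Corollary~\ref{C:gen reflective} would strictly refine the given tiling of $P$: each tile $\mathbf{t}_\alpha$ would be a union of several tiles of $T$ rather than a single one, and the tiling of $P$ would \emph{not} be a subcomplex of $T$. The missing step is precisely the paper's one-line argument: since $P$ is connected, any two tiles of the tiling of $P$ are joined by a chain in which consecutive tiles share an edge and differ by reflection in it; by induction along such a chain every tile is an $R(\mathbf{t})$-translate of $\mathbf{t}$, and reflection in any edge of the tiling is a conjugate $grg^{-1}$ of a reflection $r$ in a side of $\mathbf{t}$. This yields $R_P = R(\mathbf{t})$, after which your union-of-tiles and area argument does close the proof. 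Your vertex-by-vertex link analysis does not substitute for this global step: it controls local stabilizers but says nothing about whether all tiles lie in a single $R(\mathbf{t})$-orbit.

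A secondary point: most of the local analysis is unnecessary, and the ``main obstacle'' you identify is not the real one. Discreteness is immediate from the Poincar\'e polygon theorem applied to $\mathbf{t}$ alone, because the tile of a reflective tiling of $P$ is by definition a good polygon; once $R_P=R(\mathbf{t})$ is established there is nothing left to check at interior vertices, at boundary vertices, or at vertices of $P$. In particular the evenness hypothesis (angles $\pi/2k$ at vertices of $P$) plays no role in this lemma: the reflective tiling of $\mathbb{H}$ generated by $\mathbf{t}$ contains the tiling of $P$ as a subcomplex regardless of the parity of the submultiples at the vertices of $P$. That hypothesis is needed elsewhere, to guarantee that the order-two rotations about the vertices of $P_i$ lie in the groups $R_{P_i}^0$ appearing in the billiard argument, not to extend the tiling.
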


\begin{proof}
Because our polygons are assumed simply connected (hence connected), any two tiles differ by a sequence of reflections in tiles.  The entire group generated by reflections in any given tile defines a reflective tiling since the tile is a good polygon.
\end{proof}

%

\newcommand{\cD}{\mathcal D}

\subsection{Unfoldings} \label{S:unfoldings}
Next we define the concept of unfolding a polygonal billiard table. This connects polygonal billiard tables in the hyperbolic plane to the focus of the rest of this paper, hyperbolic cone metrics on closed orientable surfaces.  This a generalization (as in \cite{oldpaper}) of a classical construction (see e.g.~\cite{ZeKa}).

Given a polygon $P$ in $\mathbb{H}$, consider its {\em double}
\[ \cD P =  P \times \{ 0, 1 \} / \{(x,0) \sim (x,1) \,\, \forall x \in \partial P \} \]
which admits a hyperbolic cone metric for which the cone points are the vertices (or more precisely, the identified vertices of the two copies of $P$).  Observe that all cone angles are less than $2\pi$, however, so these metrics are {\em not} negatively curved.  Note that there is a canonical map $\cD P \rightarrow P$ sending $(x, t) \mapsto x$ for $t=0,1$, which is a local isometry away from the identified boundaries.  An \emph{unfolding} of $P$ is any locally isometric branched cover $(S,\varphi) \to \cD P$, branched over the cone points, so that the total angle around any point in the pre-image of a cone point of $P$ is greater than $2\pi$.  In particular $\varphi \in \tHyp_c(S)$ and all cone points map to cone points.  

Given a hyperbolic polygon $P$, observe that every (biinfinite) nonsingular geodesic on $\cD P$ projects to a (biinfinite) billiard trajectory.  Moreover, every billiard trajectory arises in this way; indeed, this sets up a $2$-to-$1$ correspondence between nonsingular geodesics on $\cD P$ and billiard trajectories on $P$.  
If $(S,\varphi) \to \cD P$ is an unfolding (with degree $k$, say) then the nonsingular $\varphi$--geodesics on $S$ project to nonsingular geodesics on $\cD P$, and all nonsingular geodesics arise in this way.  Combined with the above, this sets up a $2k$-to-$1$ correspondence between nonsingular $\varphi$--geodesics on $S$ and billiard trajectories on $P$.
 
For any two labeled $n$--gons $P_1$ and $P_2$ in $\mathbb{H}$ we may construct a common unfolding in the following sense.  First, fix a labeled {\em topological $n$--gon} $P$: this is a CW complex with $n$ vertices and $n$ edges attached to form a circle, with the edges labeled by $\mathcal A$ in order cyclically, and a single $2$--cell (a disk) whose attaching map is a homeomorphism to the $1$--skeleton.  The cell structure on $P$ lifts to one on its double, $\cD P$.
For each $i=1, 2$ fix a homeomorphism $P\to P_i$ respecting the labeling, which induces a homeomorphism of the doubles $\cD P \to \cD P_i$.  A {\em common unfolding} of $P_1$ and $P_2$ is a branched covering $S \to \cD P$ branched over the vertices together with a pair of metrics $\varphi_1,\varphi_2 \in \tHyp_c(S)$ so that the composition $(S,\varphi_i) \to \cD P \to \cD P_i$ is an unfolding for each $i=1,2$.  Observe that there is a natural cell structure on $S$ obtained by lifting the one on $\cD P$ and that with respect to $\varphi_i$, the closed $2$--cells are isometric copies of $P_i$: indeed, the map $(S,\varphi_i) \to \cD P \to \cD P_i \to P_i$ restricts to an isometry on each closed $2$--cell.  Moreover, the edges have labels in $\mathcal A$, induced from those on the edges of $\cD P$.

Now, let $S$ be a common unfolding of two $n$-gons $P_1$ and $P_2$ with corresponding metrics $\varphi_1$ and $\varphi_2$. Let $p: \tilde S\to S$ denote the universal cover of $S$, and as usual let $\tphi_1, \tphi_2$ be the pull back metrics by $p$.  Through the correspondence between billiard trajectories and nonsingular $\varphi_i$--geodesics on $S$ we get the following:

\begin{lemma}\label{L:billiards endpoints}
Suppose $P_1$ and $P_2$ are two $n$--gons (with labeled sides) and let $S$ be a common unfolding with metrics $\varphi_1$ and $\varphi_2$, respectively.  If $\CB(P_1)=\CB(P_2)$ then $\CG_{\tphi_1} = \CG_{\tphi_2}$ and $\varphi_2$ is $C_0$--uber-normalized with respect to $\varphi_1$ (where $C_0 = \cone(\varphi_1)$, as usual).
\end{lemma}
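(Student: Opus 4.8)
The plan is to exploit the correspondence between billiard trajectories on $P_i$ and nonsingular $\varphi_i$--geodesics on the common unfolding $S$, lifted to the universal cover $\tS$. First I would set up notation: lift the common cell structure on $S$ to a $\pi_1S$--invariant cell structure $\widetilde \Lambda$ on $\tS$, whose $2$--cells are (in either metric $\tphi_i$) isometric copies of $P_i$, and whose edges carry labels in $\mathcal A$. The key observation is that a nonsingular $\tphi_i$--geodesic $\eta$ crosses a sequence of $2$--cells of $\widetilde\Lambda$, and each time it passes from one cell to an adjacent one it crosses a labeled edge; recording these labels (together with an initial marker) gives an element of $\mathcal A^{\mathbb Z}$. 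Projecting $\eta$ down to $S$ and then to $\cD P_i$ and then to $P_i$, this sequence of edge-labels is precisely a bounce sequence of the corresponding billiard trajectory on $P_i$. Conversely, a billiard trajectory on $P_i$ with a given bounce sequence lifts (once we fix a starting $2$--cell of $\widetilde\Lambda$) to a nonsingular $\tphi_i$--geodesic realizing that sequence of cell-crossings.

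Now suppose $\CB(P_1) = \CB(P_2)$. Given a nonsingular $\tphi_1$--geodesic $\eta_1$, it determines a bi-infinite sequence of $2$--cells of $\widetilde\Lambda$ (equivalently, a bi-infinite reduced edge-path in the dual of $\widetilde\Lambda$), and hence a bounce sequence $\mathbf b \in \CB(P_1) = \CB(P_2)$. Starting from the same initial $2$--cell, $\mathbf b$ determines a nonsingular $\tphi_2$--geodesic $\eta_2$ crossing exactly the same sequence of $2$--cells of $\widetilde\Lambda$. Since both metrics are $\pi_1S$--cocompact and the identity $(\tS,\tphi_1) \to (\tS,\tphi_2)$ is a quasi-isometry, two geodesics that traverse the same bi-infinite sequence of cells of the locally finite complex $\widetilde\Lambda$ stay within bounded Hausdorff distance of one another (the cell-path is a quasigeodesic uniformly close to each, and there is a uniform bound on the diameter of a cell in either metric). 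Hence $\eta_1$ and $\eta_2$ have the same endpoints in $S^1_\infty$, which shows $\partial_{\tphi_1}(\eta_1) \in \CG_{\tphi_2}$. Taking closures and using that $\CG_{\tphi_i}$ is the closure of the set of endpoints of nonsingular geodesics gives $\CG_{\tphi_1} \subseteq \CG_{\tphi_2}$; the reverse inclusion is symmetric, so $\CG_{\tphi_1} = \CG_{\tphi_2}$.

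It remains to check that $\varphi_2$ is $C_0$--uber-normalized with respect to $\varphi_1$, where $C_0 = \cone(\varphi_1)$. By construction of the common unfolding, the branch points of $(S,\varphi_i) \to \cD P$ are the lifts of the vertices, so $\cone(\varphi_1) = \cone(\varphi_2) = C_0$ already; thus no preliminary adjustment to align cone points is needed, and the natural cell structure on $S$ has its $0$--skeleton equal to $C_0$ and its edges being $(C_0,\varphi_i)$--saddle connections for both $i$. The argument above in fact shows more: a nonsingular $\tphi_1$--geodesic and its $\tphi_2$--straightening cross exactly the same sequence of $2$--cells, hence define the same partition of $\widetilde\Lambda^{(0)} = \tC_0$ (into the points to the left, to the right, and on the geodesic). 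This is precisely the situation of Lemma~\ref{L:examples C--uber-normalized} (and the last paragraph of the proof of Theorem~\ref{T:deforming}): one runs the argument of Lemma~\ref{l:partition2} verbatim to conclude $\CG_{\tphi_1}(\zeta) = \CG_{\tphi_2}(\zeta)$ for every $\zeta \in \tC_0$, so $C_0$ is a concurrency set and $\varphi_2$ is $C_0$--normalized. Finally, subdividing the common cell structure into a $(C_0,\varphi_1)$--triangulation $\mathcal T$, the identity is already the canonical map on each triangle (the $2$--cells are isometric to $P_1$ in one metric and $P_2$ in the other, and the identification of edges is fixed), so $\mathcal T$ is a $(C_0,\varphi_1,\varphi_2)$--triangulation with $f_{\mathcal T} = \mathrm{id}$, and Corollary~\ref{C:compatible} gives $C_0$--uber-normalization.

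The main obstacle I expect is the bounded-Hausdorff-distance step: making precise that two bi-infinite geodesics (in two different $\mathrm{CAT}(-1)$ metrics on $\tS$) traversing the same bi-infinite sequence of cells of $\widetilde\Lambda$ are uniformly Hausdorff-close, and in particular share endpoints at infinity. This requires care because a single geodesic could in principle linger near cell boundaries, but cocompactness of the $\pi_1S$--action gives uniform lower and upper bounds on cell diameters and on the distance a geodesic must travel to exit a cell, which together with Gromov hyperbolicity (fellow-traveling of quasigeodesics) closes the gap. Everything else is a direct transcription of arguments already carried out in \S\ref{S:deformations} and \S\ref{S:geodesics and triangulations}.
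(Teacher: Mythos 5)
Your proposal is correct and follows essentially the same route as the paper's proof: reading bounce sequences off the lifted labeled cell structure to match a nonsingular $\tphi_1$--geodesic with a nonsingular $\tphi_2$--geodesic crossing the same cells, deducing equal endpoints at infinity from bounded Hausdorff distance and taking closures, and then using the common cell structure to show $\tC_0$ is a concurrency set and to produce a $(C_0,\varphi_1,\varphi_2)$--triangulation so that Corollary~\ref{C:compatible} applies. One small imprecision: after subdividing the cell structure into a $(C_0,\varphi_1)$--triangulation, the identity is \emph{not} automatically the canonical map on each triangle (the new diagonal edges are $\varphi_1$--saddle connections but need not be $\varphi_2$--geodesics), so one must first adjust $\varphi_2$ by a homeomorphism isotopic to the identity relative to $C_0$ --- as the paper does --- which is harmless for the uber-normalization conclusion.
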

\begin{proof} First observe that the cell structure on $S$ with labeled edges lifts to a cell structure on $\tS$ with labeled edges.  With respect to $\tphi_i$, each closed $2$--cell is isometric to $P_i$, and as with $S$, the map $(\tS,\tphi_i) \to (S,\varphi_i) \to P_i$ restricts to an isometry on each $2$--cell.

Now suppose $\CB(P_1) = \CB(P_2)$. Let $\{x,y\}\in\CG_{\tphi_1}$ be the endpoints of a nonsingular $\tphi_1$--geodesic $\eta$. Reading off the labels crossed gives a sequence ${\bf b} \in \mathcal A^{\mathbb Z}$.  Since this nonsingular $\tphi_1$--geodesic projects to a nonsingular $\varphi_1$--geodesic in $S$, and then down to billiard trajectory in $P_1$, it follows that ${\bf b} \in \CB(P_1)$.
 Since $\CB(P_1)=\CB(P_2)$, $\mathbf{b}\in \CB(P_2)$ too, and hence there is a nonsingular $\tphi_2$--geodesic $\eta'$ crossing the exact same set of edges of the $1$--skeleton as $\eta$ which projects to the billiard trajectory in $P_2$ with bounce sequence ${\bf b}$.   Since $\eta$ and $\eta'$ cross the exact same set of edges in $\tS$, they remain a bounded distance apart, and hence the endpoints of $\eta'$ are $\{x,y\} \in \CG_{\tphi_2}$.
Since endpoints of nonsingular $\tphi_i$--geodesics are dense in $\CG_{\tphi_i}$, it follows that $\CG_{\tphi_1} \subset \CG_{\tphi_2}$.  The other inclusion follows by a symmetric argument and we have $\CG_{\tphi_1} = \CG_{\tphi_2}$.

To see that $\varphi_2$ is $C_0$--uber-normalized, first observe that $\tC_0 = \cone(\tphi_1) = \cone(\tphi_2)$ (since the metrics come from a common unfolding of $P_1$ and $P_2$).  Moreover, the $\pi_1S$--equivariant bijection $\cone(\tphi_1) \to \cone(\tphi_2)$ from Proposition~\ref{P:chain consequence} is the {\em identity}.  This follows directly from the fact that $g \colon \CG(\tphi_1) \to \CG(\tphi_2)$ preserves partitions (Lemma~\ref{l:partition2}) and the proof above showing that $\CG_{\tphi_1} = \CG_{\tphi_2}$.
Next we subdivide the cell structure on $S$ to a $(C_0,\varphi_1)$--triangulation $\mathcal T$.  We may adjust $\varphi_2$ by a homeomorphism isotopic to the identity so that each of the triangles are also $(C_0,\varphi_2)$--triangles.  Since the $1$--cells are already saddle connections with respect to both $\varphi_1$ and $\varphi_2$, the isotopy can be assumed to preserve the $1$--skeleton of the cell structure, and hence is relative to $C_0$.  After further isotopy relative to $C_0$ if necessary we can assume that the identity is the canonical map for $\mathcal T$, and thus $\mathcal T$ is a $(C_0,\varphi_1,\varphi_2)$--triangulation.  Since all adjustments to $\varphi_2$ were by homeomorphism isotopic to the identity relative to $C_0$ and the result is a metric that is $C_0$--uber-normalized (by Corollary~\ref{C:compatible}), the original $\varphi_2$ is $C_0$--uber-normalized.
\end{proof}


\subsection{Reflection groups}\label{S:reflection groups}

Let $P_1$, $P_2$ be two $n$--gons and $S \to \cD P \to \cD P_i$, $i=1,2$, a common unfolding with corresponding metrics $\varphi_1, \varphi_2$.  If $\CB(P_1) = \CB(P_2)$, then Lemma \ref{L:billiards endpoints} implies $\CG_{\tphi_1} = \CG_{\tphi_2}$ and $\varphi_2$ is $C_0$--uber-normalized.   
Let $\hat p\colon\hS\to S$ be the extension to the completion of the universal cover of $\dot S = S\setminus C_0$, as defined in \S\ref{S:puncturing surfaces}, and $\hphi_1, \hphi_2$ the pull backs of $\varphi_1, \varphi_2$ by $\hat p$. By Lemma~\ref{L:hat combinatorics} we also have $\CG_{\hphi_1} = \CG_{\hphi_2}$.  We assume that $S$ and $\tS$ are given the cell structures with labeled edges as described in the previous section, as is $\hS$ in a similar fashion.

Viewing $\dot P = P\setminus \{\text{vertices of }P\}$ as an orbifold with reflectors as sides, let $R=\pi_1^{orb}(\dot P)$. Then $\pi_1\cD \dot P<R$ is an index 2 subgroup, and the maps $\dot S \to \cD \dot P \to \dot P$ induce inclusions
$$\pi_1\dot S< \pi_1 \cD \dot P < R$$
and all three groups act on $\hS$ by isometries with respect to both $\hphi_1$ and $\hphi_2$.
For $i=1, 2$ let $D_i: \hS\to\mathbb{H}$ be the developing maps and $\rho_i: R\to \Isom(\mathbb{H})$ the corresponding holonomy homomorphisms. 
We fix some $2$--cell $B$ in $\hS$ and assume that $D_i$ maps this $2$--cell to $P_i$, for $i=1,2$.  Then, $\rho_i$ sends the reflection in the $j^{th}$ side of $B$ to reflection in the $j^{th}$ side of $P_i$.

For each $i=1,2$, let $R_{P_i} = \rho_i(R)<\Isom(\mathbb{H})$ and note that $R_{P_i}$ is the group generated by reflections in the sides of $P_i$. For any $\zeta \in \mathbb H$ let $\tau_\zeta$ denote a rotation about $\zeta$ of order 2 and let 
\[ R_{P_i}^0 = \langle R_{P_i}, \{\tau_v\}_{v \in V(P_i)} \rangle \]
where $V(P_i)$ denotes the vertex set of $P_i$. 
Note that $R_{P_i}^0$ is also generated by reflections because each $\tau_v$ is in $\Stab_{R_{P_i}^0}(v)$, which is a dihedral group and is thus generated by its reflections. 

\begin{lemma}\label{L:billiards topological conjugate}
Suppose $\CB(P_1)=\CB(P_2)$ and that $S \to \cD P \to \cD P_i$, for $i=1,2$, is a common unfolding with associated metrics $\varphi_1,\varphi_2$. Further suppose that we have chosen developing maps $D_1,D_2 \colon \hS \to \mathbb H$ as above with associated holonomy homomorphisms $\rho_1,\rho_2$, respectively.  Then there exists an orientation preserving homeomorphism $h: \partial\mathbb{H}\to\partial\mathbb{H}$ topologically conjugating $\rho_1$ to $\rho_2$.  In particular,  $R_{P_1}$ to $R_{P_2}$ are topologically conjugate by $h$, with $h$ conjugating the reflection in the $j^{th}$ sides of $P_1$ to the reflection in the $j^{th}$ side of $P_2$. 
\end{lemma}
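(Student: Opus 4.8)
The plan is to apply Proposition~\ref{P:holonomy conjugation} to the group $G = R = \pi_1^{orb}(\dot P)$, which by construction acts on $\hS$ by homeomorphisms that are isometries with respect to both $\hphi_1$ and $\hphi_2$. The hypotheses required by that proposition are in place: $\CB(P_1) = \CB(P_2)$ gives $\CG_{\tphi_1} = \CG_{\tphi_2}$ by Lemma~\ref{L:billiards endpoints}, and the same lemma tells us that (with respect to the concurrency set $C_0 = \cone(\varphi_1) = \cone(\varphi_2)$) the metric $\varphi_2$ is $C_0$--uber-normalized with respect to $\varphi_1$. Thus Proposition~\ref{P:holonomy conjugation} directly yields an orientation preserving homeomorphism $h \colon \partial \mathbb H \to \partial \mathbb H$ with $h(\rho_1(\gamma)\cdot x) = \rho_2(\gamma) \cdot h(x)$ for all $x \in \partial \mathbb H$ and $\gamma \in R$; equivalently, $h$ topologically conjugates $\rho_1$ to $\rho_2$ in the sense of \S\ref{S:top conjugacy}.

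From this, the statement about $R_{P_1}$ and $R_{P_2}$ is essentially immediate: since $R_{P_i} = \rho_i(R)$, the conjugation relation $h \rho_1(\gamma) h^{-1} = \rho_2(\gamma)$ shows that $h$ carries $R_{P_1}$ to $R_{P_2}$, in the sense that for every $\gamma_1 \in R_{P_1}$ there is $\gamma_2 \in R_{P_2}$ with $h\gamma_1 h^{-1} = \gamma_2$ on $\partial \mathbb H$, namely $\gamma_2 = \rho_2(\gamma)$ where $\gamma \in R$ is any preimage of $\gamma_1$ under $\rho_1$ (and conversely, since $\rho_2$ is surjective onto $R_{P_2}$). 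The final clause about the $j^{th}$ sides follows from the setup preceding the lemma: having fixed a $2$--cell $B$ in $\hS$ with $D_i(B) = P_i$, the element $r_j \in R$ that is the reflection in the $j^{th}$ side of $B$ satisfies $\rho_i(r_j) = $ the reflection in the $j^{th}$ side of $P_i$, and so $h$ conjugates $\rho_1(r_j)$ to $\rho_2(r_j)$, which is exactly the asserted correspondence of reflections.

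I do not expect any genuine obstacle here, since the lemma is really a packaging statement: all the hard analytic work has already been done in Proposition~\ref{P:holonomy conjugation} and in Lemma~\ref{L:billiards endpoints}. The one point that warrants a sentence of care is verifying the hypothesis of Proposition~\ref{P:holonomy conjugation} that $G$ acts by homeomorphisms that are isometries for both metrics; this is precisely the content of the chain of inclusions $\pi_1\dot S < \pi_1\cD\dot P < R$ acting on $\hS$ noted just before the lemma, since $\hphi_1$ and $\hphi_2$ are both pulled back from $S$ via $\hat p$ and the $R$--action on $\hS$ covers the orbifold structure on $\dot P$ through $\dot S \to \cD\dot P \to \dot P$. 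Once that is observed, the proof is a two-line invocation plus the bookkeeping translating $\rho_i(R) = R_{P_i}$ and tracking the labeled reflections.
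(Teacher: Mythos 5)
Your proposal is correct and follows essentially the same route as the paper: both invoke Lemma~\ref{L:billiards endpoints} to get $\CG_{\tphi_1}=\CG_{\tphi_2}$ with $\varphi_2$ $C_0$--uber-normalized, apply Proposition~\ref{P:holonomy conjugation} with $G=R$ acting isometrically for both pulled-back metrics, and then read off the statement about the $j^{th}$ side reflections directly from the conjugation equation and the normalization $\rho_i(r_j)=$ reflection in the $j^{th}$ side of $P_i$.
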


\begin{proof}
Since $\CB(P_1) = \CB(P_2)$ we have that $\CG_{\tphi_1} = \CG_{\tphi_2}$ with $\varphi_2$ being $C_0$--uber-normalized by Lemma \ref{L:billiards endpoints}, and $\CG_{\hphi_1} = \CG_{\hphi_2}$ by Lemma~\ref{L:hat combinatorics}. Since $R$ acts on $\hS$ by isometries with respect to $\hphi_1$ and $\hphi_2$, the existence of the conjugating homeomorphism $h$ is guaranteed by Proposition \ref{P:holonomy conjugation}. From the same proposition we have that 
$$h(\rho_1(\gamma) \cdot x) = \rho_2(\gamma) \cdot h(x)$$
for all $\gamma\in R$ and $x\in \partial\mathbb{H}$. Hence if $\gamma\in R$ corresponds to reflection in the $j^{th}$ side of $P$ 
then $\rho_i(\gamma)$ is the reflection across the $j^{th}$ side of $P_i$. Therefore, the equation above expresses precisely that $h$ conjugates reflection in the $j^{th}$ sides of $P_1$ and $P_2$.
\end{proof}

The following is useful in the proof of the \billiardrigidity\!.

\begin{lemma}\label{L:billiards indiscrete}
If $R^0_{P_1}$ is either indiscrete or contains reflections in all three sides of a triangle, then $P_1$ is billiard rigid.  That is, if $P_2$ is another polygon with $\CB(P_1) = \CB(P_2)$, then there exists a label preserving isometry from $P_1$ to $P_2$.
\end{lemma}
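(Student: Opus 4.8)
The strategy is to reduce to the results already proved in the current–support setting. Suppose $\CB(P_1) = \CB(P_2)$, build a common unfolding $S \to \cD P \to \cD P_i$ with associated metrics $\varphi_1,\varphi_2$, and apply Lemma~\ref{L:billiards endpoints} to get $\CG_{\tphi_1} = \CG_{\tphi_2}$ with $\varphi_2$ being $C_0$--uber-normalized. Choose developing maps $D_1,D_2 \colon \hS \to \mathbb H$ sending a fixed $2$--cell $B$ to $P_1$ and $P_2$, respectively, with holonomy homomorphisms $\rho_1,\rho_2$, and let $h \colon \partial \mathbb H \to \partial \mathbb H$ be the orientation–preserving conjugating homeomorphism from Lemma~\ref{L:billiards topological conjugate}. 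The key observation is that $h$ conjugates not only $R_{P_1}$ to $R_{P_2}$ but in fact $R_{P_1}^0$ to $R_{P_2}^0$: the vertices of $P_i$ are cone points of $\varphi_i$, and for a cone point $\zeta \in \hC_0$ mapping (via $\hat p$, then $D_i$) to a vertex $v_i$ of $P_i$, Proposition~\ref{P:extending involutions} gives $h \tau_{D_1(\zeta)} h^{-1} = \tau_{D_2(\zeta)}$; since the vertices of $P_i$ together with their $R_{P_i}$--translates account for the generators $\tau_v$, $v \in V(P_i)$, appended to form $R_{P_i}^0$, we conclude that $h$ topologically conjugates $R_{P_1}^0$ to $R_{P_2}^0$ (this is exactly the billiards analogue of Corollary~\ref{C:extending by involutions}).

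With $h$ conjugating $R_{P_1}^0$ to $R_{P_2}^0$, I would split into the two hypothesized cases. If $R_{P_1}^0$ is indiscrete: both groups are cocompact (they are commensurable with the cocompact $\Gamma_i = \rho_i(\pi_1\dot S)$, or directly: $S$ is compact and surjects onto the quotient), so Proposition~\ref{p:subgroups} forces $h \in \PSL_2(\mathbb R)$. If $R_{P_1}^0$ contains reflections in all three sides of a triangle, then since it is discrete (being discrete is the only remaining option, and a discrete reflection group containing reflections in the three sides of a triangle and acting cocompactly is a triangle group — one needs to observe that the triangle is then a fundamental domain, which follows from the Poincaré polygon theorem applied to the subgroup generated by those three reflections and a commensurability/index argument, or is simply assumed as the definition being invoked), its orientation–preserving subgroup is a Fuchsian triangle group, and Lemma~\ref{l:trianglegroup} forces $h \in \PSL_2(\mathbb R)$.

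In either case $h \in \PSL_2(\mathbb R)$, so it extends to an isometry $\bar h \colon \overline{\mathbb H} \to \overline{\mathbb H}$. Now I would run the argument sketched in the introduction: by Lemma~\ref{L:billiards topological conjugate}, $h$ conjugates the reflection in the $j^{th}$ side of $P_1$ to the reflection in the $j^{th}$ side of $P_2$, so $\bar h$ maps the geodesic line containing the $j^{th}$ side of $P_1$ to the geodesic line containing the $j^{th}$ side of $P_2$, for each $j$. Since each side of $P_i$ is the segment of such a line cut out by the adjacent sides, $\bar h$ maps sides of $P_1$ to sides of $P_2$ and vertices to vertices (respecting the labeling and the cyclic/counterclockwise order, because $h$ is orientation preserving). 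Since a polygon is the convex region bounded by its sides, $\bar h(P_1) = P_2$, and $\bar h$ restricts to a label–preserving isometry $P_1 \to P_2$, as required.

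\textbf{Main obstacle.} The delicate point is the promotion of the conjugacy from $R_{P_i}$ to $R_{P_i}^0$, i.e.\ verifying that the involutions $\tau_v$ about vertices $v$ of $P_i$ are precisely the images under $h$-conjugation of the corresponding $\tau_v$ for $P_1$: this requires identifying the vertices of $P_i$ with (images of) cone points $\zeta \in \hC_0$ to which Proposition~\ref{P:extending involutions} applies, which in turn uses that the common unfolding is $C_0$--uber-normalized and that $D_i$ carries cone points to cone points compatibly under the cell structure. The rest — cocompactness of $R_{P_i}^0$ and the triangle–group identification — is essentially bookkeeping, modulo the standard fact (Poincaré polygon theorem) that a discrete cocompact reflection group containing reflections in the three sides of a triangle $\Delta$ has $\Delta$ as a fundamental domain, hence is a triangle group.
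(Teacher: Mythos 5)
Your setup (common unfolding, Lemma~\ref{L:billiards endpoints}, Lemma~\ref{L:billiards topological conjugate}, promotion of the conjugacy from $R_{P_i}$ to $R^0_{P_i}$ via Proposition~\ref{P:extending involutions}, cocompactness, and the final step converting $h\in\PSL_2(\mathbb R)$ into a label-preserving isometry $P_1\to P_2$) matches the paper, and the indiscrete case is handled exactly as in the paper via Proposition~\ref{p:subgroups}. The gap is in the triangle case. You assert that if $R^0_{P_1}$ is discrete, cocompact and contains reflections in the three sides of a triangle $\Delta$, then $\Delta$ is a fundamental domain for the subgroup generated by those reflections ``by the Poincar\'e polygon theorem,'' so that one gets a triangle group. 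This is false in general: the Poincar\'e polygon theorem only applies when the interior angles satisfy the cycle/angle conditions (for a reflection group, angles of the form $\pi/k$), and the hypothesis of the lemma gives no control whatsoever on the angles of $\Delta$. Indeed, in the place where this lemma is used (the proof of Proposition~\ref{P:lots of examples}), the triangle is cut out by the side $s$ and two rays $\hat l_i,\hat l_{i+1}$ whose point of intersection has an arbitrary angle, so $\Delta$ is typically \emph{not} a fundamental domain, and the group generated by reflections in its sides is a triangle group only for a nonobvious reason. Nor can one ``assume it as the definition'': the paper's definition of a triangle group requires the triangle to be a fundamental domain, which is strictly stronger than the hypothesis of the lemma.

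The paper bridges exactly this gap with a different argument (attributed to Alan Reid): let $R^\Delta=\langle r,s,t\rangle$ be the subgroup generated by the three reflections. If $R^\Delta$ is indiscrete one falls back on Proposition~\ref{p:subgroups}; if it is discrete, its orientation-preserving subgroup $\Gamma^\Delta=\langle sr,\,tr\rangle$ has index two in $R^\Delta$, hence is a discrete, cocompact subgroup of $\PSL_2(\mathbb R)$ generated by two elliptic elements, and by a theorem of Purzitsky such a group must be a Fuchsian triangle group. Lemma~\ref{l:trianglegroup} is then applied to $\Gamma^\Delta$ (not to all of $R^0_{P_1}$, which need not itself be a triangle group for the argument) to force $h\in\PSL_2(\mathbb R)$. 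So your overall architecture is right, but the step identifying a Fuchsian triangle subgroup needs the Purzitsky-type input rather than the Poincar\'e polygon theorem; as written, that step of your proof does not go through.
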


\begin{proof} 
Suppose $\CB(P_1) = \CB(P_2)$. By Lemma \ref{L:billiards topological conjugate} $R_{P_1}$ and $R_{P_2}$ are topologically conjugate by a homeomorphism $h \colon \partial \mathbb H \to \partial \mathbb H$.  It follows from Proposition~\ref{P:extending involutions} (after choosing a common unfolding and developing maps $D_1,D_2$ as above) that $R^0_{P_1}$ and $R^0_{P_2}$ are also topologically conjugate by $h$ since the $j^{th}$ vertex of $P_1$ and $P_2$ are given by $D_1(\zeta^j)$ and $D_2(\zeta^j)$ for some $\zeta^j \in \hC_0$.  If $R^0_{P_1}$ is indiscrete then it follows by Proposition \ref{p:subgroups} that $h$ is in $\PSL_2(\mathbb{R})$.  

Next suppose there is a hyperbolic triangle such that $R^0_{P_1}$ contains reflections in all three of its sides; call these reflections $r, s, t$ and let $R^{\Delta}$ be the subgroup of $R^0_{P_1}$ they generate. If $R^{\Delta}$ is indiscrete then so is $R^0_{P_1}$ and $h$ is in $\PSL_2(\mathbb{R})$ by the above, so assume $R^{\Delta}$ is discrete. Let $\Gamma^{\Delta} = \langle sr, tr\rangle$ be the subgroup of $R^{\Delta}$ generated by the two elliptic elements $sr$ and $tr$ (which must be of finite order since $\Gamma^{\Delta}$ is discrete). Note that $\Gamma^{\Delta}$ is orientation preserving since its generators are. Next, observe that $R^{\Delta}=\langle \Gamma^{\Delta}, r\rangle$  and that $r(sr)r = (sr)^{-1}\in\Gamma^{\Delta}$ and $r(tr)r=(tr)^{-1}\in\Gamma^{\Delta}$ and hence $r\Gamma^{\Delta}=\Gamma^{\Delta} r$. It follows that $\Gamma^{\Delta}$ is an index 2 subgroup of $R^{\Delta}$ and in particular it is cocompact since $R^{\Delta}$ is. We have shown that $\Gamma^{\Delta}$ is a discrete cocompact subgroup of $\PSL_2(\mathbb{R})$ generated by two finite order elements; the only such groups are Fuchsian triangle groups (see \cite{Purzitsky}). It follows by Lemma \ref{l:trianglegroup} that $h$ must lie in $\PSL_2(\mathbb{R})$.

Since $h$ is in $\PSL_2(\mathbb{R})$ we can extend it to an isometry of $\mathbb{H}$ which we also denote by $h$.  Since $h$ conjugates the reflection in the $j^{th}$ side of $P_1$ to the reflection in the $j^{th}$ side of $P_2$, it sends the geodesic extension of the $j^{th}$ side of $P_1$ to the geodesic extension of the $j^{th}$ side of $P_2$. This implies that $h$ sends sides of $P_1$ to sides of $P_2$, so defines a label preserving isometry $P_1 \to P_2$.
\end{proof}

Before proving the full billiard rigidity theorem in the next section, we already now see as a direct consequence of the above lemma that billiard rigidity is generic:

\begin{corollary} \label{C:irrational angle rigid} If any interior angle of a polygon $P$ is an irrational multiple of $\pi$, then $P$ is billiard rigid.
\end{corollary}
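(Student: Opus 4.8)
The plan is to deduce this directly from Lemma~\ref{L:billiards indiscrete}: it suffices to show that if some interior angle of $P$ is an irrational multiple of $\pi$, then the group $R^0_P$ is indiscrete (in fact already the smaller group $R_P$ generated by reflections in the sides of $P$ will be indiscrete, and $R_P < R^0_P$ forces $R^0_P$ to be indiscrete as well). Once indiscreteness is established, Lemma~\ref{L:billiards indiscrete} says that any polygon $P_2$ with $\CB(P_2)=\CB(P)$ admits a label-preserving isometry to $P$, which is precisely billiard rigidity.

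The key steps, in order, are as follows. First, fix a vertex $v$ of $P$ at which the interior angle $\theta$ satisfies $\theta/\pi \notin \mathbb{Q}$, and let $r_1,r_2 \in R_P$ be the reflections in the two geodesic lines carrying the sides of $P$ incident to $v$. Second, observe that the product $r_1 r_2$ is a rotation about $v$ through angle $2\theta$ (twice the angle between the two mirrors), and hence lies in $R_P < R^0_P$. Third, since $\theta/\pi$ is irrational the set $\{\,2k\theta \bmod 2\pi : k \in \mathbb{Z}\,\}$ is infinite and accumulates at $0$; equivalently, the cyclic subgroup $\langle r_1 r_2 \rangle < R_P$ consists of rotations about the fixed point $v$ whose rotation angles accumulate at $0$, so this subgroup has non-discrete closure inside the one-parameter group of rotations about $v$. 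Therefore $R_P$, and a fortiori $R^0_P$, is not discrete as a subgroup of $\Isom(\mathbb{H})$. Invoking Lemma~\ref{L:billiards indiscrete} then completes the proof.

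There is essentially no serious obstacle here; the argument is elementary once Lemma~\ref{L:billiards indiscrete} is in hand. The only points warranting a line of care are the standard facts that the composition of reflections in two lines meeting at angle $\theta$ is the rotation by $2\theta$ about their intersection point, and that a single infinite-order rotation generates a non-discrete subgroup of $\PSL_2(\mathbb{R})$ precisely when its angle is an irrational multiple of $2\pi$ --- and $2\theta$ is such a multiple exactly when $\theta/\pi$ is irrational. (If one prefers, the same conclusion can be phrased intrinsically via the vertex stabilizer $\Stab_{R^0_P}(v)$, which contains both the involution $\tau_v$ and the infinite-order rotation $r_1 r_2$, hence is an infinite, non-discrete group, so $R^0_P$ is non-discrete.)
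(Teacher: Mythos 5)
Your proof is correct and follows the same route as the paper: the paper's one-line argument is exactly that an interior angle in $\pi(\mathbb{R}\setminus\mathbb{Q})$ forces $R_P$ (hence $R^0_P$) to be indiscrete, after which Lemma~\ref{L:billiards indiscrete} gives rigidity. You have simply supplied the standard details (the rotation $r_1r_2$ by $2\theta$ and the non-discreteness of an irrational rotation subgroup) that the paper leaves implicit.
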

\begin{proof} The assumption on $P$ implies $R_P$ is indiscrete, and hence Lemma~\ref{L:billiards indiscrete} implies $P$ is billiard rigid.
\end{proof}

\subsection{The \billiardrigidity\!\! and examples} \label{S:billiard rigidity proof}

We are now ready to prove the theorem.

\begin{proof}[Proof of the \billiardrigidity\!] 
Suppose $\CB (P_1) = \CB (P_2)$. If $R^0_{P_1}$ is indiscrete we are done by Lemma \ref{L:billiards indiscrete} so we will assume that it is discrete.  Then, for each $i=1,2$ the union of the lines of reflection for $R_{P_i}^0$ defines a reflective tiling $T_i$ by Corollary~\ref{C:gen reflective}, and $P_i$ is a union of tiles.

The homeomorphism $h: \partial \mathbb{H} \rightarrow \partial \mathbb{H}$ from Lemma \ref{L:billiards topological conjugate} topologically conjugates $R_{P_1}^0$ to $R_{P_2}^0$, and so defines a bijection, $h_*$, between the set of oriented lines of reflection of $R_{P_1}^0$ to those of $R_{P_2}^0$ via the action on the endpoints of such lines.
In particular, the lemma guarantees that $h_*$ sends the line containing the $j^{th}$ side of $P_1$ to the line containing the $j^{th}$ side of $P_2$.
Note that $h_*$ also preserves concurrence of any pair of lines of reflection because the point of intersection is the unique fixed point of a finite subgroup and $h$ conjugates this subgroup of $R_1^0$ to the corresponding subgroup of $R_2^0$. Hence, $h$ determines an equivariant bijection $H'$ from the vertices of $T_1$ to the vertices of $T_2$.

We would like to extend $H'$ to a map $H \colon (\mathbb H, T_1) \to (\mathbb H,T_2)$ that sends $P_1$ to $P_2$ by first extending over the $1$--skeleton, $T_1^{(1)}$, and then over the tiles.  For each edge $[v_1,v_2]$ of the $1$--skeleton of $T_1$, we define $H|_{[v_1,v_2]}$ to send $[v_1,v_2]$ to the geodesic between $H'(v_1)$ and $H'(v_2)$ by the canonical map.  We have thus defined an equivariant map $H|_{T_1^{(1)}}$, and for any line $\ell$ of reflection for $R_{P_1}^0$, $H|_{T_1^{(1)}}$ maps all vertices in $\ell$ to vertices in $h_*(\ell)$, and hence $H|_{T_1^{(1)}}(\ell) = h_*(\ell)$.  To show that $H|_{T_1^{(1)}}$ is a homeomorphism, it suffices to show that for any line of reflection, $\ell$, the ordering of the vertices along $\ell$ agrees with the $H'$--image ordering of vertices along $h_*(\ell)$.


In order to get a contradiction, suppose $\ell$ is an oriented line of reflection of $R_{P_1}^0$ and let $v_1$ and $v_2$ be vertices along $\ell$ such that $H|_{T_1^{(1)}}(v_1)$ and $H|_{T_1^{(1)}}(v_2)$ occur in the opposite order along $h_*(\ell)$.  Take two lines of reflection $\ell_1$ and $\ell_2$ of $R_{P_1}^0$ that are transverse to $\ell$ at $v_1$ and $v_2$ respectively. If $\ell_1$ and $\ell_2$ do not intersect, then $H|_{T_1^{(1)}}$ must not reorder $v_1$ and $v_2$, since $h$ preserves the cyclic order of the end points of $\ell$, $\ell_1$, $\ell_2$. If they do intersect, then there is a subgroup of $R_{P_1}^0$ generated by reflections in sides of a triangle, and $P_1$ and $P_2$ are isometric by Lemma~\ref{L:billiards indiscrete}, which is impossible if the order of $v_1$ and $v_2$ are reversed along $\ell$ (for, in this case, $H|_{T_1}^{(1)}$ is the restriction of an isometry conjugating $R_{P_1}^0$ to $R_{P_2}^0$).
Therefore, the order of $v_1$ and $v_2$ along $\ell$ must be preserved by $H|_{T_1^{(1)}}$, and hence this map is an equivariant homeomorphism from $T_1^{(1)}$ to $T_2^{(1)}$.
 Note that the cyclic order of edges incident to a given vertex given by the embedding of the $1$--skeleton of $T_1$ into $\mathbb{H}$ is preserved by $H|_{T_1^{(1)}}$, since $h$ preserves the cyclic order of the endpoints of the geodesics containing these edges. Therefore, we may extend $H|_{T_1^{(1)}}$ further to the faces of the tiling $T_1$, giving a equivariant homeomorphism $H \colon (\mathbb H, T_1) \to (\mathbb H,T_2)$.  Since $h_*$ sends the line of reflection in the $j^{th}$ side of $P_1$ to the line of reflection in the $j^{th}$ side of $P_2$, it follows that $H$ sends $P_1$ to $P_2$ and preserves labels. 

For every $v$ a vertex of $P_1$, we have $\tau_v \in \Stab_{R_{P_1}^0}(v)$, so this stabilizer is the dihedral group of an even sided polygon, and hence the angle of a tile at $v$ must be of the form $\pi/2k$ for some $k\in\mathbb{N}$.  Since the stabilizer of $v$ in $R_{P_1}^0$ and $H(v)$ in $R_{P_2}^0$ are isomorphic dihedral groups, $H$ preserves the angles of the tiles. This proves one direction of the theorem.

For the converse, assuming (1), we clearly get $\CB(P_1) = \CB(P_2)$, so assume (2).  The reflective tiling $T_i$ of $P_i$ for $i=1,2$, determine reflective tilings of $\mathbb H$ of the same names.  Changing $H$ by an isotopy if necessary, we may assume that it is the restriction of a homeomorphism $\mathbb H \to \mathbb H$ that conjugates $R(T_1)$ to $R(T_2)$.  The homeomorphism $H$ induces a correspondence between the edges of $P_1$ and $P_2$ that preserves labelings, and we construct a common unfolding $S \to \cD P$ with associated metric $\varphi_1$ and $\varphi_2$ with respect to label preserving homeomorphisms $P \to P_i$.  The restricted map $H|_{P_1}$ lifts to a map $H_{\cD P} \colon \cD P_1 \rightarrow \cD P_2$, and we assume that our homeomorphism $\cD P \to \cD P_2$ factors as the composition of $H_{\cD P}$ with the homeomorphism $\cD P \to \cD P_1$.

The doubles $\cD P_1$ and $\cD P_2$ must branch-cover orbifolds $\mathcal{O}_1$ and $\mathcal{O}_2$, respectively, given by taking the quotient of $\mathbb{H}$ by the orientation preserving subgroup $\Gamma_i = R(T_i) \cap \PSL_2(\mathbb R)$. This can be seen as follows. Let $P_i$ and $\bar{P_i}$ be the two copies of the polygon used to construct $\cD P_i$. Define the map $f_i: \cD P_i \rightarrow \mathbb{H}/\Gamma_i$ on each of these copies. Take $f_i|_{P_i} \colon P_i \rightarrow \mathbb{H}/\Gamma_i$ to be the inclusion of $P_i$ into the hyperbolic plane, followed by the quotient by the action of $\Gamma_i$. Take $f_i|_{\bar{P_i}} \colon \bar{P_i} \rightarrow \mathbb{H}/\Gamma_i$ to be the embedding of the polygon into the hyperbolic plane (same as before) \emph{followed by reflecting it across one of its edges}, followed by the quotient by the action of $\Gamma_i$. Note that it does not matter what edge we reflect across in this construction, since any two choices differ by the action of an element of $\Gamma_i$. This is a branched covering because it is possible to find charts in $\mathbb{H}$ covering $\cD P_i$ on which $f_i$ is a local branched covering.

Since the homeomorphism $H$ conjugates $R(T_1)$ to $R(T_2)$, it induces an orbifold homeomorphism $H_{\mathcal{O}}$ fitting into the following commutative diagram.\\
\[
\begin{tikzcd}
(S, \varphi_1) \arrow[d] \arrow[r, "id_S"] & (S, \varphi_2) \arrow[d]\\
\cD P_1 \arrow[d, "f_1"] \arrow[r, "H_{\cD P}"] &\cD P_2 \arrow[d, "f_2"]\\
\mathcal{O}_1 \arrow[r, "H_{\mathcal{O}}"] & \mathcal{O}_2
\end{tikzcd}
\]

Note that the cone points of $S$ are sent to the cone points/vertices of $\cD P_i$, which are then sent to even order orbifold points in $\mathcal{O}_i$.  This is because at each polygon vertex, the incident tiles of $T_i$ have angle $\frac{\pi}{2k}$, so $\Gamma_i$ must contain a rotation through angle $\frac{\pi}{k}$, which has even order.

Theorem \ref{T:deforming} implies $\CG_{\tphi_1} = \CG_{\tphi_2}$, while by Lemma~\ref{L:examples C--uber-normalized}, the preimage $C$ of the even order orbifold points of $\mathcal O_1$ is a concurrency set for $(\varphi_1,\varphi_2)$ with $\varphi_2$ being $C$--uber-normalized.  The cell structure on $S$ has $2$--cells that map isometrically to $P_i$ with respect to $\varphi_i$ via the common unfolding, and sides labeled according to the labels of $P$ (and hence $P_i$).   As usual, we lift this to a cell structure on $\tS$ with induced labels on the edges.
Since every edge is simultaneously a $\tphi_1$--saddle connection and $\tphi_2$--saddle connection, Lemma~\ref{l:transverse} implies that for every nonsingular $\tphi_1$--geodesic $\eta$, $g(\eta)$ crosses the same set of saddle connections.  

Now given any ${\bf b} \in \CB(P_1)$, let $\eta$ be a nonsingular $\tphi_1$--geodesic that projects to the billiard trajectory in $P_1$ with bounce sequence ${\bf b}$.  Recording the labels on the edges crossed by $\eta$ we get the sequence ${\bf b}$.  Since $g(\eta)$ is a $\tphi_2$--geodesic that crosses the same set of edges as $\eta$, it projects to a billiard trajectory in $P_2$ with the same bounce sequence ${\bf b}$, and thus $\CB(P_1) \subset \CB(P_2)$. A symmetric argument proves the other containment, and hence $\CB(P_1) = \CB(P_2)$.
\end{proof}

The next proposition is an application to the \billiardrigidity\!\!, giving concrete examples of both rigid and flexible polygons.

\begin{proposition} \label{P:lots of examples} 
Let $n\geq4$.  Suppose $P,P'$ are $n$--gons (with labeled sides) and write $\alpha_i$ and $\alpha_i'$ to denote the interior angle between the sides of $P$ and $P'$, respectively, labeled $i$ and $i+1$ (indices taken modulo $n$).   Then we have the following:
\begin{enumerate}
\item[(1)] If for all $i$, $\alpha_i$ is an even submultiple of $\pi$, then $\CB(P) = \CB(P')$ if and only if $\alpha_i = \alpha_i'$, for all $i$.
\item[(2)] If for all $i$, $\alpha_i$ is not an even submultiple of $\pi$, then $\CB(P) = \CB(P')$ if and only if $P$ and $P'$ are isometric by a label preserving isometry.
\end{enumerate}
\end{proposition}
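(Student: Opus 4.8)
The plan is to derive both statements as consequences of the \billiardrigidity\!\! together with the classification of reflectively tiled polygons. The key observation is that, by Corollary~\ref{C:billiard flexible}, a polygon is billiard flexible precisely when it is reflectively tiled with a non-triangular tile, and that the homeomorphism $H$ in case (2) of the \billiardrigidity preserves interior angles of tiles. So the proof breaks into understanding what constraints the hypotheses on the angles $\alpha_i$ place on reflective tilings of $P$.

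\textbf{Proof of (1).} First I would observe that if $\alpha_i = \alpha_i'$ for all $i$, then both $P$ and $P'$ are reflectively tiled: since each interior angle is an even submultiple of $\pi$, the group $R_P^0 = R_P$ generated by reflections in the sides is discrete by the Poincar\'e polygon theorem (each vertex angle is $\frac{\pi}{2k}$, hence in particular an integer submultiple of $\pi$), so $P$ is a good polygon and is itself a tile; the extra evenness condition on the angles is exactly the condition required of tile-vertices that are vertices of $P$. The ``identity'' tiling (a single tile equal to $P$) then exhibits $P$ as reflectively tiled, and since $P$ and $P'$ have the same sequence of angles, the label-preserving homeomorphism $P \to P'$ is cellular for these one-tile tilings and preserves interior angles. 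Thus case (2) of the \billiardrigidity applies and $\CB(P) = \CB(P')$. Conversely, suppose $\CB(P) = \CB(P')$. Since all $\alpha_i$ are even submultiples of $\pi$, $R_{P}^0 = R_P$ is discrete, so we are in the situation governed by case (1) or (2) of the \billiardrigidity. In either case there is a label-preserving homeomorphism $P \to P'$ (an isometry in case (1)) that maps tiles to tiles preserving their interior angles; taking the one-tile tiling of $P$, the image tiles in $P'$ must have the same interior angles, and comparing angles at corresponding vertices gives $\alpha_i = \alpha_i'$ for all $i$.

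\textbf{Proof of (2).} The ``if'' direction is immediate. For ``only if'', suppose $\CB(P) = \CB(P')$ with no $\alpha_i$ an even submultiple of $\pi$. I claim $R_P^0$ must be discrete and that $R_P^0$ contains reflections in all three sides of a triangle, or $R_P^0$ is indiscrete; in either case Lemma~\ref{L:billiards indiscrete} delivers a label-preserving isometry $P \to P'$. If $R_P^0$ is indiscrete we are done. So assume $R_P^0$ is discrete; then by the \billiardrigidity there is a label-preserving homeomorphism $H \colon P \to P'$ (isometry in case (1)) mapping tiles to tiles and preserving interior angles, where the tilings come from the reflective tilings $T_1, T_2$ determined by $R_P^0, R_{P'}^0$. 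The point is to show the tile of $T_1$ is a triangle: if $R_P^0$ is discrete, then as in the proof of the \billiardrigidity the union of lines of reflection of $R_P^0$ tiles $\mathbb H$ and $P$ is a union of tiles, each tile being a good polygon. At each vertex $v$ of $P$, the stabilizer $\Stab_{R_P^0}(v)$ contains $\tau_v$ and is dihedral of order $2m$ with $m$ even (since $R_P^0$ contains the order-two rotation $\tau_v$ and is dihedral), so the tile angle at $v$ is $\frac{\pi}{m}$ with $m$ even, i.e.\ $\alpha_i$ would be a sum of even submultiples of $\pi$ emanating from the tiles meeting at that vertex --- and if $P$ itself were a tile this would force $\alpha_i = \frac{\pi}{m}$ an even submultiple of $\pi$, contradicting the hypothesis. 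Hence $P$ is a \emph{proper} union of at least two tiles, so some tile $\tile$ has a side in the interior of $P$; since $\tile$ is a convex polygon that is strictly smaller than $P$ and tiles $\mathbb H$ by reflections, one checks (this is the content of needing a triangular tile) that if $\tile$ were non-triangular we could still have $\CB(P) = \CB(P')$ via case (2), so we must rule that out using the angle hypothesis. The main obstacle --- and the step I would spend the most care on --- is precisely this: showing that under the hypothesis that \emph{no} $\alpha_i$ is an even submultiple of $\pi$, the reflective tiling of $P$ (when $R_P^0$ is discrete) cannot have a non-triangular tile, equivalently that $R_P^0$ contains reflections in the three sides of a triangle. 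I would argue this by a Gauss--Bonnet / angle-counting argument at the vertices of $P$: an interior vertex of the tiling contributes a full $2\pi$ split into tile-angles each of the form $\frac{\pi}{k}$, while a vertex $v$ of $P$ contributes $\alpha_v$ split into even-submultiple tile-angles; if all tiles were $m$-gons with $m \geq 4$, the constraint that tile-angles sum correctly at each vertex together with $\alpha_v$ not being an even submultiple forces a contradiction with discreteness or with the area/combinatorics, whereas triangular tiles are exactly flexible enough to avoid it. Once a triangular subgroup (or indiscreteness) is established, Lemma~\ref{L:billiards indiscrete} finishes the argument.

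I should note that an alternative, cleaner route for (2) is to invoke Corollary~\ref{C:billiard flexible} directly: $\CB(P) = \CB(P')$ with $P \not\cong P'$ forces $P$ billiard flexible, hence reflectively tiled with a non-triangular tile; but a non-triangular tile $\tile$ has at least four vertices each with interior angle $\frac{\pi}{k}$, and since $P$ shares vertices with some tiles and the tile-angle at a shared vertex must be $\frac{\pi}{2k}$, a short combinatorial argument shows at least one $\alpha_i$ is forced to be an even submultiple of $\pi$ (e.g.\ when a vertex of $P$ is a vertex of exactly one tile), contradicting the hypothesis --- unless the contradiction can be routed through every vertex of $P$ being shared among multiple tiles, which the non-triangularity and local picture preclude. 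Either way the heart of the matter is the local analysis at vertices of $P$ in a reflective tiling, and that is the step to get right.
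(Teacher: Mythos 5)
Your part (1) is correct and follows essentially the paper's argument: both directions reduce to the \billiardrigidity\!\!, using the one-tile reflective tiling when the angles agree, and reading off angles from the angle-preserving tile map in the converse.

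Part (2), however, has a genuine gap exactly at the step you yourself flag. After reducing to the case that $R_P^0$ is discrete, everything hinges on producing a triangle all three of whose side-reflections lie in $R_P^0$ (so that Lemma~\ref{L:billiards indiscrete} applies), or equivalently ruling out a non-triangular tile; you never prove this, you only propose a ``Gauss--Bonnet / angle-counting argument'' and assert that triangular tiles are ``exactly flexible enough'' to escape it. Your alternative route through Corollary~\ref{C:billiard flexible} does not close the gap either: at a vertex of $P$ the interior angle $\alpha_i$ is the \emph{sum} of the tile angles there, each of the form $\frac{\pi}{2k}$, and such a sum (e.g.\ $\frac{3\pi}{4}$ from three tiles of angle $\frac{\pi}{4}$) need not be an even submultiple of $\pi$; so the hypothesis on the $\alpha_i$ does not immediately contradict the existence of a reflective tiling with a non-triangular tile, and the parenthetical escape clause you mention (``every vertex of $P$ shared among multiple tiles'') is precisely the case you would have to exclude and do not.

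For comparison, the paper proves this step concretely and quite differently from what you sketch. Irrational angles are dispatched by Corollary~\ref{C:irrational angle rigid}; otherwise each $\alpha_i=\frac{p_i\pi}{q_i}$ with either $p_i>1$, or $p_i=1$ and $q_i$ odd. A cutting induction shows that for some $i$ the interior angle bisectors at the adjacent vertices $v_i,v_{i+1}$ intersect; replacing each bisector, when $p_j>1$, by the ray making angle $\frac{\pi}{q_j}$ with the side $s$ between them, one gets a genuine hyperbolic triangle bounded by $s$ and the two rays. Finally, using that $R_P^0$ contains the reflection in $s$, the rotation by $2\alpha_j$ about $v_j$, and the involution $\tau_{v_j}$, one checks (separately in the two arithmetic cases) that $R_P^0$ contains the rotation by $\frac{2\pi}{q_j}$, respectively $\frac{\pi}{q_j}$, hence the reflections in both rays. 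That puts reflections in all three sides of a triangle inside $R_P^0$, and Lemma~\ref{L:billiards indiscrete} finishes. Some argument of this explicit kind (or a correct completion of your angle-counting idea) is needed before your proof of (2) can stand.
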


\begin{proof}
\noindent First assume that we are in case (1), and hence $\alpha_i$ is an even submultiple of $\pi$ for all $i$.  If $\CB(P) = \CB(P')$ it follows immediately from the \billiardrigidity\!\! that $\alpha_i = \alpha_i'$ for all $i$.
For the converse, assume $\alpha_i = \alpha_i'$ for all $i$.  In this case $P$ (respectively, $P'$) is reflectively tiled by the single tile $P$ (respectively, $P'$).  Then any label preserving homeomorphism $H \colon P \to P'$ trivially maps tiles to tiles and preserves interior angles, since $\alpha_i = \alpha_i'$, and so by the \billiardrigidity we have $\CB(P) = \CB(P')$.

Now suppose none of the angles $\alpha_j$ are even submultiples of $\pi$. If $P$ and $P'$ are isometric by a label preserving isometry, then we must have $\CB(P)=\CB(P')$. For the converse, assume $\CB(P)=\CB(P')$. First note that if there exists $i$ with $\alpha_i\notin\mathbb{Q}\pi$ then $P$ and $P'$ are isometric by a label preserving isometry by Corollary \ref{C:irrational angle rigid}. Hence we will assume that each $\alpha_i$ is of the form $\frac{p_i\pi}{q_i}$ where $p_i, q_i\in\mathbb{N}$ with either $p_i>1$ or $p_i=1$ and $q_i$ odd. We will show that $R_{P}^0$ contains reflections in all three sides of a triangle, and hence by Lemma \ref{L:billiards indiscrete} we must have that $P$ and $P'$ are again isometric by a label preserving isometry. To obtain such a triangle, we first show the following claim.  Let $v_i$ be the vertex of $P$ corresponding to angle $\alpha_i$ and let $l_i$ be the geodesic ray emanating from $v_i$, traveling into $P$, and bisecting the angle $\alpha_i$. 


\begin{claim} 
There exists an $i$ such that $l_i$ and $l_{i+1}$ intersect (where indices are taken modulo $n$). 
\end{claim}

\begin{proof}[Proof of Claim]
Pick any vertex $v_j$ and cut $P$ along $l_j$ until the polygon has been divided into two. Note that at least one of the resulting polygons is an $n_1$--gon with $3\leq n_1<n$, call it $P_1$.  Let $v_k$ be the vertex adjacent to $v_j$ in $P_1$ and not incident to $l_j$. Note that $v_k$ is either $v_{j+1}$ or $v_{j-1}$. If $l_k$ intersects $l_j$ we are done. If not, cut $P_1$ along $l_{k}$, resulting in two new polygons, and let $P_2$ be the one not having a subsegment of $l_j$ as a side. Then $P_2$ is an $n_2$--gon with $3\leq n_2<n_1$. We repeat the process by considering the vertex $v_l$ adjacent to $v_k$ in $P_2$ and not incident to $l_{k}$. Note that either $(k,l)=(j+1, j+2)$ or $(k, l)=(j-1, j-2)$. This process must terminate since each step reduces the number of sides of the resulting polygon and if we arrive at a triangle, having a subsegment of a bisector $l$ as one of its sides, the vertex $v_i$ not incident to $l$ must have $l_i$ intersecting $l$.
\end{proof}

Now, using the claim we choose $i$ such that $l_i$ and $l_{i+1}$ intersect. Let $s$ denote the side of $P$ labeled ${i+1}$, that is, the side between $v_i$ and $v_{i+1}$. For $j=1,2$ write $\alpha_j=\frac{p_j\pi}{q_j}$ as above. If $p_j=1$ set $\hat{l}_j$ to be the bisecting ray $l_j$ defined above and if $p_j>1$ let $\hat{l}_j$ be the ray emanating from $v_j$ and making angle $\frac{\pi}{q_j}$ with $s$ inside $P$. Note that in all cases, $\hat{l}_{i}$ and $\hat{l}_{i+1}$ must intersect and so $s$ together with subsegments of $\hat{l}_{i}$ and $\hat{l}_{i+1}$ bound a triangle $\Delta$. We claim that $R_P^0$ contains reflections in all three of its sides. 

To see this, first note that $R_P^0$ contains the reflection $r$ in the side $s$ by definition and it contains a rotation about $v_j$ of angle $2\alpha_j$ for $j=1,2$. If $\alpha_j=\frac{\pi}{q_j}$ for some odd $q_j$ then, since $R_{P}^0$ contains rotations about $v_j$ both of angle $\frac{2\pi}{q_j}$ and of angle $\pi$, it also contains the rotation $\sigma_j$ about $v_j$ of angle $\frac{\pi}{q_j}$. Reflection in $\hat{l}_j=l_j$ is then given by  $\sigma_{i}\circ r$ for $j=i$ or by $\sigma^{-1}_{i+1}\circ r$ for $j=i+1$. If $\alpha_j=\frac{p_j\pi}{q_j}$, for $p_j>1$, then $R_P^0$ contains a rotation about $v_j$ of angle $\frac{2p_j\pi}{q_j}$ and hence also the rotation $\sigma_j$ about $v_j$ of angle $\frac{2\pi}{q_i}$ and reflection in $\hat{l}_j$ is then given by  $\sigma_{i}\circ r$ or $\sigma^{-1}_{i+1}\circ r$.

Hence $R_p^0$ contains reflections in all three sides of the triangle $\Delta$ as claimed and by Lemma \ref{L:billiards indiscrete} we are done.

\end{proof}

\newcommand{\polygons}{\frak P}

In case (1) of the proposition we can also compute the dimension of the space of polygons having the same bounce spectrum.  To state this precisely, let
\[ \polygons(P) = \{P' \subset \mathbb H \mid \CB(P') = \CB(P)\}/\!\sim , \]
where $P' \sim P''$ if there is a label preserving isometry from $P'$ to $P''$.
\begin{corollary} \label{C:deforming polygons} If $P$ is an $n$--gon and all interior angles are even submultiples of $\pi$, then $\polygons(P) \cong \mathbb R^{n-3}$.
\end{corollary}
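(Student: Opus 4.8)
By Proposition~\ref{P:lots of examples}(1), once $P$ is an $n$--gon with all interior angles even submultiples of $\pi$, the polygons $P'$ with $\CB(P')=\CB(P)$ are exactly (up to label-preserving isometry) the $n$--gons $P'$ whose $i^{th}$ interior angle $\alpha_i'$ equals $\alpha_i$ for all $i$. So the plan is simply to identify $\polygons(P)$ with the space of compact hyperbolic $n$--gons (up to label-preserving isometry) with prescribed interior angle sequence $(\alpha_1,\dots,\alpha_n)$, and then compute the dimension of that space to be $n-3$.

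First I would set up a parameter space for hyperbolic $n$--gons up to isometry. A convex polygon is determined up to isometry by its side lengths $(\ell_1,\dots,\ell_n)$ together with the interior angles, subject to the hyperbolic analogue of the Gauss--Bonnet/closing-up constraints; more robustly, one can argue via the holonomy representation. Fixing the angle sequence, consider the group $R_P$ generated by reflections in the sides: moving around the boundary, the product of the $n$ reflections (in a fixed cyclic order, with the prescribed rotation angles $2\alpha_i$ inserted at the vertices) must be trivial, which is precisely the condition that the polygon closes up. Concretely, I would parametrize by the side lengths $(\ell_1,\dots,\ell_n)\in\mathbb R_{>0}^n$ (with the angles fixed) and observe that the closing-up condition is that a certain product of matrices in $\PSL_2(\mathbb R)$, depending analytically on the $\ell_i$, equals the identity. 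Since $\PSL_2(\mathbb R)$ is $3$--dimensional, this imposes (generically) $3$ independent conditions, cutting the $n$--dimensional space of lengths down to dimension $n-3$. One then needs to check: (a) the solution set is nonempty (it contains $P$ itself) and is a smooth manifold near $P$ (the differential of the closing-up map is surjective onto $\mathfrak{sl}_2(\mathbb R)$ — this is a standard transversality computation, essentially the statement that the deformation space of a hyperbolic polygon with fixed angles is unobstructed), and (b) the map from this length parameter space to $\polygons(P)$ is a bijection onto a neighborhood, i.e.\ no further quotient is needed because we have already quotiented by isometries and labels pin down the cyclic order. Finally, I would note the space is connected (the set of admissible length tuples with fixed admissible angles is connected — e.g.\ one can always flow side lengths monotonically), so it is globally diffeomorphic to $\mathbb R^{n-3}$; if connectedness is awkward to establish directly one can instead invoke that $\polygons(P)$ is an open subset of an $(n-3)$--manifold and is nonempty, giving $\polygons(P)\cong\mathbb R^{n-3}$ at least as the statement is intended (a local parametrization), which I suspect is all the corollary claims.

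The key steps in order: (i) apply Proposition~\ref{P:lots of examples}(1) to reduce $\polygons(P)$ to $\{$hyperbolic $n$--gons with interior angles $(\alpha_1,\dots,\alpha_n)\}/\!\!\sim$; (ii) write down the closing-up map $\Phi\colon\mathbb R_{>0}^n\to\PSL_2(\mathbb R)$ sending a length vector to the holonomy around the boundary, so that $\polygons(P)=\Phi^{-1}(\mathrm{id})$; (iii) show $d\Phi$ has full rank $3$ at the length vector of $P$, so $\Phi^{-1}(\mathrm{id})$ is a smooth $(n-3)$--manifold near $P$; (iv) identify this manifold with $\polygons(P)$ (injectivity of lengths $\mapsto$ isometry class with fixed angle data) and establish connectedness/contractibility to conclude it is diffeomorphic to $\mathbb R^{n-3}$.

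The main obstacle I expect is step (iii): proving that the closing-up map is a submersion at $P$, i.e.\ that one can realize every infinitesimal element of $\mathfrak{sl}_2(\mathbb R)$ as the holonomy variation induced by varying the side lengths while keeping all angles fixed. This is a rigidity-flavored infinitesimal computation; the cleanest route is probably to argue that the tangent space to the space of fixed-angle polygons is computed by a Fox-calculus/group-cohomology argument — the obstruction lives in $H^2$ of the corresponding reflection-orbifold group with coefficients in $\mathfrak{sl}_2(\mathbb R)$, which vanishes for a disk orbifold — or, more elementarily, to exhibit explicit one-parameter length deformations (e.g.\ using right-angled or near-right-angled pentagon/hexagon identities, or cut-and-reglue moves along diagonals) whose holonomy derivatives span $\mathfrak{sl}_2(\mathbb R)$. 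A secondary (minor) obstacle is pinning down exactly what ``$\cong\mathbb R^{n-3}$'' is meant to assert — if it is meant globally, one additionally needs that the fixed-angle deformation space is connected and simply connected, which again follows from the disk-orbifold picture (Teichmüller space of a disk orbifold with boundary reflectors and fixed corner angles is a cell), but if read as a local/dimension statement the transversality in step (iii) suffices.
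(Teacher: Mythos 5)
Your step (i) is exactly the paper's reduction: Proposition~\ref{P:lots of examples}(1) identifies $\polygons(P)$ with the space of $n$--gons with the prescribed angle sequence, up to label-preserving isometry. From there you diverge: you propose a holonomy/closing-up map $\Phi\colon\mathbb R_{>0}^n\to\PSL_2(\mathbb R)$ on side lengths with angles frozen, and want $\polygons(P)=\Phi^{-1}(\mathrm{id})$ cut out transversally, giving dimension $n-3$. The paper instead gives explicit global coordinates: choose $n-3$ pairwise disjoint geodesic arcs joining non-adjacent sides (the polygonal analogue of a pants decomposition), which cut any fixed-angle polygon into right-angled hexagons, pentagons with one prescribed non-right angle, and quadrilaterals with two; each complementary piece is determined by the lengths of the cutting arcs it meets (classical hexagon/quadrilateral formulas), so the arc lengths give a homeomorphism $\polygons(P)\cong\mathbb R_+^{n-3}$. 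The payoff of the paper's route is that it delivers the \emph{global} cell structure and the dimension in one stroke, with no transversality or cohomology input; your route is more conceptual and would generalize, but it buys the conclusion only after more work.

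That extra work is where your proposal currently has a genuine gap. You explicitly leave unproved the surjectivity of $d\Phi$ at $P$ (your step (iii)), offering only candidate strategies ($H^2$ vanishing for the disk reflection orbifold, or exhibiting explicit length deformations spanning $\mathfrak{sl}_2(\mathbb R)$); as written, this is an assertion, not an argument. Moreover, the corollary is a global statement, so your fallback reading (``a local parametrization suffices'') does not discharge it: even granting the submersion, you still need $\Phi^{-1}(\mathrm{id})$ (intersected with the open embeddedness condition) to be nonempty, connected and contractible before you can write $\polygons(P)\cong\mathbb R^{n-3}$, and your proposal only gestures at this via ``the fixed-angle Teichm\"uller space is a cell'' --- which is essentially the fact to be proved. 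If you want to complete your approach, the cleanest fix is in fact to import the paper's decomposition at that point: the $n-3$ arc lengths give a continuous bijection from $\mathbb R_+^{n-3}$ onto the fixed-angle space, which settles nonemptiness, smooth structure, connectedness and contractibility simultaneously and makes the transversality computation unnecessary.
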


\begin{proof} Let $\alpha_j$ be the interior angle between side $j$ and $j+1$ of $P$ for all $j$ (indices taking modulo $n$).  According to Proposition~\ref{P:lots of examples}, $\polygons(P)$ is precisely the set of polygons $P'$ (up to label preserving isometry) for which the interior angle between the side $j$ and $j'$ is $\alpha_j$ for all $j$.  This space is well-known to be homeomorphic to $\mathbb R^{n-3}_+$, but we briefly explain this for completeness.  One approach to this is to construct a polygonal analogue of a pants decomposition and Fenchel-Nielsen coordiantes (see e.g.~\cite[Section~4.6]{Thurston:book}) which we sketch now.

Take a maximal collection of pairwise disjoint arcs in $P$ connecting distinct, non-adjacent sides, so that no two arcs connect the same pair of sides.  The number of such arcs is precisely $n-3$.  Replacing each arc by a minimal length geodesic connecting the same sides, these geodesic segments produce a decomposition of $P$ into right-angled hexagons, pentagons with at most one non-right angle (which occurs at a vertex of $P$) and quadrilaterals with at most two non-right angles (each of which occur at a vertex of $P$).  We note that if angles $\alpha_j$ and $\alpha_{j+1}$ are both $\tfrac{\pi}2$ and if one of the arcs has endpoints on side $j$ and $j+2$ (indices modulo $n$), then the geodesic replacement arc {\bf is} the side labeled $j+1$.

The lengths of the geodesic arcs $(\ell_1,\ldots,\ell_{n-3}) \in \mathbb R_+^{n-3}$ serve as parameters since the space of deformations of each of the complementary polygons, preserving all angles, are parameterized by these lengths (see e.g.~\cite[Section~4.6]{Thurston:book} for the case of a right-angled hexagon--the other cases are simpler; c.f.~\cite[\S7.17-7.19]{Beardon}).  Figure~\ref{F:F-N polygon} illustrates an example. \end{proof}


\begin{figure}[h]
\begin{center}
\captionsetup{width=.85\linewidth}
\begin{tikzpicture}[scale=.25]
\node at (.25,0) {\includegraphics[width=.4\linewidth]{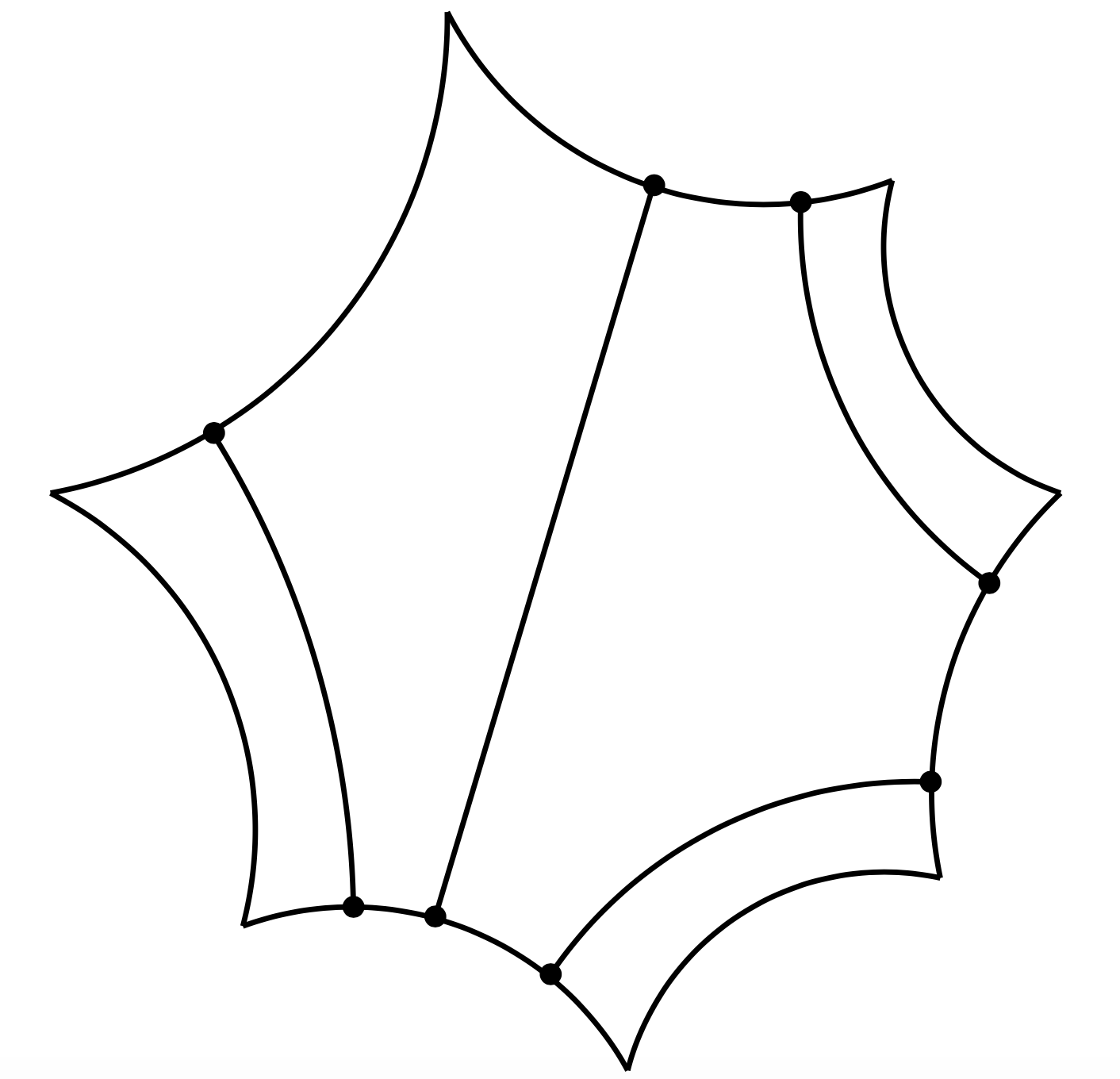}};
\node at (-4,-1.8) {\small $\ell_1$};
\node at (-.4,2) {\small $\ell_2$};
\node at (5.5,3) {\small $\ell_3$};
\node at (4,-5) {\small $\ell_4$};
\end{tikzpicture}
\caption{\small Arcs subdividing a $7$--gon with all acute interior angles into one pentagon, one hexagon, and three quadrilaterals. (The dots indicate orthogonal intersections.) The deformations of the polygon, preserving the interior angles are parameterized by the lengths $\ell_1,\ell_2,\ell_3,\ell_4$.} \label{F:F-N polygon}
\end{center}
\end{figure}

By the \billiardrigidity \hspace{-.095cm}, a polygon $P_1$ can only be billiard flexible if it admits a reflective tiling for which the tile $\tile$ is a good polygon.  Moreover, we can deform $P_1$ to a non-isometric polygon $P_2$ with $\CB(P_1) = \CB(P_2)$ precisely by deforming the tile $\tile$.  The deformation space of $\tile$, preserving the interior angles, has dimension $(n-3)$ (and is a point if $n=3$) by the same argument as the one given in the proof of Corollary~\ref{C:deforming polygons}. This gives us the following.\\

\noindent
{\bf Corollary~\ref{C:billiard flexible}} {\em A hyperbolic polygon $P$ is billiard flexible if and only it is reflectively tiled with a non-triangular tile.} \qed

\subsection{Generalized diagonals}

As discussed in the introduction, a {\em generalized diagonal} in a polygon $P$ is finite billiard trajectory $\gamma \colon [a,b] \to P$ that starts and ends at a vertex of the polygon.  Such a segment has a {\em finite} bounce sequence ${\bf b}(\gamma) = (b_1,\ldots,b_n)$ and we denote the set of all such finite bounce sequences $\CB_\Delta(P)$.\\

\noindent
{\bf Theorem~\ref{T:gen diagonal hyp}}
{\em Given polygons $P_1,P_2 \subset \mathbb H$, we have $\CB_\Delta(P_1) = \CB_\Delta(P_2)$ if and only if one of the two conclusions of the \billiardrigidity holds.}\\

\begin{proof} First, suppose $\CB_\Delta(P_1) = \CB_\Delta(P_2)$ and let $S$ be a common unfolding with metrics $\varphi_1$ and $\varphi_2$, respectively.  The same idea as in the proof of Lemma~\ref{L:billiards endpoints} implies that every $\tphi_1$--saddle connection is uniformly close to some $\tphi_2$--saddle connection (possibly having different initial/terminal endpoints) by considering the associated bounce sequences.  Now observe that any non-singular $\tphi_i$--geodesic is a limit of $\tphi_i$--saddle connections, for $i=1,2$.  Given a $\tphi_1$--geodesic $\eta$ which is a limit of $\tphi_1$--saddle connections, $\eta$ straightens to a basic $\tphi_2$--geodesic, and hence $\CG_{\tphi_1} \subset \CG_{\tphi_2}$.  Interchanging the roles of $\tphi_1$ and $\tphi_2$, we get equality $\CG_{\tphi_1} = \CG_{\tphi_2}$.  At this point, the rest of the proof of the \billiardrigidity can be carried out verbatim since $\CG_{\tphi_1} = \CG_{\tphi_2}$ is the only fact that is used after choosing a common unfolding.

For the reverse implication, we assume either (1) or (2) of the \billiardrigidity holds.  As in the proof of the \billiardrigidity we choose a common unfolding $S$ with associated metrics $\varphi_1$ and $\varphi_2$ and deduce that $\CG_{\varphi_1} = \CG_{\varphi_2}$.  As in that proof, appealing to Lemma~\ref{L:examples C--uber-normalized} we see that the preimage of the even order orbifold points is a concurrency set $C \subset S$ for $(\varphi_1,\varphi_2)$, with $\varphi_2$  $C$--uber-normalized with respect to $\varphi_1$.  By Lemma~\ref{L:relative straightening} any $\tphi_1$--saddle connection straightens to a $\tphi_2$--saddle connection in the complement of $C$.  As above we have a cell structure on $\tS$ whose $2$--cells are copies of the topological polygon $P$ with labeled $1$--cells, and it follows that a $\tphi_1$--saddle connection crosses exactly the same sequence of labeled $1$--cells as the $\tphi_2$--straightening (since the $1$--cells are geodesics with respect to both metrics).  Therefore, every sequence in $\CB_\Delta(P_1)$ is a sequence in $\CB_\Delta(P_2)$, and vice-versa, proving $\CB_\Delta(P_1) = \CB_\Delta(P_2)$, as required.
\end{proof}

\subsection{Connection to prior results and questions} \label{S:UG-connections} 
In \cite{UlGi}, Ullmo and Giannoni consider compact hyperbolic polygons $P \subset \mathbb H$ for which the interior angles are all acute.  The authors provide a nearly complete description of the bounce spectra for such a polygon.  The ideal objective is to describe finite words in the label set $\mathcal A$ that are forbidden in a bounce sequence of a billiard trajectory in $P$, and then characterize bounce sequences precisely as those that do not contain such subwords.  The authors do not quite obtain such a set, but do provide approximations to this.  Their analysis divides up into two cases: billiards in good polygons (which they call {\em tiling billiards}) and the rest. 


In the case that the polygon $P$ is not good, for every $N >0$, Ullmo-Giannoni produce two sets of words of length at most $N$; forbidden words and {\em authorized} words.   The authorized words have the property that any sequence for which all subwords of length at most $N$ are authorized are necessarily bounce sequences of a billiard trajectory in $P$ (thus, forbidden words provide necessary conditions and authorized words provide sufficient conditions).  They also show that as $N$ tends to infinity, the ratio of the number of authorized to non-forbidden words tends to $1$.  It would be interesting to see if, at least for acute polygons, one could push this analysis to give another proof of rigidity for such polygons.

To describe the situation for a good polygon $P$, let $\tfrac{\pi}{k_j}$ denote the interior angle at the vertex of $P$ between sides $j$ and $j+1$ (modulo $n$), where $k_j > 2$ is an integer for all $j$.  In this case, Ullmo-Giannoni show that length 2 subwords which simply repeat a label are forbidden as are sequences of the form $j,j+1,j,j+1,\ldots$ or $j+1,j,j+1,j,\ldots$ of length greater than $k_j$ and prove that this {\em almost} characterizes bounce sequences of billiard trajectories.  More precisely, they show that given a sequence that contains no forbidden words as above, after {\em possibly} replacing some of the length $k_j$ subwords of the form $j,j+1,j,j+1,\ldots$ with an equal length word of the form $j+1,j,j+1,j,\ldots$ (or vice versa), for each $j$, then the resulting sequence is the bounce sequence of a billiard trajectory in $P$.  See also Nagar--Singh \cite{NaSi}.

In the case that $P_1$ and $P_2$ are nonisometric, good, acute $n$--gons for which the corresponding interior angles (with respect to fixed labelings) are equal, their results provide an interesting connection to our work.
For concreteness, say the angle between sides $j$ and $j+1$ is $\tfrac{\pi}{k_j}$, as above.  If $P_1$ is billiard rigid (for example, $P_1$ could be as in case (2) of \ref{P:lots of examples}) then $\CB(P_1) \neq \CB(P_2)$.  On the other hand, the results of \cite{UlGi} described in the previous paragraph imply that given any bounce sequence ${\bf b} \in \CB(P_2)$, after possibly replacing some of the subwords of the form $j,j+1,j,j+1,\ldots$ of length $k_j$ by the length $k_j$ word of the form $j+1,j,j+1,j,\ldots$ (or vice versa) for all $j$, one obtains a bounce sequence ${\bf b'} \in \CB(P_1)$.  In particular, the difference between the difference between $\CB(P_1)$ and $\CB(P_2)$ is quite subtle.\\

\noindent
{\bf Questions.} We end with some questions and possible future directions. To start, it would be interesting to investigate further to what extent Ullmo-Giannoni's methods and ours are complementary. In particular:

\begin{itemize}
\item[(Q1)] Can one prove the \billiardrigidity using the techniques of \cite{UlGi} (perhaps restricted to polygons with only acute interior angles)?
\end{itemize} 

In an earlier paper Ullmo-Giannoni \cite{UlGi2} characterized the bounce spectra for ideal hyperbolic polygons; that is, polygons all of whose vertices are on $\partial\mathbb{H}$ (see also Nagar--Singh \cite{NaSi}).  In this setting, the authors find that bounce sequences of an ideal $n$--gon are precisely the biinfinite sequences never repeating a label and not containing an infinite repeating string of the form
\[  \overline{i,i+1} = i, i+1,i,i+1,i,i+1,\ldots\]
either forward or backward, with indices taken modulo $n$.  In particular, any two ideal $n$--gons, $P_1$ and $P_2$ have the same bounce spectra, $\CB(P_1) = \CB(P_2)$.  This fits into the \billiardrigidity if one extends the notion of reflective tiling to allow vertices at infinity (where there are no longer any conditions on interior angles at such points).  Indeed, then $P_1$ and $P_2$ are reflectively tiled with a single tile (themselves).  Observe that reflections in the sides of an ideal polygon $P$ generate a reflective tiling of $\mathbb H$ by copies of $P$.

Since ideal polygons are only a special case of noncompact polygons, one could further ask what happens for noncompact polygons in general (some special cases are considered in Nagar--Singh \cite{NaSi}).

\begin{itemize}
\item[(Q2)] Is the \billiardrigidity true if one allows for finite area noncompact hyperbolic polygons? How about infinite area polygons?  \end{itemize} 

Note that noncompact polygons could also have infinitely many sides (even in the finite area, convex case), and so some care should be taken in clearly defining what is meant by a cyclic labeling of the sides.  This question also makes sense in the case of Euclidean polygons of infinite area.

\begin{itemize}
\item[(Q3)] Is there some verson of the \billiardrigidity theorem for infinite area Euclidean polygons?
\end{itemize}

Another natural question is whether a version of \billiardrigidity holds in higher dimensions.  Coding billiards in three-dimensional hyperbolic polyhedra was considered in \cite{Singh}.

\begin{itemize}
\item[(Q4)] Is there a version of the \billiardrigidity for billiards in Euclidean or hyperbolic $n$--space, $n \geq 3$?
\end{itemize}

Returning to dimension 2, we note that in order to answer the special case of (Q2) using our methods, one would first need a more general version of the \currentsupport\!\!.  Of course, we could also formulate this question for infinite area hyperbolic or Euclidean cone surfaces (c.f.~\cite{Hooper-inf}), but we stick to the following concrete instance.

\begin{itemize}
\item[(Q5)] Is the \currentsupport\!\!\! true for noncompact, finite area, complete, negatively curved, hyperbolic cone surfaces? 
\end{itemize} 

The universal cover has a circle at infinity in this case, though now the action has parabolic fixed points on the boundary associated to the ``cusps" of $S$.  We note that even under the finite area assumption, one could still have infinitely many cone points, so the cusps here are not necessarily standard, and there are various technical issues one must consider.  For such a metric, we would take $\CG_{\tphi}$ to be the closure of the space of endpoints of nonsingular geodesics, which should coincide with the support of a Liouville current, though again some care must be taken in defining such an object in this setting.

There are other ways in which one could try to generalize the \currentsupport\!. As we mentioned in the introduction, not only are hyperbolic and Euclidean cone metrics determined by their Liouville currents, but so are any (variable) negative or non-positively curved cone metrics, by a result of Constantine \cite{Constantine}. Hence it is natural to ask to what extent they are determined by only their support:

 \begin{itemize}
\item[(Q6)] Is there a version of the \currentsupport for negative or nonpositively curved cone metrics?
\end{itemize} 

We note that the methods used in this paper to prove the \currentsupport theorem used the fact that $\hS$ is locally isometric to $\mathbb{H}$ in a crucial way (and in \cite{oldpaper} the Euclidean setting was essential).  In particular, it seems highly unlikely that (Q6) would follow by direct generalizations of the techniques used in this paper.  The statement itself is likely to be quite different: for example, the second conclusion of the \currentsupport would certainly have to be relaxed to allow lifting deformations via branched covers of negatively curved orbifolds, but an even more general alternative seems likely.  There is also nothing stopping one from asking similar questions in the case of spherical surfaces/polygons, but it seems likely the techniques will stray even further from those considered here.

  \bibliographystyle{alpha}
  \bibliography{main}

\begin{thebibliography}{DELS21}

\bibitem[Bea83]{Beardon}
Alan~F. Beardon.
\newblock {\em The geometry of discrete groups}, volume~91 of {\em Graduate
  Texts in Mathematics}.
\newblock Springer-Verlag, New York, 1983.

\bibitem[BH99]{BridHaef}
M.R. Bridson and A.~Haefliger.
\newblock {\em Metric spaces of non-positive curvature}, volume 319 of {\em
  Grundlehren der Mathematischen Wissenschaften [Fundamental Principles of
  Mathematical Sciences]}.
\newblock Springer-Verlag, Berlin, 1999.

\bibitem[BL18]{BL}
Anja Bankovic and Christopher~J. Leininger.
\newblock Marked-length-spectral rigidity for flat metrics.
\newblock {\em Trans. Amer. Math. Soc.}, 370(3):1867--1884, 2018.

\bibitem[BTV16]{BTV-Delaunay}
Mikhail Bogdanov, Monique Teillaud, and Gert Vegter.
\newblock Delaunay triangulations on orientable surfaces of low genus.
\newblock In {\em 32nd {I}nternational {S}ymposium on {C}omputational
  {G}eometry}, volume~51 of {\em LIPIcs. Leibniz Int. Proc. Inform.}, pages
  Art. 20, 17. Schloss Dagstuhl. Leibniz-Zent. Inform., Wadern, 2016.

\bibitem[CFF92]{CFF}
C.~Croke, A.~Fathi, and J.~Feldman.
\newblock The marked length-spectrum of a surface of nonpositive curvature.
\newblock {\em Topology}, 31(4):847--855, 1992.

\bibitem[Con18]{Constantine}
David Constantine.
\newblock Marked length spectrum rigidity in non-positive curvature with
  singularities.
\newblock {\em Indiana Univ. Math. J.}, 67(6):2337--2361, 2018.

\bibitem[Cro90]{croke}
Christopher~B. Croke.
\newblock Rigidity for surfaces of nonpositive curvature.
\newblock {\em Comment. Math. Helv.}, 65(1):150--169, 1990.

\bibitem[DE86]{DouadyEarle}
Adrien Douady and Clifford~J. Earle.
\newblock Conformally natural extension of homeomorphisms of the circle.
\newblock {\em Acta Math.}, 157(1-2):23--48, 1986.

\bibitem[DELS21]{oldpaper}
Moon Duchin, Viveka Erlandsson, Christopher~J. Leininger, and Chandrika
  Sadanand.
\newblock You can hear the shape of a billiard table: symbolic dynamics and
  rigidity for flat surfaces.
\newblock {\em Comment. Math. Helv.}, 96(3):421--463, 2021.

\bibitem[DLR10]{DLR}
M.~Duchin, C.~J. Leininger, and K.~Rafi.
\newblock Length spectra and degeneration of flat metrics.
\newblock {\em Invent. Math.}, 182(2):231--277, 2010.

\bibitem[FM10]{farb:MCG}
B.~Farb and D.~Margalit.
\newblock {\em A primer on mapping class groups}.
\newblock Princeton Univ. Press, Princeton, N.J., 2010.

\bibitem[Fra12]{Frazier:LS}
Jeffrey Frazier.
\newblock {\em Length spectral rigidity of non-positively curved surfaces}.
\newblock ProQuest LLC, Ann Arbor, MI, 2012.
\newblock Thesis (Ph.D.)--University of Maryland, College Park.

\bibitem[GU90]{UlGi2}
Marie-Joya Giannoni and Denis Ullmo.
\newblock Coding chaotic billiards. {I}. {N}oncompact billiards on a negative
  curvature manifold.
\newblock {\em Phys. D}, 41(3):371--390, 1990.

\bibitem[Hat02]{Hatcher}
Allen Hatcher.
\newblock {\em Algebraic topology}.
\newblock Cambridge University Press, Cambridge, 2002.

\bibitem[Hoo14]{Hooper-inf}
W.~Patrick Hooper.
\newblock An infinite surface with the lattice property {I}: {V}eech groups and
  coding geodesics.
\newblock {\em Trans. Amer. Math. Soc.}, 366(5):2625--2649, 2014.

\bibitem[HP97]{HerPaul}
Sa'ar Hersonsky and Fr{\'e}d{\'e}ric Paulin.
\newblock On the rigidity of discrete isometry groups of negatively curved
  spaces.
\newblock {\em Comment. Math. Helv.}, 72(3):349--388, 1997.

\bibitem[NS21]{NaSi}
Anima Nagar and Pradeep Singh.
\newblock Finiteness in polygonal billiards on hyperbolic plane.
\newblock {\em Topol. Methods Nonlinear Anal.}, 58(2):481--520, 2021.

\bibitem[Ota90]{Otal}
Jean-Pierre Otal.
\newblock Le spectre marqu\'e des longueurs des surfaces \`a courbure
  n\'egative.
\newblock {\em Ann. of Math. (2)}, 131(1):151--162, 1990.

\bibitem[Pur76]{Purzitsky}
Norman Purzitsky.
\newblock All two-generator {F}uchsian groups.
\newblock {\em Math. Z.}, 147(1):87--92, 1976.

\bibitem[Rat94]{Ratcliffe}
John~G. Ratcliffe.
\newblock {\em Foundations of hyperbolic manifolds}, volume 149 of {\em
  Graduate Texts in Mathematics}.
\newblock Springer-Verlag, New York, 1994.

\bibitem[Sin20]{Singh}
Pradeep Singh.
\newblock Coding of billiards in hyperbolic 3-space.
\newblock preprint, {\tt arXiv:2009.14427}, 2020.

\bibitem[Thu97]{Thurston:book}
William~P. Thurston.
\newblock {\em Three-dimensional geometry and topology. {V}ol. 1}, volume~35 of
  {\em Princeton Mathematical Series}.
\newblock Princeton University Press, Princeton, NJ, 1997.
\newblock Edited by Silvio Levy.

\bibitem[UG95]{UlGi}
Denis Ullmo and Marie-Joya Giannoni.
\newblock Coding chaotic billiards. {II}. {C}ompact billiards defined on the
  pseudosphere.
\newblock {\em Phys. D}, 84(3-4):329--356, 1995.

\bibitem[ZK75]{ZeKa}
A.~N. Zemljakov and A.~B. Katok.
\newblock Topological transitivity of billiards in polygons.
\newblock {\em Mat. Zametki}, 18(2):291--300, 1975.

\end{thebibliography}

\Addresses

\end{document}